\documentclass[11pt,twoside]{article}
\usepackage{amssymb}
\usepackage{amsmath}
\usepackage{amsthm}
\usepackage{amsfonts}
\usepackage{color}
\usepackage{enumerate}
\usepackage{mathrsfs}
\usepackage{bm}
\usepackage{epsfig}
\usepackage{graphics}
\usepackage{graphicx}
\usepackage[abs]{overpic}
\usepackage{cite}
\usepackage{verbatim}

\usepackage{indentfirst}

\usepackage[T1]{fontenc}
\usepackage[utf8]{inputenc}
\usepackage{authblk}
\usepackage{booktabs}
\usepackage[driverfallback = dvipdfm,
           bookmarksnumbered=true,
           bookmarksopen=true,
              colorlinks,
            linkcolor=red,
            anchorcolor=blue,
            citecolor=green]{hyperref}

\DeclareMathOperator{\esssup}{ess\ sup}
\DeclareMathOperator{\essinf}{ess\ inf}

\newtheorem{theorem}{Theorem}
\theoremstyle{definition}
\newtheorem{definition}[theorem]{Definition}
\newtheorem{remark}{Remark}
\newtheorem{lemma}{Lemma}
\newtheorem{proposition}[theorem]{Proposition}

\newtheorem{corollary}[theorem]{Corollary}

\providecommand{\keywords}[1]{\textbf{\textit{Keywords---}} #1}

\allowdisplaybreaks

\begin{document}
\title{Well-posedness for $p(x)$-Laplacian parabolic equations under a multiple-regime setting on an annulus}

\author[2,4,5]{Yitian Wang}
\author[1,4,5]{Runzhang Xu}
\author[1,3]{Chao Yang\thanks{Corresponding author: cyang@ims.cuhk.edu.hk, chaoyang@agh.edu.pl}}

\affil[1]{The Institute of Mathematical Sciences, The Chinese University of Hong Kong, Shatin, NT, Hong Kong, China}
\affil[2]{School of Mathematical Sciences, Shanghai Jiao Tong University, Shanghai 200240, China}
\affil[3]{Faculty of Applied Mathematics, AGH University of Krakow, Krakow {\rm 30-059}, Poland}
\affil[4]{College of Mathematical Sciences, Harbin Engineering University, 150001 Harbin, China}
\affil[5]{Sanya Nanhai Innovation and Development Base of Harbin Engineering University, Sanya, China}

\date{}
\maketitle

\begin{abstract}
    This paper proposes the energy-variational framework to deal with the well-posedness of the solution to the initial boundary value problem of the parabolic equation with variable exponents on an annulus, under the multiple-regime setting consisting of the subcritical, critical and supercritical regimes of the variable exponents.
    The multiple-regime setting leads to the appearance of the critical and supercritical regions of positive measure, which cause the failure of the classical Sobolev embedding and the lack of compactness.
    To deal with the potential destructiveness of the multiple-regime phenomenon towards the well-posedness of the solution under the energy framework, we recover the compact Sobolev embedding for a class of radially symmetric functions by the Strauss inequality and obtain the local existence of the solution via the combination of the subdifferential technique and the Principle of Symmetric Criticality for any given radially symmetric initial data.
    Additionally, the uniqueness result is also given with the recovered Sobolev embedding.
    Moreover, examining the impact on well-posedness of solutions caused by the variable exponents crossing three sensitive regions, we establish a variational structure of the potential well theory for the evolution equation under the multiple-regime setting on the annulus.
    For subcritical and critical initial energy levels, we discuss for the non-positive and positive initial energy cases separately. We obtain the finite-time blow-up of the solution with non-positive initial energy and the threshold condition for global existence and finite-time blow-up of the solution with positive initial energy.
    For supercritical initial energy level, several sufficient criteria for the global existence and finite-time blow-up of the solution are given.
    Finally, the above results (including the global existence and finite-time blow-up of the solution) are generalized to the case of the more general locally symmetric domain containing an annulus.
\end{abstract}

\keywords{Parabolic equations, well-posedness, variable exponents, multiple-regime, annulus.}

\tableofcontents

\section{Introduction}
In this paper, we study the well-posedness of the solution to the following initial boundary value problem  (IBVP) of the quasilinear $p(x)$-Laplacian parabolic equation
    \begin{align}
          &u_t-\Delta_{p(x)} u=|u|^{q(x)-2} u, && \text { in } \mathcal{A} \times(0,+\infty), \label{1}\\
          &u=0, && \text { on } \partial \mathcal{A} \times(0,+\infty), \label{2}\\
          &u(x, 0)=u_0(x), && \text { in } \mathcal{A}, \label{3}
    \end{align}
where $\mathcal{A} \subset \mathbb{R}^n$ ($n\ge 2$) is an annulus centered at the origin defined by
    \begin{align*}
      \mathcal{A}:=\left\{x \in \mathbb{R}^n: r<|x|<R\right\}
    \end{align*}
and the variable exponents $p, q: \bar{\mathcal{A}} \rightarrow \mathbb{R}$ are given continuous functions satisfying some specific constraints/hypotheses given in Subsection \ref{sec2.1}.
Furthermore, we also assume throughout this paper that the exponent $p(x)$ satisfies the so-called log-H\"{o}lder condition, i.e.,
    \begin{align}\label{4}
      \left|p(x)-p\left(y\right)\right|
      \leq \frac{C}{ \log \left( e + \frac{1}{|x-y|}\right)} \quad \text { for any } x, y \in \mathcal{A}
    \end{align}
with some constant $C>0$, which is essential for some properties of the Sobolev space with variable exponents, including Poincar\'{e} and Sobolev inequalities.

Equation \eqref{1} can be commonly found in mathematical models that describe a variety of physical phenomena and applications in reality, including the field of image restoration \cite{pyb7,pyb8,pyb9}, as well as the modeling of electrorheological fluids \cite{pyb4,pyb5,pyb6}.
As the feature of \eqref{1}, the variable exponents in \eqref{1} are functions of the space variable $x$, and
this kind of model is relatively complex in both the physical and mathematical points of view.
Then, a natural strategy is to consider the simplified version,
namely the corresponding constant exponent case as a compromise.
Before the variable exponent cases were widely considered, many works focused on the constant exponent case. For relevant studies, we refer the readers to \cite{r9,ap1,ap4,ap3} for parabolic equations, and to \cite{aw1,aw2} for wave equations.
Particularly, when $p(x)$ and $q(x)$ are constants in \eqref{1}, i.e., $ p(x) \equiv p$ and $ q(x) \equiv q$, extensive research has been conducted on the following equation, whose nonlinear features are characterized by the exponents $p,q$:
	\begin{align}\label{44}
		 u_t - \Delta_p u = u^{q-1}
	\end{align}
on a bounded domain $\Omega \subset \mathbb{R}^n$ with appropriately smooth boundary $\partial \Omega$ or on the whole space $\mathbb{R}^n$, including investigations into the topic of well-posedness \cite{r13,r18,r19,r22,r24,r25,r26,r23}.
Under the following assumptions on the exponents,
    \begin{align}
      & 2< p < q, \label{30}\\
      & q<{{p}^{*}} :=\frac{np}{n-p},\,p<n, \label{26}
    \end{align}
Tsutsumi \cite{r22} considered the well-posedness of the solution to the IBVP of \eqref{44} by the potential well theory and obtained a sharp condition that the solution exists globally or blows up in finite time with the subcritical initial energy.
Actually, the assumption \eqref{30} means that the diffusion of \eqref{44} is of the slow type and the source dominates the diffusion, which is consistent with the hypotheses $(\mathrm{H_{p,q}})$ and $(\mathrm{H^2_{p,q}})$ in this paper; and the assumption \eqref{26} means the subcritical regime for the index $q$ of the nonlinear source term in the sense of Sobolev embedding with the constant exponents, i.e.,
    \begin{align}\label{73}
      W_{0}^{1,p}(\Omega ) \hookrightarrow {L}^{q}(\Omega ),
    \end{align}
where $p$ stands for the growth of the diffusion term.
Throughout this paper, the notions of subcritical, critical and supercritical regime always appear in the same sense.

It is well known that the well-posedness of the solution to PDEs can be affected by the structure of the equation, exponents of the nonlinear structural terms, and the initial data.
The constraint \eqref{26} on $q$ of subcritical regime is a standard condition frequently adopted within the energy-variational framework for the study of well-posedness problem of PDEs, which ensures that the embedding \eqref{73} is compact and the energy function of \eqref{44} is well-defined on ${{W}_{0}}^{1,p}(\Omega )$. However, for the critical regime $q={{p}^{*}}$, we lose the compactness of \eqref{73}. For the supercritical regime $q>{{p}^{*}}$, the integrability of the nonlinear source term $u^{q-1}$ in \eqref{44} is even out of the control of the diffusion part, namely $W^{1,p}_{0}(\Omega )$ no longer belongs to ${L}^{q}(\Omega )$.
Furthermore, the critical and supercritical exponent conditions, both of which are unfavorable, may obstruct the well-posedness of solutions.
Specifically, some works considered the effects of the critical and supercritical regime of \eqref{73}, i.e., $q={{p}^{*}}$ and $q>{{p}^{*}}$ on the corresponding well-posedness problem for the class of equations represented by \eqref{44} for $p=2$ (namely $\Delta_p$ degenerates to the classical Laplacian) with $H_0^1(\Omega)$-data on general bounded domains, whose nonlinearity possesses the polynomial growth characterized by the exponent $q$:
    \begin{itemize}
      \item
      The well-posedness of the solution was proved to be still achievable for the subcritical and critical regime $q \le 2^*$ in \cite{r32+}, where $2^* := \frac{2n}{n-2}$ is the critical exponent of the Sobolev embedding $H_0^1(\Omega) \hookrightarrow L^q(\Omega)$.

      \item
      Particularly, for the critical regime $q = 2^*$, due to the failure of the $(PS)$ condition caused by the lack of the compactness of the Sobolev embedding, Tan \cite{p8} selected $\frac{1}{n} S^{\frac{n}{2}}$ as the potential well depth based on the important observation of Brezis and Nirenberg \cite{e1} rather than the mountain pass level (which is not achievable in the critical regime case) to establish the corresponding potential well theory and obtain the threshold condition of the global existence and the finite blowup of the solution to the IBVP of \eqref{44} with subcritical initial energy, where $S$ is the best constant of the embedding $H_0^1(\Omega) \hookrightarrow L^{2^*}(\Omega)$.

    \item
    But in the critical case $q = 2^*$, the solution is not uniformly well-posed \cite{r32}, since the maximal existence time of the solution is only uniform on a neighborhood of each initial data rather than some bounded sets of the initial data for the subcritical regime case, and the infinite-time $L^{\infty}$-blow-up of the solution occurs \cite{r35}.

    \item
    For the supercritical regime $q > 2^*$, the authors of \cite{r32} showed that the solution is even no longer well-posed by constructing sequences of smooth nonnegative initial data $u_{0}^k$ that converge to a nonnegative initial data $u_0$ such that the maximal existence time of their solutions approach zero as $k \rightarrow \infty$.
    \end{itemize}

As the theory of variable exponent function spaces (such as Musielak--Orlicz spaces) has matured, numerous authors have extended classical results from the constant exponent setting to the variable exponent framework, thereby enabling a more accurate mathematical description of physical phenomena in non-homogeneous media; in particular, when the nonlinear source term in the \(p\)-Laplace equation involves variable exponents, relevant contributions can be found in \cite{r28,r30,r31} and the references therein, while for parabolic equations involving the \(p(x)\)-Laplacian, we refer to \cite{r38,r39,r40,r1}, and for wave equations with variable exponents, related works are presented in \cite{aw3,aw4}.
Almost all existing studies mentioned above on variable exponent models within the energy-variational framework are confined to the subcritical regime $q(x)<p^*(x)$ under the variable exponent Sobolev embedding
    \begin{align}\label{77}
        {{W}_{0}}^{1,p(\cdot )}(\Omega ) \hookrightarrow {{L}^{q(\cdot)}}(\Omega ),
    \end{align}
whose critical exponent ${p}^{*}(x)$ is defined by
    \begin{align}\label{165}
    \begin{aligned}
      p^{*}(x):=
        \begin{cases}\frac{np(x)}{n-p(x)} & \text { if } 1<p(x)<n, \\
            \infty & \text { if } p(x) \geq n.
        \end{cases}
    \end{aligned}
    \end{align}
Restricting attention to the subcritical regime $q(x)<p^*(x)$ appears to be a reasonable extension of the well-posedness theory established for the constant exponent subcritical case, provided that the corresponding variational framework can be carried over and the resulting IBVPs remain well-posed in the variable exponent setting (as noted in the last paragraph, it is precisely the critical and supercritical constant exponents that give rise to the functional-analytic difficulties and the tendency toward ill-posedness). However, since the variable exponents vary from point to point with $x$, it is more natural---and in fact unavoidable---to consider a ``mixed regime'' in the variable exponent setting, in which the subcritical, critical, and supercritical cases may all appear in different parts of the same domain $\Omega$. This is in sharp contrast with the constant exponent case, where only one of the three regimes can occur over the whole domain. Such a contrast highlights the distinctive nature of the variable exponent framework and suggests that a more careful treatment is needed beyond the usual subcritical assumption.

Specifically, by taking the critical exponent ${p}^{*}(x)$ as the reference for $q(x)$, the index of the nonlinear source term can exhibit three different states at different points $x \in \Omega$ about the relationship between $q(x)$ and ${p}^{*}(x)$ with $1<p(x)<n$, including the subcritical regime
    \begin{align*}
      q(x)<p^{*}(x),  \quad\text{for } x \in \Omega_1,
    \end{align*}
the critical regime
    \begin{align*}
      q(x)=p^{*}(x),  \quad\text{for } x \in \Omega_2,
    \end{align*}
and the supercritical regime
    \begin{align*}
      q(x)>p^{*}(x),  \quad\text{for } x \in \Omega_3.
    \end{align*}
This means we can divide the domain $\Omega$ into three disjoint parts, i.e., $\Omega_1, \Omega_2$ and $\Omega_3$. Each one of $\Omega_i, i=1,2,3$, possesses a single regime and is not necessarily connected.
We call this phenomenon ``multiple-regime'' or ``triple-regime'', which was proposed in the pioneering work of Alves and Boudjeriou \cite{r1} concerning the parabolic problem \eqref{1}. They proved the local existence and global existence of the solution with initial energy strictly less than the mountain-pass level and also obtained the finite-time blow-up of the solution with negative initial energy.
To provide a better understanding of this phenomenon for the readers, we draw the following schematic diagram Figure \ref{f1} to show one possible configuration in which the multiple-regime phenomenon occurs for the case that the domain is the annulus $\mathcal{A}$ with the spatial dimension $n =2$ (the consideration for $n=1$ is of non-sense according to \eqref{165}) and $p(x),q(x)$ radially symmetric.

    \begin{figure}[h]
    \centering
            \begin{overpic}[width=9cm]{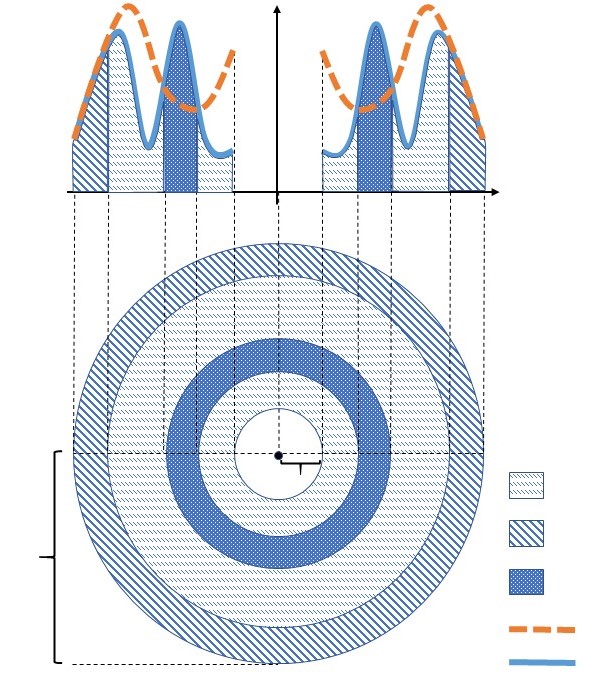}
            \put(210, 200){{$x$}}
            \put(0, 50){{$R$}}
            \put(121, 80){{$r$}}
            \put(108, 83){{$0$}}
            \put(233, 80){{$\Omega_1$}}
            \put(233, 60){{$\Omega_2$}}
            \put(233, 40){{$\Omega_3$}}
            \put(250, 20){{$p^*(x)$}}
            \put(250, 7){{$q(x)$}}
            \end{overpic}
    \caption{Schematic diagram of the multiple-regime phenomenon for the variable exponents in the annulus $\mathcal{A}$}\label{f1}
    \end{figure}

In this paper, we aim to establish the variational structure for the parabolic problem \eqref{1}-\eqref{3} under the multiple-regime setting based on the potential well theory, and in particular, we expect to investigate the rich and complex phenomena of well-posedness with different initial data, especially in relation to the embedding in variable exponent Sobolev space \eqref{77}.
However, this problem is complex and challenging: the appearance of the critical and supercritical regions of positive measure leads to the failure of the compact Sobolev embedding with variable exponent, i.e., \eqref{77}, including the loss of the integrability corresponding to the source term and also the lack of compactness, which makes it impossible to build the corresponding variational structure for problem \eqref{1}-\eqref{3} in the standard Sobolev space $W_0^{1,p(\cdot)}(\Omega)$.

Therefore, the recovery of the compact Sobolev embedding under appropriate restrictions is the cornerstone to reconstructing the variational structure for \eqref{1}-\eqref{3} under the multiple-regime setting. From the constant exponent elliptic point of view, the compact Sobolev embedding with constant exponents can be recovered with the help of the symmetry of the domain.
As a consequence, the existence of the solution to the critical/supercritical elliptic variational problems on bounded domain with certain symmetry constraints can be achieved \cite{a1,a2,a3,a4,a5,a6,a7,a8}.
From the constant exponent parabolic point of view, for \eqref{44} with $p=2$:
      Ni and Sacks et al. \cite{p9} proved that for any positive radially symmetric decreasing $L^{\infty}$ data $u_0$, there exists a scalar $\tau>0$ depending on $u_0$ such that the weak solution to the corresponding IBVP with $q \ge 2^*$ (i.e., the critical and supercritical regime cases) in an $n$-dimensional ball produced by $\tau u_0$ exists globally in time but is not uniformly bounded.
      Then, by the potential well theory, comparison principle, and upper and lower solution method, Ishiwata \cite{p2} completed this result for the critical regime case $q=2^*$ with a class of nonnegative radially symmetric initial data taken from $C^1(\Omega) \cap H_0^1(\Omega)$ and $\Omega = \mathbb{R}^n$ or a $n$-dimensional ball by showing that the solution exists globally with small initial data and does not exist globally with large data in the sense of scaling, which can be regarded as a generalization of Theorem 11 \cite{r7} (for $q<2^*$ on a general bounded domain), and he also distinguished the three possibilities of the decay, non-decay or finite-time blow-up of the solution in $L^{\infty}(\Omega)$ as the scalar starts from small to large.
      Moreover, considering the complete/incomplete blow-up (cf. \cite{p10}), Mizoguchi \cite{r47} obtained a similar result on $\mathbb{R}^n$ for $q > 2^*$ with nonnegative radially symmetric $C^1$ data.
Note that all of these results \cite{p9,p2,r47} imply that the symmetry of both the domain and the initial data leaves us with a glimmer of hope to investigate the well-posedness of the solution to the parabolic problem with constant exponents of the critical/supercritical regime. In this situation, the ``bifurcation phenomenon'' of the global well-posedness of the solution (including global existence and finite-time blow-up) with initial data of different sizes still exists. Thus, it is natural to consider the distinguishment of the initial data that drive the solution to exist globally or blow up in finite time in the corresponding phase space.
Additionally, we refer the readers to \cite{p1,p4,p5} for the $L^{\infty}$-blow-up and to \cite{p6,p7} for the asymptotic symmetry of the solution in the supercritical regime case.

The above discussion of the critical/supercritical elliptic/parabolic problems with symmetry constraints motivates us to establish the potential well theory to deal with the well-posedness for the parabolic problem \eqref{1}-\eqref{3} with variable exponents under the multiple-regime setting on a bounded radial symmetric domain in the expectation that the compact Sobolev embedding \eqref{77} can be recovered under some symmetry constraints of the domain geometry and the restriction on the variable exponent $q(x)$ can be relaxed, without considering exactly in what form the multiple-regime appears (the variable exponent $q(x)$ can even be fully critical or supercritical), see Hypotheses ($\mathrm{H_D}$) and ($\mathrm{H_{p,q}}$) introduced in Section \ref{sec2}, which are the contributions and interesting points of the present paper.
Additionally, as an important tool for recovering the Sobolev embedding with the variable exponents, since the Strauss inequality (cf. Lemma 1, W. A. Strauss 1977 \cite{r10}) fails at the origin, we choose the radially symmetric bounded domain to be the annulus $\mathcal{A}$ rather than the ball,
and we also extend the existence and finite-time blow-up results obtained on the annulus to the case of locally symmetric domains.

It should be pointed out that, to the best of our knowledge, this is the first work that establishes the potential well theory for evolutional PDEs under the multiple-regime setting of the variable exponents on an annulus, which is employed to classify the initial data in the phase space concerning the effects of the initial data on the solution's well-posedness, including the global existence, asymptotic behavior and finite-time blow-up.
Furthermore, the approach proposed in the present paper is based on the energy-variational framework, without requiring higher regularity of the initial data to deal with strong solutions as \cite{p9,p2,r47}.
To provide the readers an intuitive understanding of the role the radial symmetry of the annulus, the variable exponents and the initial data play in the arguments of this paper under the multiple-regime assumption, we summarize the relationship of the related lemmas and theorems in the following Figure \ref{f3}.
    \begin{figure}[h]
    \centering
            \begin{overpic}[width=13cm]{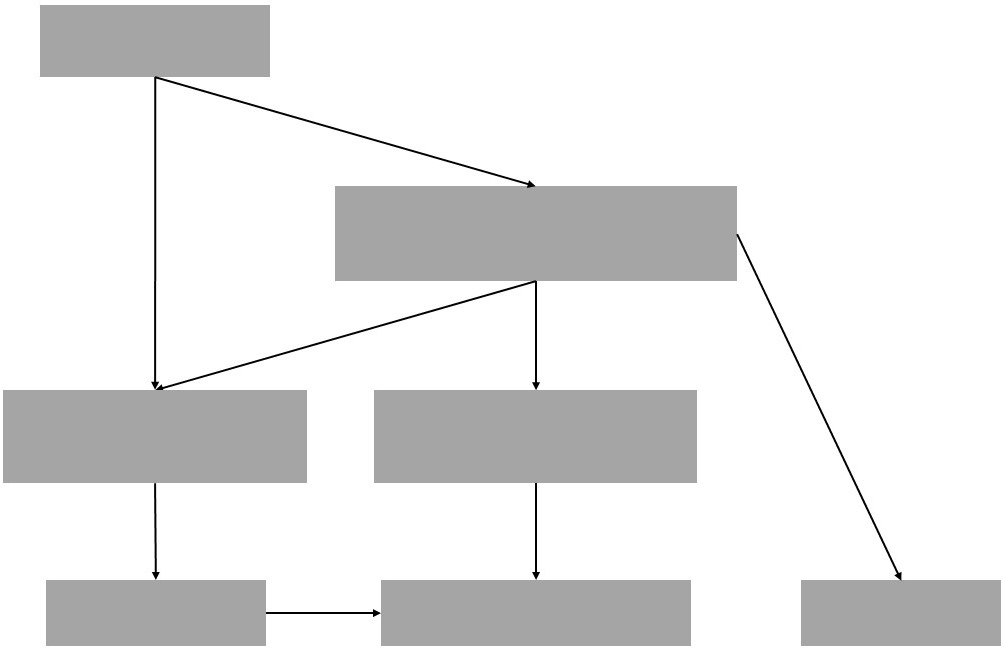}
            \put(16, 227){\small Radial symmetry}
            \put(16, 217){\small of $\mathcal{A}, p(x),q(x), u_0$}
            \put(125, 161){\small Compact Sobolev embedding}
            \put(125, 151){\small for radially symmetric functions}
            \put(125, 141){\small (Lemma \ref{l5} \& Corollary \ref{cor1})}
            \put(3, 85){\small Subdifferential technique}
            \put(3, 75){\small with $G$-symmetry}
            \put(3, 65){\small (Lemma \ref{l19})}
            \put(20, 15){\small Local existence}
            \put(20, 5){\small (Theorem \ref{l1})}
            \put(140, 85){\small Potential well structures}
            \put(140, 75){\small and related properties}
            \put(140, 65){\small (Lemmas \ref{l3}-\ref{l11})}
            \put(145, 15){\small Global well-posedness}
            \put(145, 5){\small (Theorems \ref{t10}-\ref{t2})}
            \put(300, 15){\small Uniqueness}
            \put(300, 5){\small (Theorem \ref{t1})}
            \end{overpic}
    \caption{Relationships of the lemmas and the theorems of the present paper
    in terms of the effects of the multiple-regime phenomenon of $q(x),p^*(x)$ and the radial symmetry of $\mathcal{A}$, $p(x),q(x)$ and $u_0$}\label{f3}
    \end{figure}

The outline of this paper is given as follows.
In Section \ref{sec2}, we give some basic notations, hypotheses, definitions and also some functional spaces with the variable exponents. Additionally, we introduce the subdifferential technique with $G$-symmetry and the corresponding variational structures of the potential well theory for problem \eqref{1}-\eqref{3} on the annulus.
In Section \ref{sec3}, we prove the local existence of the solution via the combination of the subdifferential technique and the Principle of Symmetric Criticality, and the uniqueness result is also given.
In Section \ref{sec33}, we give the finite-time blow-up of the solution with negative initial energy first.
Then, we prove the global existence and finite-time blow-up of the solution with subcritical and critical initial energy and also give the asymptotic behavior of the global ones and the estimation of the blow-up time for the blow-up solution especially for the subcritical initial energy level. Moreover, we give some sufficient criteria of the global existence and finite-time blow-up of the solution with supercritical initial energy.
In Section \ref{sec6}, the existence and finite-time blow-up results obtained on the annulus in Sections \ref{sec3}-\ref{sec33} are extended to the case of locally symmetric domains.
Finally, we give several remarks to discuss further possibilities of the generalization for the present paper.

\section{Preliminaries}\label{sec2}

    \subsection{Basic notations, hypotheses and definitions }\label{sec2.1}

    $\bullet$ {\bf Basic notations.}
    Firstly, we introduce some notations for the classical Lebesgue spaces $L^{p}(\Omega)$. The norm of $L^{p}(\Omega)$ is denoted by
    $\| \cdot \|_{p,\Omega} := \| \cdot \|_{L^p(\Omega)}$ with $p \in (1,\infty)$, and the inner product in $L^2(\Omega)$ is given by $(\cdot,\cdot)_{\Omega}$. Moreover, the symbol $C$ is used to represent a generic constant, whose exact value is not crucial for the argument and its value  may change from line to line even inside the same formula in some estimates.
    In addition, the notations of other specific constants will be given and defined in the text.

    $\bullet$ {\bf Hypotheses.}
    Next, we set some hypotheses on the variable exponents $p(x)$ and $q(x)$ to consider the multiple-regime situation.
    Throughout this paper, we set the common basic hypothesis as follows, which is required for all of our main results

    {\flushleft\textbf{($\mathrm{H_{p,q}}$)}}
   \quad
   The variable exponents $p(x)$ and $q(x)$ satisfy
        \begin{align*}
        \begin{cases}
          &2 \le p^{-} \le p(x) \le p^{+} < n,\\
          &2 \le q^{-} \le q(x) \le q^{+} < \infty
        \end{cases}
        \end{align*}
    for all $x\in \mathcal{A}$, where the constants $p^{\pm}$ and $q^{\pm}$ are given by
        \begin{align*}
            & p^{-} := \mathop{\essinf}\limits_{x\in \mathcal{A}} p(x),\quad
            p^{+} := \mathop{\esssup}\limits_{x\in \mathcal{A}} p(x), \\
            & q^{-} := \mathop{\essinf}\limits_{x\in \mathcal{A}} q(x),\quad
            q^{+} := \mathop{\esssup}\limits_{x\in \mathcal{A}} q(x).
        \end{align*}

    Next, we illustrate the hypothesis on the geometry of the domain in terms of the multiple-regime setting.

    {\flushleft\textbf{($\mathrm{H_D}$)}}
    \quad
    Let us define
        \begin{align*}
        \begin{aligned}
          &\Omega_{\rm sub}:= \{x\in \mathcal{A}: q(x)<p^{*}(x)\},\\
          &\Omega_{\rm critical}:= \{x\in \mathcal{A}: q(x)=p^{*}(x)\},\\
          &\Omega_{\rm sup}:= \{x\in \mathcal{A}: q(x)>p^{*}(x)\}.
        \end{aligned}
        \end{align*}
    Then, without loss of generality, we suppose that all of $\Omega_{\rm sub}, \Omega_{\rm critical}$ and $\Omega_{\rm sup}$ are sets of nonzero measures, where $p^{*}(x)$ is the critical Sobolev exponent given by \eqref{165}.\\

   Next, we introduce two additional hypotheses on the variable exponents $p(x)$ and $q(x)$.

   {\flushleft\textbf{($\mathrm{H^1_{p,q}}$)}}
   \quad
        $
          p(x)=p(|x|), \quad q(x)=q(|x|), \quad \forall x \in \mathcal{A}.
        $

    {\flushleft\textbf{($\mathrm{H^{2}_{p,q}}$)}}
   \quad
        $
        p^{+} <  q(x), \quad \forall x \in \mathcal{A}.
        $\\

   One can observe that the hypothesis $(\mathrm{H^1_{p,q}})$ requires the variable exponents $p(x),q(x)$ to be radially symmetric within $\mathcal{A}$.
   The hypothesis $(\mathrm{H^2_{p,q}})$ implies that the diffusion term is dominated by the nonlinear source term on $\mathcal{A}$, which leads to the mountain-pass geometry of the energy functional.

    \begin{remark}[On hypothesis $(\mathrm{H^2_{p,q}})$]\label{rmk3}
      The hypothesis $(\mathrm{H^2_{p,q}})$ requires that $q(x) > p^+$ holds on the whole domain $\mathcal{A}$, i.e., $q^- > p^+$.
      Additionally, we don't consider the case $q^{+} < p^{-}$ in this paper, in which case the nonlinear effect of the source term can be dominated by virtue of the chain of embeddings
     $ W_{0}^{1,p^-}(\mathcal{A}) \hookrightarrow
       L^{p^-}(\mathcal{A}) \hookrightarrow
       L^{q^+}(\mathcal{A})
     $,
     for which the global existence of the solution can be easily obtained.
    \end{remark}

    $\bullet$ {\bf Definitions.}
    Next, we introduce some definitions in the following content, including the weak solution, maximal existence time and some sets about the well-posedness of the solution in terms of the initial data.
    Furthermore, for the consideration of symmetry and compactness to deal with the multiple-regime case, our initial data are selected from the space $W_{0,\mathrm{rad}}^{1, p(\cdot)}(\mathcal{A})$, see Definition \ref{d1} (ii) below,
    which is a subspace of $W_0^{1, p(\cdot)}(\mathcal{A})$ containing a class of radially symmetric functions, and will be introduced at the end of Subsection \ref{sec2.2}.

    \begin{definition}[Weak solution]\label{d1}
      A function $u=u(x, t)$ is called a weak solution of problem (\ref{1})-(\ref{3}) on $\Omega \times(0, T)$, provided $u \in L^{\infty} (0, T ; W_{0,\mathrm{rad}}^{1, p(\cdot)}(\mathcal{A}))$ with $u_{t} \in L^{2}\left(0, T; L^{2}(\mathcal{A})\right)$ satisfying
        \begin{enumerate}[(i)]
          \item  for any test function $\psi \in W_0^{1,p(\cdot)}(\mathcal{A})$, and a.e. $t \in(0, T)$
                    \begin{align}\label{23}
                       \left(u_{t}, \psi\right)
                        + \left(|\nabla u|^{p(x)-2} \nabla u, \nabla \psi \right)
                        =\left(|u|^{q(x)-2} u, \psi\right);
                    \end{align}
          \item  $u(x, 0)=u_{0}(x)$ in $W_{0,\mathrm{rad}}^{1, p(\cdot)}(\mathcal{A})$.
        \end{enumerate}
    \end{definition}

    \begin{remark}[On Definition \ref{d1}]
      It needs to be emphasized here that, for any radially symmetric initial data $u_0 \in W_{0,\mathrm{rad}}^{1, p(\cdot)}(\mathcal{A})$, the corresponding solution is also radially symmetric, while the test function is not necessary to be radially symmetric, which is due to the so-called Principle of Symmetric Criticality \cite{r56}.
    \end{remark}

    \begin{definition}[Maximal existence time]\label{d2}
      Let $u(x,t)$ be a weak solution of problem (\ref{1})-(\ref{3}) with $u_0 \in W_{0,\mathrm{rad}}^{1, p(\cdot)}(\mathcal{A})$, then the maximal existence time $T_{\max}$ is defined as follows
         \begin{enumerate}[(i)]
	                \item if $u(x,t)$ exists in the sense of
                        Definition \ref{d1}
                        for any fixed $t \in [0,\infty)$, then $T_{\max} = \infty$;
	                \item if there exists a $t_0>0$ such that
                        $u$ exists for any fixed $t \in [0,t_0)$ but cease to exist at $t = t_0$, then $T_{\max} = t_0 <\infty$.
            \end{enumerate}
    \end{definition}

    For the convenience of the description of our main results obtained in this paper, for any solution $u(t)$ to problem \eqref{1}-\eqref{3} with $u_{0} \in W_{0,\mathrm{rad}}^{1, p(\cdot)}(\mathcal{A})$ defined on the interval $\left[0, T_{\max }\right)$, we use the notations employed in \cite{r7,r9} and introduce the following sets
        $$
        \begin{gathered}
        \mathcal{G}:=\left\{u_{0} \in W_{0,\mathrm{rad}}^{1, p(\cdot)}(\mathcal{A}): u(t) \text { exists globally }\right\}, \\
        \mathcal{G}_{0}:=\left\{u_{0} \in \mathcal{G}: u(t) \rightarrow 0 \text { in } W_{0,\mathrm{rad}}^{1, p(\cdot)}(\mathcal{A}) \text { as } t \rightarrow \infty\right\},
        \end{gathered}
        $$
    and
        $$
        \mathcal{B}:=\left\{u_{0} \in W_{0,\mathrm{rad}}^{1, p(\cdot)}(\mathcal{A}): u(t) \text { does not exist globally }\right\}.
        $$
    According to Definition \ref{d2}, we can state briefly that $T_{\max} = \infty$ by denoting $u_0 \in \mathcal{G}$ and $T_{\max} < \infty$ by denoting $u_0 \in \mathcal{B}$.

    \subsection{Spaces $L^{p(\cdot)}(\Omega), W_0^{1, p(\cdot)}(\Omega)$,
    and $W_{0,\mathrm{rad}}^{1, p(\cdot)}(\mathcal{A})$}\label{sec2.2}
    In this subsection, we recall some basic properties of the Lebesgue and Sobolev spaces with variable exponents $L^{p(\cdot)}(\Omega)$,
    $W_0^{1, p(\cdot)}(\Omega)$ for some general bounded domain $\Omega$ and $W_{0,\mathrm{rad}}^{1, p(\cdot)}\left(\mathcal{A}\right)$ for the annulus $\mathcal{A}$, respectively.
    For more details about functional spaces with variable exponents, we refer the readers to \cite{r4,r11} and the references therein.
    Then, we denote by $\mathcal{P}(\Omega)$ the set of all measurable functions $p(x)$ satisfying
    $$1< p^- \le p^+ <n,$$
    and denote by $\mathcal{P}_{\log }(\Omega)$ the set of all functions $p(x) \in \mathcal{P}(\Omega)$ satisfying the log-H\"{o}lder condition \eqref{4}.

   $\bullet$ {\bf The Lebesgue space $L^{p(\cdot)}(\Omega)$ with variable exponents.}
   For the following given modular
        $$
        \rho(u)
        := \int_{\Omega} |u|^{p(x)} \mathrm{d} x,
        $$
    the variable exponent Lebesgue space $L^{p(\cdot)}(\Omega)$ is defined by
        $$
        L^{p(\cdot)}(\Omega):=\left\{u: u \text { is measurable and } \rho(u) <\infty\right\}
        $$
    equipped with the so-called Luxemburg norm
        $$
        \|u\|_{p(\cdot),\Omega}:=\inf \left\{\lambda> 0 :
        \rho \left(\frac{u}{\lambda} \right)  \leq 1\right\},
        $$
    which is a separable and uniformly convex Banach space.
    Then we introduce some related properties including the relations between the norm $\|u\|_{p(\cdot),\Omega}$ and the modular $\rho(u)$, H\"{o}lder's inequality and embedding theorems for the variable exponent Lebesgue spaces.

    \begin{proposition}[Relationship between the modular and the norm\cite{r4}]\label{p1}
      For all $u \in L^{p(\cdot)}(\Omega)$, there holds
        $$
        \min \left\{\|u\|_{p(\cdot),\Omega}^{p^{-}},\|u\|_{p(\cdot),\Omega}^{p^{+}}\right\} \leq \rho(u) \leq \max \left\{\|u\|_{p(\cdot),\Omega}^{p^{-}},\|u\|_{p(\cdot),\Omega}^{p^{+}}\right\}.
        $$
    \end{proposition}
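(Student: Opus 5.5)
The plan is to reduce both inequalities to a comparison between the modular $\rho(u)$ and the quantity $\rho(u/\lambda)$ at the value $\lambda=\|u\|_{p(\cdot),\Omega}$, using only the fact that $p(x)$ takes values in $[p^-,p^+]$.

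\emph{Trivial case.} If $\|u\|_{p(\cdot),\Omega}=0$, then $u=0$ a.e. Indeed, $\rho(u/\lambda)\le 1$ for arbitrarily small $\lambda>0$, so $\int_\Omega |u|^{p(x)}\,\mathrm{d}x\le \lambda^{p^-}$ once $\lambda\le 1$. Both sides of the claimed inequality then vanish. From now on set $\lambda:=\|u\|_{p(\cdot),\Omega}>0$.

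\emph{Step 1: $\rho(u/\lambda)=1$ at the norm.} Consider $\phi(\mu):=\rho(u/\mu)=\int_\Omega \mu^{-p(x)}|u|^{p(x)}\,\mathrm{d}x$ for $\mu>0$.
\begin{itemize}
\item $\phi$ is finite for all $\mu>0$, because $\mu^{-p(x)}\le \max\{\mu^{-p^-},\mu^{-p^+}\}$ and $\rho(u)<\infty$.
\item $\phi$ is continuous, by dominated convergence on compact subintervals of $(0,\infty)$.
\item $\phi$ is strictly decreasing wherever it is positive, since $p^->1$.
\item $\phi(\mu)\to\infty$ as $\mu\to 0$, and $\phi(\mu)\to 0$ as $\mu\to\infty$.
\end{itemize}
Hence the infimum in the definition of the Luxemburg norm is attained and $\rho(u/\lambda)=1$.

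\emph{Step 2: the pointwise scaling bound.} For any $\lambda>0$ and a.e. $x$,
\[
\min\{\lambda^{p^-},\lambda^{p^+}\}\,\Bigl|\frac{u}{\lambda}\Bigr|^{p(x)}\le |u|^{p(x)}=\lambda^{p(x)}\Bigl|\frac{u}{\lambda}\Bigr|^{p(x)}\le \max\{\lambda^{p^-},\lambda^{p^+}\}\,\Bigl|\frac{u}{\lambda}\Bigr|^{p(x)}.
\]
This holds because $t\mapsto \lambda^{t}$ is monotone on $[p^-,p^+]$: increasing when $\lambda\ge1$ and decreasing when $\lambda<1$. So $\lambda^{p(x)}$ always lies between $\lambda^{p^-}$ and $\lambda^{p^+}$.

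\emph{Step 3: conclusion.} Integrate the bound of Step 2 over $\Omega$ with $\lambda=\|u\|_{p(\cdot),\Omega}$, and use $\rho(u/\lambda)=1$ from Step 1. This gives
\[
\min\bigl\{\|u\|_{p(\cdot),\Omega}^{p^-},\|u\|_{p(\cdot),\Omega}^{p^+}\bigr\}\le \rho(u)\le \max\bigl\{\|u\|_{p(\cdot),\Omega}^{p^-},\|u\|_{p(\cdot),\Omega}^{p^+}\bigr\}.
\]

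The only step needing care is Step 1, namely that the infimum is attained with modular exactly $1$; this rests on continuity of $\phi$ via dominated convergence together with $p^+<\infty$. Step 2 is elementary, and Step 3 is immediate once the other two are in place.
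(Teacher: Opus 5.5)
Your proof is correct. The paper gives no proof of its own and simply cites \cite{r4}, and your argument is the standard one behind that result: the unit-ball identity $\rho\bigl(u/\|u\|_{p(\cdot),\Omega}\bigr)=1$, which holds because $p^+<\infty$ makes $\mu\mapsto\rho(u/\mu)$ continuous by dominated convergence, combined with the pointwise bound $\min\{\lambda^{p^-},\lambda^{p^+}\}\le\lambda^{p(x)}\le\max\{\lambda^{p^-},\lambda^{p^+}\}$, with the case $\|u\|_{p(\cdot),\Omega}=0$ handled separately as you do.
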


    \begin{proposition}[Embedding between Lebesgue spaces \cite{r4}]\label{p4}
      If $p(x) \leq q(x)$ for a.e. $x \in \Omega$, then $L^{q(\cdot)}(\Omega) \hookrightarrow L^{p(\cdot)}(\Omega)$ continuously.
    \end{proposition}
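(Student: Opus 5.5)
The statement to prove is Proposition \ref{p4}. Assume $p(x)\le q(x)$ a.e. on the bounded domain $\Omega$. The plan is to show $L^{q(\cdot)}(\Omega)\subset L^{p(\cdot)}(\Omega)$ together with a norm bound
$$
\|u\|_{p(\cdot),\Omega}\le C\|u\|_{q(\cdot),\Omega},\qquad C=1+|\Omega|.
$$
The argument works directly with the modular and the Luxemburg norm. The key ingredient is a pointwise inequality. Since $p(x)\le q(x)$, for every $s\ge 0$ we have
$$
s^{p(x)}\le 1+s^{q(x)},
$$
checked separately for $s\le 1$ and $s\ge 1$. Applying this with $s=|u(x)|$ and integrating gives
$$
\int_\Omega |u|^{p(x)}\,dx\le |\Omega|+\int_\Omega |u|^{q(x)}\,dx .
$$
Hence any $u$ with finite $q$-modular has finite $p$-modular, which is the set inclusion.

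For the norm estimate, first take $\|u\|_{q(\cdot),\Omega}\le 1$. Then $\int_\Omega |u|^{q(x)}\,dx\le 1$, by the definition of the Luxemburg norm (or by Proposition \ref{p1}). Set $\lambda=1+|\Omega|\ge 1$. Since $p(x)\ge 1$, we have $\lambda^{-p(x)}\le \lambda^{-1}$, so
$$
\int_\Omega \Big|\frac{u}{\lambda}\Big|^{p(x)}dx\le \frac{1}{\lambda}\int_\Omega |u|^{p(x)}\,dx\le \frac{|\Omega|+1}{\lambda}=1 .
$$
This gives $\|u\|_{p(\cdot),\Omega}\le 1+|\Omega|$.

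For general $u\ne 0$, apply this case to $u/\|u\|_{q(\cdot),\Omega}$. Homogeneity of the Luxemburg norm then yields $\|u\|_{p(\cdot),\Omega}\le (1+|\Omega|)\|u\|_{q(\cdot),\Omega}$, which is the continuous embedding.

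The only delicate point is the role of boundedness of $\Omega$. The constant term $|\Omega|$ in the modular comparison is finite only because $\Omega$ is bounded, and this is exactly where the argument would break down on unbounded domains. There is also a minor technicality: one must use $\|u\|\le 1 \Rightarrow \rho(u)\le 1$. This follows because $q^+<\infty$, so the modular is continuous in the scaling parameter and the infimum in the Luxemburg norm is attained.
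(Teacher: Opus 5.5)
Your proof is correct. The bound $s^{p(x)}\le 1+s^{q(x)}$ gives $\rho_{p(\cdot)}(u)\le|\Omega|+\rho_{q(\cdot)}(u)$, and normalizing to $\|u\|_{q(\cdot),\Omega}\le 1$ then yields $\|u\|_{p(\cdot),\Omega}\le(1+|\Omega|)\|u\|_{q(\cdot),\Omega}$. The paper does not prove this proposition; it only cites \cite{r4}, and your modular comparison on a set of finite measure is the standard argument for this fact. The step $\|u\|_{q(\cdot),\Omega}\le 1\Rightarrow\rho_{q(\cdot)}(u)\le 1$ also follows directly from Proposition \ref{p1}, so nothing is missing.
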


    $\bullet$ {\bf The spaces $W^{1, p(\cdot)}(\Omega)$ and $ W_0^{1, p(\cdot)}(\Omega)$.}
    The variable exponent Sobolev space is defined by
        $$
        W^{1, p(\cdot)}(\Omega) :=\left\{u \in L^{p(\cdot)}(\Omega):| \nabla u | \in L^{p(\cdot)}(\Omega)\right\}
        $$
    with the norm
        \begin{align}\label{92}
          \|u\|_{W^{1,p(\cdot)}(\Omega)} :=\left(\|u\|_{p(\cdot),\Omega}^{2}
          +\|\nabla u\|_{p(\cdot),\Omega}^{2}\right)^{1 / 2},
        \end{align}
    which is a reflexive and separable Banach space for $1<p^{-} \leq p^{+}<\infty$, and the space $W_{0}^{1, p(\cdot)}(\Omega)$ is the closure of $C_{0}^{\infty}(\Omega)$ in $W^{1, p(\cdot)}(\Omega)$.
    Then we introduce the Poincar\'{e} inequality and Sobolev embeddings for the variable exponent Sobolev spaces.

    \begin{proposition}[Poincar\'{e} inequality \cite{r4}]\label{p5}
      Let $\Omega$ be a bounded domain in $\mathbb{R}^{n}$ with smooth boundary $\partial \Omega$ and $p(x) \in \mathcal{P}_{\log }(\Omega)$. Then for all $u \in W_{0}^{1, p(\cdot)}(\Omega)$, there exists a constant
      $C_{p(\cdot)}^* > 0$ such that
        $$
        \|u\|_{p(\cdot),\Omega} \leq C_{p(\cdot)}^*\|\nabla u\|_{p(\cdot),\Omega}.
        $$
    \end{proposition}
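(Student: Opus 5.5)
The plan is to argue by contradiction, with compactness supplied by the Rellich–Kondrachov theorem for a constant exponent and with homogeneity replaced by a normalization of the Luxemburg norm.

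\textbf{Step 1 (contradiction sequence).} Suppose the inequality fails for every constant. Then there are $u_k \in W_0^{1,p(\cdot)}(\Omega)$ with $\|u_k\|_{p(\cdot),\Omega} > k\|\nabla u_k\|_{p(\cdot),\Omega}$. Both sides of this inequality are positively homogeneous of degree one, since the Luxemburg norm is a norm even though the modular is not homogeneous. We may therefore normalize so that $\|u_k\|_{p(\cdot),\Omega}=1$ and $\|\nabla u_k\|_{p(\cdot),\Omega} \to 0$. In particular $(u_k)$ is bounded in $W_0^{1,p(\cdot)}(\Omega)$ with respect to the norm \eqref{92}.

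\textbf{Step 2 (compactness).} We need a compact embedding $W_0^{1,p(\cdot)}(\Omega)\hookrightarrow L^{p(\cdot)}(\Omega)$. Since $p^- > 1$ and $\Omega$ is bounded, Proposition \ref{p4} gives the continuous embedding $L^{p(\cdot)}(\Omega) \hookrightarrow L^{p^-}(\Omega)$, so $(u_k)$ is bounded in $W_0^{1,p^-}(\Omega)$. By the classical Rellich–Kondrachov theorem, a subsequence converges in $L^{p^-}(\Omega)$ and almost everywhere to some $u$.

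\textbf{Step 3 (upgrading to $L^{p(\cdot)}$ convergence).} This is the main obstacle: we must pass from $L^{p^-}$ convergence to $L^{p(\cdot)}$ convergence. This is where $p \in \mathcal P_{\log}(\Omega)$ and $p^+<n$ enter.

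The first route is to use the variable exponent Sobolev embedding $W_0^{1,p(\cdot)}(\Omega)\hookrightarrow L^{p^*(\cdot)}(\Omega)$, which holds under log-Hölder continuity. This makes $(u_k)$ bounded in $L^{p^*(\cdot)}(\Omega)$. Since $p^*(x) - p(x) \ge \delta > 0$ uniformly, the family $|u_k - u|^{p(x)}$ is equi-integrable, because $\int_E |v|^{p(x)} \le C\,\| 1_E\|_{s(\cdot)}\,\||v|^{p}\|_{p^*/p}$ by Hölder. Vitali's theorem then gives modular convergence $\rho(u_k-u)\to 0$, and Proposition \ref{p1} turns this into norm convergence.

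If citing the Sobolev embedding is not allowed, the fallback is a local argument. Cover $\Omega$ by small balls on which the oscillation of $p$ is below a fixed threshold, by uniform continuity on $\bar\Omega$. On each ball, apply the constant exponent Sobolev embedding with $p_{B}^-$ to control $\int_B |v|^{p(x)}$, and glue the pieces with a partition of unity.

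\textbf{Step 4 (limit and contradiction).} From Step 3, $\|u\|_{p(\cdot),\Omega}=1$. On the other hand, $\nabla u_k \to 0$ in $L^{p(\cdot)}$ and hence in $L^{1}$. Combined with $u_k\to u$ in $L^1$, this gives $\nabla u = 0$ in the distributional sense. Moreover $u$ lies in the closed subspace $W_0^{1,p^-}(\Omega)$.

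A function in $W_0^{1,p^-}(\Omega)$ with zero gradient on a bounded domain is zero. One way to see this is to extend $u$ by zero: the extension is constant on $\mathbb{R}^n$ and vanishes outside $\Omega$, so $u\equiv 0$. This contradicts $\|u\|_{p(\cdot),\Omega}=1$, and the constant $C^*_{p(\cdot)}$ exists.
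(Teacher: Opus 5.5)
Your argument is correct, but it differs from the proof behind this statement. The paper gives no proof and cites \cite{r4}, where the inequality is proved directly rather than by compactness. There one uses the bound $|u(x)|\le c\int_\Omega |\nabla u(y)|\,|x-y|^{1-n}\,dy$ for $u\in C_0^\infty(\Omega)$. This Riesz potential is dominated by $c\,\mathrm{diam}(\Omega)\,M(|\nabla u|)(x)$, and the Hardy--Littlewood maximal operator is bounded on $L^{p(\cdot)}$ for log-H\"older exponents with $p^->1$; density of $C_0^\infty(\Omega)$ then finishes the proof. That route gives an explicit constant $C^*_{p(\cdot)}=c\,\mathrm{diam}(\Omega)$ with $c=c(n,p)$, and it needs neither $p^+<n$ nor regularity of $\partial\Omega$. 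Your contradiction argument only shows that some constant exists, with no information on how it depends on $\Omega$ or $p$. It does, correctly, give a constant uniform in $u$, which is the intended reading of the statement. In return, your fallback version of Step~3 is more elementary and more general. Take finitely many balls covering $\bar\Omega$ on which $\operatorname{osc}p<(p^-)^2/(n-p^-)$; this guarantees $p_B^+<(p_B^-)^*$. Constant-exponent Rellich--Kondrachov, applied to the zero extensions of $u_k$ on each ball, then gives $\int_\Omega|u_k-u|^{p(x)}\,dx\to0$. You only need to sum over the finite cover, not use a partition of unity. This uses only uniform continuity of $p$ on $\bar\Omega$, so log-H\"older continuity is not really needed for this inequality. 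Your first route is just Proposition~\ref{p6}(ii) with $q=p$, since $p^*-p=p^2/(n-p)\ge(p^-)^2/(n-p^-)>0$, so you can cite it instead of redoing the Vitali argument. Be aware that you must rely on the inhomogeneous embedding, with the full $W^{1,p(\cdot)}$-norm on the right, as Proposition~\ref{p6} is stated. The homogeneous Sobolev inequality $\|u\|_{p^*(\cdot),\Omega}\le c\|\nabla u\|_{p(\cdot),\Omega}$ on $W_0^{1,p(\cdot)}(\Omega)$ is how that embedding is usually proved, via the same potential estimate. Combined with $L^{p^*(\cdot)}(\Omega)\hookrightarrow L^{p(\cdot)}(\Omega)$, it already is the Poincar\'e inequality, which would make the compactness detour redundant.
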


\begin{proposition}[Sobolev embedding \cite{r4}]\label{p6}
      Let $\Omega$ be a bounded domain in $\mathbb{R}^{n}$ with smooth boundary $\partial \Omega$,
      $p(x) \in \mathcal{P}_{\log }(\Omega)$ and $q(x) \in \mathcal{P}(\Omega)$, the following statements hold:

    (i) if $1 \leq q(x) \leq p^{*}(x)$, then $W^{1, p(\cdot)}(\Omega) \hookrightarrow L^{q(\cdot)}(\Omega)$ continuously;

    (ii) if $1 \leq q(x)$ and $\inf \limits_{x \in \Omega}\left(p^{*}(x)-q(x)\right)>0$, then $W^{1, p(\cdot)}(\Omega) \hookrightarrow L^{q(\cdot)}(\Omega)$ compactly.
    \end{proposition}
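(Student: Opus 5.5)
The plan is to treat the two statements separately: (i) goes through a pointwise potential estimate together with the boundedness of the Riesz potential on variable-exponent spaces, and (ii) is a localization argument reducing to the classical Rellich--Kondrachov theorem.

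For (i), I first reduce to compactly supported functions on $\mathbb{R}^n$. Since $\partial\Omega$ is smooth and $p\in\mathcal{P}_{\log}(\Omega)$, I extend $p$ to a log-H\"older exponent on $\mathbb{R}^n$ that is constant outside a large ball and keeps $1<p^-\le p^+<n$. I then use a bounded extension operator $E: W^{1,p(\cdot)}(\Omega)\to W^{1,p(\cdot)}(\mathbb{R}^n)$ whose image is supported in a fixed ball $B$; reflection across the boundary plus a cutoff works, because reflection preserves the log-H\"older class. For $v=Eu$ I use the pointwise bound
\begin{align*}
|v(x)|\le C\, I_1(|\nabla v|)(x),\qquad I_1 f(x)=\int_{\mathbb{R}^n}\frac{f(y)}{|x-y|^{n-1}}\,\mathrm{d}y ,
\end{align*}
valid for compactly supported Sobolev functions.

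The main step is to show $I_1: L^{p(\cdot)}(\mathbb{R}^n)\to L^{p^*(\cdot)}(\mathbb{R}^n)$ is bounded. I would follow Hedberg's method.
\begin{enumerate}[(a)]
\item Split $I_1f(x)$ into the region $|x-y|<\delta$, bounded by $C\delta\, Mf(x)$ with $M$ the Hardy--Littlewood maximal operator, and the region $|x-y|\ge\delta$.
\item Bound the far part by H\"older's inequality in $L^{p(\cdot)}$, normalized to $\|f\|_{p(\cdot)}\le 1$, by $C\delta^{1-n/p(x)}$. The log-H\"older condition \eqref{4} is what lets me freeze the exponent at $x$ up to a bounded factor.
\item Optimize in $\delta$ to get $|I_1f(x)|\le C\,(Mf(x))^{p(x)/p^*(x)}$, up to a harmless additive term.
\item Conclude from the boundedness of $M$ on $L^{p(\cdot)}$, which again relies on log-H\"older continuity and $p^->1$, and the modular--norm relations of Proposition \ref{p1}.
\end{enumerate}
This gives $\|v\|_{p^*(\cdot)}\le C\|\nabla v\|_{p(\cdot)}\le C\|u\|_{W^{1,p(\cdot)}(\Omega)}$. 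Finally $q\le p^*$ on the bounded set $\Omega$, so Proposition \ref{p4} gives $L^{p^*(\cdot)}(\Omega)\hookrightarrow L^{q(\cdot)}(\Omega)$, proving (i).

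For (ii), set $\varepsilon:=\inf_\Omega(p^*-q)>0$; only uniform continuity of $p$ on $\bar\Omega$ is needed here.
\begin{enumerate}[(a)]
\item Choose $\delta>0$ so small that on every ball $B_j$ of radius $\delta$ the number $p_j:=\inf_{B_j\cap\Omega}p$ satisfies $p_j^*\ge \sup_{B_j\cap\Omega}p^*-\varepsilon/2$. This uses that $t\mapsto nt/(n-t)$ is uniformly continuous on $[p^-,p^+]$, since $p^+<n$.
\item Set $s_j:=p_j^*-\varepsilon/4$, so that $q\le s_j<p_j^*$ on $B_j\cap\Omega$.
\item Cover $\bar\Omega$ by finitely many such balls and take a smooth partition of unity $\{\varphi_j\}$ subordinate to the cover.
\item For a bounded sequence $(u_k)$ in $W^{1,p(\cdot)}(\Omega)$, the functions $(Eu_k)\varphi_j$ are bounded in $W_0^{1,p_j}(B_j)$, using $L^{p(\cdot)}\hookrightarrow L^{p_j}$ on bounded sets (Proposition \ref{p4}).
\item The classical Rellich--Kondrachov theorem then gives, after a diagonal extraction over the finitely many $j$, a subsequence converging in $L^{s_j}(B_j)$ for each $j$, hence in $L^{q(\cdot)}(B_j\cap\Omega)$ by Proposition \ref{p4}.
\item Summing the finitely many pieces gives convergence of $u_k=\sum_j\varphi_j u_k$ in $L^{q(\cdot)}(\Omega)$.
\end{enumerate}

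The main obstacle is the critical case $q=p^*$ in (i). There localization to constant exponents fails, because freezing $p$ at its local infimum loses exactly the endpoint exponent. One genuinely needs the Riesz-potential (Hedberg) estimate with the exponent frozen at each point $x$, and this is where the log-H\"older hypothesis \eqref{4} enters essentially.
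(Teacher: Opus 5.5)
The paper does not prove Proposition~\ref{p6}; it quotes it from \cite{r4}. Your outline is the standard argument behind that citation. For (i) it combines the pointwise bound by $I_1(|\nabla v|)$, Hedberg's splitting with the exponent frozen at $x$, and boundedness of the maximal operator on $L^{p(\cdot)}$. For (ii) it is the classical localization to constant exponents plus Rellich--Kondrachov, which needs only continuity of $p$. So the strategy is sound, but two steps do not work as you wrote them.

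\textbf{First gap: the extension in (i).} If you extend $p$ to some log-H\"older $\tilde p$ first and then reflect $u$, you do not get a bounded map $W^{1,p(\cdot)}(\Omega)\to W^{1,\tilde p(\cdot)}(\mathbb{R}^n)$. Here is a counterexample.
\begin{itemize}
\item Take $p\equiv p_0$ on $\Omega$ and $\tilde p=p_0+\min\{\mathrm{dist}(\cdot,\Omega),\eta\}$ with $p_0+\eta<n$. This $\tilde p$ is Lipschitz, hence log-H\"older.
\item Take the bump $u(y)=\lambda(\tau-|y-z|)_+$ with $\lambda=\tau^{-n/p_0}$, centred at a point $z$ at distance $h\in(\tau,\eta)$ from $\partial\Omega$. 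Then $\int_\Omega|\nabla u|^{p_0}\approx1$.
\item Its reflection has gradient modular $\approx\lambda^{p_0+h}\tau^n=\tau^{-nh/p_0}$, which tends to $\infty$ as $\tau\to0$.
\end{itemize}
The fix is to define the exterior exponent by the reflection itself, $\tilde p:=p\circ T$ on a collar, and only afterwards extend it to $\mathbb{R}^n$. Here $T$ is the reflection through the nearest-point projection onto $\partial\Omega$, a smooth involution of a collar since $\partial\Omega$ is smooth. Then boundedness of $E$ is just a change of variables. The log-H\"older preservation you cite only makes $\tilde p$ admissible for the Riesz-potential step; it is not what makes the reflection bounded.

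Alternatively, you can skip the extension altogether by using the John-domain estimate $|u(x)-u_\Omega|\le C\int_\Omega|\nabla u(y)|\,|x-y|^{1-n}\,\mathrm{d}y$, valid on smooth bounded domains. It involves only $|\nabla u|\chi_\Omega$, so any log-H\"older extension of $p$ suffices.

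\textbf{Second gap: the local exponent in (ii).} The function $(Eu_k)\varphi_j$ lives on all of $B_j$, which may stick out of $\Omega$. So $p_j$ must be $\inf_{B_j}\tilde p$, not $\inf_{B_j\cap\Omega}p$; otherwise the embedding $L^{\tilde p(\cdot)}(B_j)\hookrightarrow L^{p_j}(B_j)$ you invoke can fail. With $\tilde p=p\circ T$ near $\partial\Omega$, this infimum is bounded below by the infimum of $p$ over a slightly enlarged ball intersected with $\Omega$. Shrinking $\delta$ then restores your $\varepsilon/2$ margin, and the rest of (ii) goes through unchanged.
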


    By virtue of the Poincar\'{e} inequality, see Proposition \ref{p5}, we have the following equivalent norm in $W_0^{1, p(\cdot)}(\Omega)$
        \begin{align}\label{52}
         \|u\|_{W_0^{1,p(\cdot)}(\Omega)}
        \simeq \|\nabla u\|_{p(\cdot),\Omega},
        \end{align}
    rather than the original one given by \eqref{92}.

    $\bullet$ {\bf The space $W_{0,\mathrm{rad}}^{1, p(\cdot)}(\mathcal{A})$.}
    Corresponding to the case that $\Omega$ is replaced by the annulus $\mathcal{A}$, we define the space $W_{0,\mathrm{rad}}^{1, p(\cdot)}(\mathcal{A})$ as follows
        \begin{align}\label{9}
          W_{0,\mathrm{rad}}^{1, p(\cdot)}(\mathcal{A}):=
          \left\{u \in W_{0}^{1, p(\cdot)}(\mathcal{A}): u(x)=u(|x|) \text { a.e. } x \in \mathcal{A}\right\},
        \end{align}
    which is a subspace of $W_0^{1,p(\cdot)}(\mathcal{A})$ invariant under the action of $\mathbf{O}(n)$ (the group of orthogonal transformations of dimension $n$) and is endowed with the same norm of  $W_0^{1,p(\cdot)}(\mathcal{A})$.
    By the symmetry of $\mathcal{A}$ and $W_{0,\mathrm{rad}}^{1, p(\cdot)}(\mathcal{A})$, we shall introduce the compact Sobolev embedding from
    $W_{0,\mathrm{rad}}^{1, p(\cdot)}(\mathcal{A})$ into
    $L^{q}(\mathcal{A})$ for constant $q$ satisfying $1<q<\infty$ under the hypotheses $(\mathrm{H_{p,q}})$, $(\mathrm{H^1_{p,q}})$ and $(\mathrm{H_{D}})$, which is made extensive use throughout the paper and plays a crucial role as the cornerstone of the establishment of the potential well structure of problem \eqref{1}-\eqref{3}. Before that, we recall an important result obtained by Strauss in 1977, i.e., Lemma 1 \cite{r10} shown as follows.
        \begin{lemma}[Strauss inequality \cite{r10}]\label{l20}
          Let $n \geq 2$. Every radial function $u \in H^1(\mathbb{R}^n)$ is almost everywhere equal to a function $U(x)$, continuous for $x \neq 0$, such that
            \begin{align}\label{181}
              |U(x)| \leq c|x|^{\frac{1-n}{2}}\|u\|_{H^1(\mathbb{R}^n)} \quad \text { for a.e. } x \in \mathbb{R}^n,
            \end{align}
          where the positive constant $c$ only depends on $n$.
        \end{lemma}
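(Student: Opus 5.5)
The statement is the classical Strauss inequality, and I would prove it by reducing to a one-dimensional estimate in the radial variable. First, by density of radial functions in $C_0^\infty(\mathbb{R}^n)$ within the radial subspace of $H^1(\mathbb{R}^n)$, it suffices to prove \eqref{181} for smooth compactly supported radial $u$. The density follows from mollification with a radial mollifier and a radial cutoff, which preserve radial symmetry. Once the estimate holds for such $u$ with a constant $c$ depending only on $n$, I pass to the limit. If $u_k\to u$ in $H^1$ with $u_k$ smooth and radial, then \eqref{181} applied to $u_k-u_j$ shows that $(u_k)$ is uniformly Cauchy on every set $\{|x|\ge \delta\}$. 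Hence $u_k$ converges locally uniformly on $\mathbb{R}^n\setminus\{0\}$ to a continuous function $U$, which equals $u$ a.e. because a subsequence converges a.e. The bound then passes to the limit.

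For the core estimate, I write $u(x)=v(|x|)$ with $v\in C^1([0,\infty))$ vanishing for large $s$. For $\rho>0$ I use the identity
\begin{align*}
\rho^{n-1}v(\rho)^2=-\int_\rho^\infty \frac{d}{ds}\bigl(s^{n-1}v(s)^2\bigr)\,ds
=-\int_\rho^\infty \bigl(2s^{n-1}v v' +(n-1)s^{n-2}v^2\bigr)\,ds .
\end{align*}
Since $n\ge 2$, the second term in the integrand is nonnegative and enters with a minus sign, so it can be dropped. This leaves
\begin{align*}
\rho^{n-1}v(\rho)^2\le 2\int_\rho^\infty s^{n-1}|v||v'|\,ds .
\end{align*}
Applying Cauchy--Schwarz and then Young's inequality gives
\begin{align*}
\rho^{n-1}v(\rho)^2\le 2\Bigl(\int_0^\infty s^{n-1}v^2\,ds\Bigr)^{1/2}\Bigl(\int_0^\infty s^{n-1}|v'|^2\,ds\Bigr)^{1/2}\le \int_0^\infty s^{n-1}(v^2+|v'|^2)\,ds .
\end{align*}
In polar coordinates the right-hand side equals $|S^{n-1}|^{-1}\|u\|_{H^1(\mathbb{R}^n)}^2$, using $|\nabla u|=|v'(|x|)|$. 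Taking square roots yields $|u(x)|\le c\,|x|^{(1-n)/2}\|u\|_{H^1}$ with $c=|S^{n-1}|^{-1/2}$, which depends only on $n$.

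The main point requiring care is the approximation step rather than the computation itself. One must check that radial smooth functions are dense in the radial part of $H^1$ and that the limiting function $U$ is a genuinely continuous representative away from the origin. Both follow from the uniform-on-annuli Cauchy property above, so I expect no real obstruction. Near the origin the bound degenerates, which is consistent with the statement claiming continuity only for $x\neq 0$. This degeneracy is exactly why the paper works on the annulus $\mathcal{A}$ rather than on a ball.
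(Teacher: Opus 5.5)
Your proof is correct and is the classical argument behind Strauss's radial lemma. You integrate $\frac{d}{ds}\bigl(s^{n-1}v^2\bigr)$ over $[\rho,\infty)$, discard the nonnegative term $(n-1)s^{n-2}v^2$, and apply Cauchy--Schwarz; you then pass from smooth radial functions to $H^1$ via uniform convergence on $\{|x|\ge\delta\}$, which gives the bound for every $x\neq 0$ with $c=|S^{n-1}|^{-1/2}$. The paper gives no proof of this lemma and simply quotes Strauss's Lemma~1, whose proof proceeds in essentially this way, so nothing needs to be added.
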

        \begin{remark}[On Lemma \ref{l20}]\label{rmk6}
           By checking the proof of Lemma 1 \cite{r10}, it is easy to find that for the case $n = 2$, no singularity appears at the origin, namely there holds
            \begin{align*}
              |U(x)| \leq c\|u\|_{H^1(\mathbb{R}^n)} \quad \text { for a.e. } x \in \mathbb{R}^n,\ n=2.
            \end{align*}
        \end{remark}

    It is well known that $H_{\mathbf{O}(n)}^1(\mathbb{R}^n) \hookrightarrow L^p(\mathbb{R}^n)$ compactly
    (cf. Corollary 1.26 \cite{r58}) for $2<p<2^*:= \frac{2n}{n-2}$, due to Lemma \ref{l20}, where
        \begin{align*}
          H_{\mathbf{O}(n)}^1(\mathbb{R}^n) := \{u\in H^1(\mathbb{R}^n): u(x) = u(|x|)\}.
        \end{align*}
    The recovered compactness with symmetry from Lemma \ref{l20} is usually applied to treat the compactness issue of the Sobolev embeddings on the whole space $\mathbb{R}^n$.
    In the present paper, in view of the lack of compactness caused by the multiple-regime phenomenon of the variable exponents, i.e., ($\mathrm{H_D}$) (especially the appearance of the regions $\Omega_{\rm critical}$ and $\Omega_{\rm sup}$), we shall apply Lemma \ref{l20} in another way to recover the compactness of the Sobolev embedding $W_{0,\mathrm{rad}}^{1, p(\cdot)}(\mathcal{A}) \hookrightarrow L^{q}(\mathcal{A})$ ($1<q<\infty$) on $\mathcal{A}$.

    Moreover, it is necessary to emphasize the rationality of consideration of the domain $\mathcal{A}$ in terms of compactness and symmetry, since $B_R (0)$ (a ball centered at the origin with radius $R>0$) is also a bounded radially symmetric domain. One can observe that the inequality \eqref{181} in Lemma \ref{l20} requires a ``hole'' containing the origin at the center part of the domain such that the estimate is uniform (in $x$), i.e., $|x|\ge r$ for some $r>0$. Specifically, in \eqref{181}, the exponent $\frac{1-n}{2}$ is negative for $n> 2$ and produces a singularity at the origin, where \eqref{181} is of non-sense. Therefore, an annulus centered at the origin seems to be the fitting domain to enjoy \eqref{181} rather than a ball.

    To end this part, we give the expected compact Sobolev embedding result in the following Lemma \ref{l5}.
    Although it is not difficult to prove, we could not find an exact reference for it. Therefore, we give a detailed proof of Lemma \ref{l5} for the sake of completeness.

        \begin{lemma}\label{l5}
         Let $(\mathrm{H_{p,q}})$, $(\mathrm{H^1_{p,q}})$ and $(\mathrm{H_{D}})$ be satisfied, then
         \begin{enumerate}[(i)]
         \item $W_{0,\mathrm{rad}}^{1, p(\cdot)}(\mathcal{A}) \hookrightarrow L^{\infty}(\mathcal{A})$ continuously;
         \item $W_{0,\mathrm{rad}}^{1, p(\cdot)}(\mathcal{A}) \hookrightarrow L^{q}(\mathcal{A})$ compactly for any constant $q \in(1,\infty)$.
         \end{enumerate}
        \end{lemma}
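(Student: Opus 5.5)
Since $p^-\ge 2$ by $(\mathrm{H_{p,q}})$ and $\mathcal{A}$ is bounded, the plan is to pass from the variable exponent space to a constant exponent one. Proposition \ref{p4} applied to $u$ and $\nabla u$, together with the Poincar\'e inequality (Proposition \ref{p5}), gives the continuous embedding
\begin{align*}
W_{0,\mathrm{rad}}^{1,p(\cdot)}(\mathcal{A})\hookrightarrow W_{0,\mathrm{rad}}^{1,2}(\mathcal{A})=H^1_{0,\mathrm{rad}}(\mathcal{A}).
\end{align*}
Extending $u$ by zero outside $\mathcal{A}$ yields a radial function in $H^1(\mathbb{R}^n)$ with the same norm. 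Lemma \ref{l20} then gives, for a.e. $x\in\mathcal{A}$,
\begin{align*}
|u(x)|\le c|x|^{\frac{1-n}{2}}\|u\|_{H^1}\le c\,r^{\frac{1-n}{2}}\|u\|_{H^1},
\end{align*}
and we use Remark \ref{rmk6} when $n=2$. The key point is that $|x|\ge r>0$ on the annulus, so the singular weight is bounded. Combining these estimates gives $\|u\|_{\infty,\mathcal{A}}\le C\|\nabla u\|_{p(\cdot),\mathcal{A}}$, which is part (i).

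For part (ii), I will first establish compactness into $L^2(\mathcal{A})$. The bounded inclusion $W_{0,\mathrm{rad}}^{1,p(\cdot)}\hookrightarrow H_0^1(\mathcal{A})$ composed with the Rellich--Kondrachov compact embedding $H_0^1(\mathcal{A})\hookrightarrow L^2(\mathcal{A})$ is compact. Equivalently, one can use that $W_0^{1,p^-}\hookrightarrow L^{s}$ is compact for $s<(p^-)^*$. Now take a bounded sequence $(u_k)$ in $W_{0,\mathrm{rad}}^{1,p(\cdot)}(\mathcal{A})$. Passing to a subsequence, $u_k\to u$ in $L^2(\mathcal{A})$, and by (i) there is a bound $\|u_k\|_\infty\le M$. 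Since the limit is an a.e. limit of a subsequence, $\|u\|_\infty\le M$ as well.

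For $q\le 2$, the conclusion follows from H\"older's inequality on the bounded domain. For $2<q<\infty$, I will use the interpolation
\begin{align*}
\int_{\mathcal{A}}|u_k-u|^q\,\mathrm{d}x\le (2M)^{q-2}\int_{\mathcal{A}}|u_k-u|^2\,\mathrm{d}x\to 0 .
\end{align*}
This shows the embedding is compact for every constant $q\in(1,\infty)$.

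The main obstacle is the rigorous justification of (i), namely that the Strauss pointwise bound transfers from $H^1(\mathbb{R}^n)$ to the zero extension of $u\in W^{1,p(\cdot)}_{0,\mathrm{rad}}(\mathcal{A})$. I will need to check three things: the zero extension stays in $H^1(\mathbb{R}^n)$ and stays radial, which holds since $u$ is a limit of $C_0^\infty(\mathcal{A})$ functions and radial symmetrization by averaging over $\mathbf{O}(n)$ preserves the approximation; the $H^1$ norm is controlled by $\|\nabla u\|_{p(\cdot)}$ through Proposition \ref{p4}, since $2\le p(x)$ and $|\mathcal{A}|<\infty$; and the constant is uniform thanks to $|x|\ge r$. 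Hypotheses $(\mathrm{H^1_{p,q}})$ and $(\mathrm{H_D})$ play no real role beyond ensuring the setting; in particular the argument does not depend on how the multiple-regime sets are arranged.
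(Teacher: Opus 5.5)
Your proof is correct. Part (i) follows the paper's argument: embed into $H_0^1(\mathcal{A})$ using $p^-\ge 2$ and $|\mathcal{A}|<\infty$, extend by zero, and apply the Strauss bound, which is uniform because $|x|\ge r$ on $\mathcal{A}$.

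Part (ii) takes a genuinely different and shorter route.

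\begin{itemize}
\item The paper essentially re-proves a Rellich--Kondrachov theorem by hand. It mollifies the zero extensions on an enlarged annulus $\mathcal{A}_\delta$ and shows $u_k^{\varepsilon}\to\tilde u_k$ in $L^1$ uniformly in $k$. It then upgrades this to $L^q$ by interpolating against the $L^\infty$ bound from (i), and extracts a Cauchy subsequence via Arzel\`a--Ascoli and a diagonal argument.
\item You instead quote the classical compact embedding $H_0^1(\mathcal{A})\hookrightarrow L^2(\mathcal{A})$. You then upgrade $L^2$ convergence to $L^q$ convergence with $\int_{\mathcal{A}}|u_k-u|^q\,\mathrm{d}x\le(2M)^{q-2}\int_{\mathcal{A}}|u_k-u|^2\,\mathrm{d}x$.
\end{itemize}

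Both proofs rest on the same two ingredients: ordinary compactness into a low Lebesgue space, plus the uniform $L^\infty$ bound of (i). Yours exposes this structure directly and sidesteps two imprecisions in the paper's version. First, the interpolation in \eqref{107} should read $\|f\|_q\le\|f\|_\infty^{1-1/q}\|f\|_1^{1/q}$. Second, controlling $\|u_k^{\varepsilon}-\tilde u_k\|_\infty$ through a $p(\cdot)$-gradient norm on $\mathcal{A}_\delta$ is unnecessary, since the bound $2M$ suffices. The paper's route buys only self-containedness.

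Two minor points.
\begin{itemize}
\item You do not need Remark \ref{rmk6} for $n=2$, because $|x|^{-1/2}\le r^{-1/2}$ on $\mathcal{A}$ already. That remark should not be relied on anyway: $\log\log(e/|x|)$ on the unit disc, extended by zero, is an unbounded radial element of $H^1(\mathbb{R}^2)$.
\item The $\mathbf{O}(n)$-averaging step is superfluous. The zero extension of an $H_0^1(\mathcal{A})$ function lies in $H^1(\mathbb{R}^n)$ with the same norm and inherits radial symmetry directly.
\end{itemize}
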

        \begin{proof}
          (i) Firstly, it follows from Lemma \ref{l20} that
            \begin{align}\label{8}
              \mathop{\esssup}\limits_{x \in \mathbb{R}^n\backslash B_r(0)} |u(x)|
              \le C r^{\frac{1-n}{2}} \|u\|_{H^1(\mathbb{R}^n)}
            \end{align}
           for any fixed $r>0$ and radially symmetric $u \in H^1(\mathbb{R}^n)$, i.e., $u(x)=u(|x|)$ for $x\in \mathbb{R}^n$, where $C$ is some positive constant only depends on the spatial dimension $n$.
           Then for $u \in W_{0,\mathrm{rad}}^{1, p(\cdot)}(\mathcal{A})$, we can extend it to a member of $W_{0,\mathrm{rad}}^{1, p(\cdot)}(\mathbb{R}^n)$ by defining
                \begin{align*}
                  \tilde{u}(x):=
                  \begin{cases}
                    u, & x\in \mathcal{A}, \\
                    0, & x\in \mathbb{R}^n\backslash \mathcal{A}.
                  \end{cases}
                \end{align*}
           Note that $W_{0,\mathrm{rad}}^{1, p(\cdot)}(\mathcal{A}) \hookrightarrow W_{0,\mathrm{rad}}^{1, p^-}(\mathcal{A}) \hookrightarrow H_0^1(\mathcal{A})$ for $p(x) \ge p^- >2$ (the former embedding follows from Proposition \ref{p4} in view of $p^- \le p(x)$ and $|\mathcal{A}|<\infty$, and the latter from $p^- > 2$), by substituting $\tilde{u}$ into \eqref{8}, we can obtain
                \begin{align}\label{105}
                  \|u\|_{\infty,\mathcal{A}}
                  = \mathop{\esssup}\limits_{x \in \mathcal{A}} |u(x)|
                  \le C \|\nabla u\|_{p(\cdot),\mathcal{A}},
                \end{align}
           where $C$ is some positive constant only depends on $n,p^-,p^+$ and $r,R$. Therefore, \eqref{105} indicates the continuous embedding $W_{0,\mathrm{rad}}^{1, p(\cdot)}(\mathcal{A}) \hookrightarrow L^{\infty}(\mathcal{A})$.

           (ii) Next, we prove the compact embedding $W_{0,\mathrm{rad}}^{1, p(\cdot)}(\mathcal{A}) \hookrightarrow L^{q}(\mathcal{A})$ for $1<q<\infty$. Suppose that $\{u_k\}_{k=1}^{\infty} \subset W_{0,\mathrm{rad}}^{1, p(\cdot)}(\mathcal{A})$ is a sequence of functions such that
           $\sup \limits_{k} \|\nabla u\|_{p(\cdot),\mathcal{A}} \le C$ with some constant $C>0$. Then, as a preparation of the modification, let us extend the domain of $u_k$ by defining
                \begin{align*}
                  \tilde{u}_k(x):=
                  \begin{cases}
                    u_k, & x\in \mathcal{A}, \\
                    0, & x\in \mathcal{A}_{\delta} \backslash \mathcal{A},
                  \end{cases}
                \end{align*}
           where
                $$\mathcal{A}_{\delta}:=\left\{x \in \mathbb{R}^n:
                r - \delta <|x|<R + \delta \right\}$$
           is the extended annulus with some fixed number $\delta$ satisfying $0<\delta<\frac{r}{2}$. Then, there holds
           $\|\tilde{u}_k\|_{q,\mathcal{A}_{\delta}}
           \ge \|u_k\|_{q,\mathcal{A}}$. Let $u_k^{\varepsilon}:= \eta_{\varepsilon} * \tilde{u}_k$ for $\varepsilon>0$ and $k=1,2,\cdots$, where $\eta_{\varepsilon}$ is the usual mollifier. Obviously, $\{u_k^{\varepsilon}\} \subset C_0^{\infty}(\mathcal{A}_{\delta})$ for sufficient small
           $\varepsilon <\delta$ and
               \begin{align}\label{106}
                 u_k^{\varepsilon} \rightarrow \tilde{u}_k
                 \quad\text{in } L^1(\mathcal{A}_{\delta}),
                 \quad\text{as } \varepsilon \rightarrow 0,
                 \text{ uniformly in } k,
               \end{align}
           according to the properties of the mollifier $\eta_{\varepsilon}$. Then, it follows from the $L^p$ interpolation inequality for $L^{\infty}(\mathcal{A}_{\delta}) \hookrightarrow L^{q}(\mathcal{A}_{\delta}) \hookrightarrow L^{1}(\mathcal{A}_{\delta})$ and \eqref{8} that
                \begin{align}\label{107}
                \begin{aligned}
                  \|u_k^{\varepsilon} - \tilde{u}_k\|_{q,\mathcal{A}_{\delta}}
                  \le& \|u_k^{\varepsilon} - \tilde{u}_k\|_{\infty, \mathcal{A}_{\delta}} \|u_k^{\varepsilon} - \tilde{u}_k\|_{1,\mathcal{A}_{\delta}}\\
                  \le& \|\nabla (u_k^{\varepsilon} - \tilde{u}_k)\|_{p(\cdot),\mathcal{A}_{\delta}}
                        \|u_k^{\varepsilon} - \tilde{u}_k\|_{1,\mathcal{A}_{\delta}},
                \end{aligned}
                \end{align}
           which together with \eqref{106} implies that $u_k^{\varepsilon} \rightarrow \tilde{u}_k$ in $L^q(\mathcal{A}_{\delta})$ as $\varepsilon \rightarrow 0$, uniformly in $k$. Moreover, for each fixed $\varepsilon \in (0,\delta)$, the family $\{u_k^{\varepsilon}\}_{k=1}^{\infty}$ is uniformly bounded and equicontinuous in $C(\overline{\mathcal{A}_{\delta}})$. Indeed, $\|u_k^{\varepsilon}\|_{\infty} \le \|\tilde{u}_k\|_{\infty} \le C$ uniformly in $k$ (see \eqref{8} and \eqref{105}), and the mean value theorem yields $|u_k^{\varepsilon}(x)-u_k^{\varepsilon}(y)| \le \|\nabla \eta_{\varepsilon}\|_{\infty}\|\tilde{u}_k\|_{1,\mathcal{A}_{\delta}}|x-y| \le C_{\varepsilon}|x-y|$ for all $x,y \in \overline{\mathcal{A}_{\delta}}$, because $\|\tilde{u}_k\|_{1,\mathcal{A}_{\delta}} \le |\mathcal{A}_{\delta}|\,\|\tilde{u}_k\|_{\infty} \le C$ uniformly in $k$. Hence, by the standard
           Arzela-Ascoli compactness argument and the diagonal argument (similarly as the proof of the Rellich-Kondrachov Compactness Theorem), we can obtain a Cauchy subsequence $\{u_{k_l}\}_{l=1}^{\infty} \subset \{u_k\}_{k=1}^{\infty}$ such that
                \begin{align*}
                  \limsup _{l_1, l_2 \rightarrow \infty}
                  \|u_{k_{l_1}}-u_{k_{l_2}}\|_{q,\mathcal{A}}
                  =  \limsup _{l_1, l_2 \rightarrow \infty}
                    \|\tilde{u}_{k_{l_1}}-\tilde{u}_{k_{l_2}}\|_{q,\mathcal{A}_{\delta}}=0,
                \end{align*}
           which asserts the compactness of the embedding $W_{0,\mathrm{rad}}^{1, p(\cdot)}(\mathcal{A}) \hookrightarrow L^{q}(\mathcal{A})$ for $1<q<\infty$.
        \end{proof}
        \begin{remark}[On Lemma \ref{l5}]
          We point out that, it can be further deduced from Lemma 1 \cite{r10} that $W_{0,\mathrm{rad}}^{1, p(\cdot)}(\mathcal{A}) \hookrightarrow C(\bar{\mathcal{A}})$ after eventual modification on a set of measure zero, but the assertion (i) of lemma \ref{l5} is adequate to prove assertion (ii) and other results of this paper. Besides, the constraint on $q$ of Lemma \ref{l5} (ii) (i.e., $q \in(1,\infty)$) is strict for the application of the $L^p$ interpolation inequality, cf. \eqref{107}.
        \end{remark}

        Moreover, from the embedding $L^{q^+}(\mathcal{A}) \hookrightarrow L^{q(\cdot)}(\mathcal{A})$, we have the following corollary immediately.

        \begin{corollary}\label{cor1}
        Let $(\mathrm{H_{p,q}})$, $(\mathrm{H^1_{p,q}})$ and $(\mathrm{H_{D}})$ be satisfied, then
        $W_{0,\mathrm{rad}}^{1, p(\cdot)}(\mathcal{A}) \hookrightarrow L^{q(\cdot)}(\mathcal{A})$ compactly.
        \end{corollary}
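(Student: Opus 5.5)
The plan is to factor the embedding through a constant-exponent Lebesgue space: take $q^+$ from $(\mathrm{H_{p,q}})$, which satisfies $2\le q^+<\infty$, so that Lemma \ref{l5} (ii) with the constant exponent $q^+\in(1,\infty)$ applies. Then
\begin{align*}
W_{0,\mathrm{rad}}^{1, p(\cdot)}(\mathcal{A}) \hookrightarrow L^{q^+}(\mathcal{A}) \hookrightarrow L^{q(\cdot)}(\mathcal{A}),
\end{align*}
with the first embedding compact by Lemma \ref{l5} (ii). The second embedding follows from Proposition \ref{p4}: since $q(x)\le q^+$ for a.e. $x\in\mathcal{A}$, viewing $q^+$ as a (constant) variable exponent gives $L^{q^+}(\mathcal{A})\hookrightarrow L^{q(\cdot)}(\mathcal{A})$ continuously. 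Note that $\mathcal{A}$ is bounded, which Proposition \ref{p4} implicitly requires.

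Composing a compact operator with a bounded one yields a compact operator, which concludes the argument. Concretely, take a bounded sequence $\{u_k\}$ in $W_{0,\mathrm{rad}}^{1, p(\cdot)}(\mathcal{A})$. By Lemma \ref{l5} (ii) it has a subsequence that is Cauchy in $L^{q^+}(\mathcal{A})$. Then the inequality $\|v\|_{q(\cdot),\mathcal{A}}\le C\|v\|_{q^+,\mathcal{A}}$ shows that the same subsequence is Cauchy in $L^{q(\cdot)}(\mathcal{A})$, and it converges there by completeness.

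The only step needing care is the continuity constant in $L^{q^+}\hookrightarrow L^{q(\cdot)}$. If Proposition \ref{p4} is to be justified directly, I would use the modular: for $\|v\|_{q^+}\le 1$ one has $|v|^{q(x)}\le 1+|v|^{q^+}$, so $\rho_{q(\cdot)}(v)\le |\mathcal{A}|+1$. Proposition \ref{p1} then gives a uniform bound on $\|v\|_{q(\cdot)}$, and homogeneity yields the linear estimate. No further obstacle is expected, since the multiple-regime difficulty from $(\mathrm{H_D})$ has already been absorbed in Lemma \ref{l5} via the $L^\infty$ control.
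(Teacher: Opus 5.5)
Your proposal is correct and follows the paper's route exactly: the paper deduces Corollary \ref{cor1} immediately by composing the compact embedding of Lemma \ref{l5} (ii), applied with the constant exponent $q^+$, with the continuous embedding $L^{q^+}(\mathcal{A})\hookrightarrow L^{q(\cdot)}(\mathcal{A})$. Your modular verification of that second embedding, via $|v|^{q(x)}\le 1+|v|^{q^+}$ and Proposition \ref{p1}, is a correct supplement to what the paper leaves implicit.
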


    \subsection{Subdifferential technique with $G$-symmetry}\label{sec2.4}

    Concerning symmetry, the so-called Principle of Symmetric Criticality was first proposed by R. Palais \cite{r57} in 1979, which was generalized for non-smooth functionals by Kobayashi and \^{O}tani \cite{r56}. Then, Akagi, Kobayashi and \^{O}tani combined the result of \cite{r56} and the subdifferential techniques in the reflexive Banach space setting \cite{r53} to consider the $G$-symmetric solutions of an abstract evolutionary inclusion governed by subdifferential operators, which can grasp the form of problem \eqref{1}-\eqref{3} studied in the present paper.

    As the generalization of Fr\'{e}chet derivative for non-smooth convex functionals, the theory of perturbation for subdifferential operators is often employed to verify the existence of solutions of evolutionary PDEs like \eqref{44}. For related studies about the subdifferential technique, we mention here \cite{r23} in the Hilbert space setting and \cite{r24,r53} in the reflexive Banach space setting.
    Next, we introduce the subdifferential techniques with $G$-symmetry proposed by \cite{r55} and recall some notations and definitions therein.

    $\bullet$ {\bf Group action and Banach $G$-space.}
    Let $X$ be a real Banach space and let $G$ be a group. Then, considering the action of $G$ on $X$,
    we give the definition of Banach $G$-space first.

    \begin{definition}[Banach $G$-space \cite{r55}]
      We call $X$ a Banach $G$-space if $G$ acts on $X$ as a transformation group of bounded linear operators in $X$, that is, a bounded linear operator $\pi_{X}(g)$ in $X$ is assigned to each $g \in G$ and satisfies
        $$
        \begin{aligned}
        & \pi_{X}(e) u=u, \quad\forall u \in X \\
        & \pi_{X}\left(g_{1} g_{2}\right) u=\pi_{X}\left(g_{1}\right)\left(\pi_{X}\left(g_{2}\right) u\right), \quad \forall g_{1}, g_{2} \in G, \ \forall u \in X,
        \end{aligned}
        $$
      where $e$ is the identity element of $G$, $\pi_{X}$ is called a representation of $G$ over $X$.
    \end{definition}

    Then, we give the definitions of the $G$-invariance for functionals on $X$, subsets of $X$ and linear subspaces of $G$-symmetric points of $X$, separately.
        \begin{definition}\label{d4}
        Let $X$ be a Banach $G$-space. A functional $f$ on $X$ is called $G$-invariant if
            $$
            f(g u)=f(u), \quad\forall u \in X,\ \forall g \in G
            $$
        and a subset $M$ of $X$ is called $G$-invariant if
            $$
            g M=\{g u: u \in M\} \subset M, \quad \forall g \in G.
            $$
        The linear subspaces of $G$-symmetric points of $X$ is defined as the common fixed points of $G$ :
            $$
            \Sigma_{X}:=\left\{u \in X: g u=u, \ \forall g \in G\right\}.
            $$
        \end{definition}
    It is clear that $\Sigma_{X}$ forms a closed linear subspace of $X$, and $\Sigma_{X}$ can be regarded as a Banach space with its induced topology \cite{r55}.

    $\bullet$ {\bf Subdifferential and evolutionary inclusions in reflexive Banach spaces with $G$-symmetry.}
    In this part, we concern the evolution equations governed by subdifferentials in reflexive Banach $G$-spaces. To this end, we introduce some basic notations and definitions for functionals defined on the real Banach space $X$, i.e., $\varphi: X \rightarrow (-\infty,+\infty]$ and the corresponding subdifferentials $\partial \varphi$ first.
    Let
        \begin{align*}
          \Phi(X):= \{\varphi: \varphi
          \text{ are proper lower semicontinuous convex functionals}\},
        \end{align*}
    where ``proper'' means the effective domain of $\varphi$
        $$D(\varphi):=\{u \in X : \varphi(u)<+\infty\}$$
    is not empty. For $u \in D(\varphi)$, the subdifferential $\partial_{X} \varphi(u)$ of $\varphi$ at $u$ is defined by
        \begin{align}\label{190}
         \partial_{X} \varphi(u):=\left\{v^{*} \in X^{*}: \varphi(w)-\varphi(u) \geq\left\langle v^{*}, w-u\right\rangle,\ \forall w \in X\right\},
        \end{align}
    where $X^{*}$ is the dual space of $X$.
    Then, as is well known, $\partial_{X} \varphi: X \rightarrow X^{*}$ is a maximal monotone operator. For simplicity of notation, we usually write $\partial \varphi$ instead of $\partial_{X} \varphi$. Additionally, we define
        \begin{align*}
          \Phi_{G}(X):= \{ \varphi\in \Phi(X): \varphi \ \text{is}\ G\text{-invariant in the sense of Definition \ref{d4}}\}.
        \end{align*}

    Let $V$ be a real reflexive Banach space such that $V$ and its dual $V^{*}$ are both strictly convex. We assume that there exists a real Hilbert space $H$ identified with its dual $H^{*}$ such that
        \begin{equation*}
        V \subset H \equiv H^{*} \subset V^{*},
        \end{equation*}
    where each injection is dense and continuous.
    Additionally, we denote $\|\cdot\|_V$ the norm of $V$ and $|\cdot|_H$ the norm of $H$.
    Then it holds that
        \begin{equation*}
        \langle u, v\rangle_{V}=(u, v)_{H}, \quad \forall u \in H,\ \forall v \in V,
        \end{equation*}
    where $\langle\cdot, \cdot\rangle_{V}$ denotes the duality pairing between $V$ and $V^{*}$ and
    $(\cdot, \cdot)_{H}$ denotes the inner product of $H$ respectively.

    Let $\varphi^{1}, \varphi^{2} \in \Phi_{G}(V)$ be such that $D\left(\varphi^{1}\right) \cap D\left(\varphi^{2}\right) \neq \emptyset$.
    Then, we consider the $G$-invariant strong solution of the following Cauchy problem
        \begin{equation*}
        \begin{cases}
          \frac{\mathrm{d} u}{\mathrm{d} t}(t)+\partial \varphi^{1}(u(t))-\partial \varphi^{2}(u(t)) \ni 0 \text { in } V^{*},\\
          u(0)=u_{0}.
        \end{cases} \tag{CP}
        \end{equation*}

     The strong solution of (CP) is defined as follows.

        \begin{definition}[Strong solution of (CP) \cite{r55}]
          A function $u \in C\left([0, T] ; V^{*}\right)$ is said to be a strong solution of (CP) on $[0, T]$ if the following conditions are satisfied:
            \begin{enumerate}[(i)]

            \item $u(\cdot)$ is a $V^{*}$-valued absolutely continuous function on $(0, T]$,

            \item $u(0)=u_{0}$,

            \item $u(t) \in D\left(\partial \varphi^{1}\right) \cap D\left(\partial \varphi^{2}\right)$ for a.e. $t \in(0, T)$ and there exist sections $\xi^{1}(t) \in$ $\partial \varphi^{1}(u(t))$ and $\xi^{2}(t) \in \partial \varphi^{2}(u(t))$ satisfying
                \begin{equation}\label{176}
                    \frac{\mathrm{d} u}{\mathrm{d} t}(t)+\xi^{1}(t)-\xi^{2}(t)=0 \text { in } V^{*} \text { a.e. on }(0, T).
                \end{equation}
            \end{enumerate}
        \end{definition}

        Moreover, we give the local existence of the solution of (CP) by setting $f \equiv 0$ in Theorem 2 \cite{r55}.

        \begin{lemma}[Existence of the strong solution of (CP) \cite{r55}]\label{l19}
        Assume that the following conditions are satisfied:
            \begin{enumerate}[(i)]
                \item The action of $G$ on $V$ and $H$ are both isometric and satisfy
                    \begin{equation}\label{177}
                    \pi_{H}(g) u=\pi_{V}(g) u, \quad \forall g \in G, \ \forall u \in V.
                    \end{equation}

                \item There exists some positive constant $\gamma$ such that
                    \begin{align}\label{120}
                      \|u\|_{V}^{\gamma}-C_{1}|u|_{H}^{2}-C_{2} \leq C_{3} \varphi^{1}(u), \quad \forall u \in \Sigma_{V}.
                    \end{align}

                \item $D\left(\varphi^{1}\right) \cap \Sigma_{V} \subset D\left(\partial \varphi^{2}\right)$ and the following holds:

                Let $\{u_{m}\}$ be a $G$-symmetric sequence of $L^{\infty}(0, T ; V) \cap W^{1,2}(0, T ; H)$ such that
                    \begin{align*}
                      \sup _{t \in[0, T], m \in \mathbb{N}}\left\{\varphi^{1}\left(u_{m}(t)\right)+|u_{m}(t)|_{H}\right\}<+\infty, \
                     \sup _{m \in \mathbb{N}} \int_{0}^{T}
                     |u_{mt}(\tau)|_{H}^{2} d t<+\infty
                    \end{align*}
                and let $\xi_{m}(\cdot) \in \partial \varphi^{2}\left(\left(u_{m}(\cdot)\right)\right.$. Then $\{\xi_{m}\}$ has a (strongly) convergent subsequence in $C\left([0, T] ; V^{*}\right)$.

                \item There exists $\tilde{\varphi}^{2} \in \Phi(H)$ such that $\tilde{\varphi}^{2}(u)=\varphi^{2}(u)$ for all
                    $u \in V$,
                    $$J_{\lambda}^{2}\left(D\left(\varphi^{1}\right) \cap \Sigma_{V}\right) \subset D\left(\varphi^{1}\right) \cap \Sigma_{V},$$ and
                        $$
                        \varphi^{1}\left(J_{\lambda}^{2} u\right) \leq l_{1}\left(\varphi^{1}(u)+l_{2}\left(|u|_{H}\right)\right), \quad \forall u \in D\left(\varphi^{1}\right) \cap \Sigma_{V},\ \forall \lambda>0,
                        $$
                    where $C_{1}, C_2, C_3$ are positive constants, $l_{1}, l_{2}$ are positive monotone increasing functions on $[0,+\infty), \gamma \in(1,+\infty)$, and $J_{\lambda}^{2}$ denotes the resolvent of $\partial_{H} \tilde{\varphi}^{2}$, i.e., $J_{\lambda}^{2}=\left(I+\lambda \partial_{H} \tilde{\varphi}^{2}\right)^{-1}$.
            \end{enumerate}
                Then for all $u_{0} \in D\left(\varphi^{1}\right) \cap \Sigma_{V}$, there exists a $T_0 >0$ such that $(\mathrm{CP})$ admits a local strong solution $u$ on $[0, T_0)$ satisfying
            \begin{align}\label{174}
            \begin{cases}
               u \in C_{w}([0, T_0] ; V) \cap W^{1,2}(0, T_0 ; H), \\
               u(t) \in D(\partial \varphi^{1}) \cap D(\partial \varphi^{2}) \cap \Sigma_{V}  \quad \text { for a.e. } t \in (0, T_0), \\
               \xi^{1} \in L^{2}(0, T_0 ; V^{*}), \quad \xi^{2} \in C([0, T_0] ; V^{*}), \\
               \sup \limits_{t \in [0, T_0]} \varphi^{1}(u(t))<+\infty,\quad \varphi^{2}(u(\cdot)) \in C([0, T_0]),
            \end{cases}
            \end{align}
        where $C_{w}\left([0, T_0] ; V\right)$ denotes the set of all $V$-valued weakly continuous functions on $[0, T_0]$, and $\xi^{1}$ and $\xi^{2}$ are sections of $\partial \varphi^{1}$ and $\partial \varphi^{2}$ satisfying \eqref{176} for a.e. $t \in [0, T_0]$.
        \end{lemma}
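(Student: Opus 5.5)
Lemma \ref{l19} is Theorem 2 of \cite{r55} with $f\equiv 0$. I would prove it in two layers. First, reduce the $G$-symmetric problem to an ordinary problem posed on the symmetric subspaces $\Sigma_V\subset\Sigma_H\subset\Sigma_V^*$, using the Principle of Symmetric Criticality for convex functionals \cite{r56}. Second, solve that ordinary problem with the Akagi--\^{O}tani perturbation theory for $\partial\varphi^1-\partial\varphi^2$ in reflexive Banach spaces \cite{r53}.

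\emph{Reduction to $\Sigma_V$.} Since the actions are isometric and satisfy \eqref{177}, $\Sigma_V$ is a closed subspace of $V$, contained in the closed subspace $\Sigma_H$ of $H$. I would check two things: that $\Sigma_V$ is dense in $\Sigma_H$, and that $\Sigma_V$ and $\Sigma_V^*$ inherit reflexivity and strict convexity. This gives a Gel'fand triple on the symmetric spaces. Next I restrict $\varphi^i$ to $\Sigma_V$. The key identity to establish, for $u\in\Sigma_V$, is
\begin{equation*}
\partial_{\Sigma_V}\bigl(\varphi^i|_{\Sigma_V}\bigr)(u)=\bigl\{\xi|_{\Sigma_V}:\ \xi\in\partial_V\varphi^i(u)\bigr\}.
\end{equation*}
Moreover, every element of the left-hand side should extend to a $G$-invariant element of $\partial_V\varphi^i(u)$. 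The inclusion ``$\supset$'' is trivial. For ``$\subset$'' I would follow Kobayashi--\^{O}tani. Strict convexity of $V$ and $V^*$ makes the duality map single-valued, and the isometry of the action makes it commute with $g$. With these I construct a $G$-equivariant extension and show that a subgradient on $\Sigma_V$ is the trace of an invariant subgradient on $V$. I expect this identification to be the main obstacle, because $G$ is an arbitrary group, so no Haar averaging is available. Once it holds, a solution of the reduced problem in $\Sigma_V^*$ solves (CP) in $V^*$, and the sections $\xi^1,\xi^2$ can be taken $G$-invariant.

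\emph{Existence for the reduced problem.} I would replace $\varphi^2$ by the Moreau--Yosida regularization $\tilde\varphi^2_\lambda$ of $\tilde\varphi^2$ in $H$. Its subdifferential $\partial\tilde\varphi^2_\lambda=\lambda^{-1}(I-J^2_\lambda)$ is Lipschitz on $H$. Since $\tilde\varphi^2$ is invariant and the action is isometric, $J^2_\lambda$ commutes with $g$ and therefore preserves $\Sigma_H$; hypothesis (iv) additionally gives invariance of $D(\varphi^1)\cap\Sigma_V$. The approximate problem $u_t+\partial\varphi^1(u)-\partial\tilde\varphi^2_\lambda(u)\ni0$ is a Lipschitz perturbation of a maximal monotone subdifferential, so it is globally solvable in $\Sigma_H$. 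Testing with $u_t$ gives the energy identity
\begin{equation*}
\int_0^t|u_t|_H^2\,\mathrm{d}s+\varphi^1(u(t))=\varphi^1(u_0)+\tilde\varphi^2_\lambda(u(t))-\tilde\varphi^2_\lambda(u_0).
\end{equation*}
To control the right-hand side I would use $\tilde\varphi^2_\lambda\le\tilde\varphi^2$, the bound on $\varphi^1(J^2_\lambda u)$ from (iv), and the coercivity (ii). A Gronwall/ODE comparison argument then yields, on some short interval $[0,T_0]$ with $T_0$ independent of $\lambda$, uniform bounds on $\sup_t\varphi^1(u_\lambda)$, $\sup_t|u_\lambda|_H$ and $\|u_{\lambda t}\|_{L^2(0,T_0;H)}$.

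\emph{Passage to the limit.} Hypothesis (iii), applied to the sections $\partial\tilde\varphi^2_\lambda(u_\lambda)$ (via $J^2_\lambda u_\lambda$), gives a subsequence converging strongly in $C([0,T_0];V^*)$. Aubin--Lions-type arguments give $u_\lambda\to u$ in $C([0,T_0];H)$ and weakly-$*$ in $L^\infty(0,T_0;V)$, which yields $u\in C_w([0,T_0];V)$. The demiclosedness of the maximal monotone operators $\partial\varphi^1$ and $\partial\varphi^2$ identifies the limit sections $\xi^1\in L^2(0,T_0;V^*)$ and $\xi^2\in C([0,T_0];V^*)$. A chain rule gives continuity of $\varphi^2(u(\cdot))$, which yields \eqref{174}. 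Since every approximant lies in $\Sigma_V$ and $\Sigma_V$ is weakly closed, the limit is $G$-symmetric.
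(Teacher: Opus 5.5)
The paper gives no proof of this lemma; it quotes Theorem 2 of \cite{r55} with $f\equiv 0$. Your architecture is the natural one: reduce to the triple $\Sigma_V\subset\Sigma_H\subset(\Sigma_V)^*$ by the Principle of Symmetric Criticality, then run the Akagi--\^{O}tani theory of \cite{r53}. Your symmetric-criticality layer is sound. If $\ell=F_{\Sigma_V}(w)$, then $F_V(w)$ is a $G$-invariant extension of $\ell$. A subgradient on $\Sigma_V$ lifts because the minimiser of $\varphi-\tilde\ell+\frac12\|\cdot-u\|_V^2$ over $V$ is unique (by strict convexity of $V$), hence $G$-invariant, hence equal to $u$. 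Had you simply applied the main theorem of \cite{r53} to the restricted functionals, the plan would essentially be complete.

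The genuine gap is in your re-derivation of that second layer, namely the $\lambda$-uniform estimate. With only (ii), (iv) and $\tilde\varphi^2_\lambda\le\tilde\varphi^2$, the energy identity gives $\varphi^1(u_\lambda(t))\le C+\varphi^2(u_\lambda(t))$. This says nothing, because no hypothesis limits the growth of $\varphi^2$ relative to $\varphi^1$ (in the paper's application $q^->p^+$, so $\varphi^2$ is of higher order). A Gronwall or ODE-comparison argument would need a $\lambda$-uniform bound on $\frac{d}{dt}\tilde\varphi^2_\lambda(u_\lambda)=(\partial\tilde\varphi^2_\lambda(u_\lambda),u_{\lambda t})_H$, i.e.\ an $H$-bound on $\partial\tilde\varphi^2_\lambda(u_\lambda)$. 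Nothing provides one: there is no subordination condition of the form $|\partial\varphi^2(u)|_H\le k|\partial\varphi^1(u)|_H+\ell(\varphi^1(u)+|u|_H)$ among the hypotheses.

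The missing idea is that hypothesis (iii) must already enter the estimate, not only the limit passage.
\begin{itemize}
\item Applied to constant-in-time sequences, (iii) shows that $K_L:=\bigcup\{\partial\varphi^2(w):w\in\Sigma_V,\ \varphi^1(w)+|w|_H\le L\}$ is relatively compact in $V^*$.
\item Covering $K_L$ by finitely many $\varepsilon$-balls centred in $H$ (using density of $H$ in $V^*$) gives $\langle\xi,v\rangle\le\varepsilon\|v\|_V+C_\varepsilon|v|_H$ for all $\xi\in K_L$, $v\in V$.
\item By (iv), $\partial\tilde\varphi^2_\lambda(u_\lambda(t))=\lambda^{-1}(u_\lambda(t)-J^2_\lambda u_\lambda(t))\in\partial\varphi^2(J^2_\lambda u_\lambda(t))\subset K_{L'}$ as long as $\varphi^1(u_\lambda)+|u_\lambda|_H\le L$, with $L'$ depending on $L,l_1,l_2$.
\item Convexity then yields
\[
\tilde\varphi^2_\lambda(u_\lambda(t))-\tilde\varphi^2_\lambda(u_0)\le\bigl\langle\partial\tilde\varphi^2_\lambda(u_\lambda(t)),u_\lambda(t)-u_0\bigr\rangle\le\varepsilon\|u_\lambda(t)-u_0\|_V+C_\varepsilon t^{1/2}\|u_{\lambda t}\|_{L^2(0,t;H)}.
\]
\item Combined with (ii) and a first-exit-time argument, this gives $T_0$ independent of $\lambda$. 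It is a compactness-driven continuity argument, not Gronwall.
\end{itemize}

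Your limit passage needs similar repairs.
\begin{itemize}
\item Aubin--Lions requires a compact embedding $\Sigma_V\hookrightarrow\Sigma_H$, which is not assumed; $\varphi^2\equiv0$ satisfies every hypothesis with a non-compact embedding. Instead, monotonicity of $\partial\varphi^1$ gives $\frac12\frac{d}{dt}|u_\lambda-u_\mu|_H^2\le\langle\eta_\lambda-\eta_\mu,u_\lambda-u_\mu\rangle$, with $\eta_\lambda:=\partial\tilde\varphi^2_\lambda(u_\lambda)$ Cauchy in $C([0,T_0];V^*)$ along the subsequence from (iii). This yields convergence in $C([0,T_0];H)$.
\item $\xi^1$ must be identified by a $\limsup$ (Minty) argument using the equation, not by demiclosedness alone, since $\partial\varphi^1(u_\lambda)$ and $u_\lambda$ converge only weakly.
\item $G$-invariance of $\tilde\varphi^2$ on $H$ is not assumed, so the symmetry of $J^2_\lambda$ has to be taken from (iv).
\end{itemize}
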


        $\bullet$ {\bf $G$-symmetry induced by the geometry of the domain.}
        To apply the abstract result given in Lemma \ref{l19} in the functional setting towards evolutionary PDEs on a domain $\Omega$ possessing some symmetry characterized by the invariance under the orthogonal transformations, we consider the $G$-symmetry induced by the geometry of the domain and introduce some related notations and definitions.

        Let $G$ be a subgroup of $\mathbf{O}(n)$ (the group of orthogonal transformations of dimension $n$)
        and $\Omega \subset \mathbb{R}^{n}$ with smooth boundary $\partial \Omega$ be invariant under the action of $G$, i.e., $g(\Omega)=\Omega$ for all $g \in G$.
        Suppose that $\Omega$ is compatible with $G$ (see \cite{r58}), that is, for some constant $\rho>0$
            \begin{equation}\label{108}
            m(y, \rho, G) \rightarrow \infty \quad \text { as }|y| \rightarrow \infty \text { with } \operatorname{dist}(y, \Omega) \leq \rho,
            \end{equation}
        where
            $$
            m(y, \rho, G)=\sup \begin{cases}\left.n \in \mathbb{N}: \begin{array}{l}
            \exists g_{1}, g_{2}, \ldots, g_{n} \in G \text { s.t. } \\
            B_{\rho}\left(g_{j} y\right) \cap B_\rho\left(g_{k}y\right)=\emptyset \text { if } j \neq k
            \end{array}\right\}\end{cases}
            $$
        and $B_{\rho}\left(g_{j} y\right)$ denotes the ball centered at $g_{j} y$ with the radius $\rho$.

        Then, we introduce the action of $G$ on functions and the definition of $G$-invariant functions.

            \begin{definition}[Group action on functions and $G$-invariant functions \cite{r55}]\label{d3}
            For $u \in \mathcal{D}=C_{0}^{\infty}(\Omega)$ and $g \in G$, we define $g u$ by
                \begin{equation}\label{175}
                (g u)(x)=u\left(g^{-1} x\right), \quad \forall g \in G, \ x \in \Omega.
                \end{equation}
            For a distribution $U\left(U \in \mathcal{D}^{\prime}, \text{where}\ \mathcal{D}^{\prime} \text{ is the dual space of } \mathcal{D} \right)$ and $g \in G$, we also define $g U$ by
                $$
                \langle g U, \eta\rangle=\left\langle U, g^{-1} \eta\right\rangle, \quad\forall \eta \in C_{0}^{\infty}(\Omega).
                $$
            Especially, for $u \in L_{\text {loc }}^{1}(\Omega)$, then $g u$ is also defined by \eqref{175}.
            Additionally, we define the $G$-invariant function as follows:
                \begin{enumerate}[(i)]
                  \item We say $U \in \mathcal{D}^{\prime}$ is $G$-invariant if $g U=U$ for all $g \in G$.
                        Especially $u \in L_{\text {loc }}^{1}(\Omega)$ is $G$-invariant if and only if $u(g x)=u(x)$ for all $g \in G$ and a.e. $x \in \Omega$.
                  \item A Banach space valued function $f: (0, T) \rightarrow \mathcal{D}^{\prime}$ is said to be
                        $G$-invariant if $f(t)$ is $G$-invariant a.e. $t \in(0, T)$.
                \end{enumerate}
            \end{definition}

         From Definition \ref{d3}, one can deduce that the relation \eqref{175} defines the group
         action on a Banach space $X$ as a transformation group of bounded linear operators in $X$, that is
            $$
              \pi_{X}(g) u = gu, \quad \forall u \in X.
            $$

         Moreover, in this paper, we shall consider the case that $\Omega = \mathcal{A}$ and $G = \mathbf{O}(n)$.
         Obviously, the annulus $\mathcal{A}$ is invariant under the action of $\mathbf{O}(n)$ and are compatible with $\mathbf{O}(n)$ in the sense of \eqref{108}.
         Additionally, if we take $V = W_{0}^{1, p(\cdot)}(\mathcal{A})$ and
         $H = L^2(\mathcal{A})$, then \eqref{177} is satisfied and $\Sigma_{V} = W_{0,\mathrm{rad}}^{1, p(\cdot)}(\mathcal{A})$ (see \eqref{9} for the definition of the functional space $W_{0,\mathrm{rad}}^{1, p(\cdot)}(\mathcal{A})$).

    \subsection{Potential well and stationary problem}\label{sec2.3}
    In this subsection, we introduce a series of structures of the potential well theory utilized to deal with the global well-posedness of the solution of \eqref{1}-\eqref{3}. Firstly, we define the energy functional $J(u)$ and the Nehari functional $I(u)$ as follows, i.e,
        \begin{align}\label{45}
        J(u):=\int_{\mathcal{A}} \frac{|\nabla u(x)|^{p(x)}}{p(x)} \mathrm{d} x-\int_{\mathcal{A}} \frac{|u(x)|^{q(x)}}{q(x)} \mathrm{d} x
        \end{align}
    and
        \begin{align}\label{46}
          I(u):=\int_{\mathcal{A}}|\nabla u(x)|^{p(x)} \mathrm{d} x-\int_{\mathcal{A}}|u(x)|^{q(x)} \mathrm{d} x.
        \end{align}
    We need to point out that, $J(u)$ and $I(u)$ are not well defined on the whole space $W_{0}^{1, p(\cdot)}(\mathcal{A})$ because of lack of the integrability of the last term of $J(u)$ (resp. $I(u)$), i.e., $\int_{\mathcal{A}} \frac{|u(x)|^{q(x)}}{q(x)} \mathrm{d} x$ (resp. $\int_{\mathcal{A}}|u(x)|^{q(x)} \mathrm{d} x$) with given $u \in W_{0}^{1, p(\cdot)}(\mathcal{A})$. Specifically, the multiple-regime phenomenon within $\mathcal{A}$, i.e., the hypothesis $(\mathrm{H_{D}})$ causes the failure of the embedding $W_{0}^{1, p(\cdot)}(\mathcal{A}) \hookrightarrow L^{q(\cdot)}(\mathcal{A})$ for the subcritical regime case and the phase space $W_{0}^{1, p(\cdot)}(\mathcal{A})$ can not provide the $L^{q(\cdot)}$-integrability of $u$ in such situation.

    Fortunately, with the embedding given by Lemma \ref{l5} and the radial symmetry of the annulus $\mathcal{A}$, $J(u)$ and $I(u)$ are well-defined on $W_{0,\mathrm{rad}}^{1, p(\cdot)}(\mathcal{A})$ and of class $C^{1}(W_{0,\mathrm{rad}}^{1, p(\cdot)}(\mathcal{A}),\mathbb{R})$.
    Then the corresponding Nehari manifold for $I(u)$ is given by
        \begin{align}\label{18}
          \mathcal{N}:=\left\{u \in W_{0,\mathrm{rad}}^{1, p(\cdot)}\left(\mathcal{A}\right)\backslash \{0\}: I(u)=0\right\}.
        \end{align}
     Then the space $W_{\mathrm{rad}}^{1, p(\cdot)}(\mathcal{A})$ is split into the following two disjoint sets by $\mathcal{N}$, i.e.,
        \begin{align}\label{21}
          \mathcal{N}^{-}:=\left\{u \in W_{0,\mathrm{rad}}^{1, p(\cdot)}\left(\mathcal{A}\right): I(u)<0\right\}
        \end{align}
     and
        \begin{align}\label{22}
          \mathcal{N}^{+}:=\left\{u \in W_{0,\mathrm{rad}}^{1, p(\cdot)}\left(\mathcal{A}\right): I(u)>0\right\}
          \cup\{0\}.
        \end{align}
    Moreover, the depth of the potential well is defined as
        \begin{align}\label{53}
          d := \inf _{u \in W_{0,\mathrm{rad}}^{1, p(\cdot)}\left(\mathcal{A}\right) \backslash\{0\}} \sup _{\lambda \geq 0} J(\lambda u).
        \end{align}

    Next, we give several relevant properties of the structures of the potential well theory introduced above with detailed discussion, including:
    \begin{enumerate}[(i)]
    \item
    scaling properties of the mapping $j(\lambda):= J(\lambda u)$;

    \item
    properties about the potential well depth and the Nehari functional $I(u)$;

    \item
    variational structures to characterize the supercritical energy level;

    \item
    the stationary problem related to the evolution problem \eqref{1}-\eqref{3}.
  \end{enumerate}

  It's worth mentioning that although constant version of these properties are well established in constant exponent case, to extend them into the multiple-regime case of the variable exponents, it is necessary to check carefully the effects of the variable exponents $p(x),q(x)$ upon these properties.

  Besides, there seem to be few results discussing the related properties of the mapping $j(\lambda):= J(\lambda u)$ as well as the equivalent definition of the potential well depth in the variable exponent setting, which are fundamental and vital for the application of the potential well theory towards the evolutionary PDEs with variable exponents, even for the subcritical regime case.\\

    $\bullet$ {\bf Properties of the mapping $j(\lambda)$.}
    We introduce the mapping $j(\lambda):= J(\lambda u)$ for $\lambda \ge 0$, i.e.,
        \begin{align}\label{12}
          j(\lambda) =
          \int_{\mathcal{A}} \frac{\lambda^{p(x)}}{p(x)}|\nabla u(x)|^{p(x)} \mathrm{d} x
          - \int_{\mathcal{A}} \frac{\lambda^{q(x)}}{q(x)} |u(x)|^{q(x)} \mathrm{d} x,
        \end{align}
    whose properties are used to prove the asymptotic behavior and the finite-time blow-up of the solution with the subcritical initial energy, see the proofs of Theorem \ref{t3} and Theorem \ref{t4}.

    We have to emphasize that, due to the lack of homogeneity of the modular $\rho(u)$ caused by anisotropy of the variable exponent Lebesgue spaces, i.e., $\int_{\mathcal{A}}|\lambda u|^{p(x)} \mathrm{d} x \not\equiv \lambda^{p(x)} \int_{\mathcal{A}}|u|^{p(x)} \mathrm{d} x$ with a constant $\lambda >0$, different from the constant exponent case (i.e., $p(x)\equiv p, q(x)\equiv q$), the continuity and the differentiability of the mapping $j(\lambda)$ are not immediate.
    Specifically, if we ``extract'' the scalar $\lambda$ from $j(\lambda)$, i.e.,
        \begin{align*}
          \check{j}(\lambda) \le
            j(\lambda)
            \le \hat{j}(\lambda)
        \end{align*}
    with $\check{j}(\lambda)$ and $\hat{j}(\lambda)$ separately given by
        \begin{align*}
           \hat{j}(\lambda)
           :=& \max \left\{\lambda^{p^{+}}, \lambda^{p^{-}}\right\} \int_{\mathcal{A}} \frac{|\nabla u(x)|^{p(x)}}{p(x)} \mathrm{d} x
          -\min \left\{\lambda^{q^{+}}, \lambda^{q^{-}}\right\} \int_{\mathcal{A}} \frac{|u(x)|^{q(x)}}{q(x)} \mathrm{d} x
        \end{align*}
    and
        \begin{align*}
              \check{j}(\lambda)
          :=& \min \left\{\lambda^{p^{+}}, \lambda^{p^{-}}\right\} \int_{\mathcal{A}} \frac{|\nabla u(x)|^{p(x)}}{p(x)} \mathrm{d} x
          - \max \left\{\lambda^{q^{+}}, \lambda^{q^{-}}\right\} \int_{\mathcal{A}} \frac{|u(x)|^{q(x)}}{q(x)} \mathrm{d} x,
        \end{align*}
    it is easy to check that although $\check{j}(\lambda)$ and $ \hat{j}(\lambda)$ are continuous for $\lambda >0$,
    but no longer differentiable (specifically at the point $\lambda = 1$).
    Therefore, we give the following lemma to discuss these two properties of $j(\lambda)$.

        \begin{lemma}[Continuity and differentiability of $j(\lambda)$]\label{l3}
        Assume that $(\mathrm{H_{p,q}})$, $(\mathrm{H^1_{p,q}})$ and $(\mathrm{H_{D}})$ hold, then for any fixed $u\in  W_{0,\mathrm{rad}}^{1, p(\cdot)}\left(\mathcal{A}\right) \backslash \{0\}$, the mapping $j(\lambda)$ is continuous for $\lambda \in [0,\infty)$ and differentiable for $\lambda \in (0,\infty)$. Additionally,
                  \begin{align}\label{14}
                  \frac{\rm d}{{\rm d} \lambda} j(\lambda)
                  = \frac{1}{\lambda} I(\lambda u),
                  \quad \lambda \in (0,\infty).
                \end{align}
        \end{lemma}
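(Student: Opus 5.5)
Write $j(\lambda)=\int_{\mathcal{A}} F(x,\lambda)\,\mathrm{d}x$ with
\begin{align*}
F(x,\lambda):=\frac{\lambda^{p(x)}}{p(x)}|\nabla u(x)|^{p(x)}-\frac{\lambda^{q(x)}}{q(x)}|u(x)|^{q(x)},
\end{align*}
and prove continuity and differentiability by dominated convergence applied directly to this integral. The first step is to check that both integrands are finite for a fixed $u\in W_{0,\mathrm{rad}}^{1,p(\cdot)}(\mathcal{A})\setminus\{0\}$. The gradient term is integrable because $|\nabla u|\in L^{p(\cdot)}(\mathcal{A})$, so $\rho(\nabla u)<\infty$ by Proposition \ref{p1}. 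For the source term, Lemma \ref{l5}(i) gives $u\in L^\infty(\mathcal{A})$, hence $|u|^{q(x)}\le \max\{\|u\|_\infty^{q^+},\|u\|_\infty^{q^-}\}$ is bounded on the bounded set $\mathcal{A}$. This is the one place where $(\mathrm{H_D})$ matters: without the radial embedding, the source term would not be integrable on $\Omega_{\rm sup}$.

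\textbf{Continuity on $[0,\infty)$.} For each fixed $x$, the map $\lambda\mapsto F(x,\lambda)$ is continuous on $[0,\infty)$, since $p(x),q(x)\ge 2>0$ and so $\lambda^{p(x)}\to 0$ as $\lambda\to 0^+$. Fix $\Lambda>0$. For all $\lambda\in[0,\Lambda]$,
\begin{align*}
|F(x,\lambda)|\le \frac{\max\{1,\Lambda^{p^+}\}}{p^-}|\nabla u|^{p(x)}+\frac{\max\{1,\Lambda^{q^+}\}}{q^-}|u|^{q(x)},
\end{align*}
and the right-hand side belongs to $L^1(\mathcal{A})$ by the first step. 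Dominated convergence then gives continuity of $j$ on $[0,\Lambda]$, and since $\Lambda$ is arbitrary, on $[0,\infty)$.

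\textbf{Differentiability on $(0,\infty)$.} Fix $\lambda_0>0$ and work on the interval $[\lambda_0/2,2\lambda_0]$. Pointwise,
\begin{align*}
\partial_\lambda F(x,\lambda)=\lambda^{p(x)-1}|\nabla u|^{p(x)}-\lambda^{q(x)-1}|u|^{q(x)}.
\end{align*}
On this interval the factors $\lambda^{p(x)-1}$ and $\lambda^{q(x)-1}$ are bounded by constants depending only on $\lambda_0,p^\pm,q^\pm$. Hence $|\partial_\lambda F(x,\lambda)|\le C(|\nabla u|^{p(x)}+|u|^{q(x)})\in L^1(\mathcal{A})$, uniformly in $\lambda$. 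By the mean value theorem, the difference quotients $\frac{F(x,\lambda_0+h)-F(x,\lambda_0)}{h}$ are dominated by the same function, so dominated convergence allows differentiation under the integral sign:
\begin{align*}
j'(\lambda_0)=\int_{\mathcal{A}}\left(\lambda_0^{p(x)-1}|\nabla u|^{p(x)}-\lambda_0^{q(x)-1}|u|^{q(x)}\right)\mathrm{d}x.
\end{align*}

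\textbf{Identification with $I$.} Since $\lambda_0^{p(x)-1}|\nabla u|^{p(x)}=\frac{1}{\lambda_0}|\nabla(\lambda_0 u)|^{p(x)}$, and likewise for the $q$-term, the formula above equals $\frac{1}{\lambda_0}I(\lambda_0u)$. This is \eqref{14}.

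\textbf{Main obstacle.} Because the modular is not homogeneous, the scalar $\lambda$ cannot be pulled out of the integral. The difficulty is therefore in finding a single $L^1$ majorant that is uniform in $\lambda$ near $\lambda_0$. Using $p^\pm,q^\pm$ to bound the powers of $\lambda$ pointwise, combined with the $L^\infty$ bound from Lemma \ref{l5}, supplies this majorant.
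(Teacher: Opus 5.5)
Your proof is correct and takes essentially the same route as the paper: pointwise differentiation in $\lambda$ plus dominated convergence, with the integrability of $|u|^{q(x)}$ supplied by the radial embedding of Lemma \ref{l5}. Your version is slightly more careful in two places. You write down the explicit $\lambda$-uniform majorant (via the mean value theorem on $[\lambda_0/2,2\lambda_0]$), which the paper only asserts. You also prove continuity by dominated convergence instead of the paper's $\varepsilon$--$\delta$ estimate, whose $\delta$ has to be chosen uniformly in $x$ without this being said.
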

        \begin{proof}
          We first claim that $j(\lambda)$ is continuous for $\lambda \in [0,\infty)$. Note that the mapping $\lambda \rightarrow \lambda^{p(x)}$ is continuous in $\lambda$ for every fixed $x\in\mathcal{A}$, which means for any fixed $x$ and sufficiently small $\varepsilon >0$, there exists a $\delta(\varepsilon)>0$ such that
            \begin{align*}
              \left|\lambda^{p(x)} - \lambda_0^{p(x)}\right| < \varepsilon
            \end{align*}
          for $|\lambda - \lambda_0| < \delta(\varepsilon)$, and the same fact also happens for the mapping $\lambda \rightarrow \lambda^{q(x)}$.
          Therefore, from the definition of $J(u)$, we get
            \begin{align*}
            \begin{aligned}
                  \left|j(\lambda)-j\left(\lambda_0\right)\right|
                  \le &
                  \int_{\mathcal{A}} \left(
                    \left|\lambda^{p(x)}
                  - \lambda_0^{p(x)}\right|
                    \frac{|\nabla u|^{p(x)}}{p(x)}
                  +  \left|\lambda^
                    {q(x)} - \lambda_0^{q(x)}\right| \frac{|u|^{q(x)}}{q(x)} \right)
                    {\rm d} x\\
                  \leq &
                  \varepsilon \int_{\mathcal{A}}
                  \left(
                    \frac{|\nabla u|^{p(x)}}{p(x)}
                    + \frac{|u|^{q(x)}}{q(x)}
                  \right)
                    {\rm d} x \le C\varepsilon
            \end{aligned}
            \end{align*}
          for $|\lambda - \lambda_0| < \delta(\varepsilon)$ and given $u\in  W_{0,\mathrm{rad}}^{1, p(\cdot)}\left(\mathcal{A}\right) \backslash \{0\}$, which claims the continuity of $j(\lambda)$ with respect to $\lambda$.

          Then it remains to check the differentiability of $j(\lambda)$ with respect to $\lambda$ over $(0,\infty)$. Let $\lambda_0 \in (0,\infty)$ be fixed, and $\{\lambda_i\}$ is an arbitrary sequence satisfying $\lambda_i \rightarrow \lambda_0$, then
          we define
                \begin{align*}
                  f_i(u):=\frac{\left(\lambda_i^{p(x)}
                    -\lambda_0^{p(x)}\right)|\nabla u|^{p(x)}}{\left(\lambda_i-\lambda_0\right) p(x)},
                  \quad
                  g_i(u):=\frac{\left(\lambda_i^{q(x)}
                  -\lambda_0^{q(x)}\right)|u|^{q(x)}}{\left(\lambda_i-\lambda_0\right) q(x)},
                \end{align*}
          and give the following difference quotient
                \begin{align*}
                  \frac{j\left(\lambda_i\right)
                    - j\left(\lambda_0\right)}{\lambda_i-\lambda_0}
                  = \int_{\mathcal{A}} \left(f_i(u) - g_i(u) \right) {\rm{d}} x.
                \end{align*}
          It is obvious that for a.e. $x\in \mathcal{A}$,
              $$f_i(u) \rightarrow \lambda_0^{p(x)-1} |\nabla u|^{p(x)} \quad \text{and } \quad
              g_i(u) \rightarrow \lambda_0^{q(x)-1} |u|^{q(x)}$$
          as $i \rightarrow \infty$ and the corresponding limits are both integrable. According to the boundedness of $p(x),q(x)$ on $\mathcal{A}$, and the embedding
          $W_{\mathrm{rad}}^{1, p(\cdot)}\left(\mathcal{A}\right) \hookrightarrow L^{q(\cdot)}\left(\mathcal{A}\right)$ given by Lemma \ref{l5}, the Lebesgue dominated convergence theorem claims that
                \begin{align*}
                  \lim \limits_{i \rightarrow \infty} \frac{j\left(\lambda_i\right)
                        - j\left(\lambda_0\right)}{\lambda_i-\lambda_0}
                    =
                        \int_{\mathcal{A}} \lambda_0^{p(x)-1} |\nabla u|^{p(x)} {\rm d}x
                        - \int_{\mathcal{A}} \lambda_0^{q(x)-1} |u|^{q(x)} {\rm d}x ,
                \end{align*}
          then \eqref{14} follows from the arbitrariness of the sequence $\{\lambda_i\}$, i.e.,
                \begin{align*}
                  \frac{\rm d}{{\rm d} \lambda} j(\lambda)
                  = \int_{\mathcal{A}} \lambda^{p(x)-1} |\nabla u|^{p(x)} {\rm d}x
                        - \int_{\mathcal{A}} \lambda^{q(x)-1} |u|^{q(x)} {\rm d}x = \frac{1}{\lambda} I(\lambda u).
                \end{align*}
        \end{proof}

        \begin{remark}[On Lemma \ref{l3}]
According to the proof of Lemma \ref{l3}, the continuity and differentiability of $j(\lambda)$ only rely on the boundedness of the variable exponents $p(x)$ and $q(x)$, rather than their smoothness (in other words, the logarithmic continuity \eqref{4} is relatively strong),
          for which the assertion of Lemma \ref{l3} are valid for both the subcritical regime $\inf \limits_{x \in \mathcal{A}}\left(p^{*}(x)-q(x)\right)>0$ and the multiple-regime case studied in this paper.
          Additionally, one can easily check that for $p(x), q(x) \in (1,2)$ with any $x \in \mathcal{A}$, the conclusion of Lemma \ref{l3} still holds true.
        \end{remark}

        \begin{lemma}[Qualitative properties of $j(\lambda)$]\label{l8}
            Assume that $(\mathrm{H_{p,q}})$, $(\mathrm{H^1_{p,q}})$-$(\mathrm{H^2_{p,q}})$ and $(\mathrm{H_{D}})$ hold, then for every $u \in W_{0,\mathrm{rad}}^{1, p(\cdot)}\left(\mathcal{A}\right) \backslash \{0\}$
            \begin{align*}
              \lim \limits_{\lambda \rightarrow 0^{+}}
                            j(\lambda)=0
              \quad\text{and}\quad \lim \limits_{\lambda \rightarrow \infty} j(\lambda) = -\infty.
            \end{align*}
        \end{lemma}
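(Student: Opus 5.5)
The plan is to bound $j(\lambda)$ above and below using the elementary sandwich $\check{j}(\lambda)\le j(\lambda)\le \hat{j}(\lambda)$ from the discussion preceding Lemma \ref{l3}, and to treat the two limits separately. Fix $u\in W_{0,\mathrm{rad}}^{1,p(\cdot)}(\mathcal{A})\backslash\{0\}$ and set
\begin{align*}
a:=\int_{\mathcal{A}}\frac{|\nabla u|^{p(x)}}{p(x)}\,\mathrm{d}x,\qquad b:=\int_{\mathcal{A}}\frac{|u|^{q(x)}}{q(x)}\,\mathrm{d}x .
\end{align*}
Both are finite: $a$ because $u\in W_0^{1,p(\cdot)}(\mathcal{A})$ (Proposition \ref{p1}), and $b$ by Corollary \ref{cor1}, or directly by Lemma \ref{l5}(i) since $u\in L^\infty(\mathcal{A})$ and $|\mathcal{A}|<\infty$. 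Moreover $b>0$, because $u\neq 0$ on a set of positive measure.

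For $\lambda\to 0^+$, I will restrict to $0<\lambda<1$. There $\lambda^{p(x)}\le \lambda^{p^-}$ and $\lambda^{q(x)}\le\lambda^{q^-}$ pointwise. This gives
\begin{align*}
|j(\lambda)|\le \lambda^{p^-}a+\lambda^{q^-}b\longrightarrow 0 ,
\end{align*}
since $p^-,q^-\ge 2>0$. Alternatively, the first limit follows at once from the continuity of $j$ at $0$ proved in Lemma \ref{l3}, together with $j(0)=0$.

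For $\lambda\to\infty$, I will restrict to $\lambda>1$. There $\lambda^{p(x)}\le\lambda^{p^+}$ and $\lambda^{q(x)}\ge\lambda^{q^-}$, so that
\begin{align*}
j(\lambda)\le \lambda^{p^+}a-\lambda^{q^-}b=\lambda^{p^+}\bigl(a-\lambda^{q^--p^+}b\bigr).
\end{align*}
By $(\mathrm{H^2_{p,q}})$, and as noted in Remark \ref{rmk3}, we have $q^->p^+$. Hence $\lambda^{q^--p^+}b\to\infty$ because $b>0$, the bracket tends to $-\infty$, and $\lambda^{p^+}\to\infty$, so $j(\lambda)\to-\infty$.

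The only point needing care is the strict inequality $q^->p^+$. The hypothesis is stated pointwise as $p^+<q(x)$, and taking the infimum only gives $q^-\ge p^+$. However, $q$ is continuous on the compact set $\bar{\mathcal{A}}$, so the infimum is attained and strictness is preserved. This is what Remark \ref{rmk3} asserts. Apart from this, and the positivity of $b$, the argument is routine.
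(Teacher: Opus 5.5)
Your argument is correct and essentially matches the paper's proof. The first limit comes from the continuity of $j$ (Lemma \ref{l3}); your direct bound $|j(\lambda)|\le \lambda^{p^-}a+\lambda^{q^-}b$ works equally well. The second comes from $j(\lambda)\le \lambda^{p^{+}}a-\lambda^{q^-}b$ for $\lambda>1$ together with $q^->p^{+}$.

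One caveat concerns your side justification of $q^->p^{+}$. Continuity of $q$ on $\bar{\mathcal{A}}$ does not upgrade the pointwise inequality $q(x)>p^{+}$ on the open set $\mathcal{A}$ to $q^->p^{+}$, since the minimum may be attained on $\partial\mathcal{A}$ with value $p^{+}$. Instead, rely on Remark \ref{rmk3}, which reads $(\mathrm{H^2_{p,q}})$ as $q^->p^{+}$. Alternatively, with only the pointwise form, choose $\varepsilon>0$ such that $c:=\int_{\{q\ge p^{+}+\varepsilon\}}|u|^{q(x)}/q(x)\,\mathrm{d}x>0$; this is possible by monotone convergence since $b>0$. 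Then use $j(\lambda)\le\lambda^{p^{+}}a-c\,\lambda^{p^{+}+\varepsilon}$.
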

        \begin{proof}
            Firstly, $\lim \limits_{\lambda \rightarrow 0^{+}} j(\lambda)=0$ follows from the continuity of $j(\lambda)$ with respect to $\lambda$ given by Lemma \ref{l3}, then it remains to check the latter assertion. From ($\mathrm{H^{2}_{p,q}}$), we have
                \begin{align}\label{59}
                \begin{aligned}
                  j(\lambda)
                  \leq& \int_{\mathcal{A}} \frac{\lambda^{p(x)}}{p(x)}|\nabla u(x)|^{p(x)} \mathrm{d} x
                    - \int_{\mathcal{A}} \frac{\lambda^{q(x)}}{q(x)} |u(x)|^{q(x)} \mathrm{d} x\\
                  \leq& \max \left\{\lambda^{p^{+}}, \lambda^{p^{-}}\right\} \int_{\mathcal{A}} \frac{|\nabla u(x)|^{p(x)}}{p(x)} \mathrm{d} x
                    -\min \left\{\lambda^{ q^+}, \lambda^{ q^-}\right\} \int_{\mathcal{A}} \frac{|u(x)|^{q(x)}}{q(x)} \mathrm{d} x,
                \end{aligned}
                \end{align}
            then ($\mathrm{H^{2}_{p,q}}$) implies that for sufficiently large $\lambda$,
                $$\max \left\{\lambda^{p^{+}}, \lambda^{p^{-}}\right\}
                = \lambda^{p^{+}} < \lambda^{ q^-} = \min \left\{\lambda^{ q^+}, \lambda^{ q^-}\right\}.$$
            Therefore,
            $j(\lambda) \rightarrow -\infty$ as $\lambda \rightarrow \infty$.
        \end{proof}

        \begin{remark}[On Lemma \ref{l8}]
        We see that Lemma \ref{l8} is also
        valid for both the subcritical case $\inf \limits_{x \in \mathcal{A}}\left(p^{*}(x)-q(x)\right)>0$ (see Lemma 4.1 in \cite{r2})
        and the multiple-regime case, and the condition $(\mathrm{H^2_{p,q}})$ matters in terms of the qualitative shape of $j(\lambda)$, rather than the upper bound of $q(x)$.
        \end{remark}

        \begin{lemma}[Scaling properties of $j(\lambda)$]\label{l12}
           Assume that $(\mathrm{H_{p,q}})$, $(\mathrm{H^1_{p,q}})$-$(\mathrm{H^2_{p,q}})$ and $(\mathrm{H_{D}})$ hold and $u \in W_{0,\mathrm{rad}}^{1, p(\cdot)}\left(\mathcal{A}\right) \backslash \{0\}$, then we possess
                \begin{enumerate}[(i)]	
	                \item   for every
                            $u \in W_{0,\mathrm{rad}}^{1, p(\cdot)}\left(\mathcal{A}\right) \backslash \{0\}$, there exists a unique $\lambda^{*} :=
                            \lambda^{*}(u) > 0$
                            such that $\frac{\rm d}{{\rm d} \lambda} \left. j(\lambda) \right|_{\lambda = \lambda^{*} }=0$ and
                            $$\max \limits_{\lambda\ge0}j(\lambda) = j(\lambda^{*}),$$
                        with
                            \begin{align*}
                                \min \left\{ \Theta^{\frac{1}{q^+ - p^-}}(u),
                                            \Theta^{\frac{1}{q^- - p^+} }(u) \right\}
                                \leq \lambda^*(u)
                                \leq \max \left\{ \Theta^{\frac{1}{q^+ - p^-}}(u),
                                            \Theta^{\frac{1}{q^- - p^+} }(u) \right\},
                            \end{align*}
                        where
                            \begin{align}\label{15}
                                \Theta(u) :=  \frac{\int_{\mathcal{A}} |\nabla u(x)|^{p(x)} \mathrm{d} x}{\int_{\mathcal{A}} |u(x)|^{q(x)} \mathrm{d} x};
                            \end{align}
                    \item in addition, we have that $0<\lambda^{*}<1, \lambda^{*}=1$ and $\lambda^{*}>1$ provided $I(u)<0, I(u)=0$ and $I(u)>0$, respectively.
            \end{enumerate}
        \end{lemma}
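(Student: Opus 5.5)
We work throughout with the derivative formula \eqref{14} of Lemma \ref{l3}. Writing $j'(\lambda)=\frac1\lambda I(\lambda u)$, we have
\begin{align*}
  j'(\lambda)=\int_{\mathcal A}\lambda^{p(x)-1}|\nabla u|^{p(x)}\,{\rm d}x-\int_{\mathcal A}\lambda^{q(x)-1}|u|^{q(x)}\,{\rm d}x ,
\end{align*}
and we study the sign of $h(\lambda):=\lambda^{1-p^+}j'(\lambda)$. Since $u\neq0$, both $a(x):=|\nabla u|^{p(x)}$ and $b(x):=|u|^{q(x)}$ have positive integrals; the latter is finite by Corollary \ref{cor1}, and it is nonzero because $u\neq 0$ a.e. fails. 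Then
\begin{align*}
  h(\lambda)=\int_{\mathcal A}\lambda^{p(x)-p^+}a\,{\rm d}x-\int_{\mathcal A}\lambda^{q(x)-p^+}b\,{\rm d}x .
\end{align*}
The first integral is nonincreasing in $\lambda$, since $p(x)-p^+\le 0$. By $(\mathrm{H^2_{p,q}})$ we have $q(x)-p^+\ge q^- -p^+>0$, so the second integral is strictly increasing. Hence $h$ is strictly decreasing on $(0,\infty)$, and it is continuous by the same dominated convergence argument as in Lemma \ref{l3}.

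As $\lambda\to0^+$, the first term is at least $\int a>0$ and the second tends to $0$, so $h>0$ near $0$. As $\lambda\to\infty$, the first term is at most $\int a$ and the second tends to $+\infty$, so $h<0$ for large $\lambda$. Therefore $h$, and hence $j'$, has exactly one zero $\lambda^*>0$, with $j'>0$ on $(0,\lambda^*)$ and $j'<0$ on $(\lambda^*,\infty)$. Combined with the continuity of $j$ at $0$ from Lemma \ref{l3}, this gives $\max_{\lambda\ge0}j(\lambda)=j(\lambda^*)$ and uniqueness of the critical point.

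For the bounds, the relation $I(\lambda^* u)=0$ reads $\int(\lambda^*)^{p(x)}a=\int(\lambda^*)^{q(x)}b$. Bounding powers by $\min/\max\{\lambda^{p^\pm}\}$ and $\min/\max\{\lambda^{q^\pm}\}$ gives two inequalities:
\begin{align*}
  \min\{(\lambda^*)^{q^+},(\lambda^*)^{q^-}\}\textstyle\int b&\le\max\{(\lambda^*)^{p^+},(\lambda^*)^{p^-}\}\textstyle\int a,\\
  \min\{(\lambda^*)^{p^+},(\lambda^*)^{p^-}\}\textstyle\int a&\le\max\{(\lambda^*)^{q^+},(\lambda^*)^{q^-}\}\textstyle\int b .
\end{align*}
We then split into the cases $\lambda^*\ge1$ and $\lambda^*<1$.
\begin{itemize}
  \item If $\lambda^*\ge1$, the inequalities become $(\lambda^*)^{q^-}\int b\le(\lambda^*)^{p^+}\int a$ and $(\lambda^*)^{p^-}\int a\le(\lambda^*)^{q^+}\int b$. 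These give $(\lambda^*)^{q^--p^+}\le\Theta$ and $(\lambda^*)^{q^+-p^-}\ge\Theta$, i.e. $\Theta^{1/(q^+-p^-)}\le\lambda^*\le\Theta^{1/(q^--p^+)}$.
  \item If $\lambda^*<1$, the exponents swap roles and give the reversed pair $\Theta^{1/(q^--p^+)}\le\lambda^*\le\Theta^{1/(q^+-p^-)}$.
\end{itemize}
In both cases $\lambda^*$ lies between the min and the max of the two quantities, as claimed. This case bookkeeping is the only delicate point, since the exponents must be chosen consistently with $\lambda^*$ versus $1$.

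For (ii), note that $j'(1)=I(u)$. By the monotonicity of $h$, $I(u)>0$ means $h(1)>0$, which forces $\lambda^*>1$. Likewise $I(u)=0$ gives $\lambda^*=1$, and $I(u)<0$ gives $0<\lambda^*<1$.

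The main obstacle is establishing the strict monotonicity of $h$ without homogeneity, since $j'$ itself need not be monotone. The fix is the normalization by $\lambda^{1-p^+}$, which uses $(\mathrm{H^2_{p,q}})$ exactly in the form $q(x)>p^+\ge p(x)$.
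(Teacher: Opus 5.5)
Your proof is correct, and it departs from the paper at exactly the step that matters.

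The paper factors the weight $\lambda^{p(x)-1}$ out of the integrand of $j'(\lambda)$ and works with
$i(\lambda)=\int_{\mathcal A}|\nabla u|^{p(x)}\,{\rm d}x-\int_{\mathcal A}\lambda^{q(x)-p(x)}|u|^{q(x)}\,{\rm d}x$.
It asserts that $i$ and $j'$ have the same sign. From there it gets:
\begin{itemize}
\item strict monotonicity of $i$ from $q(x)>p(x)$;
\item the $\Theta$-bounds in one line from $i(\lambda^*)=0$ and $q(x)-p(x)\in[q^--p^+,q^+-p^-]$;
\item part (ii) from $i(1)=I(u)$.
\end{itemize}

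The weight $\lambda^{p(x)-1}$ depends on $x$, so that sign claim is only justified when $p$ is constant. In general the zero of $i$ need not be a critical point of $j$; the two functions are only guaranteed to coincide at $\lambda=1$, where both equal $I(u)$.

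Your normalization by the scalar $\lambda^{1-p^+}$ preserves the sign of $j'$ exactly and still gives strict monotonicity. This is because $(\mathrm{H^2_{p,q}})$ yields $p(x)-p^+\le 0<q^--p^+\le q(x)-p^+$. So your $\lambda^*$ really is the unique critical point and global maximizer of $j$, which is what the statement asserts.

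The price is that your first term is only nonincreasing rather than constant. The bounds must then be read off from $\int_{\mathcal A}(\lambda^*)^{p(x)}|\nabla u|^{p(x)}\,{\rm d}x=\int_{\mathcal A}(\lambda^*)^{q(x)}|u|^{q(x)}\,{\rm d}x$ with the case split $\lambda^*\ge 1$ versus $\lambda^*<1$. You carry this out correctly, and it lands in the same interval as the paper's.

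Minor wording: ``$u\neq 0$ a.e.\ fails'' should read ``$u$ is not a.e.\ zero''.
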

        \begin{proof}
        (i) From \eqref{14} and the definition of $I(u)$, we have
                \begin{align}\label{35}
                  \frac{\rm d}{{\rm d} \lambda} j(\lambda) = \frac{1}{\lambda} I(\lambda u) = \int_{\mathcal{A}} \lambda^{p(x)-1}\left(|\nabla u|^{p(x)}-\lambda^{q(x)-p(x)}|u|^{q(x)}\right) \mathrm{d} x.
                \end{align}
            Then we define $i(\lambda)$ as follows
                \begin{align}\label{36}
                  i(\lambda)
                  := \int_{\mathcal{A}} |\nabla u|^{p(x)} \mathrm{d} x
                  - \int_{\mathcal{A}} \lambda^{q(x)-p(x)}|u|^{q(x)} \mathrm{d} x.
                \end{align}
            Obviously, $i(\lambda)$ shares the same sign with
            $\frac{\rm d}{{\rm d} \lambda} j(\lambda)$ according to \eqref{35} and \eqref{36}. Next, we claim the uniqueness of $\lambda^{*}$.
            Similar to the proof of Lemma \ref{l3}, the derivative of $i(\lambda)$ with respect to $\lambda$ can be computed as
                \begin{align}\label{17}
                  \frac{\rm d}{{\rm d} \lambda} i(\lambda) =
                -\int_{\mathcal{A}} (q(x)-p(x))\lambda^{q(x)-p(x)-1}|u|^{q(x)} \mathrm{d} x.
                \end{align}
            Obviously, $\frac{\rm d}{{\rm d} \lambda} i(\lambda) <0$ for $\lambda \in (0,\infty)$ according to $(\mathrm{H^2_{p,q}})$.
            Moreover, since
            $i(0) = \int_{\mathcal{A}} |\nabla u|^{p(x)} \mathrm{d} x >0$
            (the inequality holds strictly by recalling that $u \in W_{0,\mathrm{rad}}^{1, p(\cdot)}\left(\mathcal{A}\right) \backslash \{0\}$), there holds for sufficiently large $\large \lambda$
                \begin{align*}
                  i(\lambda)
                  \le  \int_{\mathcal{A}} |\nabla u|^{p(x)} \mathrm{d} x
                  - \min\{\lambda^{q^--p^+}, \lambda^{q^+-p^-}\} \int_{\mathcal{A}} |u|^{q(x)} \mathrm{d} x <0,
                \end{align*}
            which together with the monotonicity of $i(\lambda)$ for $\lambda \in (0,\infty)$ claims
            the existence and uniqueness of $\lambda^{*}$ such that $i(\lambda^{*}) = 0$, i.e., $\frac{\rm d}{{\rm d} \lambda} \left. j(\lambda) \right|_{\lambda = \lambda^{*} }=0$.
            Furthermore, in view of \eqref{17}, $i(\lambda)$ is strictly decreasing on $(0,\infty)$, which together with the fact that $i(\lambda)$ shares the same sign with $\frac{\rm d}{{\rm d}\lambda}j(\lambda)$ (see \eqref{35} and \eqref{36}) implies that $\frac{\rm d}{{\rm d}\lambda}j(\lambda)$ changes sign from positive to negative exactly once at $\lambda = \lambda^{*}$. Therefore, $\lambda = \lambda^{*}$ is the unique maximum point of $j(\lambda)$ for $\lambda \in (0,\infty)$.
            Moreover, according to \eqref{36}, $i(\lambda^{*}) = 0$ implies that
                \begin{align*}
                  \min \left\{{\lambda^{*}}^{q^{-}-p^{+}},
                        {\lambda^{*}}^{q^{+}-p^{-}}\right\}
                  \leq \frac{\int_{\mathcal{A}}|\nabla u|^{p(x)} \mathrm{d} x}{\int_{\mathcal{A}}|u|^{q(x)} \mathrm{d} x}
                  \leq \max \left\{{\lambda^*}^{{q^{-}-p^{+}}}, {\lambda^*}^{q^{+}-p^{-}}\right\},
                \end{align*}
            which implies the last assertion of (i).

        (ii) On the one hand, according to the proof of (i) and the monotonicity of $i(\lambda)$ with respect to $\lambda$, we have
            \begin{align}\label{255}
            i(\lambda)
            \begin{cases}
              > 0,  \quad  0< \lambda \le\lambda^*, \\
              = 0,  \quad  \lambda = \lambda^*, \\
              < 0,  \quad  \lambda^*< \lambda < \infty.
            \end{cases}
            \end{align}
        On the other hand, from the definition of $i(\lambda)$ (see \eqref{36}), $i(1) = I(u)$, therefore
            \begin{align*}
              i(1)>0\ (=0 \ \text{or} <0)
              \quad \text{  if and only if  } \quad
              I(u)>0\ (=0\ \text{or} <0),
            \end{align*}
        which together with \eqref{255} completes the proof of (ii).
        \end{proof}

        \begin{remark}[On Lemma \ref{l12}]
        The statement (ii) of Lemma \ref{l12} can be also derived from Lemma \ref{l12} (i). From the definition of $\Theta(u)$, i.e., \eqref{15}, there holds
            \begin{align*}
              \Theta(u) > 1\ (=1 \ \text{or} <1)
              \quad \text{ if and only if } \quad
              I(u)>0\ (=0\ \text{or} <0).
            \end{align*}
        \end{remark}

        $\bullet$ {\bf Properties about the potential well depth $d$ and the Nehari functional $I(u)$.}
        We apply the similar arguments as Payne and Sattinger's work \cite{r8} (for the case of the constant exponent) to give an equivalent definition of the potential well depth as follows.

        \begin{lemma}[Equivalent definitions of the potential well depth]\label{l13}
            Assume that $(\mathrm{H_{p,q}})$, $(\mathrm{H^1_{p,q}})$-$(\mathrm{H^2_{p,q}})$ and $(\mathrm{H_{D}})$ hold, then
                \begin{align}\label{37}
                  d = \inf _{u \in W_{0,\mathrm{rad}}^{1, p(\cdot)}\left(\mathcal{A}\right) \backslash \{0\}} \sup _{\lambda \geq 0} J(\lambda u)
                = \inf _{u \in \mathcal{N}} J(u).
                \end{align}
        \end{lemma}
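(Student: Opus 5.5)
The equality \eqref{37} follows by proving the two inequalities separately, using the scaling map $j(\lambda)=J(\lambda u)$ from Lemmas \ref{l3}--\ref{l12}. Write $d_{\mathcal N}:=\inf_{u\in\mathcal N}J(u)$. Before starting, I note that $\mathcal N\neq\emptyset$. Indeed, for any $u\in W_{0,\mathrm{rad}}^{1,p(\cdot)}(\mathcal A)\setminus\{0\}$, Lemma \ref{l12}(i) gives a unique $\lambda^*(u)>0$ with $i(\lambda^*)=0$. By \eqref{35} this means $I(\lambda^* u)=0$, and since $\lambda^*u\neq0$ we get $\lambda^*u\in\mathcal N$. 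Thus both infima are taken over nonempty sets.

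\emph{Step 1: $d\ge d_{\mathcal N}$.} Fix $u\in W_{0,\mathrm{rad}}^{1,p(\cdot)}(\mathcal A)\setminus\{0\}$. By Lemma \ref{l12}(i), $\sup_{\lambda\ge0}J(\lambda u)=j(\lambda^*)=J(\lambda^*u)$. We just saw that $\lambda^*u\in\mathcal N$, so $J(\lambda^*u)\ge d_{\mathcal N}$. Taking the infimum over $u$ gives $d\ge d_{\mathcal N}$.

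\emph{Step 2: $d\le d_{\mathcal N}$.} Fix $u\in\mathcal N$, so that $I(u)=0$. By Lemma \ref{l12}(ii), $\lambda^*(u)=1$, and by Lemma \ref{l12}(i) the maximum of $j$ over $[0,\infty)$ is therefore attained at $\lambda=1$. Hence $\sup_{\lambda\ge0}J(\lambda u)=J(u)$. Since $u\in W_{0,\mathrm{rad}}^{1,p(\cdot)}(\mathcal A)\setminus\{0\}$, this yields $d\le J(u)$, and taking the infimum over $\mathcal N$ gives $d\le d_{\mathcal N}$.

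The only delicate point is that "$\max_{\lambda\ge0}$" really includes the endpoint $\lambda=0$, and that the supremum is attained rather than merely approached. Both follow from the sign analysis in the proof of Lemma \ref{l12}: $j'(\lambda)>0$ on $(0,\lambda^*)$ and $j'(\lambda)<0$ on $(\lambda^*,\infty)$. Combined with the continuity of $j$ at $0$ (Lemma \ref{l3}), this gives $j(\lambda)\le j(\lambda^*)$ for every $\lambda\ge0$. Lemma \ref{l8} is not needed beyond this.

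Throughout, everything is well defined because Corollary \ref{cor1} ensures that $J$ and $I$ are finite on the radial space, even under the multiple-regime hypothesis $(\mathrm{H_D})$. I therefore expect no real obstacle: the argument is a direct consequence of the uniqueness of $\lambda^*$ established in Lemma \ref{l12}.
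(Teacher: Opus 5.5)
Your argument is correct and is essentially the paper's. The paper notes that $v\mapsto\lambda^{*}(v)v$ maps $W_{0,\mathrm{rad}}^{1,p(\cdot)}(\mathcal{A})\setminus\{0\}$ onto $\mathcal{N}$ with $J(\lambda^{*}(v)v)=\sup_{\lambda\ge0}J(\lambda v)$; the map is onto because $\lambda^{*}(u)=1$ for $u\in\mathcal{N}$ by Lemma \ref{l12}(ii). Your two inequalities simply make these two facts explicit.

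One justification should be replaced. You obtain $I(\lambda^{*}u)=0$ from $i(\lambda^{*})=0$ ``by \eqref{35}''. For non-constant $p$, however, the weight $\lambda^{p(x)-1}$ in \eqref{35} cannot be pulled out of the integral, so $i$ and $j'$ need not vanish at the same point. The same-sign claim in the proof of Lemma \ref{l12} has this defect too.

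Instead, use $j'(\lambda^{*})=0$ from the statement of Lemma \ref{l12}(i) together with \eqref{14}. If you want a self-contained sign analysis, note that $\lambda^{-p^{+}}I(\lambda u)=\int_{\mathcal{A}}\lambda^{p(x)-p^{+}}|\nabla u|^{p(x)}\,\mathrm{d}x-\int_{\mathcal{A}}\lambda^{q(x)-p^{+}}|u|^{q(x)}\,\mathrm{d}x$ is strictly decreasing in $\lambda$ by $(\mathrm{H^2_{p,q}})$. It is positive for small $\lambda$ and negative for large $\lambda$, which gives directly the single sign change of $j'$ from positive to negative.
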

        \begin{proof}
          From Lemma \ref{l12}, for any $v \in W_{0,\mathrm{rad}}^{1, p(\cdot)}\left(\mathcal{A}\right) \backslash \{0\}$, there exists a unique $\lambda^{*}(v) > 0$
          such that $\lambda^{*}(v) v \in \mathcal{N}$ and $J(\lambda v)$ takes its maximum at
          $\lambda = \lambda^{*}(v)$, i.e.,
                \begin{align*}
                  J(\lambda^*(v) v) = \sup _{\lambda \geq 0} J(\lambda v).
                \end{align*}
          By setting $u = \lambda^{*}(v) v$, we can do a normalization such that $\lambda^{*}(u) = 1$ for any $u \in W_{0,\mathrm{rad}}^{1, p(\cdot)}(\mathcal{A}) \backslash\{0\}$, which claims that
                \begin{align}\label{20}
                      \inf _{u \in W_{0,\mathrm{rad}}^{1, p(\cdot)}(\mathcal{A}) \backslash\{0\}} \sup _{\lambda \geq 0} J(\lambda u)
                    = \inf _{u \in W_{0,\mathrm{rad}}^{1, p(\cdot)}(\mathcal{A}) \backslash\{0\}} J(\lambda^*(u) u)
                    = \inf _{u \in \mathcal{N}} J(u).
                \end{align}
          Thus \eqref{37} holds.
        \end{proof}

        \begin{remark}[On Lemma \ref{l13}]
          We point out that, the validity of Lemma \ref{l13} relies on the uniqueness of $\lambda^*$ ensured by Lemma \ref{l12} (i), which requires $(\mathrm{H^2_{p,q}})$. Therefore, we can apply Lemma \ref{l12} (ii) to obtain the last equal sign in \eqref{20}.
          Otherwise, if $\lambda^*(u)$ is not unique for
          $u \in W_{0,\mathrm{rad}}^{1, p(\cdot)}(\mathcal{A})$ without $(\mathrm{H^2_{p,q}})$, then the mapping $u \mapsto \lambda^*(u) u$ is not surjective from
          $W_{0,\mathrm{rad}}^{1, p(\cdot)}(\mathcal{A})$ to $\mathcal{N}$.
        \end{remark}

        Next, in Lemma \ref{l14} below, we give a characterization of the relationship between the sign of $I(u)$ and the norm of $u$ in $W_{0,\mathrm{rad}}^{1, p(\cdot)}\left(\mathcal{A}\right)$ with the radius $r_{{\rm ptw}}$ defined by
                        \begin{align}\label{25}
                          r_{{\rm ptw}} := \min \left\{\delta_{pq}^{\frac{1}{ p^+ - q^-}},
                                \delta_{pq}^{\frac{1}{ p^- - q^+}} \right\},
                        \end{align}
        where the constant $\delta_{pq}$ is given by
            \begin{align}\label{54}
              \delta_{pq} :=
                            \max \left\{ \left( C^{*}_{p(\cdot),q(\cdot)} \right)^{q^-},
                        \left( C^{*}_{p(\cdot),q(\cdot)} \right)^{q^+} \right\},
            \end{align}
        the constant $C^{*}_{p(\cdot),q(\cdot)}$ is the best constant of the Sobolev embedding $W_{0,\mathrm{rad}}^{1, p(\cdot)}\left(\mathcal{A}\right) \hookrightarrow L^{q(\cdot)}\left(\mathcal{A}\right)$,
        and the subscript of the notation $r_{{\rm ptw}}$ stands for the abbreviation of the phrase ``potential well''.

        \begin{lemma}\label{l14}
                 Let $(\mathrm{H_{p,q}})$, $(\mathrm{H^1_{p,q}})$-$(\mathrm{H^2_{p,q}})$ and $(\mathrm{H_{D}})$ be satisfied,
                  then for any
                  $u \in W_{0,\mathrm{rad}}^{1, p(\cdot)}\left(\mathcal{A}\right)$, we have
                    \begin{enumerate}[(i)]
                        \item if $I(u)<0,$ then $\|\nabla u\|_{p (\cdot),\mathcal{A}} >r_{{\rm ptw}}$;
                        \item if $I(u)=0,$ then\ $\|\nabla u\|_{p (\cdot),\mathcal{A}}\ge r_{{\rm ptw}}$ or\ $\|\nabla u\|_{p(\cdot),\mathcal{A}}=0$;
                        \item if $0<\|\nabla u\|_{p (\cdot),\mathcal{A}} < r_{{\rm ptw}}$, then $I(u)>0$;
                        \item the depth of the potential well is bounded below, i.e., $d \ge \tilde{d} >0$,
                        where
                                \begin{align}\label{38}
                                  \tilde{d} := \left( \frac{1}{p^+} - \frac{1}{q^-} \right)
                                  \min \left\{ \left( C^{*}_{p(\cdot),q(\cdot)} \right)^{\frac{p^+ q^-}{ p^+ - q^-}}, \left( C^{*}_{p(\cdot),q(\cdot)} \right)^{\frac{p^- q^+}{ p^- - q^+}} \right\}.
                                \end{align}
                    \end{enumerate}
        \end{lemma}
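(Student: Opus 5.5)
The plan is to prove (iii) first, directly from the modular–norm relation (Proposition \ref{p1}) and the embedding constant $C^{*}_{p(\cdot),q(\cdot)}$ of Corollary \ref{cor1}. Then (i) and (ii) follow by contraposition, and (iv) follows by combining (ii) with the Nehari identity. Throughout, write $a:=\|\nabla u\|_{p(\cdot),\mathcal{A}}$.

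For (iii), take $0<a<r_{\rm ptw}$.

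\emph{Lower bound on the gradient term.} Proposition \ref{p1} gives $\int_{\mathcal{A}}|\nabla u|^{p(x)}\,\mathrm{d}x\ge \min\{a^{p^-},a^{p^+}\}$.

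\emph{Upper bound on the source term.} By Proposition \ref{p1} and Corollary \ref{cor1}, with $C^*:=C^{*}_{p(\cdot),q(\cdot)}$,
\begin{align*}
\int_{\mathcal{A}}|u|^{q(x)}\,\mathrm{d}x\le \max\{(C^*a)^{q^-},(C^*a)^{q^+}\}\le \delta_{pq}\max\{a^{q^-},a^{q^+}\}.
\end{align*}

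\emph{Case $a\le 1$.} Here $I(u)\ge a^{p^+}-\delta_{pq}a^{q^-}$. This is positive iff $a^{q^--p^+}<\delta_{pq}^{-1}$. Since $q^->p^+$ by $(\mathrm{H^2_{p,q}})$, this is equivalent to $a<\delta_{pq}^{1/(p^+-q^-)}$.

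\emph{Case $a>1$.} Here $I(u)\ge a^{p^-}-\delta_{pq}a^{q^+}$, which is positive iff $a<\delta_{pq}^{1/(p^--q^+)}$.

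Since $r_{\rm ptw}$ is the minimum of these two thresholds, $a<r_{\rm ptw}$ gives $I(u)>0$ in either case. I expect the main care point of the whole lemma to be this case bookkeeping, namely matching which of $\min/\max$ is active to the correct power. In particular, the case split is on $a$ versus $1$, not on $r_{\rm ptw}$ versus $1$. Taking the minimum of both thresholds makes the argument insensitive to where $r_{\rm ptw}$ sits relative to $1$, at the cost of possibly not being sharp.

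(i) and (ii) are immediate contrapositives of (iii). If $I(u)\le 0$ and $a\neq 0$, then $a\ge r_{\rm ptw}$. For (i) I need the strict inequality, so I rule out $a=r_{\rm ptw}$ when $I(u)<0$. I would try the continuity of $\lambda\mapsto I(\lambda u)$, which follows from the argument of Lemma \ref{l3}. If $a=r_{\rm ptw}$ and $I(u)<0$, then $I(\lambda u)<0$ for some $\lambda<1$ close to $1$. But $\|\nabla(\lambda u)\|_{p(\cdot),\mathcal{A}}=\lambda a<r_{\rm ptw}$, which contradicts (iii). Also $a=0$ forces $u=0$ and hence $I(u)=0$, so the case $I(u)<0$ has $a>0$.

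For (iv), I use Lemma \ref{l13}, $d=\inf_{\mathcal{N}}J$. For $u\in\mathcal{N}$ we have $\int_{\mathcal{A}}|u|^{q(x)}\,\mathrm{d}x=\int_{\mathcal{A}}|\nabla u|^{p(x)}\,\mathrm{d}x$, so
\begin{align*}
J(u)\ge \frac{1}{p^+}\int_{\mathcal{A}}|\nabla u|^{p(x)}\,\mathrm{d}x-\frac{1}{q^-}\int_{\mathcal{A}}|u|^{q(x)}\,\mathrm{d}x=\Big(\frac{1}{p^+}-\frac{1}{q^-}\Big)\int_{\mathcal{A}}|\nabla u|^{p(x)}\,\mathrm{d}x .
\end{align*}
By (ii), $a\ge r_{\rm ptw}$, so $\int_{\mathcal{A}}|\nabla u|^{p(x)}\,\mathrm{d}x\ge\min\{r_{\rm ptw}^{p^-},r_{\rm ptw}^{p^+}\}$. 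Substituting the definitions \eqref{25}–\eqref{54} and simplifying the exponents, for instance $(C^*)^{q^-\cdot p^+/(p^+-q^-)}$ from the first branch, should yield exactly the two terms in \eqref{38}. This gives $d\ge\tilde d>0$, since $p^+<q^-$.

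The exponent simplification is routine but fiddly. If the pairing of branches in \eqref{38} does not match exactly, I would fall back on the cruder bound $\min$ over all four combinations, which is still a positive constant and suffices for $d>0$.
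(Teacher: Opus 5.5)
Your treatment of (i)--(iii) is correct. You reverse the paper's order. The paper proves (i) and (ii) directly from $\int_{\mathcal A}|\nabla u|^{p(x)}\,\mathrm{d}x<\int_{\mathcal A}|u|^{q(x)}\,\mathrm{d}x$ (resp.\ $\le$), using the same Proposition~\ref{p1}/embedding estimates and the same split of $\|\nabla u\|_{p(\cdot),\mathcal A}$ against $1$. This gives the strict inequality in (i) at once, and (iii) then follows by contradiction. Your continuity-in-$\lambda$ argument is a valid replacement for that strictness.

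The gap is in (iv), at the step ``substituting \eqref{25}--\eqref{54} should yield exactly the two terms in \eqref{38}''. It does not, because $\delta_{pq}$ is a max, $r_{\rm ptw}$ is a min, and the active branches pair up differently from \eqref{38}. Write $C^*:=C^{*}_{p(\cdot),q(\cdot)}$.

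If $C^*>1$, then $\delta_{pq}=(C^*)^{q^+}$ and $r_{\rm ptw}=(C^*)^{q^+/(p^+-q^-)}<1$. Your bound becomes $(\frac{1}{p^+}-\frac{1}{q^-})(C^*)^{p^+q^+/(p^+-q^-)}$, which is strictly below $\tilde d$ whenever $q^+>q^-$. For instance, with $p^-=2$, $p^+=3$, $q^-=4$, $q^+=6$, $C^*=2$ you get $\min\{r_{\rm ptw}^{p^-},r_{\rm ptw}^{p^+}\}=2^{-18}$, while the minimum in \eqref{38} is $2^{-12}$. If $C^*<1$, you get $r_{\rm ptw}^{p^-}=(C^*)^{p^-q^-/(p^--q^+)}$, again below $\tilde d$ when $q^+>q^-$.

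Your fallback is a lower bound for this same quantity, so it proves $d>0$ but not the stated $d\ge\tilde d$. The paper's proof stops at the same quantity in \eqref{39} and then simply asserts \eqref{38}, so this slip is inherited from the source.

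The repair is to split on the modular rather than the norm. For $u\in\mathcal N$, let $m:=\int_{\mathcal A}|\nabla u|^{p(x)}\,\mathrm{d}x=\int_{\mathcal A}|u|^{q(x)}\,\mathrm{d}x>0$. Inverting Proposition~\ref{p1} gives $\|\nabla u\|_{p(\cdot),\mathcal A}\le\max\{m^{1/p^-},m^{1/p^+}\}$ and $\|u\|_{q(\cdot),\mathcal A}\ge\min\{m^{1/q^-},m^{1/q^+}\}$. Combine these with $\|u\|_{q(\cdot),\mathcal A}\le C^*\|\nabla u\|_{p(\cdot),\mathcal A}$ and use $q^->p^+$ for the signs of the exponents:
\begin{align*}
m\le 1 &\ \Longrightarrow\ m^{1/q^-}\le C^{*}m^{1/p^+}\ \Longrightarrow\ m\ge (C^{*})^{\frac{p^+q^-}{p^+-q^-}},\\
m>1 &\ \Longrightarrow\ m^{1/q^+}\le C^{*}m^{1/p^-}\ \Longrightarrow\ m\ge (C^{*})^{\frac{p^-q^+}{p^--q^+}}.
\end{align*}
Hence $J(u)\ge(\frac{1}{p^+}-\frac{1}{q^-})\,m\ge\tilde d$ for every $u\in\mathcal N$, and Lemma~\ref{l13} gives exactly (iv).
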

        \begin{proof}
                    (i) Firstly, Proposition \ref{p1} asserts
                    \begin{align}
                      \int_{\mathcal{A}} |\nabla u|^{p(x)} {\rm d}x
                      \ge \min \left\{\|\nabla u\|_{p (\cdot),\mathcal{A}}^{p^{-}},\|\nabla u\|_{p(
                      \cdot),\mathcal{A}}^{p^{+}}\right\}. \label{10}
                    \end{align}
                Then, to determine the power index in \eqref{10} and continue the proof, we discuss the cases $\|\nabla u\|_{p (\cdot),\mathcal{A}} \le 1$ and $\|\nabla u\|_{p (\cdot),\mathcal{A}} > 1$.
                For $\|\nabla u\|_{p (\cdot),\mathcal{A}} \le 1$, it follows from $I(u)<0$ that
                    \begin{align}\label{5}
                    \begin{aligned}
                    \|\nabla u\|_{p (\cdot),\mathcal{A}}^{p^{+}}
                    &\leq \int_{\mathcal{A}}|\nabla u(x)|^{p(x)} \mathrm{d} x
                    <\int_{\mathcal{A}}|u(x)|^{q(x)} \mathrm{d} x \\
                    & \leq \max \left\{\|u\|_{q(\cdot),\mathcal{A}}^{q^{-}},\|u\|_{q(\cdot),\mathcal{A}}^{q^{+}}\right\}  \\
                    & \leq \max \left\{
                    \left( C^{*}_{p(\cdot),q(\cdot)} \right)^{q^-}
                    \|\nabla u\|_{p (\cdot),\mathcal{A}}^{q^{-}},
                    \left( C^{*}_{p(\cdot),q(\cdot)} \right)^{q^+}
                    \|\nabla u\|_{p (\cdot),\mathcal{A}}^{q^{+}}\right\} \\
                    & \leq \delta_{pq}\|\nabla u\|_{p (\cdot),\mathcal{A}}^{q^{-}}
                    \end{aligned}
                    \end{align}
                 with $\delta_{pq}$ given by \eqref{54}.
                 For $\|\nabla u\|_{p (\cdot),\mathcal{A}} > 1$, we can similarly get
                    \begin{align}\label{6}
                      \|\nabla u\|_{p (\cdot),\mathcal{A}}^{p^{-}}
                      <& \delta_{pq} \|\nabla u\|_{p (\cdot),\mathcal{A}}^{q^{+}}.
                    \end{align}
                 Then combining \eqref{5} and \eqref{6}, we obtain
                    $$
                      \|\nabla u\|_{p (\cdot),\mathcal{A}} >  \min \left\{\delta_{pq}^{\frac{1}{ p^+ - q^-}}, \delta_{pq}^{\frac{1}{ p^- - q^+}} \right\}
                      = r_{{\rm ptw}},
                    $$
                  where $r_{{\rm ptw}}$ is given by \eqref{25}.

                  (ii)
                  Since the case $\|\nabla u\|_{p (\cdot),\mathcal{A}} = 0$ is relatively trivial, we directly consider
                  the case $I(u)=0$ with $\|\nabla u\|_{p (\cdot),\mathcal{A}} \neq 0$. Then, in this situation, we can obtain that $\|\nabla u\|_{p (\cdot),\mathcal{A}} \ge r_{\rm ptw}$ from
                    \begin{align*}
                      \int_{\mathcal{A}}|\nabla u|^{p(x)}  {\rm d}x
                      \le \int_{\mathcal{A}} |u|^{q(x)} {\rm d} x,
                    \end{align*}
                  which is analogous to the proof of (i).

              (iii) Arguing by contradiction, we suppose that $I(u) \le 0$, then the condition $0<\|\nabla u\|_{p (\cdot),\mathcal{A}}<r_{{\rm ptw}}$ contradicts (i) and (ii) of this lemma, which asserts the validity of (iii).

              (iv) Taking any $u \in \mathcal{N}$, then it follows from the definitions of $I(u), J(u)$ and (ii) of this lemma that
                \begin{align}\label{39}
                \begin{aligned}
                  J(u)
                  \geq& \frac{1}{p^{+}} \int_{\mathcal{A}}|\nabla u(x)|^{p(x)} \mathrm{d} x-\frac{1}{q^{-}} \int_{\mathcal{A}}|u(x)|^{q(x)} \mathrm{d} x\\
                  =& \left(\frac{1}{p^{+}}-\frac{1}{q^{-}}\right) \int_{\mathcal{A}}|\nabla u(x)|^{p(x)} \mathrm{d} x + \frac{1}{q^-} I(u)\\
                  \ge& \left(\frac{1}{p^{+}}-\frac{1}{q^{-}}\right) \min \left\{\|\nabla u\|_{p (\cdot),\mathcal{A}}^{p^{-}},\|\nabla u\|_{p(
                      \cdot),\mathcal{A}}^{p^{+}}\right\}\\
                  \ge& \left(\frac{1}{p^{+}}-\frac{1}{q^{-}}\right) \min \left\{r_{{\rm ptw}}^{p^-}, r_{{\rm ptw}}^{p^+} \right\}.
                \end{aligned}
                \end{align}
         Thus \eqref{38} follows from \eqref{39} according to the arbitrariness of $u$.
    \end{proof}

    Now, we give two estimates related with $I(u)$ at the subcritical energy level, in order to deal with the asymptotic behavior and finite-time blow-up of the solution for $0<J(u_0)<d$.

    \begin{lemma}[Refined estimates of $I(u)$]\label{l6}
            Assume that $(\mathrm{H_{p,q}})$, $(\mathrm{H^1_{p,q}})$-$(\mathrm{H^2_{p,q}})$ and $(\mathrm{H_{D}})$ hold and $u \in W_{0,\mathrm{rad}}^{1, p(\cdot)}\left(\mathcal{A}\right) \backslash \{0\}$ with $J(u)< d$, then the following statements hold true:
              \begin{enumerate}[(i)]
                    \item if $I(u)>0$, then
                            \begin{align}\label{33}
                            \begin{aligned}
                               I(u)
                               >&
                               \left(
                               1- \left(\frac{d}{J(u)
                               }\right)
                                ^{\frac{p^{+}-q^{-}}{q^{+}}}\right)
                                \int_{\mathcal{A}}|\nabla u|^{p(x)} \mathrm{d} x;
                            \end{aligned}
                            \end{align}
                    \item if $I(u)<0$, then
                            \begin{align}\label{34}
                              I(u)< q^- \left( J(u) -d \right).
                            \end{align}
              \end{enumerate}
            \end{lemma}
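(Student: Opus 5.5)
Both parts follow the Payne--Sattinger scaling argument. We compare $u$ with its projection $\lambda^{*}(u)u$ onto $\mathcal{N}$ from Lemma \ref{l12}, and use $J(\lambda^{*}u)\ge d$ from Lemma \ref{l13}. The only new work is replacing the homogeneity of the constant-exponent case by monotone auxiliary functions of $\lambda$, built using Lemma \ref{l3}.

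\emph{Part (ii).} Let $I(u)<0$. Lemma \ref{l12}(ii) gives $0<\lambda^{*}<1$, and Lemma \ref{l13} gives $d\le J(\lambda^{*}u)$. Set
\begin{align*}
h(\lambda):=J(\lambda u)-\frac{1}{q^-}I(\lambda u)=\int_{\mathcal{A}}\Big(\frac{1}{p(x)}-\frac{1}{q^-}\Big)\lambda^{p(x)}|\nabla u|^{p(x)}\mathrm{d}x+\int_{\mathcal{A}}\Big(\frac{1}{q^-}-\frac{1}{q(x)}\Big)\lambda^{q(x)}|u|^{q(x)}\mathrm{d}x .
\end{align*}
By $(\mathrm{H^2_{p,q}})$ both coefficients are nonnegative, and the first is bounded below by $\frac1{p^+}-\frac1{q^-}>0$. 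Since $u\neq0$, the first integral is strictly increasing in $\lambda$, so $h$ is strictly increasing on $(0,\infty)$. Because $I(\lambda^{*}u)=0$, we get $d\le J(\lambda^{*}u)=h(\lambda^{*})<h(1)=J(u)-\frac1{q^-}I(u)$. This is exactly \eqref{34}.

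\emph{Part (i).} Let $I(u)>0$. Then $J(u)\ge(\frac1{p^+}-\frac1{q^-})\int_{\mathcal{A}}|\nabla u|^{p(x)}\mathrm{d}x>0$, so the ratio $d/J(u)$ makes sense, and Lemma \ref{l12}(ii) gives $\lambda^{*}>1$.

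\emph{Step 1.} From $I(\lambda^{*}u)=0$ and $\lambda^{*}>1$ we have $(\lambda^{*})^{q^-}\int|u|^{q(x)}\le\int(\lambda^{*})^{q(x)}|u|^{q(x)}=\int(\lambda^{*})^{p(x)}|\nabla u|^{p(x)}\le(\lambda^{*})^{p^+}\int|\nabla u|^{p(x)}$. Hence
\begin{align*}
I(u)\ge\big(1-(\lambda^{*})^{p^+-q^-}\big)\int_{\mathcal{A}}|\nabla u|^{p(x)}\mathrm{d}x .
\end{align*}

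\emph{Step 2.} It then suffices to show $(\lambda^{*})^{q^+}>d/J(u)$. Since $p^+-q^-<0$, this gives $(\lambda^{*})^{p^+-q^-}<(d/J(u))^{(p^+-q^-)/q^+}$, and together with Step 1 yields \eqref{33}. Because $d\le J(\lambda^{*}u)$, it is enough to prove $J(\lambda^{*}u)<(\lambda^{*})^{q^+}J(u)$.

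\emph{Step 3 (main obstacle).} This is the variable-exponent substitute for homogeneity. I would show that $\lambda\mapsto\lambda^{-q^+}j(\lambda)$ is strictly decreasing on $[1,\lambda^{*}]$. By Lemma \ref{l3} its derivative is $\lambda^{-q^+-1}\big(I(\lambda u)-q^+J(\lambda u)\big)$. Writing $w=\lambda u$, we have
\begin{align*}
I(w)-q^+J(w)=\int\Big(1-\frac{q^+}{p(x)}\Big)|\nabla w|^{p(x)}+\int\Big(\frac{q^+}{q(x)}-1\Big)|w|^{q(x)}\le\Big(1-\frac{q^+}{p^+}\Big)\int|\nabla w|^{p(x)}+\Big(\frac{q^+}{q^-}-1\Big)\int|w|^{q(x)} .
\end{align*}
On $[1,\lambda^{*}]$, \eqref{255} gives $I(w)\ge0$, that is $\int|w|^{q(x)}\le\int|\nabla w|^{p(x)}$. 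Substituting, the right side is at most $q^+\big(\frac1{q^-}-\frac1{p^+}\big)\int|\nabla w|^{p(x)}<0$. This gives strict monotonicity and completes the argument. The delicate point is that the positive $|w|^{q(x)}$ term can only be absorbed by using the sign of $I$ along the whole segment $[1,\lambda^{*}]$, which is why \eqref{255} from Lemma \ref{l12} is essential.
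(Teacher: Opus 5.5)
Your proof is correct. Part (ii) and Step 1 of part (i) are essentially the paper's argument. For (ii), the paper writes the explicit bound $h(\lambda^*)\le(\lambda^*)^{p^-}h(1)<h(1)$ instead of invoking monotonicity of $h$, which amounts to the same thing.

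The genuine difference is in Steps 2--3 of part (i). There the paper uses no differential argument. It applies your own function $h$ at $\lambda^*$: $d\le J(\lambda^*u)=h(\lambda^*)$. Both integrands of $h$ have nonnegative coefficients, and $(\lambda^*)^{p(x)},(\lambda^*)^{q(x)}\le(\lambda^*)^{q^+}$ when $\lambda^*>1$. So one gets, pointwise,
$h(\lambda^*)\le(\lambda^*)^{q^+}h(1)=(\lambda^*)^{q^+}\bigl(J(u)-\frac{1}{q^-}I(u)\bigr)<(\lambda^*)^{q^+}J(u)$.

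Your ``main obstacle'' therefore disappears once the vanishing term $\frac{1}{q^-}I(\lambda^*u)=0$ is subtracted first. Your claim that the $|w|^{q(x)}$ term can only be absorbed using the sign of $I$ along the whole segment is not accurate.

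What each route buys:
\begin{itemize}
\item The paper's route uses only endpoint information ($I(\lambda^*u)=0$, $\lambda^*>1$). It also yields the slightly stronger bound $d\le(\lambda^*)^{q^+}\bigl(J(u)-\frac{1}{q^-}I(u)\bigr)$.
\item Your route proves a reusable quasi-homogeneity fact: $\lambda\mapsto\lambda^{-q^+}J(\lambda u)$ is strictly decreasing wherever $I(\lambda u)\ge 0$. The cost is the derivative formula of Lemma~\ref{l3} plus the sign of $I(\lambda u)$ on all of $[1,\lambda^*]$.
\end{itemize}

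On that last ingredient, be careful with your citation. \eqref{255} is stated for $i(\lambda)$, not for $I(\lambda u)$. The two differ by the $x$-dependent weight $\lambda^{p(x)-1}$ inside the integral, so for variable $p$ they need not literally share a sign. The fact you need is nevertheless true and easy to justify directly:
$\lambda^{-p^+}I(\lambda u)=\int_{\mathcal{A}}\lambda^{p(x)-p^+}|\nabla u|^{p(x)}\,\mathrm{d}x-\int_{\mathcal{A}}\lambda^{q(x)-p^+}|u|^{q(x)}\,\mathrm{d}x$
is strictly decreasing in $\lambda$ and vanishes at $\lambda^*$, so $I(\lambda u)>0$ for $\lambda<\lambda^*$.
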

            \begin{proof}
              (i) By the definition of $I(u)$ and Lemma \ref{l12} (ii), we have $\lambda^* >1$ and
                \begin{align}\label{28}
                \begin{aligned}
                    I\left(\lambda^{*} u \right)
                    \le& \left(\lambda^{*}\right)^{p^{+}} \int_{\mathcal{A}}|\nabla u|^{p(x)} \mathrm{d} x
                        - \left(\lambda^{*}\right)^{q^{-}} \int_{\mathcal{A}}|u|^{q(x)} \mathrm{d} x \\
                    =& \left(\lambda^{*}\right)^{q^{-}} I(u)
                        + \left( \left(\lambda^{*}\right)^{p^{+}}
                        - \left(\lambda^{*}\right)^{q^{-}}\right) \int_{\mathcal{A}}|\nabla u|^{p(x)} \mathrm{d} x,
                \end{aligned}
                \end{align}
            which implies that
                \begin{align}\label{32}
                \begin{aligned}
                  I(u)
                  \geq& \left(1-\left(\lambda^{*}\right)
                  ^{p^{+}-q^{-}}\right) \int_{\mathcal{A}}|\nabla u|^{p(x)} \mathrm{d} x.
                \end{aligned}
                \end{align}
            Next, we estimate $\lambda^{*}$. From the definitions of $J(u),I(u)$ and $d$, there holds
                \begin{align}\label{31}
                \begin{aligned}
                d
                \leq& J\left(\lambda^{*} u\right)
                = J\left(\lambda^{*} u\right)-\frac{1}{q^{-}} I\left(\lambda^{*} u\right) \\
                \leq& \left(\lambda^{*}\right)^{p^{+}}
                    \int_{\mathcal{A}}\left(\frac{1}{p(x)}
                    -\frac{1}{q^{-}}\right) |\nabla u|^{p(x)} \mathrm{d} x
                    - \left(\lambda^{*}\right)^{q^{+}}
                      \int_{\mathcal{A}}\left(\frac{1}{q(x)}
                      -\frac{1}{q^{-}}\right)
                |u|^{q(x)} \mathrm{d} x \\
                \leq& \left(\lambda^{*}\right)^{q^{+}}
                \left(J(u)-\frac{1}{q^{-}} I(u)\right),
                \end{aligned}
                \end{align}
            where $\lambda^{*}>1$ according to Lemma \ref{l12} (ii).
            Since $I(u)>0$, from \eqref{31}, we have
                \begin{align}\label{90}
                  J(u) > J(u)-\frac{1}{q^{-}} I(u) > \frac{d}{\left(\lambda^{*}\right)^{q^{+}}},
                \end{align}
            which implies that
            $
            \lambda^{*} \geq\left(\frac{d}{J(u)}\right)^
            {\frac{1}{q^{+}}}
            $.
            Then \eqref{33} follows from (\ref{32}) and \eqref{90}.

            (ii)
            Firstly, since $I(u)<0$, we can derive that $0<\lambda^{*}<1$ according to Lemma \ref{l12} (ii). Then, recalling \eqref{31}, we can similarly get
                \begin{align*}
                \begin{split}
                \begin{aligned}
                d
                \leq& J\left(\lambda^{*} u\right)
                = J\left(\lambda^{*} u\right)-\frac{1}{q^{-}} I\left(\lambda^{*} u\right) \\
                \leq& \left(\lambda^{*}\right)^{p^{-}}
                    \int_{\mathcal{A}}\left(\frac{1}{p(x)}
                    -\frac{1}{q^{-}}\right) |\nabla u|^{p(x)} \mathrm{d} x
                    - \left(\lambda^{*}\right)^{q^{-}}
                      \int_{\mathcal{A}}\left(\frac{1}{q(x)}
                      -\frac{1}{q^{-}}\right)
                |u|^{q(x)} \mathrm{d} x \\
                \leq& \left(\lambda^{*}\right)^{p^{-}}
                \left(J(u)-\frac{1}{q^{-}} I(u)\right)
                < J(u)-\frac{1}{q^{-}} I(u).
                \end{aligned}
                \end{split}
                \end{align*}
            which tells (\ref{34}) immediately.
            \end{proof}

    $\bullet$ {\bf Variational structures for the supercritical initial energy level.}
    In this part, we introduce some structures of the potential well theory especially for the initial data at the supercritical energy level, i.e., $J(u_0)>d$.
    Next, we define the level set of $J(u)$ as follows
        $$
        J^{k}:=\left\{u \in W_{0,\mathrm{rad}}^{1, p(\cdot)}(\mathcal{A}): J(u) < k\right\}.
        $$
    Then, we show that the $W_{0,\mathrm{rad}}^{1, p(\cdot)}\left(\mathcal{A}\right)$ norms of the elements of $\mathcal{N}$ and $\mathcal{N}^+$ are both bounded above within $J^{k}$ in the following lemma.

            \begin{lemma}\label{l4}
               Assume that $(\mathrm{H_{p,q}})$, $(\mathrm{H^1_{p,q}})$-$(\mathrm{H^2_{p,q}})$ and $(\mathrm{H_{D}})$ hold.
              Suppose that $u \in J^{k}$ and $I(u)\ge 0$, then
                $ \|\nabla u\|_{p (\cdot),\mathcal{A}} < r_{{\rm ptw},k}$,
              where $r_{{\rm ptw},k}$ is defined by
                \begin{align}
                  &r_{{\rm ptw},k} :=
                  \max \left\{
                  \left(
                  \frac{p^+ q^- k}{q^- - p^+} \right)^{\frac{1}{p^-}},
                  \left(
                  \frac{p^+ q^- k}{q^- - p^+} \right)^{\frac{1}{p^+}} \right\}. \label{98}
                \end{align}
            \end{lemma}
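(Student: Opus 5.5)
The plan is to combine the energy bound $J(u)<k$ with the sign condition $I(u)\ge 0$, exactly as in the computation \eqref{39} used for Lemma \ref{l14} (iv), but now reading it as an upper bound on the modular of $\nabla u$. First, by the definitions \eqref{45}--\eqref{46} and since $p(x)\le p^+$, $q(x)\ge q^-$, we write
\begin{align*}
k > J(u) \ge \frac{1}{p^+}\int_{\mathcal{A}}|\nabla u|^{p(x)}\,\mathrm{d}x - \frac{1}{q^-}\int_{\mathcal{A}}|u|^{q(x)}\,\mathrm{d}x
= \Big(\frac{1}{p^+}-\frac{1}{q^-}\Big)\int_{\mathcal{A}}|\nabla u|^{p(x)}\,\mathrm{d}x + \frac{1}{q^-}I(u).
\end{align*}
All quantities here are finite because $u\in W_{0,\mathrm{rad}}^{1,p(\cdot)}(\mathcal{A})$ and Corollary \ref{cor1} gives $u\in L^{q(\cdot)}(\mathcal{A})$. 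By $(\mathrm{H^2_{p,q}})$ the coefficient $\frac{1}{p^+}-\frac{1}{q^-}=\frac{q^--p^+}{p^+q^-}$ is positive.

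Next, we drop the term $\frac{1}{q^-}I(u)\ge 0$. This yields the modular bound
\begin{align*}
\int_{\mathcal{A}}|\nabla u|^{p(x)}\,\mathrm{d}x < \frac{p^+q^-k}{q^--p^+}.
\end{align*}
We then pass from the modular to the Luxemburg norm by Proposition \ref{p1}, which gives $\min\{\|\nabla u\|_{p(\cdot),\mathcal{A}}^{p^-},\|\nabla u\|_{p(\cdot),\mathcal{A}}^{p^+}\}<\frac{p^+q^-k}{q^--p^+}$. We split into the cases $\|\nabla u\|_{p(\cdot),\mathcal{A}}\le 1$ (the minimum is the $p^+$-power) and $>1$ (the minimum is the $p^-$-power). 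In each case we take the corresponding root and obtain $\|\nabla u\|_{p(\cdot),\mathcal{A}}<\big(\frac{p^+q^-k}{q^--p^+}\big)^{1/p^\pm}$, which is at most the maximum defining $r_{{\rm ptw},k}$ in \eqref{98}.

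There is no real obstacle beyond bookkeeping. The only care points are keeping the inequality strict and handling $u=0$ and $k\le 0$. For $u=0$ the claim is trivial whenever $k>0$. For $k\le0$, the hypotheses $J(u)<k\le 0$ and $I(u)\ge0$ are incompatible with the displayed lower bound $J(u)\ge 0$, so the statement is vacuous. The substantive input is really just $(\mathrm{H^2_{p,q}})$, which makes the coefficient positive, together with the well-definedness of $J$ and $I$ on the radial space.
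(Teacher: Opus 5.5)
Your proof is correct and is essentially the paper's argument: the lower bound for $J(u)$ from \eqref{39} with the nonnegative term $\frac{1}{q^-}I(u)$ dropped, then Proposition \ref{p1} with the case split at $\|\nabla u\|_{p(\cdot),\mathcal{A}}=1$. You also treat $u=0$ and the vacuous case $k\le 0$ explicitly, which the paper leaves implicit.
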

            \begin{proof}
            From \eqref{39} , we have
              \begin{align*}
                  k > J(u)
                  >& \left( \frac{1}{p^+} - \frac{1}{q^-} \right)
                    \min \left\{
                       \|\nabla u\|_{p (\cdot),\mathcal{A}}^{p^{-}},
                       \|\nabla u\|_{p (\cdot),\mathcal{A}}^{p^{+}}
                       \right\}.
              \end{align*}
            For $\|\nabla u\|_{p (\cdot),\mathcal{A}} \le 1$, we get
                \begin{align*}
                  k > \left( \frac{1}{p^+} - \frac{1}{q^-}
                        \right)
                        \|\nabla u\|_{p (\cdot),\mathcal{A}}^{p^{+}},
                \end{align*}
            which implies that
                \begin{align}\label{19}
                  \|\nabla u\|_{p (\cdot),\mathcal{A}}
                  < \left(
                  \frac{p^+ q^- k}{q^- - p^+} \right)^{\frac{1}{p^+}}.
                \end{align}
            Similarly, for $\|\nabla u\|_{p (\cdot),\mathcal{A}} > 1$, there holds
                \begin{align}\label{27}
                  \|\nabla u\|_{p (\cdot),\mathcal{A}}
                  < \left(
                  \frac{p^+ q^- k}{q^- - p^+} \right)^{\frac{1}{p^-}}.
                \end{align}
            Therefore, (\ref{19}) and (\ref{27}) assert that
            $\|\nabla u\|_{p (\cdot),\mathcal{A}} < r_{{\rm ptw},k}$.
            \end{proof}

            \begin{remark}[On Lemma \ref{l4}]\label{rmk4}
               It is easy to check that, if we take
               $k = \tilde{d}$ (defined by \eqref{38}) in Lemma \ref{l4}, there holds
                $
                  \|\nabla u\|_{p(\cdot),\mathcal{A}}
                < r_{{\rm ptw}}
                $
              for $I(u) > 0$
              (resp. $
                  \|\nabla u\|_{p(\cdot),\mathcal{A}}
                \le r_{{\rm ptw}}
                $ for $I(u) = 0$).
            \end{remark}

    Then, we consider the intersection of $J^k$ and $\mathscr{N}$. Recalling Lemma \ref{l14} (ii), we can derive that
        \begin{align}\label{167}
          \operatorname{dist}(0, \mathcal{N}):=\inf _{u \in \mathcal{N}}\|\nabla u\|_{p (\cdot),\mathcal{A}} \geq r_{{\rm ptw}} >0,
        \end{align}
     where $r_{{\rm ptw}}$ is defined by \eqref{25}. Besides, Lemma \ref{l4} claims that
        \begin{align}\label{168}
           \sup_{u \in \mathcal{N}}\|\nabla u\|_{p (\cdot),\mathcal{A}} < r_{{\rm ptw},k},
        \end{align}
     where $r_{{\rm ptw},k}$ is given by \eqref{98} and is increasing with respect to $k$.
     Moreover, recalling that Lemma \ref{l4} implies $r_{{\rm ptw},k} = r_{{\rm ptw}}$ for $k = \tilde{d}$, see Remark \ref{rmk4}, and Lemma \ref{l14} (iv) tells that $d \ge \tilde{d}$, then for $k > d$ we can deduce that
     $r_{{\rm ptw},k} > r_{{\rm ptw}}$, which together with \eqref{167} and \eqref{168} implies
        $$
        \mathcal{N}_{k}:=\mathcal{N} \cap J^{k} \neq \emptyset.
        $$
    Then we define
        \begin{align*}
          \lambda_{k}:=\inf \left\{\|u\|_{2,\mathcal{A}}: u \in \mathcal{N}_{k}\right\},
          \quad
          \Lambda_{k}:=\sup \left\{\|u\|_{2,\mathcal{A}}: u \in \mathcal{N}_{k}\right\}
        \end{align*}
    and
        $
              \lambda_{\infty}:=\inf \left\{\|u\|_{2,\mathcal{A}}: u \in \mathcal{N}\right\}.
        $
    It is obvious that
        $k \mapsto \lambda_{k}$ is non-increasing, and
        $k \mapsto \Lambda_{k}$ is non-decreasing with respect to the energy level $k$. Then the next lemma show that under the hypotheses $(\mathrm{H_{p,q}})$, $(\mathrm{H^1_{p,q}})$-$(\mathrm{H^2_{p,q}})$ and $(\mathrm{H_{D}})$,
        both $\Lambda_{k}, \lambda_{k}$ are bounded above and below.

    \begin{lemma}\label{l9}
     Assume that $(\mathrm{H_{p,q}})$, $(\mathrm{H^1_{p,q}})$-$(\mathrm{H^2_{p,q}})$ and $(\mathrm{H_{D}})$ hold, then for $k>d$, there holds
                $
                0< \lambda_k \le \Lambda_{k}<\infty.
                $
    \end{lemma}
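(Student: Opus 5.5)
The argument has three parts: the middle inequality is immediate, the upper bound comes from an energy bound on $\mathcal{N}_k$, and the positive lower bound comes from the Nehari identity together with the $L^\infty$ embedding and interpolation.

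\emph{Middle inequality.} Since $k>d$, the discussion preceding the lemma shows $\mathcal{N}_k\neq\emptyset$. Hence the infimum and supremum are taken over a nonempty set, and $\lambda_k\le\Lambda_k$ is trivial.

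\emph{Upper bound $\Lambda_k<\infty$.} Every $u\in\mathcal{N}_k$ satisfies $I(u)=0$ and $J(u)<k$, so Lemma \ref{l4} gives $\|\nabla u\|_{p(\cdot),\mathcal{A}}<r_{{\rm ptw},k}$. Since $p^-\ge2$ and $|\mathcal{A}|<\infty$, Proposition \ref{p4} gives $L^{p(\cdot)}(\mathcal{A})\hookrightarrow L^2(\mathcal{A})$. Combined with the Poincar\'e inequality (Proposition \ref{p5}), this yields
\[
\|u\|_{2,\mathcal{A}}\le C\|\nabla u\|_{p(\cdot),\mathcal{A}}< C\,r_{{\rm ptw},k}.
\]
Taking the supremum over $\mathcal{N}_k$ gives $\Lambda_k\le C\,r_{{\rm ptw},k}<\infty$.

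\emph{Lower bound $\lambda_k>0$.} I will in fact show $\lambda_\infty>0$, which suffices because $\lambda_k\ge\lambda_\infty$. Fix $u\in\mathcal{N}$.
\begin{enumerate}[(a)]
\item By \eqref{167}, $\|\nabla u\|_{p(\cdot),\mathcal{A}}\ge r_{{\rm ptw}}$. Proposition \ref{p1} and the Nehari identity then give
\[
\int_{\mathcal{A}}|u|^{q(x)}\,\mathrm{d}x=\int_{\mathcal{A}}|\nabla u|^{p(x)}\,\mathrm{d}x\ge c_0:=\min\{r_{{\rm ptw}}^{p^-},r_{{\rm ptw}}^{p^+}\}>0.
\]
\item Lemma \ref{l5}(i) together with $\|\nabla u\|_{p(\cdot),\mathcal{A}}<r_{{\rm ptw},k}$ (Lemma \ref{l4}) gives $\|u\|_{\infty,\mathcal{A}}\le M:=C\,r_{{\rm ptw},k}$ on $\mathcal{N}_k$.
\item Since $q(x)\ge q^-\ge 2$, we have pointwise
\[
|u|^{q(x)}\le \|u\|_{\infty,\mathcal{A}}^{q(x)-2}|u|^2\le \max\{1,M^{q^+-2}\}\,|u|^2 .
\]
Integrating yields
\[
c_0\le \max\{1,M^{q^+-2}\}\,\|u\|_{2,\mathcal{A}}^2 .
\]
\end{enumerate}
This gives a positive lower bound on $\|u\|_{2,\mathcal{A}}$ uniform over $\mathcal{N}_k$, and hence $\lambda_k>0$.

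The main obstacle is step (b): the $L^\infty$ bound is uniform only on $\mathcal{N}_k$, not on all of $\mathcal{N}$. So as written the argument bounds $\lambda_k$ directly rather than passing through $\lambda_\infty$, and it produces a $k$-dependent lower bound. That is still enough for the statement. If a bound for $\lambda_\infty$ itself were wanted, I would try an interpolation between $L^2$ and the critical-free bound coming from Lemma \ref{l5}(ii), but that is not needed here.
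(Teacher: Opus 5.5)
Your proof is correct and follows the paper's route. Both arguments rest on the Nehari identity, the radial $L^\infty$ embedding of Lemma \ref{l5}(i), and the two-sided bound $r_{{\rm ptw}}\le\|\nabla u\|_{p(\cdot),\mathcal{A}}<r_{{\rm ptw},k}$ on $\mathcal{N}_k$ from Lemma \ref{l14}(ii) and Lemma \ref{l4}. Your pointwise estimate $|u|^{q(x)}\le\max\{1,M^{q^+-2}\}|u|^2$ is a cleaner form of the paper's $L^2$--$L^\infty$ interpolation of $\|u\|_{q^{\pm},\mathcal{A}}$, and it avoids the case split on whether $\|\nabla u\|_{p(\cdot),\mathcal{A}}\le 1$. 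Drop the opening claim that you will prove $\lambda_\infty>0$: as you note yourself, step (b) is uniform only on $\mathcal{N}_k$.
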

    \begin{proof}
        Firstly, we prove $\Lambda_{k}<\infty$. This assertion is obvious and follows from the fact that for any $u \in \mathcal{N}$ $\|\nabla u\|_{p (\cdot),\mathcal{A}} < r_{{\rm ptw},k}$ according to Lemma \ref{l4} and the embedding $W_{0,\mathrm{rad}}^{1, p(\cdot)}(\mathcal{A}) \hookrightarrow L^{2}(\mathcal{A})$.

        Next, we prove $\lambda_{k}>0$.
        It follows from $I(u) = 0$ that
            \begin{align}\label{101}
              \int_{\mathcal{A}}|\nabla u(x)|^{p(x)} \mathrm{d} x
              = \int_{\mathcal{A}}|u(x)|^{q(x)} \mathrm{d} x
              \le \|u\|_{q^-,\mathcal{A}}^{q^-}
                + \|u\|_{q^+,\mathcal{A}}^{q^+}.
            \end{align}
       Besides, since it follows from Lemma \ref{l5} (i) that $W_{0,\mathrm{rad}}^{1, p(\cdot)}(\mathcal{A}) \hookrightarrow L^{\infty}(\mathcal{A})$ and the following embeddings hold for $q^- >2$
           $$
           L^{\infty}(\mathcal{A}) \hookrightarrow
           L^{q^+}(\mathcal{A}) \hookrightarrow
           L^{q^-}(\mathcal{A}) \hookrightarrow L^{2}(\mathcal{A}),
           $$
       then the $L^p$-interpolation inequality yields
            \begin{align}\label{102}
              \|u\|_{q^{\pm},\mathcal{A}}
                  \leq C \|u\|_{2,\mathcal{A}}^{\theta_{\pm}} \|u\|_{\infty,\mathcal{A}}^{1-\theta_{\pm}}
                  \le C \|u\|_{2,\mathcal{A}}^{\theta_{\pm}} \|\nabla u\|_{p(\cdot),\mathcal{A}}^{1-\theta_{\pm}} ,
            \end{align}
       where $\theta_{+} := \frac{2}{q^+}$, $\theta_{-} := \frac{2}{q^-}$ and $C$ is some positive constant.
       Additionally, we also have
        \begin{align}\label{103}
          \int_{\mathcal{A}}|\nabla u|^{p(x)} \mathrm{d} x
                \ge \min \left\{
                           \|\nabla u\|_{p (\cdot),\mathcal{A}}^{p^{-}},
                           \|\nabla u\|_{p(\cdot),\mathcal{A}}^{p^{+}}
                           \right\}.
        \end{align}
       Then, it follows from \eqref{101}-\eqref{103} that
        \begin{align}\label{104}
          \|u\|_{2,\mathcal{A}}^{2}  \ge
              \begin{cases}
                C\|\nabla u\|_{p(\cdot),\mathcal{A}}^{p^- - q^+ + 2},
                    &  0 < \|\nabla u\|_{p(\cdot),\mathcal{A}} \le 1,\\
                C\|\nabla u\|_{p(\cdot),\mathcal{A}}^{p^+ - q^- + 2},
                    &  1< \|\nabla u\|_{p(\cdot),\mathcal{A}} < \infty.
              \end{cases}
        \end{align}
       Moreover, since $\|\nabla u\|_{p (\cdot),\mathcal{A}}$ is bounded above and below for given $u \in \mathcal{N}_{k}$ according to Lemma \ref{l14} (ii) and Lemma \ref{l4}, we can deduce that
        $$\min \left\{\|\nabla u\|^{p^- - q^+ + 2}_{p (\cdot),\mathcal{A}},\
            \|\nabla u\|^{p^+ - q^- + 2}_{p (\cdot),\mathcal{A}} \right\}\ge C$$
       with some positive constant $C$ only depending on $p^{\pm}, q^{\pm}$ and the best constant of the
       embedding $W_{0,\mathrm{rad}}^{1, p(\cdot)}(\mathcal{A}) \hookrightarrow L^{q(\cdot)}(\mathcal{A})$. Therefore, it follows from \eqref{104} that $\|u\|_{2,\mathcal{A}} >0 $, which claims $\lambda_{k} \ge C> 0$ with the arbitrariness of $u$.
    \end{proof}

    \begin{remark}[On Lemma \ref{l9}]\label{rmk5}
      The key point of the proof of Lemma \ref{l9} is to estimate the positive and uniform lower bound of $\|u\|_{2,\mathcal{A}}$ for any $ u \in \mathcal{N}_k$. Although the classical Gagliardo-Nirenberg interpolation inequality fails with $(\mathrm{H_{D}})$, we can recover the Gagliardo-Nirenberg type
      interpolation inequality by applying the $L^p$ interpolation inequality for $1<q^- \le q^+ <\infty$
      and the continuous embedding
      $W_{0,\mathrm{rad}}^{1, p(\cdot)}(\mathcal{A}) \hookrightarrow L^{\infty}(\mathcal{A})$ ensured by Lemma \ref{l5} (i), cf. \eqref{102}.
    \end{remark}

    Then, according to the proof of Lemma \ref{l9}, we can obtain the following Corollary immediately.

    \begin{corollary}\label{cor2}
      Assume that $(\mathrm{H_{p,q}})$, $(\mathrm{H^1_{p,q}})$-$(\mathrm{H^2_{p,q}})$ and $(\mathrm{H_{D}})$ hold. Suppose further
        \begin{align}\label{180}
          q^{+} < p^- + 2,
        \end{align}
      then for $k>d$, there holds
        $
        \lambda_{\infty} > 0.
        $
    \end{corollary}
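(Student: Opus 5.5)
The plan is to rerun the proof of Lemma \ref{l9} on the whole Nehari manifold $\mathcal{N}$ instead of $\mathcal{N}_k$. The only thing that changes is that we lose the upper bound $\|\nabla u\|_{p(\cdot),\mathcal{A}}<r_{{\rm ptw},k}$ from Lemma \ref{l4}. Assumption \eqref{180} is what should compensate for that loss.

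First I take an arbitrary $u\in\mathcal{N}$. Since $I(u)=0$, estimate \eqref{101} holds verbatim. The interpolation bound \eqref{102} uses only Lemma \ref{l5} (i) and $q^\pm>2$, so it also holds on all of $\mathcal{N}$. Combining these with the modular lower bound \eqref{103} gives the dichotomy \eqref{104}:
\begin{align*}
\|u\|_{2,\mathcal{A}}^{2}\ge C\|\nabla u\|_{p(\cdot),\mathcal{A}}^{p^- - q^+ + 2}\ \text{ if } \|\nabla u\|_{p(\cdot),\mathcal{A}}\le 1,
\qquad
\|u\|_{2,\mathcal{A}}^{2}\ge C\|\nabla u\|_{p(\cdot),\mathcal{A}}^{p^+ - q^- + 2}\ \text{ if } \|\nabla u\|_{p(\cdot),\mathcal{A}}> 1.
\end{align*}
I will recheck the exponent bookkeeping in each regime, since the $L^{q^\pm}$ terms in \eqref{101} mix powers. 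Wherever needed I will bound $\|u\|_{q^\pm,\mathcal{A}}^{q^\pm}$ by $\|u\|_{2,\mathcal{A}}^{2}\|u\|_{\infty,\mathcal{A}}^{q^\pm-2}$ and then $\|u\|_{\infty,\mathcal{A}}$ by $C\|\nabla u\|_{p(\cdot),\mathcal{A}}$.

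Next comes the case analysis, which I expect to be the only real issue.
\begin{enumerate}[(a)]
\item If $\|\nabla u\|_{p(\cdot),\mathcal{A}}\le1$, Lemma \ref{l14} (ii) gives $\|\nabla u\|_{p(\cdot),\mathcal{A}}\ge r_{\rm ptw}>0$. So in this regime the norm lies in $[r_{\rm ptw},1]$, and $\|\nabla u\|^{p^- - q^+ + 2}$ is bounded below by a positive constant whatever the sign of the exponent.
\item If $\|\nabla u\|_{p(\cdot),\mathcal{A}}>1$, there is no upper bound on the norm, so the exponent must be nonnegative to prevent the lower bound from degenerating. To use \eqref{180} here, I would redo the large-norm estimate more carefully, keeping the worst power. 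Both $\|u\|_{q^-,\mathcal{A}}^{q^-}$ and $\|u\|_{q^+,\mathcal{A}}^{q^+}$ are at most $C\|u\|_{2,\mathcal{A}}^{2}\|\nabla u\|_{p(\cdot),\mathcal{A}}^{q^+-2}$, because the norm exceeds $1$. This yields
\begin{align*}
\|u\|_{2,\mathcal{A}}^{2}\ge C\|\nabla u\|_{p(\cdot),\mathcal{A}}^{p^- - q^+ + 2}.
\end{align*}
The exponent $p^- - q^+ + 2$ is strictly positive by \eqref{180}, so the right-hand side is at least $C$ when the norm exceeds $1$.
\end{enumerate}

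Combining the two cases gives $\|u\|_{2,\mathcal{A}}\ge C>0$ uniformly on $\mathcal{N}$, with $C$ depending only on $p^\pm$, $q^\pm$, $r_{\rm ptw}$ and the embedding constant from Lemma \ref{l5} (i). Hence $\lambda_\infty>0$. The role of $k>d$ is only to make sense of the statement, since $\mathcal{N}_k\ne\emptyset$; the bound itself does not depend on $k$.

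The main obstacle is step (b). Read literally, \eqref{104} carries the exponent $p^+-q^-+2$ for large norms, which \eqref{180} does not control. So I must take care to choose the crude bound that produces the exponent $p^- - q^+ + 2$ uniformly in the large-norm regime, rather than trusting \eqref{104} as written.
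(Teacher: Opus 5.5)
Your argument is correct and is essentially the paper's. The paper reruns the proof of Lemma \ref{l9} on all of $\mathcal{N}$, uses Lemma \ref{l14} (ii) when $\|\nabla u\|_{p(\cdot),\mathcal{A}}\le 1$, and relies on the positive exponent from \eqref{180} when $\|\nabla u\|_{p(\cdot),\mathcal{A}}>1$.

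One correction to your commentary: \eqref{180} does control $p^+-q^-+2$, since $p^+-q^-+2\ge p^--q^++2>0$; this is exactly the paper's remark. Your recomputation is still the right move, because \eqref{101}--\eqref{103} actually give the exponent $p^+-q^-+2$ for small norms and $p^--q^++2$ for large norms, so the two exponents in the printed \eqref{104} are interchanged.
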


    \begin{remark}[On Corollary \ref{cor2}]
      The constraint \eqref{180} is set to ensure that
        \begin{align*}
          p^+ - q^- + 2 \ge p^- - q^+ + 2 >0.
        \end{align*}
      Moreover, if the spatial dimension $n$ is appropriately low in the following sense
        \begin{align*}
          1 < \frac{n}{p^-} < \frac{p^+}{2} + 1,
        \end{align*}
      we may deduce that $p^+ + 2 < {p^-}^* = \frac{np^-}{n - p^-}$, in which situation the variable exponent $q(x)$ is of the subcritical regime within $\mathcal{A}$.
    \end{remark}

$\bullet$ {\bf The stationary problem corresponding to \eqref{1}-\eqref{3}.}
Note that any critical point of the energy functional $J(u)$ solves the following stationary problem in the weak distribution sense.
    \begin{align}\label{16}
        \begin{cases}
        -\Delta_{p(x)} u=|u|^{q(x)-2}u, & \text { in } \mathcal{A}, \\
          u(x)=0, & \text { on } \partial \mathcal{A},
        \end{cases}
    \end{align}
where $p(x)$ and $q(x)$ satisfy the hypotheses $(\mathrm{H_{p,q}})$, $(\mathrm{H_D})$ and $(\mathrm{H^1_{p,q}})$-$(\mathrm{H^2_{p,q}})$.
Then, the corresponding solution to \eqref{16} lies in the Nehari manifold $\mathcal{N}$ and the existence of a nontrivial solution to \eqref{16} suffices to show that $\mathcal{N}$ is nonempty, which is ensured by the following Lemma.
It is easy to check that $J(u)$ possesses the mountain-pass geometry and satisfies the Palais-Smale condition, for which the existence of \eqref{16} can be proved by the mountain pass theorem and we give the following Lemma \ref{l11} without proof.

        \begin{lemma}\label{l11}
          Let $(\mathrm{H_{p,q}})$, $(\mathrm{H^1_{p,q}})$-$(\mathrm{H^2_{p,q}})$ and $(\mathrm{H_{D}})$ be satisfied, then the stationary problem \eqref{16} admits a nontrivial solution.
        \end{lemma}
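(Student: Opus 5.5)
The plan is to apply the mountain pass theorem to $J$ restricted to the Banach space $X:=W_{0,\mathrm{rad}}^{1,p(\cdot)}(\mathcal{A})$, normed by $\|\nabla u\|_{p(\cdot),\mathcal{A}}$, and then pass from $X$ back to the whole space by the Principle of Symmetric Criticality.

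\textbf{Step 1: regularity and invariance of $J$.} By Lemma \ref{l5}(i), $X\hookrightarrow L^\infty(\mathcal{A})$. Hence $u\mapsto\int_{\mathcal{A}}|u|^{q(x)}/q(x)\,\mathrm{d}x$ is $C^1$ on $X$, with derivative $\langle |u|^{q(x)-2}u,\psi\rangle$. The gradient part of $J$ is $C^1$ by the standard theory of variable exponent spaces. By $(\mathrm{H^1_{p,q}})$, $J$ is $\mathbf{O}(n)$-invariant on $W_0^{1,p(\cdot)}(\mathcal{A})$ wherever it is finite.

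\textbf{Step 2: mountain pass geometry.} For $\|\nabla u\|_{p(\cdot),\mathcal{A}}=\rho<\min\{1,r_{\rm ptw}\}$, argue exactly as in \eqref{5}:
\[
J(u)\ge \frac{1}{p^+}\rho^{p^+}-\frac{1}{q^-}\delta_{pq}\rho^{q^-}.
\]
Since $q^->p^+$ by $(\mathrm{H^2_{p,q}})$, this is bounded below by some $\alpha>0$ once $\rho$ is small. For any fixed $u\ne0$, Lemma \ref{l8} gives $J(\lambda u)\to-\infty$, which produces a point $e$ with $\|e\|>\rho$ and $J(e)<0$. The mountain pass level $c$ therefore satisfies $c\ge\alpha>0$.

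\textbf{Step 3: Palais--Smale condition on $X$.} This is the main obstacle, because the compactness in the critical and supercritical regions must come entirely from Lemma \ref{l5}(ii) and Corollary \ref{cor1}. Take a sequence $(u_k)\subset X$ with $J(u_k)\to c$ and $J'(u_k)\to0$ in $X^*$.

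First, boundedness. The identity
\[
J(u_k)-\frac{1}{q^-}\langle J'(u_k),u_k\rangle\ge\Big(\frac{1}{p^+}-\frac{1}{q^-}\Big)\int_{\mathcal{A}}|\nabla u_k|^{p(x)}\,\mathrm{d}x
\]
has left side at most $C+o(1)\|u_k\|$. Combined with Proposition \ref{p1}, this bounds $\|\nabla u_k\|_{p(\cdot),\mathcal{A}}$.

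Second, convergence. By reflexivity, pass to a subsequence with $u_k\rightharpoonup u$ in $X$ (note $X$ is closed in $W_0^{1,p(\cdot)}$). By Lemma \ref{l5}(ii), $u_k\to u$ in $L^{q^+}(\mathcal{A})$. Since $u_k$ is also bounded in $L^\infty$, we get
\[
\int_{\mathcal{A}}|u_k|^{q(x)-2}u_k(u_k-u)\,\mathrm{d}x\to0 .
\]
Therefore
\[
\langle-\Delta_{p(x)}u_k,u_k-u\rangle=\langle J'(u_k),u_k-u\rangle+o(1)\to0 .
\]
Here the duality pairing can be taken in $X$, since $u_k-u\in X$. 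The $(S_+)$ property of the $p(x)$-Laplacian on $W_0^{1,p(\cdot)}$ then yields $u_k\to u$ strongly in $X$.

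\textbf{Step 4: conclusion.} The mountain pass theorem on $X$ yields a critical point $u\in X$ of $J|_X$ with $J(u)=c>0$, so $u\neq0$. It remains to show that $u$ solves \eqref{16} against all test functions $\psi\in W_0^{1,p(\cdot)}(\mathcal{A})$, not only radial ones. For a general $\psi$, the map $\psi\mapsto\langle J'(u),\psi\rangle$ is still well defined, because $|u|^{q(x)-1}\in L^\infty$. This map is $\mathbf{O}(n)$-equivariant, so vanishing on the fixed-point space $X=\Sigma_V$ implies vanishing everywhere, by Palais' principle \cite{r57}. One can also check this directly: averaging $\psi$ over $\mathbf{O}(n)$ with Haar measure gives a radial $\bar\psi$ with $\langle J'(u),\psi\rangle=\langle J'(u),\bar\psi\rangle=0$. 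Hence $u$ is a nontrivial weak solution of \eqref{16}.
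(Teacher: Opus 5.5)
Your proposal is correct and takes the same route the paper indicates: the paper gives no proof, only asserting that $J$ has mountain-pass geometry and satisfies the Palais--Smale condition on $W_{0,\mathrm{rad}}^{1,p(\cdot)}(\mathcal{A})$, so the mountain pass theorem applies. Your argument fills in these omitted details, and it also supplies the Haar-averaging step showing that a critical point of $J$ on the radial space satisfies the weak formulation for all test functions in $W_0^{1,p(\cdot)}(\mathcal{A})$, a step the paper leaves implicit.
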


\section{Local existence and uniqueness}\label{sec3}
In this section, we employ the subdifferential technique with G-symmetry introduced in Subsection \ref{sec2.4} to investigate the local existence of the solution to \eqref{1}-\eqref{3} and prove its uniqueness by Gronwall's inequality.

Recalling that the annulus $\mathcal{A}$ is radially symmetric, thus it is invariant under the action of $\mathbf{O}(n)$ (the group of orthogonal transformations of dimension $n$) and compatible with $\mathbf{O}(n)$ in the sense of \eqref{108}, which provides the foundation for our application of the subdifferential technique with G-symmetry.
Compared with the Faedo–Galerkin’s method, the argument of the subdifferential technique in the functional setting suits the generalized Principle of Symmetric Criticality better, which was initially proposed for elliptic variational problems \cite{r56}.
Besides, the subdifferential technique in the reflexive Banach space setting consists of that in the Hilbert space setting, which benefits the proof of the energy identities, cf. Lemma \ref{l7}.

    \begin{theorem}[Local existence]\label{l1}
        Assume that $(\mathrm{H_{p,q}})$, $(\mathrm{H^1_{p,q}})$-$(\mathrm{H^2_{p,q}})$ and $(\mathrm{H_{D}})$ hold, $u_{0} \in W_{0,\mathrm{rad}}^{1, p(\cdot)}\left(\mathcal{A}\right)$, then there exists $T_0>0$ such that problem \eqref{1}-\eqref{3} admits a weak solution $u \in L^{\infty}(0, T_0; W_{0,\mathrm{rad}}^{1, p(\cdot)}\left(\mathcal{A}\right))$ with $u_{t} \in L^{2}\left(0, T_0; L^{2}(\mathcal{A})\right)$.
    \end{theorem}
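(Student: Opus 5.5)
We apply Lemma \ref{l19} with $G=\mathbf{O}(n)$, $V=W_0^{1,p(\cdot)}(\mathcal{A})$ and $H=L^2(\mathcal{A})$, so that $\Sigma_V=W_{0,\mathrm{rad}}^{1,p(\cdot)}(\mathcal{A})$. We take
\begin{align*}
\varphi^1(u):=\int_{\mathcal{A}}\frac{|\nabla u|^{p(x)}}{p(x)}\,\mathrm{d}x,\qquad
\varphi^2(u):=\int_{\mathcal{A}}\frac{|u|^{q(x)}}{q(x)}\,\mathrm{d}x,
\end{align*}
with $\varphi^2(u):=+\infty$ whenever the integral diverges. Both functionals are proper, convex and lower semicontinuous on $V$ (by Fatou's lemma), and both are $\mathbf{O}(n)$-invariant because $p,q$ are radial by $(\mathrm{H^1_{p,q}})$ and $\mathcal{A}$ is invariant. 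We set $\tilde\varphi^2$ to be the same formula on $H$; it lies in $\Phi(H)$ and agrees with $\varphi^2$ on $V$. On $V$ we have $\partial\varphi^1(u)=-\Delta_{p(x)}u$, and $\partial\varphi^2(u)=|u|^{q(x)-2}u$ whenever this belongs to $V^*$. A strong solution of (CP) is therefore a weak solution in the sense of Definition \ref{d1}.

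The checks of the hypotheses of Lemma \ref{l19} go as follows. Condition (i) is immediate: rotations act isometrically on both $L^2$ and $W_0^{1,p(\cdot)}$, since $|\nabla(gu)|=|(\nabla u)\circ g^{-1}|$ and $p$ is radial. Condition (ii) follows from Proposition \ref{p1} with $\gamma=p^-$:
\begin{align*}
\|\nabla u\|_{p(\cdot)}^{p^-}\le \int_{\mathcal{A}}|\nabla u|^{p(x)}\,\mathrm{d}x+1\le p^+\varphi^1(u)+1 .
\end{align*}

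For condition (iii), Lemma \ref{l5}(i) gives, for $u\in D(\varphi^1)\cap\Sigma_V$, that $u\in L^\infty$ with $\|u\|_\infty\le C\|\nabla u\|_{p(\cdot)}$. Hence $|u|^{q(x)-2}u\in L^\infty\subset V^*$, and therefore $D(\varphi^1)\cap\Sigma_V\subset D(\partial\varphi^2)$. Now take radial $u_m$ with $\varphi^1(u_m(t))$ bounded uniformly in $t,m$ and $u_{mt}$ bounded in $L^2(0,T;L^2)$. Then $\{u_m\}$ is bounded in $L^\infty(0,T;\Sigma_V)$, and $\Sigma_V$ embeds compactly into $L^s(\mathcal{A})$ for every finite $s$ by Lemma \ref{l5}(ii). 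Together with equicontinuity in $L^2$ (from $u_{mt}$) and the Aubin--Lions--Simon lemma, we extract a subsequence converging in $C([0,T];L^{s}(\mathcal{A}))$ for every finite $s$. Since $u_m$ is uniformly bounded in $L^\infty$, the map $u\mapsto|u|^{q(x)-2}u$ is locally Lipschitz there. Thus $\xi_m=|u_m|^{q(x)-2}u_m$ converges in $C([0,T];L^{s'})$ with $s'=(p^-)^{*\prime}$, hence in $C([0,T];V^*)$ by the continuous embedding $V\hookrightarrow L^{p^{-*}}$ (Proposition \ref{p6}(i) with the constant exponent $p^-$).

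Condition (iv) is the step I expect to be the main obstacle. The resolvent $J_\lambda^2u=v$ solves $v+\lambda|v|^{q(x)-2}v=u$ pointwise, and $v$ is radial whenever $u$ is. Write $v=h_x(u)$ with $h_x=(I+\lambda|\cdot|^{q(x)-2}\cdot)^{-1}$. This map is odd and $1$-Lipschitz in the variable $s$, since $\partial_s h_x=1/(1+\lambda(q-1)|v|^{q-2})\le1$. The difficulty is the $x$-dependence through $q(x)$: differentiating gives
\begin{align*}
\nabla v=\partial_s h_x(u)\nabla u+\partial_x h_x(u),\qquad \partial_x h_x(u)=-\frac{\lambda|v|^{q-2}v\log|v|\,\nabla q}{1+\lambda(q-1)|v|^{q-2}} .
\end{align*}
The second term requires differentiability of $q$, which only holds in a Lipschitz or $W^{1,\infty}$ sense. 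I would first try to assume or approximate $q$ as Lipschitz, or more cheaply use radial one-dimensionality. Indeed, $v(\rho)=h_\rho(u(\rho))$ depends on the single variable $\rho=|x|$. In that term the factor $\lambda|v|^{q-2}v/(1+\lambda(q-1)|v|^{q-2})$ is bounded by $|v|/(q-1)\le|u|$, and $|\log|v||$ is bounded on the range $|v|\le\|u\|_\infty$, except near $v=0$ where $|v|\,|\log|v||$ stays bounded. This yields $\varphi^1(J^2_\lambda u)\le l_1(\varphi^1(u)+l_2(\|u\|_\infty))$, and $\|u\|_\infty$ is in turn controlled by $\varphi^1(u)$ via Lemma \ref{l5}(i). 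If regularity of $q$ is unavailable, the fallback is to replace $q$ by mollified radial $q_\varepsilon$ and pass to the limit using the compactness above. Once (iv) is verified, Lemma \ref{l19} yields a radial strong solution on $[0,T_0)$ with the regularity \eqref{174}. This gives $u\in L^\infty(0,T_0;W^{1,p(\cdot)}_{0,\mathrm{rad}})$ and $u_t\in L^2(0,T_0;L^2)$. Testing \eqref{176} against an arbitrary $\psi\in V$ gives \eqref{23}, and non-radial test functions are admissible because \eqref{176} holds in $V^*$ (Principle of Symmetric Criticality).
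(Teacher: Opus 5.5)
Your functional setting and your checks of (i)--(iii) match the paper's. The only difference is in (iii): the paper gets compactness from the interpolation estimate \eqref{191} and Arzel\`a--Ascoli in $C([0,T];L^{q(\cdot)}(\mathcal{A}))$ rather than Aubin--Lions--Simon, which is immaterial. The problem is condition (iv). The paper disposes of it in one line. Since $s\mapsto(1+\lambda\beta_x)^{-1}s$ is $1$-Lipschitz for each fixed $x$, it asserts $|\nabla J^2_\lambda u_l|\le|\nabla u_l|$, hence $\varphi^1(J^2_\lambda u)\le\varphi^1(u)$ with $l_1\equiv1$, $l_2\equiv0$. Your objection to exactly this is correct. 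The difference quotient of $v_l=J^2_\lambda u_l$ compares two different maps, $h_{x+h}$ and $h_x$, and the claimed bound fails whenever $q$ is nonconstant. For instance, if $u_l\equiv c\neq0$ on a sub-annulus where $q$ varies, then $\nabla u_l=0$ there while $h_x(c)$ is not constant. So you cannot close the step by reverting to the paper's argument.

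However, your own repair does not prove the theorem as stated either. The estimate $|\nabla v|\le|\nabla u|+|v|\,|\log|v||\,|\nabla q|$ needs $\nabla q$ to exist and be controlled. But the paper only assumes $p,q$ continuous and radial, with the log-H\"older condition imposed on $p$ alone. Passing to $\rho=|x|$ changes nothing, because $\frac{\mathrm d}{\mathrm d\rho}h_\rho(u(\rho))$ still contains $q'(\rho)$.

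In fact, for merely continuous $q$, condition (iv) fails for this $\tilde\varphi^2$. Suppose $u\in\Sigma_V$ equals a constant $c>1+\lambda$ on a sub-annulus. Then $v=J^2_\lambda u>1$ there, and $v=\Phi(q(x))$ with $\Phi\in C^1$ and $\Phi'=-\lambda v^{q-1}\log v/(1+\lambda(q-1)v^{q-2})<0$. So $v\in W^{1,1}$ there would force $q=\Phi^{-1}\circ v\in W^{1,1}$, which fails, e.g., for a nowhere differentiable radial profile. Hence $J^2_\lambda u\notin D(\varphi^1)$, and Lemma \ref{l19} cannot be applied directly.

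Your ``fallback'' is therefore the whole proof, and the proposal does not carry it out. Approximate $q$ by radial Lipschitz $q_\varepsilon$ with $p^+<q^-\le q_\varepsilon\le q^+$. You then need two things.
\begin{enumerate}[(a)]
\item An existence time and bounds independent of $\varepsilon$. Lemma \ref{l19} only asserts some $T_0>0$, and your $l_1$ grows with $\|\nabla q_\varepsilon\|_\infty$. You can obtain them by testing with $u_t$ and using Lemma \ref{l5}(i): $\frac{\mathrm d}{\mathrm dt}\varphi^1(u_\varepsilon)+\frac12\|\partial_t u_\varepsilon\|_{2,\mathcal{A}}^2\le C\bigl(1+\varphi^1(u_\varepsilon)\bigr)^{2(q^+-1)/p^-}$, with $C$ independent of $\varepsilon$.
\item A limit passage. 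Your compactness handles $|u_\varepsilon|^{q_\varepsilon-2}u_\varepsilon$, via a.e.\ convergence plus the uniform $L^\infty$ bound. The weak-$*$ limit of $-\Delta_{p(x)}u_\varepsilon$ in $L^\infty(0,T_0;V^*)$ must still be identified, by Minty's monotonicity argument.
\end{enumerate}
Alternatively, add the hypothesis that $q$ is Lipschitz; under that hypothesis your verification of (iv) goes through.
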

    \begin{proof}
      We tend to apply Lemma \ref{l19} to prove the local existence of the solution to \eqref{1}-\eqref{3}.
      To start this proof, we check first that IBVP \eqref{1}-\eqref{3} can be reduced to the abstract Cauchy problem (CP).
      To this end, we set $V=W_{0}^{1, p(\cdot)}(\mathcal{A})$ (so that $\Sigma_{V}=W_{0,\mathrm{rad}}^{1, p(\cdot)}(\mathcal{A})$, see \eqref{9}) and $H=L^{2}(\mathcal{A})$ endowed with the norms $\|u\|_{V}:=\|\nabla u\|_{p(\cdot),\mathcal{A}}$ and $|u|_{H}:=\|u\|_{2,\mathcal{A}}$, respectively.
      Obviously, there holds $V \subset H \equiv H^{*} \subset V^{*}$.
      Then, considering the components of the energy functional,
      let $\varphi^{1}$ and $\varphi^{2} \in \Phi_{G}(V)$ be defined as
        \begin{align*}
          \varphi^{1}(u):= \int_{\mathcal{A}} \frac{|\nabla u(x)|^{p(x)}}{p(x)} \mathrm{d} x,
          \quad
          \varphi^{2}(u):= \int_{\mathcal{A}} \frac{|u(x)|^{q(x)}}{q(x)}  \mathrm{d} x.
        \end{align*}
      Since $\varphi^{1}(u),\varphi^{2}(u)$ are Fr\'{e}chet differentiable, the subdifferentials of $\varphi^{1}(u)$ and $\varphi^{2}(u)$ coincide with their Fr\'{e}chet derivatives, and are both single valued given by
        \begin{align*}
          \partial \varphi^{1}(u) = -\Delta_{p(x)} u,
          \quad
          \partial \varphi^{2}(u) = |u|^{q(x)-2} u
        \end{align*}
      in the sense of distribution, where $D(\varphi^{1})=D(\partial \varphi^{1})=V$
      and $D(\varphi^{2})=D(\partial \varphi^{2})=V \cap L^{q(\cdot)}(\mathcal{A})$, respectively. Therefore, IBVP \eqref{1}-\eqref{3} can be reduced to the abstract Cauchy problem (CP). In other words, a $G$-symmetric solution of (CP) with $G=\mathbf{O}(n)$, $V=W_{0}^{1, p(\cdot)}(\mathcal{A})$, $\Sigma_{V}=W_{0,\mathrm{rad}}^{1, p(\cdot)}(\mathcal{A})$ and $H=L^{2}(\mathcal{A})$ is exactly the radially symmetric weak solution of \eqref{1}-\eqref{3}.

      Then, we need to check conditions (i)-(iv) of Lemma \ref{l19} one by one.
      At the end of Subsection \ref{sec2.4}, we have obtained the validity of \eqref{177}, i.e., condition (i) of Lemma \ref{l19} holds. Then, it remains to check (ii)-(iv) of Lemma \ref{l19}.
      We check the validity of the condition (ii) of Lemma \ref{l19} first. Note that for any $u\in W_0^{1,p(\cdot)}(\mathcal{A})$, there holds
      $\|u\|_{p(\cdot),\mathcal{A}}^{p^{-}} \le \|u\|_{p(\cdot),\mathcal{A}}^{p^{+}} + 1$, then it follows from Proposition \ref{p1} that
        \begin{align}\label{121}
        \begin{aligned}
          \varphi^{1}(u)
          \ge \frac{1}{p^+} \int_{\mathcal{A}} |\nabla u(x)|^{p(x)} \mathrm{d} x
          \ge \frac{1}{p^+} \min \left\{\|\nabla u\|_{p(\cdot),\mathcal{A}}^{p^{-}},
                \|\nabla u\|_{p(\cdot),\mathcal{A}}^{p^{+}}\right\}
          \ge \frac{1}{p^+} \left( \|\nabla u\|_{p(\cdot),\mathcal{A}}^{p^{-}} - 1 \right),
        \end{aligned}
        \end{align}
      which implies that \eqref{120} is satisfied with $\gamma =p^-, C_1 = 0, C_2 =1$ and $C_3 = p^+$.

      Then, we check condition (iii) of Lemma \ref{l19}. Since $D(\varphi^{1})=V$ and $\Sigma_{V} = W_{0,\mathrm{rad}}^{1, p(\cdot)}(\mathcal{A})$, then it follows from Corollary \ref{cor1} that
      $D\left(\varphi^{1}\right) \cap \Sigma_{V} \subset D\left(\partial \varphi^{2}\right)$.
      Then, let $\{u_{m}\}$ be a $G$-invariant sequence such that
        $$
        \sup _{t \in[0, T]}\left\{\varphi^{1}\left(u_{m}(t)\right)+\left|u_{m}(t)\right|_{H}\right\}<C  \quad \text{and} \quad
        \int_{0}^{T}\left|u_{mt}\right|_{H}^{2} d t<C,
        $$
      which together with \eqref{121} implies that
        \begin{equation}\label{122}
        \sup _{t \in[0, T]}\|u_m(t)\|_{V}
        =\sup _{t \in[0, T]}\|\nabla u_m(t)\|_{p(\cdot),\mathcal{A}} <C,
        \end{equation}
      namely the sequence $\{u_{m}\}$ is uniformly bounded in $V = W_{0}^{1, p(\cdot)}(\mathcal{A})$ for every fixed $t \in [0,T]$.
      Thus Corollary \ref{cor1} yields that $\{u_{m}(t)\}$ is compact in $L^{q(\cdot)}(\mathcal{A})$ for $t \in[0, T]$. Additionally, since $2<q^+<\infty$, we can apply the $L^p$ interpolation inequality, \eqref{122} and Lemma \ref{l5} to obtain that for every fixed $s,t \in (0,T)$
        \begin{align}\label{191}
        \begin{aligned}
        \|u_{m}(t)-u_{m}(s)\|_{q^+,\mathcal{A}}
        & \leq C \|u_{m}(t)-u_{m}(s)\|_{2,\mathcal{A}}^{\theta_+}
            \|u_{m}(t)-u_{m}(s)\|_{\infty,\mathcal{A}}^{1-\theta_+} \\
        & \leq C \|u_{m}(t)-u_{m}(s)\|_{2,\mathcal{A}}^{\theta_+}
            \|\nabla (u_{m}(t)-u_{m}(s))\|_{p(\cdot),\mathcal{A}}^{1-\theta_+} \\
        & \leq C|t-s|^{\frac{\theta_+}{2}}\left(\int_{0}^{T} |u_{mt}(\tau)|_{H}^{2} \mathrm{d} \tau \right)^{ \frac{\theta_+}{2}}
        \end{aligned}
        \end{align}
      for $\theta_+ = \frac{2}{q^+}$, which verifies the equi-continuity of $\{u_m\}$ in
      $C\left([0, T] ; L^{q(\cdot)}(\mathcal{A})\right)$. Therefore, combining the uniform boundedness and the equi-continuity of $\{u_m\}$ given by \eqref{122} and \eqref{191}, respectively,
      the Arzela-Ascoli theorem implies that there exists a limit function $u$ and a convergent subsequence of $\{u_m\}$, i.e.,
      $u_{m_\nu} \in C\left([0, T] ; L^{q(\cdot)}(\mathcal{A})\right)$ ($m=1,2,\cdots$) such that $\left|u_{m_\nu}\right|^{q(x)-2} u_{m_\nu} \rightarrow|u|^{q(x)-2} u$ strongly in $C\left([0, T] ; L^{q^{\prime}(\cdot)}(\mathcal{A})\right)$, where $q'(x) = \frac{q(x)}{q(x)-1}$ is the H\"{o}lder conjugate of $q(x)$ and $\partial \varphi^{2}(u_{m_\nu}) \rightarrow \partial \varphi^{2}(u)$ strongly in $C\left([0, T] ; V^{*}\right)$.

      Finally, we verify the condition (iv) of Lemma \ref{l19}. To this end, we define
        \begin{align}\label{182}
        \begin{aligned}
           \tilde{\varphi}^{2}(u):=
           \begin{cases}
             \varphi^{2}(u), &u \in L^{q(\cdot)}(\mathcal{A}),\\
             +\infty, &u \in H \backslash L^{q(\cdot)}(\mathcal{A}).
           \end{cases}
        \end{aligned}
        \end{align}
      One can easily check that $\varphi^{2}=\tilde{\varphi}^{2}$ on $V$ and $\partial_{H} \tilde{\varphi}^{2}(u)=$ $|u|^{q(x)-2} u$ with $D\left(\partial_{H} \tilde{\varphi}^{2}\right)=L^{2}(\mathcal{A}) \cap L^{2(q(\cdot)-1)}(\mathcal{A})$, where $\partial_{H} \tilde{\varphi}^{2}(u)$ is defined by \eqref{190}.
      Then, for any $u \in D\left(\varphi^{1}\right)=V$, we can select a sequence of
      $u_{l} \in C_{0}^{\infty}(\mathcal{A})$ such that $u_{l} \rightarrow u$ in $V$ as $l \rightarrow \infty$ (such sequence can be found due to the density of $C_{0}^{\infty}(\mathcal{A})$ in $W_0^{1,p(\cdot)}(\mathcal{A})$). Let $\lambda>0$ and $v_{l}=J_{\lambda}^{2} u_{l}$, i.e., $v_{l}+\lambda\left|v_{l}\right|^{q(x)-2} v_{l}=u_{l}$. Moreover, let
      $\beta_x(s):=|s|^{q(x)-2} s$ for every fixed $x \in \mathcal{A}$, then we can deduce that
        \begin{equation}\label{123}
        v_{l}(x)=(1+\lambda \beta_x)^{-1} u_{l}(x), \quad\forall x \in \mathcal{A}.
        \end{equation}
      Then, one can observe from the definition of $\beta_x(s)$ that $(1+\lambda \beta_x)^{-1}(s) \in C^{1}(\mathbb{R})$ for every fixed $x\in \mathcal{A}$ and is non-expansive, i.e., Lipschitz continuous with the corresponding Lipschitz constant $L = 1$. Therefore, it follows from \eqref{123} that $v_{l} \in C^{1}(\mathbb{R})$ and
        $$
        \left|\frac{v_{l}(x+h)-v_{l}(x)}{h}\right| \leq\left|\frac{u_{l}(x+h)-u_{l}(x)}{h}\right| \quad\forall x \in \mathcal{A},\forall h \in \mathbb{R}^{n} \text { with } x+h \in \mathcal{A},
        $$
     which implies that $\left|\nabla v_{l}(x)\right| \leq\left|\nabla u_{l}(x)\right|$ for all $x \in \mathcal{A}$ and
        \begin{align}\label{124}
          \varphi^{1}\left(v_{l}\right) \leq \varphi^{1}\left(u_{l}\right).
        \end{align}
     Since $v_{l} \rightarrow v=J_{\lambda}^{2} u$ in $H$ as $l \rightarrow \infty$, there holds
        \begin{align}\label{125}
          \varphi^{1}\left(J_{\lambda}^{2} u\right) \leq \liminf \limits_{l \rightarrow \infty} \varphi^{1}\left(v_{l}\right).
        \end{align}
    Additionally, it follows from \eqref{124}, \eqref{125} and the fact $\lim \limits_{l\rightarrow \infty} \varphi^{1}\left(u_{l}\right)=\varphi^{1}(u)$ that
        $$\varphi^{1}\left(J_{\lambda}^{2} u\right) \leq \varphi^{1}(u).$$
    Moreover if $u$ is $G$-invariant, then it easily follows from \eqref{123} with $u_{l}, v_{l}$ replaced by $u, v$ that $v$ is also $G$-invariant, namely $J_{\lambda}^{2} u \in \Sigma_{V}$, provided that
    $u \in \Sigma_{V}$. Therefore, (iv) of Lemma \ref{l19} is satisfied with $l_{1} \equiv 1$ and $l_{2} \equiv 0$.
    \end{proof}

A precise continuation criterion for the local weak solution, together with the corresponding blow-up alternative, will be established in Subsection \ref{sec4.1} after deriving the energy identity.

Next, we give the uniqueness of the solution by the Sobelev embedding $W_{0,\mathrm{rad}}^{1, p(\cdot)}(\mathcal{A}) \hookrightarrow L^{\infty}(\mathcal{A})$ given by Lemma \ref{l5} (i).
Although the proof of the uniqueness result is not difficult, we need to stress again that the proof of the following Theorem \ref{t1} benefits from Lemma \ref{l5} (i) with the radially symmetric setting of the present paper, for which we don't have to add the restriction that the spatial dimension is low in sense that $p(x)\ge n$ (in which case the corresponding critical Sobolev exponent $p^*(x) = \infty$, cf. \eqref{165}) or the initial data is of higher integrability that $u_0 \in L^{\infty}(\Omega)$. For the uniqueness results obtained with these mentioned assumptions, we refer the readers to Theorem 1 \cite{r22} for the constant exponent case with $p > n$, and to Theorem 2.3 \cite{r39} for the variable exponent case with
$u_0 \in W_{0}^{1, p(\cdot)}(\Omega) \cap L^{\infty}(\Omega)$.

\begin{theorem}[Uniqueness]\label{t1}
      Let $(\mathrm{H_{p,q}})$, $(\mathrm{H^1_{p,q}})$-$(\mathrm{H^2_{p,q}})$ and $(\mathrm{H_{D}})$ be satisfied and $u_{0} \in W_{0,\mathrm{rad}}^{1, p(\cdot)}\left(\mathcal{A}\right)$, then problem \eqref{1}-\eqref{3} permits at most one weak solution.
    \end{theorem}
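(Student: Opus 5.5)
We argue by contradiction via an energy estimate on the difference of two solutions, closed by Gronwall's inequality. Let $u$ and $v$ be two weak solutions of \eqref{1}-\eqref{3} on $\mathcal{A}\times(0,T)$ with the same initial data $u_0 \in W_{0,\mathrm{rad}}^{1, p(\cdot)}(\mathcal{A})$. Set $w:=u-v$. By Definition \ref{d1}, both $u,v \in L^{\infty}(0,T;W_{0,\mathrm{rad}}^{1, p(\cdot)}(\mathcal{A}))$ and $u_t,v_t\in L^2(0,T;L^2(\mathcal{A}))$. Hence $w(t)\in W_0^{1,p(\cdot)}(\mathcal{A})$ is an admissible test function in \eqref{23} for a.e. $t$. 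Subtracting the two identities \eqref{23} and testing with $\psi=w(t)$ gives, for a.e. $t\in(0,T)$,
\begin{align*}
\frac12\frac{\mathrm d}{\mathrm dt}\|w\|_{2,\mathcal{A}}^2
+\int_{\mathcal{A}}\left(|\nabla u|^{p(x)-2}\nabla u-|\nabla v|^{p(x)-2}\nabla v\right)\cdot\nabla w\,\mathrm dx
=\int_{\mathcal{A}}\left(|u|^{q(x)-2}u-|v|^{q(x)-2}v\right)w\,\mathrm dx .
\end{align*}
The time derivative is justified because $w\in W^{1,2}(0,T;L^2(\mathcal{A}))$, so $t\mapsto\|w(t)\|_{2,\mathcal{A}}^2$ is absolutely continuous with derivative $2(w_t,w)$.

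The diffusion term is nonnegative by the monotonicity of $\xi\mapsto|\xi|^{p(x)-2}\xi$ at each fixed $x$. In fact it is bounded below by $2^{2-p^+}\int_{\mathcal{A}}|\nabla w|^{p(x)}\,\mathrm dx\ge 0$ since $p(x)\ge2$, although we only need nonnegativity, so we drop it.

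The key step is the source term, and this is where Lemma \ref{l5} (i) enters. Since $u,v\in L^\infty(0,T;W_{0,\mathrm{rad}}^{1, p(\cdot)}(\mathcal{A}))$, estimate \eqref{105} gives a constant $M$ with
\begin{align*}
\sup_{t\in(0,T)}\left(\|u(t)\|_{\infty,\mathcal{A}}+\|v(t)\|_{\infty,\mathcal{A}}\right)\le C\sup_{t\in(0,T)}\left(\|\nabla u(t)\|_{p(\cdot),\mathcal{A}}+\|\nabla v(t)\|_{p(\cdot),\mathcal{A}}\right)=:M<\infty .
\end{align*}
For each fixed $x$, the mean value theorem applied to $s\mapsto|s|^{q(x)-2}s$ yields
\begin{align*}
\left||u|^{q(x)-2}u-|v|^{q(x)-2}v\right|\le (q(x)-1)\max\{|u|,|v|\}^{q(x)-2}|w|\le (q^+-1)\max\{1,M\}^{q^+-2}|w| .
\end{align*}
This uses $q(x)\ge q^-\ge2$, so the function is $C^1$ with derivative $(q(x)-1)|s|^{q(x)-2}$. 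Therefore the right-hand side is bounded by $L\|w\|_{2,\mathcal{A}}^2$, where $L:=(q^+-1)\max\{1,M\}^{q^+-2}$ is independent of $t$. We therefore obtain $\frac{\mathrm d}{\mathrm dt}\|w\|_{2,\mathcal{A}}^2\le 2L\|w\|_{2,\mathcal{A}}^2$ for a.e. $t$, together with $w(0)=0$. Gronwall's inequality then gives $\|w(t)\|_{2,\mathcal{A}}=0$ on $[0,T)$, that is, $u\equiv v$.

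The main obstacle is the uniform $L^\infty$ bound in the source term. Without the radial embedding of Lemma \ref{l5} (i), the multiple-regime hypothesis $(\mathrm{H_D})$ would prevent controlling the locally Lipschitz nonlinearity by the $L^2$ norm of $w$. With that bound, the remaining steps are routine. We will also note that the initial condition $w(0)=0$ is meaningful in $L^2$, because $u,v\in C([0,T];L^2(\mathcal{A}))$ follows from $u_t,v_t\in L^2(0,T;L^2(\mathcal{A}))$.
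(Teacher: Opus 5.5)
Your proof is correct and follows essentially the same route as the paper: subtract the weak formulations and test with $w$, discard the nonnegative $p(x)$-Laplacian monotonicity term (using $p^-\ge 2$), bound the source difference by $C\|w\|_{2,\mathcal{A}}^2$ via the mean value theorem and the radial $L^\infty$ embedding of Lemma \ref{l5} (i), and close with Gronwall's inequality. Your version is slightly more explicit than the paper's, since it gives the Lipschitz constant $(q^+-1)\max\{1,M\}^{q^+-2}$ and justifies both the time derivative of $\|w\|_{2,\mathcal{A}}^2$ and the meaning of $w(0)=0$ in $L^2$.
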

    \begin{proof}
    Arguing by contradiction,
    we suppose that there are two solutions $u_1,u_2$ of problem \eqref{1}-\eqref{3} with $u_1(\cdot,0)=u_2(\cdot,0) =u_0$. Let $w := u_2 - u_1$, then we shall check that $u_1 \equiv u_2$, in other words, $w \equiv 0$.
    Substituting $u_2$ and $u_1$ into Eq. \eqref{1}, respectively, and subtracting these two equations.
    Then, by testing the both sides of the subtraction by $w$ and integrating by parts, we get
        \begin{align}\label{119}
        \begin{aligned}
          &\frac{1}{2} \frac{\mathrm{d}}{\mathrm{d} t} \| w \|_{2,\mathcal{A}}^{2}
            + (|\nabla u_2|^{p(x)-2} \nabla u_2 - |\nabla u_1|^{p(x)-2} \nabla u_1,\nabla w)_{\mathcal{A}}\\
          =& \left(|u_2|^{q(x)-2} u_2 - |u_1|^{q(x)-2} u_1, w \right)_{\mathcal{A}}.
        \end{aligned}
        \end{align}
    It follows from the elementary equality (cf. Lemma 2.1, \cite{p11})
        \begin{align*}
           (|\mathbf{a}|^{p-2} \mathbf{a} - |\mathbf{b}|^{p-2} \mathbf{b}) \cdot(\mathbf{a}-\mathbf{b})
           \geq 2^{-p} |\mathbf{a}-\mathbf{b}|^p, \quad \forall \mathbf{a}, \mathbf{b} \in \mathbb{R}^N,\ \forall p \geq 2
        \end{align*}
    and $p^- \ge 2$ that $(|\nabla u_2|^{p(x)-2} \nabla u_2 - |\nabla u_1|^{p(x)-2} \nabla u_1,\nabla w)_{\mathcal{A}} \ge 0$.
    Then we deal with the term on the right hand side of \eqref{119}. By the mean value theorem, Holder's inequality and the embeddings
        $$W_{0,\mathrm{rad}}^{1, p(\cdot)}(\mathcal{A}) \hookrightarrow L^{\infty}(\mathcal{A})
        \hookrightarrow L^{q(\cdot)}(\mathcal{A})$$
    ensured by Lemma \ref{l5} (i), we obtain
        \begin{align}\label{110}
        \begin{aligned}
          &\left(|u_2|^{q(x)-2} u_2 - |u_1|^{q(x)-2} u_1, w \right)_{\mathcal{A}}\\
          \le&  (q^+ - 1) \int_{\mathcal{A}} \left(\int_0^1|s u_2 + (1-s) u_1|^{q(x)-2} \mathrm{~d} s \right) |w|^2 \mathrm{d} x\\
          \le&  (q^+ - 1) \int_{\mathcal{A}} \left(|u_2|^{q(x)-2} + |u_1|^{q(x)-2}  \right) |w|^2 \mathrm{d} x\\
          \le& C \|w\|_{2,\mathcal{A}}^2,
        \end{aligned}
        \end{align}
    with some positive constant $C$. Then gathering \eqref{119}-\eqref{110}, we obtain
        \begin{align*}
          \frac{\mathrm{d}}{\mathrm{d} t} \| w \|_{2,\mathcal{A}}^{2}  \le& C \|w\|_{2,\mathcal{A}}^2
        \end{align*}
    and the uniqueness of the solution follows from a straight application of Gronwall's inequality.
    \end{proof}

In view of Theorems \ref{l1} and \ref{t1}, the local weak solution can be uniquely continued to a maximal interval $[0,T_{\max})$.

\section{Global well-posedness}\label{sec33}
In this section, based on the local well-posedness of the solution obtained in Section \ref{sec3} and the recovered compact embedding, i.e., Corollary \ref{cor1} for radially symmetric functions of $W_{0,\mathrm{rad}}^{1, p(\cdot)}(\mathcal{A})$, we tend to apply the potential well theory established on the annulus $\mathcal{A}$ (introduced in Subsection \ref{sec2.3}) to investigate the effect of the initial data on the global well-posedness of the solution, including the global-in-time existence and finite-time blow-up of the solution.

\subsection{Energy identities and invariant manifolds}\label{sec4.1}
In this subsection, we introduce the energy identities of \eqref{1}-\eqref{3} firstly, and then derive several invariant manifolds of the solution generated by initial data of different energy levels, utilized to prove our main results concerning the global well-posedness of the solution in Subsections \ref{sec4}-\ref{sec5}.

In order to prove the invariant manifolds of the solution, in the following Lemma \ref{l7}
we introduce two energy identities of \eqref{1}-\eqref{3} associated with $J(u)$ and $I(u)$, respectively by applying the subdifferential technique in the Hilbert space setting \cite{r23} and the Lebesgue differentiation theorem.
We emphasize here that the energy identity \eqref{47} given below can not be obtained by taking the test function $\psi = u_t$ in \eqref{23} directly, since higher regularity
        $u_t \in L^{\infty}( 0,T_{\max}; W_{0,\mathrm{rad}}^{1, p(\cdot)}(\mathcal{A}) \cap L^{q(\cdot)}(\mathcal{A}) )$
is required to ensure that the integration make sense, which is unknown and we only have $u_{t} \in L^{2}\left(0, T_{\max}; L^{2}(\mathcal{A})\right)$.

    \begin{lemma}[Energy identities]\label{l7}
    Assume that $(\mathrm{H_{p,q}})$, $(\mathrm{H^1_{p,q}})$-$(\mathrm{H^2_{p,q}})$ and $(\mathrm{H_{D}})$ hold. Let $u$ be a weak solution to problem \eqref{1}-\eqref{3} defined on the interval $(0,T_{\max})$, then both $J(u(t))$ and $I(u(t))$ are absolutely continuous in $t$, and the following two energy identities hold for a.e. $t \in (0, T_{\max})$,
         \begin{align} \label{47}
          \frac{\mathrm{d}}{\mathrm{d} t} J(u(t))
          = -\left\|u_t(t)\right\|_{2,\mathcal{A}}^{2}
        \end{align}
    and
        \begin{align}\label{48}
            \frac{\mathrm{d}}{\mathrm{d} t} \left\|u(t)\right\|_{2,\mathcal{A}}^{2}
          = -2 I(u(t)).
        \end{align}
    \end{lemma}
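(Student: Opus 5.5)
The plan is to treat the two identities separately, using the subdifferential structure of Lemma \ref{l19} for \eqref{47} and a direct testing argument for \eqref{48}.

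\textbf{The identity \eqref{48}.} This one is easy, since $u$ itself is an admissible test function.
\begin{enumerate}[(1)]
\item By Definition \ref{d1}, $u(t)\in W_{0,\mathrm{rad}}^{1,p(\cdot)}(\mathcal{A})$ for a.e.\ $t$. So $\psi=u(t)$ may be taken in \eqref{23}. By Lemma \ref{l5}(i) and Corollary \ref{cor1}, the source pairing $(|u|^{q(x)-2}u,u)=\int_{\mathcal{A}}|u|^{q(x)}\,\mathrm{d}x$ is finite.
\item Since $u\in W^{1,2}(0,T;L^2(\mathcal{A}))$, the map $t\mapsto\|u(t)\|_{2,\mathcal{A}}^2$ is absolutely continuous, with derivative $2(u_t,u)$ a.e.
\item This gives $\frac{\mathrm d}{\mathrm dt}\|u\|_{2,\mathcal{A}}^2=-2I(u(t))$ directly.
\end{enumerate}

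\textbf{The identity \eqref{47}.} Here $u_t$ cannot be used as a test function, so I would write $J=\varphi^1-\varphi^2$ and differentiate each piece along the trajectory separately.

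For $\varphi^2$:
\begin{enumerate}[(1)]
\item $\varphi^2$ is $C^1$ on $\Sigma_V$, and the solution is bounded in $L^\infty(\mathcal{A})$ uniformly on $[0,T]$ by Lemma \ref{l5}(i) and $\sup_t\varphi^1(u(t))<\infty$.
\item Hence $\partial\varphi^2(u)=|u|^{q(x)-2}u\in L^\infty(0,T;L^2(\mathcal{A}))$.
\item The chain rule in $H$ then gives, for a.e.\ $t$,
\[
\frac{\mathrm d}{\mathrm dt}\varphi^2(u(t))=(|u|^{q(x)-2}u,u_t).
\]
This uses the mean value theorem for the difference quotient together with dominated convergence, controlled by the $L^\infty$ bound.
\end{enumerate}

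For $\varphi^1$, which is the main obstacle, I would use the Hilbert space chain rule for subdifferentials (Brezis' lemma, as in \cite{r23}).
\begin{enumerate}[(1)]
\item Let $\tilde\varphi^1$ be the extension of $\varphi^1$ to $H$ by $+\infty$ off $V$. It is convex, lower semicontinuous and proper on $H$.
\item From \eqref{176}, $\xi^1=\xi^2-u_t$. Since $\xi^2=|u|^{q(x)-2}u\in L^2(0,T;H)$ and $u_t\in L^2(0,T;H)$, we get $\xi^1(t)\in H$ a.e.
\item Therefore $\xi^1(t)\in\partial_H\tilde\varphi^1(u(t))$ a.e., with $\xi^1\in L^2(0,T;H)$.
\item The lemma then yields that $\tilde\varphi^1(u(\cdot))$ is absolutely continuous, with $\frac{\mathrm d}{\mathrm dt}\varphi^1(u)=(\xi^1,u_t)$ a.e.
\end{enumerate}
The delicate point is the identification $\partial_V\varphi^1(u)\cap H\subset\partial_H\tilde\varphi^1(u)$. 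I expect this to follow from the subdifferential inequality \eqref{190}, tested with $w\in V$ and trivially valid for $w\notin V$. The derivative computation itself would be done via the Lebesgue differentiation theorem applied to the difference quotients of the convexity inequality.

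\textbf{Conclusion.} Subtracting the two derivatives gives
\[
\frac{\mathrm d}{\mathrm dt}J(u)=(\xi^1-\xi^2,u_t)=-\|u_t\|_{2,\mathcal{A}}^2 .
\]
Absolute continuity of $J(u(t))$ follows from that of both pieces, and absolute continuity of $I(u(t))$ is then obtained from the weak formulation. Extending from each $[0,T]\subset[0,T_{\max})$ covers the whole interval.
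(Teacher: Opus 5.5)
Your proofs of \eqref{48} and \eqref{47} are correct and follow the paper's route. For \eqref{48} the paper tests \eqref{23} with $\psi=u$. For \eqref{47} it recasts the problem as $u'+\partial\tilde\varphi^1(u)-\partial\tilde\varphi^2(u)\ni 0$ in $H$ and quotes Theorem 3.3 and Lemmas 2.6--2.7 of \cite{r23}. You instead unpack that citation into Brezis' chain rule for $\tilde\varphi^1$ plus a direct mean-value computation for $\varphi^2$. This makes explicit two facts the citation silently needs: $\xi^1=\xi^2-u_t\in L^2(0,T;H)$, which uses the $L^\infty$ bound of Lemma \ref{l5}(i), and $\partial_V\varphi^1(u)\cap H\subset\partial_H\tilde\varphi^1(u)$.

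The step that fails is your last claim, that absolute continuity of $I(u(t))$ follows from the weak formulation. The weak formulation only identifies $I(u(t))$ with $-(u_t(t),u(t))_{\mathcal{A}}$ for a.e.\ $t$. Since $u_t$ is merely in $L^2(0,T;L^2(\mathcal{A}))$, this is just an $L^2(0,T)$ function, not an absolutely continuous one. Absolute continuity also does not transfer from $\varphi^1(u(t))$. With variable $p$, $\int_{\mathcal{A}}|\nabla u|^{p(x)}\,\mathrm{d}x$ is not a multiple of $\varphi^1(u)$; it would be for constant $p$. Running Brezis' lemma on it directly would require $-\mathrm{div}(p(x)|\nabla u|^{p(x)-2}\nabla u)\in L^2(0,T;H)$, which is not available for a merely log-H\"older $p$. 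The paper's proof does not establish this part of the statement either.

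What is used later is only continuity of $t\mapsto I(u(t))$, in the first-hitting-time arguments of Lemmas \ref{l17}, \ref{l10} and Theorem \ref{t4}. That does follow from what you proved:
\begin{enumerate}[(1)]
\item $\int_{\mathcal{A}}|u(t)|^{q(x)}\,\mathrm{d}x$ is continuous by your $L^\infty$/mean-value estimate.
\item $\varphi^1(u(t))$ is continuous by Brezis' lemma. Hence $u$ is bounded in $V$, and, being continuous in $H$, it is weakly continuous in $V$.
\item Use the pointwise Clarkson inequality $|\tfrac{a-b}{2}|^{p}+|\tfrac{a+b}{2}|^{p}\le\tfrac12(|a|^{p}+|b|^{p})$ for $p=p(x)\ge 2$. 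Multiply by $1/p(x)$, integrate, and use weak lower semicontinuity of $\varphi^1$ at $\tfrac12(u(t_n)+u(t))$. This upgrades $u(t_n)\rightharpoonup u(t)$ together with $\varphi^1(u(t_n))\to\varphi^1(u(t))$ to $\nabla u(t_n)\to\nabla u(t)$ in $L^{p(\cdot)}(\mathcal{A})$.
\item Strong convergence gives continuity of $\int_{\mathcal{A}}|\nabla u(t)|^{p(x)}\,\mathrm{d}x$, and hence of $I(u(t))$.
\end{enumerate}
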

    \begin{proof}
      Firstly, to prove \eqref{47}, we shall apply the subdifferential theory in the Hilbert space setting \cite{r23}. Specifically, we set the Hilbert space $H = L^2(\Omega)$ as in the proof of Theorem \ref{l2} and define
        \begin{align*}
        \begin{aligned}
           \tilde{\varphi}^{1}(u):=
           \begin{cases}
             \varphi^{1}(u), &u \in W_{0,\mathrm{rad}}^{1, p(\cdot)}(\mathcal{A}),\\
             +\infty, &u \in H \backslash W_{0,\mathrm{rad}}^{1, p(\cdot)}(\mathcal{A}).
           \end{cases}
        \end{aligned}
        \end{align*}
      Recalling $\tilde{\varphi}^{2}(u)$ defined by \eqref{182}, it follows from Theorem 3.3, Lemmas 2.6, 2.7 in \cite{r23} that $u$ solves the following abstract Cauchy problem
        \begin{equation*}
        \begin{cases}
          \frac{\mathrm{d} u}{\mathrm{d} t}(t) + \partial \tilde{\varphi}^{1}(u(t))-\partial \tilde{\varphi}^{2}(u(t)) \ni 0 \ \text { in } H,\\
          u(0)=u_{0}
        \end{cases}
        \end{equation*}
    and
        \begin{align*}
          \int_0^t |u_t(\tau)|^2 \mathrm{d}\tau + \tilde{\varphi}^{1}(u(t)) - \tilde{\varphi}^{2}(u(t))
          = \tilde{\varphi}^{1}(u_0) - \tilde{\varphi}^{2}(u_0) \quad \text {in } H
        \end{align*}
    for $t \in (0,T_{\max})$, i.e.,
            \begin{align}\label{99}
              J(u(t+h)) - J(u(t)) = -\int_{t}^{t+h} \left\|u_{t}(\tau)\right\|_{2,\mathcal{A}}^{2} \mathrm{d} \tau,
            \end{align}
      where $t\in(0,T_{\max})$, $h>0$, and the right hand side of \eqref{99} is absolutely continuous with respect to $t$ according to the absolute continuity of Lebesgue integration. Then by multiplying the both sides of \eqref{99} by $\frac{1}{h}$, the identity \eqref{47} immediately follows from the Lebesgue differentiation theorem by letting $h \rightarrow 0^+$.

      Then we check \eqref{48}. Since $u \in L^{\infty}(0, T; W_{0}^{1, p(\cdot)}(\mathcal{A}))$, by taking
      $\psi = u$ in \eqref{23} and integrating over $\mathcal{A} \times (t, t + h)$ with $h > 0$, we derive that
            \begin{align*}
              &\frac{1}{h} \int_t^{t+h} \int_{\mathcal{A}} u u_t \mathrm{~d} x \mathrm{~d} \tau+\frac{1}{h} \int_t^{t+h} \int_{\mathcal{A}}|\nabla u|^{p(x)} \mathrm{d} x \mathrm{~d} \tau
              = \frac{1}{h} \int_t^{t+h} \int_{\mathcal{A}}|u|^{q(x)} \mathrm{d} x \mathrm{~d} \tau.
            \end{align*}
        Then let $h \rightarrow 0^+$, the identity \eqref{48} straight follows by an application of Lebesgue differentiation theorem and the definition of $I(u)$.
    \end{proof}

    \begin{remark}[On Lemma \ref{l7}]
      It seems that via the subdifferential technique in the reflexive Banach space setting, we can only obtain the energy inequality, see \cite{r53}, which is not adequate for us to obtain the absolute continuity of $J(u(t))$. Moreover, since $u \in C_{w}([0, T_{\max}) ; W_{0,\mathrm{rad}}^{1, p(\cdot)}(\mathcal{A}))$ according to \eqref{174} in Lemma \ref{l19}
      and the energy identity \eqref{99} holds, by the similar argument as the proof of lemma 2.2 \cite{r50}, we may infer better continuity of $u$ in $t$, i.e., $u \in C([0, T_{\max}) ; W_{0,\mathrm{rad}}^{1, p(\cdot)}(\mathcal{A}))$.
    \end{remark}

    \begin{lemma}[Continuation criterion and blow-up alternative]\label{prop:continuation}
    Assume that $(\mathrm{H_{p,q}})$, $(\mathrm{H^1_{p,q}})$-$(\mathrm{H^2_{p,q}})$ and $(\mathrm{H_{D}})$ hold. Let
    \[
    u:[0,T_{\max})\to W_{0,\mathrm{rad}}^{1, p(\cdot)}\left(\mathcal{A}\right)
    \]
    be the maximal weak solution to problem \eqref{1}-\eqref{3} corresponding to $u_0\in W_{0,\mathrm{rad}}^{1, p(\cdot)}\left(\mathcal{A}\right)$. If
    \[
    \sup_{0\le t<T_{\max}}\|\nabla u(t)\|_{p(\cdot),\mathcal{A}}<\infty,
    \]
    then $T_{\max}=\infty$.
    Equivalently, if $T_{\max}<\infty$, then
    \[
    \limsup_{t\to T_{\max}^{-}}\|\nabla u(t)\|_{p(\cdot),\mathcal{A}}=\infty.
    \]
    \end{lemma}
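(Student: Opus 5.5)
We argue by contradiction. Assume $T_{\max}<\infty$ and
\[
M:=\sup_{0\le t<T_{\max}}\|\nabla u(t)\|_{p(\cdot),\mathcal{A}}<\infty.
\]
The plan is to restart the flow from times close to $T_{\max}$ with a local existence time that does not depend on the restart time. This produces an extension past $T_{\max}$, which contradicts maximality.

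\emph{Step 1: a uniform local existence time.} We go back into the proof of Theorem \ref{l1} and Lemma \ref{l19}. The existence time $T_0$ produced there comes from a priori estimates on the approximate problems. These estimates depend on the initial data only through upper bounds on $\varphi^1(u_0)$ and $|u_0|_H$; the constants $C_1,C_2,C_3,\gamma,l_1,l_2$ are fixed once and for all in the proof of Theorem \ref{l1}.

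To see this concretely, test the equation with $u_t$ at the level of the Hilbert-space formulation, as in Lemma \ref{l7}. This gives
\[
\int_0^t\|u_t\|_{2,\mathcal{A}}^2\,\mathrm{d}\tau+\varphi^1(u(t))=\varphi^1(u_0)-\varphi^2(u_0)+\varphi^2(u(t)).
\]
By Lemma \ref{l5}(i), together with Proposition \ref{p1} and \eqref{121}, we have $\varphi^2(v)\le F(\varphi^1(v))$ for an increasing function $F$. Since $v\in\Sigma_V$ is controlled in $L^\infty$ by $\|\nabla v\|_{p(\cdot),\mathcal{A}}$, one may take $F(s)\le C(1+s)^{q^+/p^-}$.

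Set $y(t)=\varphi^1(u(t))$. The identity above yields the integral inequality
\[
y(t)\le y(0)+C\bigl(1+\sup_{\tau\le t}y(\tau)\bigr)^{q^+/p^-}.
\]
This is not quite closed as written. To close it, I would instead differentiate $\varphi^2(u(t))$: its time derivative is bounded by $\|u_t\|_2\,\||u|^{q-1}\|_2$, and $\||u|^{q-1}\|_2$ is controlled by $L^\infty$ bounds on $u$. Absorbing $\|u_t\|_2^2$ by Young's inequality gives an ODE inequality of the form $y'\le C(1+y)^{\kappa}$. Its comparison solution stays bounded on an interval whose length $T_*(K)$ depends only on a bound $K\ge y(0)$. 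This uniformity is the part I expect to be the main obstacle. The abstract Lemma \ref{l19} only asserts that some $T_0$ exists, so we must either trace its proof, namely the proof of Theorem 2 in \cite{r55}, to confirm that $T_0=T_0(\varphi^1(u_0),|u_0|_H)$, or redo the a priori estimate on the approximations as just sketched.

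\emph{Step 2: restart and glue.} By \eqref{121} and Proposition \ref{p1}, the bound $M$ gives $\varphi^1(u(t))\le K:=\max\{M^{p^-},M^{p^+}\}/p^-$ for all $t<T_{\max}$. Also $\|u(t)\|_{2,\mathcal{A}}\le C M$ by Lemma \ref{l5}. Pick $t_1\in(T_{\max}-T_*(K)/2,\,T_{\max})$ such that $u(t_1)\in W^{1,p(\cdot)}_{0,\mathrm{rad}}(\mathcal{A})$; this holds for a.e.\ $t_1$, and in fact for every $t_1$ by the $C_w$ continuity in \eqref{174}. Apply Theorem \ref{l1} with initial data $u(t_1)$. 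This gives a solution $\tilde u$ on $[t_1,t_1+T_*(K)]$.

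Theorem \ref{t1} shows that $\tilde u=u$ on the common interval. The concatenation belongs to $L^\infty(0,t_1+T_*(K);W^{1,p(\cdot)}_{0,\mathrm{rad}})$ with $u_t\in L^2L^2$, and it satisfies \eqref{23} for a.e.\ $t$. Hence it is a weak solution in the sense of Definition \ref{d1} on an interval strictly longer than $[0,T_{\max})$. This contradicts Definition \ref{d2}.

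The second formulation in the statement is just the contrapositive of the first. If $T_{\max}<\infty$, then the supremum of $\|\nabla u(t)\|_{p(\cdot),\mathcal{A}}$ is infinite. Since $t\mapsto\|\nabla u(t)\|_{p(\cdot),\mathcal{A}}$ is locally bounded on $[0,T_{\max})$, this forces $\limsup_{t\to T_{\max}^-}\|\nabla u(t)\|_{p(\cdot),\mathcal{A}}=\infty$.
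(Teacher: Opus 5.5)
Your overall strategy (restart from times $t_1$ close to $T_{\max}$ with an existence time independent of $t_1$, then glue via Theorem \ref{t1}) is legitimate in principle. As written, however, its load-bearing step is not established. Theorem \ref{l1} and Lemma \ref{l19} only give \emph{some} $T_0>0$ for each datum, with no stated dependence on the size of the data. The inequality $y'\le C(1+y)^{\kappa}$ that you obtain by differentiating $\varphi^2(u(t))$ is an a priori bound. It holds only on whatever interval the solution already exists. Turning it into a uniform existence time $T_*(K)$ requires knowing that a solution with bounded $\varphi^1$ can be continued, which is exactly the statement you are trying to prove. So your second option is circular unless it is carried out on the approximate problems inside the proof of Theorem 2 of \cite{r55}. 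Your first option, tracing that proof to confirm $T_0=T_0(\varphi^1(u_0),|u_0|_H)$, is left undone. The proposal is therefore incomplete precisely at the point you flag.

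The missing idea, which the paper uses, is that no uniformity is needed if you restart at $T_{\max}$ itself.
\begin{itemize}
\item From the bound $M$ and Lemma \ref{l5}(i) one gets $\int_{\mathcal{A}}|u|^{q(x)}/q(x)\,\mathrm{d}x\le C_M$, hence $J(u(t))\ge -C_M$.
\item The energy identity \eqref{47} then gives $\int_0^{T_{\max}}\|u_t\|_{2,\mathcal{A}}^2\,\mathrm{d}t\le J(u_0)+C_M$.
\item Consequently $u(t)$ is Cauchy in $L^2(\mathcal{A})$ as $t\to T_{\max}^-$, with some limit $u_*$.
\item The uniform gradient bound and weak compactness show that $u_*\in W_{0,\mathrm{rad}}^{1,p(\cdot)}(\mathcal{A})$.
\item Apply Theorem \ref{l1} to $u_*$, with whatever positive existence time it yields, and glue at $T_{\max}$. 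The $L^2$-continuity on both sides of the junction rules out a jump in $W^{1,2}(0,T;L^2)$, and this contradicts maximality.
\end{itemize}
Your gluing logic in Step 2 and your contrapositive argument for the $\limsup$ formulation are fine. Replacing Step 1 by this terminal-limit argument closes the proof using only results already stated in the paper.
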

    \begin{proof}
    We argue by contradiction. Suppose that $T_{\max}<\infty$ and
    \begin{equation}
    \sup_{0\le t<T_{\max}}\|\nabla u(t)\|_{p(\cdot),\mathcal{A}}\le M
    \label{eq:continuation-bound}
    \end{equation}
    for some constant $M>0$.

    By Lemma \ref{l5} (i), the radial Sobolev embedding
    $W_{0,\mathrm{rad}}^{1, p(\cdot)}(\mathcal{A}) \hookrightarrow L^{\infty}(\mathcal{A})$
    is continuous. Hence,
    \[
    \sup_{0\le t<T_{\max}}\|u(t)\|_{\infty,\mathcal{A}}\le C M.
    \]
    Consequently,
    \[
    \int_{\mathcal{A}}\frac{|u(x,t)|^{q(x)}}{q(x)}\,\mathrm{d} x\le C_M,
    \qquad 0\le t<T_{\max},
    \]
    where $C_M>0$ depends only on $M$, the exponents and the domain. Therefore,
    \[
    J(u(t))\ge -C_M,
    \qquad 0\le t<T_{\max}.
    \]

    By the energy identity in Lemma \ref{l7},
    \[
    \int_0^t\|u_\tau(\tau)\|_{2,\mathcal{A}}^2\,\mathrm{d}\tau
    +J(u(t))
    =J(u_0),
    \qquad 0<t<T_{\max}.
    \]
    It follows that
    \begin{equation}
    \int_0^{T_{\max}}
    \|u_t(t)\|_{2,\mathcal{A}}^2\,\mathrm{d}t
    \le J(u_0)+C_M<\infty.
    \label{eq:ut-global-bound}
    \end{equation}

    For $0<s<t<T_{\max}$, the Cauchy-Schwarz inequality gives
    \[
    \begin{aligned}
    \|u(t)-u(s)\|_{2,\mathcal{A}}
    &\le
    \int_s^t\|u_\tau(\tau)\|_{2,\mathcal{A}}\,\mathrm{d}\tau\\
    &\le
    (t-s)^{1/2}
    \left(
    \int_s^t\|u_\tau(\tau)\|_{2,\mathcal{A}}^2\,\mathrm{d}\tau
    \right)^{1/2}.
    \end{aligned}
    \]
    Hence $\{u(t)\}_{t<T_{\max}}$ is a Cauchy family in $L^2(\mathcal{A})$
    as $t\to T_{\max}^{-}$. Thus there exists $u_*\in L^2(\mathcal{A})$
    such that
    \begin{equation}
    u(t)\to u_*
    \quad\text{strongly in }L^2(\mathcal{A})
    \quad\text{as }t\to T_{\max}^{-}.
    \label{eq:L2-terminal-limit}
    \end{equation}

    On the other hand, by \eqref{eq:continuation-bound} and the
    reflexivity of $W_{0,\mathrm{rad}}^{1, p(\cdot)}(\mathcal{A})$, for any
    sequence $t_j\uparrow T_{\max}$ there exist a subsequence,
    still denoted by $\{t_j\}$, and some
    $\widehat{u}\in W_{0,\mathrm{rad}}^{1, p(\cdot)}(\mathcal{A})$ such that
    \[
    u(t_j)\rightharpoonup\widehat{u}
    \quad\text{weakly in }
    W_{0,\mathrm{rad}}^{1, p(\cdot)}(\mathcal{A}).
    \]
    Since
    \[
    W_{0,\mathrm{rad}}^{1, p(\cdot)}(\mathcal{A})
    \hookrightarrow L^2(\mathcal{A})
    \]
    continuously, the above convergence also holds weakly in
    $L^2(\mathcal{A})$. In view of \eqref{eq:L2-terminal-limit}, we
    necessarily have $\widehat{u}=u_*$. Therefore,
    \[
    u_*\in W_{0,\mathrm{rad}}^{1, p(\cdot)}(\mathcal{A}).
    \]

    We can now apply Theorem \ref{l1} with $u_*$ as the initial datum.
    There exist $\delta>0$ and a local weak solution $v$ on
    $[0,\delta)$ satisfying $v(0)=u_*$. Define
    \[
    \widetilde{u}(t):=
    \begin{cases}
    u(t), & 0\le t<T_{\max},\\
    v(t-T_{\max}),&
    T_{\max}\le t<T_{\max}+\delta.
    \end{cases}
    \]
    Since $u(t)\to u_*=v(0)$ strongly in $L^2(\mathcal{A})$ as
    $t\to T_{\max}^{-}$, together with
    \[
    u_t\in L^2(0,T_{\max};L^2(\mathcal{A})),
    \qquad
    v_t\in L^2(0,\delta;L^2(\mathcal{A})),
    \]
    the function $\widetilde{u}$ belongs to
    \[
    L^\infty
    \bigl(0,T_{\max}+\delta;
    W_{0,\mathrm{rad}}^{1, p(\cdot)}(\mathcal{A})\bigr)
    \cap
    W^{1,2}
    \bigl(0,T_{\max}+\delta;L^2(\mathcal{A})\bigr).
    \]
    Moreover, the weak formulation is satisfied on both sides of
    $t=T_{\max}$, and the $L^2$-continuity at the joining time
    shows that no jump term appears. Hence $\widetilde{u}$ is a
    weak solution of \eqref{1}-\eqref{3} on
    $[0,T_{\max}+\delta)$.

    This contradicts the maximality of $T_{\max}$. Therefore
    $T_{\max}=\infty$.

    The equivalent blow-up alternative follows immediately:
    if $T_{\max}<\infty$, then
    \[
    \limsup_{t\to T_{\max}^{-}}
    \|\nabla u(t)\|_{p(\cdot),\mathcal{A}}
    =\infty .
    \]
    \end{proof}

    Next, by virtue of Lemma \ref{l7}, we give several invariant manifolds for the solution of problem \eqref{1}-\eqref{3} to deal with initial data at subcritical, critical and supercritical energy levels,
    i.e., $0<J(u_0)<d, J(u_0)=d$ and $J(u_0)>d$ respectively.

    One can observe that we don't consider the case of non-positive initial energy $0<J(u_0)\le 0$ with $u_{0} \in W_{0,\mathrm{rad}}^{1, p(\cdot)}\left(\mathcal{A}\right) \backslash \{0\}$, since it follows from the definitions of $I(u)$ and $J(u)$ that
        \begin{align}\label{169}
          J(u(t)) \ge \frac{1}{q^-}I(u(t))
                        +\left(\frac{1}{p^{+}}-\frac{1}{q^{-}}\right)
                            \int_{\mathcal{A}}\left|\nabla u(t)\right|^{p(x)} \mathrm{d} x
        \end{align}
    for $t \in (0,T_{\max})$, which together with \eqref{47} and $u_{0} \in W_{0,\mathrm{rad}}^{1, p(\cdot)}\left(\mathcal{A}\right) \backslash \{0\}$ implies that $I(u(t))<0$ for $t\in (0,T_{\max})$ and the solution blows up in finite time, see Theorem \ref{t10}.

    Then, we show the stable and unstable invariant manifolds of the solution for $0<J(u_0)\le d$ in the following lemma first.

    \begin{lemma}[Stable and unstable invariant manifolds for $0<J(u_0)\le d$]\label{l17}
    Assume that $(\mathrm{H_{p,q}})$, $(\mathrm{H^1_{p,q}})$-$(\mathrm{H^2_{p,q}})$ and $(\mathrm{H_{D}})$ hold. Suppose that $0< J(u_0) \le d$, the following statements hold
            \begin{enumerate}[(i)]
                \item if $u_0 \in \mathcal{N}^+$, then
                    $
                      u \in \mathcal{N}^+ \text{ for } t \in [0,T_{\max});
                    $
                \item if $u_0 \in \mathcal{N}^-$, then
                    $
                      u \in \mathcal{N}^- \text{ for } t \in [0,T_{\max})
                    $.
            \end{enumerate}
    \end{lemma}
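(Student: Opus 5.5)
The plan is a first-exit-time argument that combines the energy identity of Lemma \ref{l7}, the characterization $d=\inf_{\mathcal{N}}J$ from Lemma \ref{l13}, and the continuity in $t$ of $I(u(t))$. By Lemma \ref{l7}, $J(u(t))$ and $I(u(t))$ are absolutely continuous on $[0,T_{\max})$. Integrating \eqref{47} gives
\begin{align*}
J(u(t))+\int_0^t\|u_\tau(\tau)\|_{2,\mathcal{A}}^2\,\mathrm{d}\tau=J(u_0)\le d,\qquad t\in[0,T_{\max}).
\end{align*}
Both parts then argue by contradiction at the first time $t_0\in(0,T_{\max})$ where the solution leaves the set under consideration. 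At that time continuity forces $I(u(t_0))=0$, while $I(u(t))$ keeps its original sign on $[0,t_0)$.

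For (i), let $u_0\in\mathcal{N}^+$ and suppose $t_0$ is the first time with $u(t_0)\notin\mathcal{N}^+$. Then $I(u(t_0))=0$, and we must also have $u(t_0)\neq0$, since $0\in\mathcal{N}^+$. Hence $u(t_0)\in\mathcal{N}$, and Lemma \ref{l13} gives $J(u(t_0))\ge d$. Comparing with the integrated energy identity yields $J(u(t_0))\le J(u_0)\le d$, so equality holds throughout. If $J(u_0)<d$ this is already a contradiction.

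For (ii), let $u_0\in\mathcal{N}^-$ and let $t_0$ be the first exit time, so $I(u(t_0))=0$. Here we must rule out $u(t_0)=0$, since $0\notin\mathcal{N}$ would otherwise leave a gap in the argument. On $[0,t_0)$ we have $I(u(t))<0$. Lemma \ref{l14}(i) and Proposition \ref{p1} then give
\begin{align*}
\int_{\mathcal{A}}|u(t)|^{q(x)}\,\mathrm{d}x>\int_{\mathcal{A}}|\nabla u(t)|^{p(x)}\,\mathrm{d}x\ge\min\{r_{\rm ptw}^{p^-},r_{\rm ptw}^{p^+}\}=:c_0>0.
\end{align*}
I will not pass to the limit in $\|\nabla u\|$ here: weak continuity only gives lower semicontinuity, which points the wrong way. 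Instead I use the continuity of $\varphi^2(u(\cdot))$ from \eqref{174}, equivalently the strong continuity coming from the compactness in Corollary \ref{cor1}. Since $q(x)\le q^+$ gives $\int_{\mathcal{A}}|u|^{q(x)}\mathrm{d}x\le q^+\varphi^2(u)$, this yields $q^+\varphi^2(u(t_0))\ge c_0$, hence $u(t_0)\neq0$. Therefore $u(t_0)\in\mathcal{N}$, and as in (i) we obtain $d\le J(u(t_0))\le J(u_0)\le d$.

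The main obstacle is the borderline case $J(u_0)=d$, where the chain of inequalities only gives equality. In that case the integrated energy identity forces $\int_0^{t_0}\|u_t\|_{2,\mathcal{A}}^2\,\mathrm{d}\tau=0$, so $u_t=0$ a.e. on $(0,t_0)$. Then $u(t)\equiv u_0$ on $[0,t_0]$, using the continuity $u\in W^{1,2}(0,T_0;L^2(\mathcal{A}))$. This gives $I(u_0)=I(u(t_0))=0$, which contradicts $I(u_0)>0$ in case (i) and $I(u_0)<0$ in case (ii). One small point remains for (i): when $u_0=0$, uniqueness (Theorem \ref{t1}) gives $u\equiv0\in\mathcal{N}^+$, although this case is in fact excluded by $J(u_0)>0$. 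I expect the delicate points to be the justification of continuity of $t\mapsto\int_{\mathcal{A}}|u(t)|^{q(x)}\mathrm{d}x$ under the multiple-regime setting and the non-vanishing of $u(t_0)$ in (ii). Both rest on Corollary \ref{cor1} and Lemma \ref{l5}.
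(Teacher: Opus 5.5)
Your proposal is correct and follows essentially the paper's proof. Both use a first-exit-time contradiction that combines the integrated energy identity \eqref{47} with $d=\inf_{\mathcal{N}}J$. In the borderline case $J(u_0)=d$, both use $\int_0^{t_0}\|u_t\|_{2,\mathcal{A}}^2\,\mathrm{d}\tau=0$ to get $u\equiv u_0$, hence $I(u(t_0))=I(u_0)\neq0$.

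Your check that $u(t_0)\neq0$ in (ii) fills a point the paper passes over with ``similarly''; a shorter route is $\|u(t_0)\|_{2,\mathcal{A}}\ge\|u_0\|_{2,\mathcal{A}}>0$, which follows from \eqref{48}. The same care is needed in (i): a first exit time exists only after you exclude that $u$ hits $0$ at the infimum of the exit set. That follows from Theorem \ref{t1} (a solution reaching $0$ stays at $0$), or from your $L^{q(\cdot)}$-continuity argument.
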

    \begin{proof}
     We prove (i) first by considering the cases $0<J(u_0) < d$ and $J(u_0) =d$ separately.
     For $0<J(u_0) < d$ and $u_0 \in \mathcal{N}^+$, arguing by contradiction, we suppose that $u$ does not lie in $\mathcal{N}^+$ for all $t \in [0,T_{\max})$, which means that there exists a $t_1 > 0$ such that $u \in \mathcal{N}^+$ for  $t \in [0, t_1)$ and $u(t_1) \in \mathcal{N}$ for the first time according to the continuity of $I(u(t))$ in $t$.
     Then by Lemma \ref{l7}, we have
            \begin{align*}
               J(u(t_1)) \le \int_{0}^{t_1} \left\|u_{t}(\tau)\right\|_{2,\mathcal{A}}^{2} {\rm d}\tau + J(u(t_1)) \le J(u_0) < d,
            \end{align*}
     However, since $u(t_1) \in \mathcal{N}$, we also have $J(u(t_1)) \ge d$, which causes a contradiction.

     For $J(u_0) = d$ and $u_0 \in \mathcal{N}^+$, we still argue by contradiction and suppose that $u$ does not always lie in $\mathcal{N}^+$ for $t \in [0,T_{\max})$. Then, there exists a $t_2 > 0$ such that $u \in \mathcal{N}^+$ for  $t \in [0, t_2)$ and $u(t_2) \in \mathcal{N}$ for the first time. Then by Lemma \ref{l7}, we have
            \begin{align*}
               \int_{0}^{t_2} \left\|u_{t}(\tau)\right\|_{2,\mathcal{A}}^{2} {\rm d}\tau + J(u(t_2)) \le J(u_0) = d,
            \end{align*}
         which together with the definition of $d$ claims that $\int_{0}^{t_2} \left\|u_{t}\right\|_{2}^{2} {\rm d} \tau = 0$ and $J(u(t_2)) = d$. Therefore, we can derive the facts that $u \equiv u_0$ for a.e. $t \in [0,t_2]$ and $I(u(t_2)) = I(u_0) > 0$ also lead to a contradiction.

     Then, the proof of the unstable invariant manifold for $0<J(u_0)\le d$ and $u_0 \in \mathcal{N}^-$ can be similarly done as the above discussion for $u_0 \in \mathcal{N}^+$.
    \end{proof}

    \begin{remark}[On Lemma \ref{l17}]
      Although $J(u(t))$ is decreasing according to \eqref{47}, one can deduce from \eqref{169} and Lemma \ref{l17} (i) that $J(u(t))$ remains positive for $u_0\in \mathcal{N}^+$. Otherwise, if there exists a $t_0>0$ such that $J(u(t_0))=0$, we may deduce from \eqref{169} that
        \begin{align*}
          0 = J(u(t_0)) \ge \frac{1}{q^-}I(u(t_0))
                        +\left(\frac{1}{p^{+}}-\frac{1}{q^{-}}\right)
                            \int_{\mathcal{A}}\left|\nabla u(t_0)\right|^{p(x)} \mathrm{d} x >0,
        \end{align*}
      which leads to a contradiction. Moreover, in Theorem \ref{t3}, we show that $u$ converges to the trivial equilibrium of \eqref{1}-\eqref{3}, i.e., $u_0 \in \mathcal{G}_0$, which implies that
          $$\lim \limits_{t \rightarrow \infty} I(u(t))=\lim \limits_{t \rightarrow \infty} J(u(t))=0.$$
      However, the positivity of $J(u(t))$ can not be preserved by \eqref{169} for $u_0\in \mathcal{N}^-$ according to Lemma \ref{l17}, (ii). Furthermore, since
        $$\int_0^t\left\|u_t(\tau)\right\|_{2,\mathcal{A}}^2 \mathrm{d} \tau \geq \frac{1}{t}\left(\int_0^t\left\|u_t(\tau)\right\|_{2,\mathcal{A}} \mathrm{d} \tau\right)^2 \geq \frac{1}{t}\left(\|u(t)\|_{2,\mathcal{A}}-\left\|u_0\right\|_{2,\mathcal{A}}\right)^2,$$
      we can deduce from \eqref{99} and \eqref{169} that
          \begin{align*}
            \frac{1}{q^-} I(u(t)) \leq J(u(t)) \leq J(u_0)-\frac{1}{t}\left(\|u(t)\|_{2,\mathcal{A}}-\left\|u_0\right\|_{2,\mathcal{A}}\right)^2 .
          \end{align*}
      Provided that the $L^2$ norm of the solution blows up in finite time, i.e.,
      $\|u(t)\|_{2,\mathcal{A}} \rightarrow \infty \text { as } t \rightarrow T_{\max }$, we can deduce that
          $$\lim \limits_{t \rightarrow T_{\max }} I(u(t))=\lim \limits_{t \rightarrow T_{\max }} J(u(t))=-\infty.$$
    \end{remark}

    For $J(u_0)>d$, recalling that Lemma \ref{l9} claims that $\Lambda_{k}<\infty$ and
    $\lambda_k \ge \lambda_{\infty}>0$ for $k>d$ in the multiple-regime setting of the variable exponents, based on which, we give the following stable and unstable invariant manifolds following the same spirit of \cite{r7} without proof, whose proof itself is not affected by the multiple-regime setting and requires more of the energy identities given by Lemma \ref{l7} to conduct a contradiction argument against the definitions of $\lambda_k$ and $\Lambda_{k}$.

    \begin{lemma}[Stable and unstable invariant manifolds for $J(u_0)> d$]\label{l15}
    Let $(\mathrm{H_{p,q}})$, $(\mathrm{H^1_{p,q}})$-$(\mathrm{H^2_{p,q}})$ and $(\mathrm{H_{D}})$ be satisfied. Suppose that $J(u_0) > d$, then we have

            \begin{enumerate}[(i)]
                \item if $\|u_0\|_{2,\mathcal{A}} \le \lambda_{J(u_0)}$ and $u_0 \in \mathcal{N}^+$, then
                        $
                          u \in \mathcal{N}^+ \text{ for } t \in [0,T_{\max});
                        $
                \item if $\|u_0\|_{2,\mathcal{A}} \ge \Lambda_{J(u_0)}$ and $u_0 \in \mathcal{N}^-$, then
                        $
                          u \in \mathcal{N}^- \text{ for } t \in [0,T_{\max}).
                        $
            \end{enumerate}
    \end{lemma}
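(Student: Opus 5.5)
The argument is by contradiction along the continuous trajectory, in the style of \cite{r7}. It combines the two energy identities of Lemma \ref{l7} with the definitions of $\lambda_k$ and $\Lambda_k$. Write $k:=J(u_0)>d$. By Lemma \ref{l7}, $J(u(t))$ is nonincreasing, so $J(u(t))\le k$ for all $t\in[0,T_{\max})$. Moreover $I(u(t))$ is continuous in $t$, being absolutely continuous by Lemma \ref{l7}.

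For (i), assume $u_0\in\mathcal{N}^+$ and $\|u_0\|_{2,\mathcal{A}}\le\lambda_k$, and suppose $u$ leaves $\mathcal{N}^+$. By continuity of $I(u(t))$ there is a first time $t_0>0$ with $u(t_0)\in\mathcal{N}$ and $I(u(t))>0$ on $[0,t_0)$. Note that $u(t_0)\neq0$: otherwise $u(t_0)=0\in\mathcal{N}^+$, and by the uniqueness of Theorem \ref{t1} the solution would remain $0$, so it would never reach $\mathcal{N}$.

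By \eqref{48}, $\frac{\mathrm d}{\mathrm dt}\|u\|_{2,\mathcal{A}}^2=-2I(u)<0$ on $(0,t_0)$, hence $\|u(t_0)\|_{2,\mathcal{A}}<\|u_0\|_{2,\mathcal{A}}\le\lambda_k$. The inequality is strict because $I>0$ on an interval of positive length. We also need $J(u(t_0))<k$, so that $u(t_0)\in\mathcal{N}_k$. By \eqref{47}, $J(u(t_0))=k-\int_0^{t_0}\|u_t\|_{2,\mathcal{A}}^2\,\mathrm d\tau$. If this integral were zero, then $u\equiv u_0$ on $[0,t_0]$, giving $I(u(t_0))=I(u_0)>0$, which is a contradiction. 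Hence $u(t_0)\in\mathcal{N}\cap J^k=\mathcal{N}_k$ and $\|u(t_0)\|_{2,\mathcal{A}}<\lambda_k$, contradicting the definition of $\lambda_k$ as an infimum over $\mathcal{N}_k$.

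For (ii), argue symmetrically. Assume $u_0\in\mathcal{N}^-$, $\|u_0\|_{2,\mathcal{A}}\ge\Lambda_k$, and let $t_0$ be the first time with $I(u(t_0))=0$, so that $I(u(t))<0$ on $[0,t_0)$. Then $u(t_0)\ne0$: by Lemma \ref{l14}(i), $\|\nabla u\|_{p(\cdot),\mathcal{A}}>r_{\rm ptw}$ on $[0,t_0)$, and $u$ is continuous in $t$ with values in $W^{1,p(\cdot)}_{0,\mathrm{rad}}(\mathcal{A})$ (cf. the remark after Lemma \ref{l7}). Alternatively, $\|u(t)\|_{2,\mathcal{A}}$ is increasing, hence bounded below by $\|u_0\|_{2,\mathcal{A}}>0$. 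Relation \eqref{48} now gives $\|u(t_0)\|_{2,\mathcal{A}}>\|u_0\|_{2,\mathcal{A}}\ge\Lambda_k$. Exactly as before, $J(u(t_0))<k$, since otherwise $u\equiv u_0$ and $I(u(t_0))<0$. So $u(t_0)\in\mathcal{N}_k$ with $L^2$ norm exceeding $\Lambda_k=\sup_{\mathcal{N}_k}\|\cdot\|_{2,\mathcal{A}}$, a contradiction. Lemma \ref{l9} ensures that $\lambda_k$ and $\Lambda_k$ are finite and positive, so these comparisons are meaningful.

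The main delicate point is securing the \emph{strict} inequalities needed to land in $\mathcal{N}_k=\mathcal{N}\cap\{J<k\}$, whose defining condition is strict, and to beat the infimum or supremum. I would handle this via the degenerate case $u_t\equiv0$ as above, together with the strict monotonicity of $\|u\|_2^2$ coming from the sign of $I$ on $(0,t_0)$. The remaining care is to justify the first-hitting time, which uses only the continuity of $I(u(t))$ from Lemma \ref{l7}.
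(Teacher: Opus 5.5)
Your proposal is correct and is essentially the argument the paper intends: the paper omits the proof and points to the contradiction scheme of \cite{r7}. That scheme takes the first time $t_0$ at which $u$ hits $\mathcal{N}$, uses \eqref{48} for a strict comparison of $\|u(t_0)\|_{2,\mathcal{A}}$ with $\|u_0\|_{2,\mathcal{A}}$, and uses \eqref{47} to place $u(t_0)$ in $\mathcal{N}_{J(u_0)}$, which contradicts the definition of $\lambda_{J(u_0)}$ or $\Lambda_{J(u_0)}$. Your extra care with the strict inequality $J(u(t_0))<J(u_0)$ and with ruling out $u(t_0)=0$ (via uniqueness in (i) and $L^2$-monotonicity in (ii)) fills in details the paper leaves implicit.
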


Then, we characterize the unstable invariant manifold of the solution with arbitrarily positive initial energy $J(u_0) > 0$.

        \begin{lemma}[Unstable invariant manifolds for $J(u_0) > 0$]\label{l10}
            Let $(\mathrm{H_{p,q}})$, $(\mathrm{H^1_{p,q}})$-$(\mathrm{H^2_{p,q}})$ and $(\mathrm{H_{D}})$ be satisfied.
            Assume that $u_{0} \in W_{0,\mathrm{rad}}^{1, p(\cdot)}\left(\mathcal{A}\right)$ satisfies
                \begin{align}\label{93}
                    P(\|u_0\|_{2,\mathcal{A}}) > J(u_{0})>0,
                \end{align}
            then $u \in \mathcal{N}^{-}$ for $t\in [0,T_{\max})$,
            where $P(s): \mathbb{R}_+ \rightarrow \mathbb{R}_+$ is defined by
                \begin{align}\label{100}
                \begin{aligned}
                   P(s) :=&
                    \left(\frac{1}{p^{+}}-\frac{1}{q^{-}}\right)
                    \min \left\{
                        \left(C^{*}_{p(\cdot),2}\right)^{-p^{-}} s^{p^{-} },
                        \left(C^{*}_{p(\cdot),2}\right)^{-p^{+}} s^{p^{+} }
                        \right\}
                \end{aligned}
                \end{align}
            and $C^{*}_{p(\cdot),2}$ is the Sobolev constant of the embedding
            $W_{0}^{1, p(\cdot)}(\mathcal{A}) \hookrightarrow L^{2}(\mathcal{A})$.
            \end{lemma}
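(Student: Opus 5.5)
The plan is to show first that $u_0\in\mathcal{N}^-$, and then that the solution can never leave $\mathcal{N}^-$. The tools are a continuity argument, the energy identities of Lemma \ref{l7}, and the lower bound \eqref{169} for $J$ in terms of $I$ and the gradient modular.

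\textbf{Step 1 (the key inequality).} For any $v\in W_{0,\mathrm{rad}}^{1,p(\cdot)}(\mathcal{A})$, combine Proposition \ref{p1} with the embedding $\|v\|_{2,\mathcal{A}}\le C^*_{p(\cdot),2}\|\nabla v\|_{p(\cdot),\mathcal{A}}$. This gives
\[
\int_{\mathcal{A}}|\nabla v|^{p(x)}\,\mathrm{d}x\ \ge\ \min\Big\{\big(C^*_{p(\cdot),2}\big)^{-p^-}\|v\|_{2,\mathcal{A}}^{p^-},\ \big(C^*_{p(\cdot),2}\big)^{-p^+}\|v\|_{2,\mathcal{A}}^{p^+}\Big\}.
\]
The right-hand side is monotone in $\|v\|_{2,\mathcal{A}}$, since $t\mapsto\min\{a t^{p^-},bt^{p^+}\}$ is increasing. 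Inserting this into \eqref{169} yields
\[
J(v)\ \ge\ \frac{1}{q^-}I(v)+P(\|v\|_{2,\mathcal{A}}).
\]
Applied at $t=0$, the hypothesis \eqref{93} gives $\frac1{q^-}I(u_0)\le J(u_0)-P(\|u_0\|_{2,\mathcal{A}})<0$, so $u_0\in\mathcal{N}^-$.

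\textbf{Step 2 (invariance by contradiction).} Suppose the solution leaves $\mathcal{N}^-$. By continuity of $I(u(t))$ (Lemma \ref{l7}), there is a first time $t_0\in(0,T_{\max})$ with $I(u(t))<0$ on $[0,t_0)$ and $I(u(t_0))=0$. Two facts then hold on $[0,t_0]$:
\begin{enumerate}[(a)]
\item By \eqref{48}, $\frac{\mathrm{d}}{\mathrm{d}t}\|u(t)\|_{2,\mathcal{A}}^2=-2I(u(t))>0$ on $[0,t_0)$, so $\|u(t_0)\|_{2,\mathcal{A}}\ge \|u_0\|_{2,\mathcal{A}}$. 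Monotonicity of $P$ then gives $P(\|u(t_0)\|_{2,\mathcal{A}})\ge P(\|u_0\|_{2,\mathcal{A}})$.
\item By \eqref{47}, $J(u(t_0))\le J(u_0)$.
\end{enumerate}
Applying Step 1 at $t_0$, where $I(u(t_0))=0$, we get
\[
J(u_0)\ \ge\ J(u(t_0))\ \ge\ P(\|u(t_0)\|_{2,\mathcal{A}})\ \ge\ P(\|u_0\|_{2,\mathcal{A}}),
\]
which contradicts \eqref{93}. Hence $u(t)\in\mathcal{N}^-$ for all $t\in[0,T_{\max})$.

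\textbf{Main obstacle.} The delicate point is justifying the existence of the first exit time $t_0$. This needs continuity of $t\mapsto I(u(t))$, which Lemma \ref{l7} supplies. If only $C_w$-continuity of $u$ were available, I would instead argue with the continuity of $\|u(t)\|_{2,\mathcal{A}}^2$, which is absolutely continuous by \eqref{48}, together with that of $J(u(t))$ and the Step 1 bound.

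A second subtlety is that \eqref{169} holds only for $q(x)\ge q^->p^+$, which is assumption $(\mathrm{H^2_{p,q}})$. The Sobolev constant $C^*_{p(\cdot),2}$ is finite because $p^-\ge2$ and $\mathcal{A}$ is bounded. Nothing specific to the multiple-regime setting enters here beyond the fact that $J$ and $I$ are well defined on $W_{0,\mathrm{rad}}^{1,p(\cdot)}(\mathcal{A})$.
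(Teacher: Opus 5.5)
Your proposal is correct and follows the paper's proof essentially step for step. The bound $J(v)\ge \frac{1}{q^-}I(v)+P(\|v\|_{2,\mathcal{A}})$, obtained from \eqref{169}, Proposition \ref{p1} and the $L^2$ embedding, forces $u_0\in\mathcal{N}^-$. At a first exit time, the growth of $\|u(t)\|_{2,\mathcal{A}}$ from \eqref{48}, the energy decay \eqref{47} and the monotonicity of $P$ then give $J(u_0)\ge P(\|u_0\|_{2,\mathcal{A}})$, contradicting \eqref{93}.

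One small correction to your closing remark: \eqref{169} holds with no assumption comparing $q^-$ and $p^+$. What $(\mathrm{H^2_{p,q}})$ actually provides is the positivity of $\frac{1}{p^+}-\frac{1}{q^-}$, which is what lets you insert the lower bound on the gradient modular and makes $P$ positive and increasing.
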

            \begin{proof}
              Let $u(t)$ be a weak solution to problem \eqref{1}-\eqref{3} generated by $u_0$.
              Firstly, we tend to prove that $u_0\in \mathcal{N}^-$.
              By the definition of $J(u)$, we get
                  \begin{align}
                     J(u_0)
                     \ge& \frac{1}{q^-}I(u_0)
                        +\left(\frac{1}{p^{+}}-\frac{1}{q^{-}}\right)
                            \int_{\mathcal{A}}\left|\nabla u_{0}\right|^{p(x)} \mathrm{d} x \notag\\
                     \ge& \frac{1}{q^-}I(u_0)
                        + \left( \frac{1}{p^+} - \frac{1}{q^-} \right)
                      \min \left\{
                        \|\nabla u_0\|_{p (\cdot),\mathcal{A}}^{p^{-}},
                        \|\nabla u_0\|_{p(\cdot),\mathcal{A}}^{p^{+}}
                            \right\} \label{94}\\
                     \ge& \frac{1}{q^-}I(u_0)
                        +
                    \left(\frac{1}{p^{+}}-\frac{1}{q^{-}}\right)
                    \min \left\{
                        \left(C^{*}_{p(\cdot),2}\right)^{-p^{-}} \|u_0\|_{2,\mathcal{A}}^{p^{-} },
                        \left(C^{*}_{p(\cdot),2}\right)^{-p^{+}}
                        \|u_0\|_{2,\mathcal{A}}^{p^{+} }
                        \right\} \notag\\
                     =& \frac{1}{q^-}I(u_0)
                        + P(\|u_0\|_{2,\mathcal{A}}), \notag
                  \end{align}
            thus $u_0 \in \mathcal{N}^-$ according to \eqref{93}.

            Next, we claim $u(t) \in \mathcal{N}^{-}$ for all $t \in[0, T_{\max})$.
            Since $u_0 \in \mathcal{N}^-$ and $I(u(t))$ is continuous in $t$,
            there exists an $s_0>0$ such that $u(t) \in \mathcal{N}^{-}$ for $t\in(0,s_0)$.
            Then, arguing by contradiction, we assume that there exists an $s_1 \in(s_0, T_{\max})$ such that $u(t) \in \mathcal{N}^{-}$ for $t \in (0,s_1)$, and $u(s_1) \in \mathcal{N}$ for the first time, which together with \eqref{94} implies that
                \begin{align}\label{95}
                  \begin{aligned}
                     J(u(s_1))
                     \ge \frac{1}{q^-}I(u(s_1)) + P(\|u(s_1)\|_{2,\mathcal{A}})
                     = P(\|u(s_1)\|_{2,\mathcal{A}}).
                  \end{aligned}
                  \end{align}
            Recalling \eqref{48}, we get
                $$
                \frac{\rm d}{{\rm d} t}\|u(t)\|_{2,\mathcal{A}}^{2}=-2 I(u)>0, \quad \text { for a.e. } t \in[0, s_1),
                $$
            which indicates that $\|u(t)\|_{2}^{2}$ is increasing as long as $u(t) \in \mathcal{N}^{-}$ and
                \begin{align*}
                  \|u(s_1)\|_{2,\mathcal{A}}^{2} \ge \|u_{0}\|_{2,\mathcal{A}}^{2}.
                \end{align*}
            Hence, by the monotonicity of $P(s)$ with respect to $s$ (see \eqref{100}), we have
                \begin{align}\label{96}
                  P(\|u(s_1)\|_{2,\mathcal{A}}) \ge P(\|u_0\|_{2,\mathcal{A}}).
                \end{align}
            Moreover, \eqref{47} implies
                \begin{align}\label{97}
                  J(u(s_1)) \le J(u_{0}).
                \end{align}
            Then, gathering \eqref{95}-\eqref{97}, we get
                $
                  J(u_{0}) \ge P(\|u_0\|_{2,\mathcal{A}}),
                $
            which contradicts \eqref{93}. Thus $u(t) \in \mathcal{N}^{-}$ for all $t \in[0, T_{\max})$.
            \end{proof}

\subsection{Subcritical and critical initial energy $J(u_0) \le d$}\label{sec4}
In this subsection, we investigate the global well-posedness of the solution with initial data at subcritical and critical initial energy levels, including the finite-time blow-up of the solution for non-positive initial energy $J(u_0) \le 0$ in Theorem \ref{t10} and the global existence and the finite-time blow-up of the solution for positive initial energy $0 < J(u_0) \le d$, see Theorem \ref{t3} and Theorem \ref{t4}, respectively. Particularly, for the subcritical initial energy $0< J(u_0) < d$, we prove the asymptotic behavior of the global solution in terms of the $L^2$ norm in Theorem \ref{t3}, and we also estimate the upper bound of the blow-up time for the blow-up solution in Theorem \ref{t4}.

Firstly, we give the finite-time blow-up of the solution with initial data of non-positive initial energy $J(u_0) \le 0$ in the following Theorem \ref{t10}.

        \begin{theorem}[Finite-time blow-up]\label{t10}
          Let $(\mathrm{H_{p,q}})$, $(\mathrm{H^1_{p,q}})$-$(\mathrm{H^2_{p,q}})$ and $(\mathrm{H_{D}})$ be satisfied. Provided $J(u_0) \le 0$, then $u_{0} \in \mathcal{B}$ and the maximal existence can be estimated by
            \begin{align}\label{185}
              T_{\max } \leq \frac{2}{\left(q^{-}-2\right) C}\left\|u_0\right\|_{2,\mathcal{A}}^{2-q^{-}},
            \end{align}
          where $C$ is some positive constant depending on $p^+,q^{\pm},R$ and $r$.
        \end{theorem}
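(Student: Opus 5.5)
We will use the concavity-type ODE argument on $M(t):=\|u(t)\|_{2,\mathcal{A}}^2$, driven by the energy identities of Lemma \ref{l7}. By \eqref{48}, $M'(t)=-2I(u(t))$ for a.e. $t$. We write $I$ in terms of $J$:
\begin{align*}
-I(u)\ \ge\ -q^- J(u)+\Big(\frac{q^-}{p^+}-1\Big)\int_{\mathcal{A}}|\nabla u|^{p(x)}\,\mathrm{d}x
+\int_{\mathcal{A}}\Big(1-\frac{q^-}{q(x)}\Big)|u|^{q(x)}\,\mathrm{d}x .
\end{align*}
By \eqref{47}, $J(u(t))\le J(u_0)\le 0$, and by $(\mathrm{H^2_{p,q}})$ we have $q^->p^+$. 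Hence
\begin{align*}
M'(t)\ \ge\ 2\Big(\frac{q^-}{p^+}-1\Big)\int_{\mathcal{A}}|\nabla u|^{p(x)}\,\mathrm{d}x .
\end{align*}
This is nonnegative, so $M$ is nondecreasing. We assume $u_0\neq0$; otherwise the statement is vacuous, since the interesting case is $u_0\ne0$. If $J(u_0)=0$ with $u_0\neq 0$, then \eqref{169} gives $I(u_0)<0$, so $M'>0$ initially.

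Next we bound $M'$ from below by a power of $M$, which is where the constant $C(p^+,q^\pm,r,R)$ enters. It is more convenient to use the source term. From $J(u(t))\le 0$ we get
\begin{align*}
\int_{\mathcal{A}}\frac{|\nabla u|^{p(x)}}{p(x)}\,\mathrm{d}x\le \int_{\mathcal{A}}\frac{|u|^{q(x)}}{q(x)}\,\mathrm{d}x .
\end{align*}
Substituting this into $-I(u)=\int_{\mathcal{A}}|u|^{q(x)}\,\mathrm{d}x-\int_{\mathcal{A}}|\nabla u|^{p(x)}\,\mathrm{d}x$ yields
\begin{align*}
-I(u)\ \ge\ \Big(1-\frac{p^+}{q^-}\Big)\int_{\mathcal{A}}|u|^{q(x)}\,\mathrm{d}x .
\end{align*}
We then bound $\int_{\mathcal{A}}|u|^{q(x)}\,\mathrm{d}x$ from below by $\|u\|_{2,\mathcal{A}}^{q^-}$. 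Split $\mathcal{A}$ into $\{|u|\ge1\}$ and $\{|u|<1\}$. This gives $\int_{\mathcal{A}}|u|^{q(x)}\ge\int_{\mathcal{A}}|u|^{q^-}-|\mathcal{A}|$, and Hölder gives $\int_{\mathcal{A}}|u|^{q^-}\ge|\mathcal{A}|^{(2-q^-)/2}M^{q^-/2}$.

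The additive $-|\mathcal{A}|$ is the main obstacle, since we want a pure power inequality $M'\ge C M^{q^-/2}$. I would try to remove it by exploiting $q(x)\ge q^-$ differently. Use the estimate
\begin{align*}
\int_{\mathcal{A}}|u|^{q(x)}\,\mathrm{d}x\ \ge\ \min\{\|u\|_{q(\cdot)}^{q^-},\|u\|_{q(\cdot)}^{q^+}\}
\end{align*}
from Proposition \ref{p1}, together with the embedding $L^{q(\cdot)}\hookrightarrow L^2$ (Proposition \ref{p4}, whose constant depends on $|\mathcal{A}|$, i.e. on $r,R$). The $q^+$ branch occurs only when $\|u\|_{q(\cdot)}\le1$. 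There, $J\le0$ and Lemma \ref{l14}(i) force $\|\nabla u\|_{p(\cdot)}>r_{\rm ptw}$. Combined with $\int_{\mathcal{A}}|\nabla u|^{p(x)}\,\mathrm{d}x\le \frac{p^+}{q^-}\int_{\mathcal{A}}|u|^{q(x)}\,\mathrm{d}x$, this keeps $\int_{\mathcal{A}}|u|^{q(x)}\,\mathrm{d}x$ bounded below by a positive constant. Since $M$ is nondecreasing, it is bounded below by $\|u_0\|_2^2$. In both regimes we therefore obtain, possibly with $C$ depending also on the data, the differential inequality
\begin{align*}
M'(t)\ \ge\ C\,M(t)^{q^-/2}\qquad\text{for a.e. } t\in(0,T_{\max}).
\end{align*}
If the constant cannot be made data-independent, I would accept the weaker dependence as permitted by the statement.

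Finally we integrate. Since $q^->2$, the function $M^{(2-q^-)/2}$ satisfies
\begin{align*}
\frac{\mathrm{d}}{\mathrm{d}t}M^{\frac{2-q^-}{2}}\ \le\ -\frac{q^--2}{2}\,C .
\end{align*}
Hence
\begin{align*}
0< M(t)^{\frac{2-q^-}{2}}\le \|u_0\|_{2,\mathcal{A}}^{2-q^-}-\frac{(q^--2)C}{2}\,t ,
\end{align*}
which is impossible for $t\ge \frac{2}{(q^--2)C}\|u_0\|_{2,\mathcal{A}}^{2-q^-}$. So $M$ blows up no later than that time, and the solution cannot persist beyond it. By Lemma \ref{prop:continuation} together with $W_{0,\mathrm{rad}}^{1,p(\cdot)}\hookrightarrow L^2$, this gives $u_0\in\mathcal{B}$ and the bound \eqref{185}.
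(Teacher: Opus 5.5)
Your proposal is correct in substance and reaches \eqref{185} by a different route from the paper's. Both arguments start from $-I(u)\ge(1-\frac{p^+}{q^-})\int_{\mathcal{A}}|u|^{q(x)}\,\mathrm{d}x$ and use that $\sigma(t)=\|u(t)\|_{2,\mathcal{A}}^2$ is nondecreasing.

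The paper then splits $\mathcal{A}$ pointwise into $\{|u|\ge1\}$ and $\{|u|<1\}$. It applies H\"older on each piece to get $(\sigma')^{2/q^+}+(\sigma')^{2/q^-}\ge\delta_\Omega\sigma$, and uses $\sigma\ge\sigma(0)$ to get $\sigma'\ge\delta_\sigma$, which absorbs the $q^+$-term. This needs only elementary inequalities, but the final constant $\bar\delta_\sigma$ depends on $\|u_0\|_{2,\mathcal{A}}$.

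You instead split according to whether $\|u(t)\|_{q(\cdot),\mathcal{A}}\ge1$ or $<1$, using Proposition \ref{p1} and $L^{q(\cdot)}(\mathcal{A})\hookrightarrow L^2(\mathcal{A})$. You treat the $q^+$ branch with the Nehari-type bound $\|\nabla u\|_{p(\cdot),\mathcal{A}}>r_{\rm ptw}$ of Lemma \ref{l14}(i). This is legitimate: $M(t)\ge M(0)>0$ gives $u(t)\neq0$, and then \eqref{169} gives $I(u(t))<0$. Done this way, your constant $C$ is independent of $u_0$; it depends on $r_{\rm ptw}$, i.e.\ on the radial Sobolev constant. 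That is closer to what the statement asserts than the paper's own constant.

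One link must be repaired. In the $q^+$ branch, a lower bound $\int_{\mathcal{A}}|u|^{q(x)}\,\mathrm{d}x\ge c_1>0$ combined with $M\ge\|u_0\|_{2,\mathcal{A}}^2$ does not give $M'\ge CM^{q^-/2}$, because that bound on $M$ points the wrong way. There are two ways to fix it.
\begin{itemize}
\item Use the upper bound $M\le C_e^2\|u\|_{q(\cdot),\mathcal{A}}^2\le C_e^2$, which holds in that branch ($C_e$ the embedding constant). Then $c_1\ge c_1C_e^{-q^-}M^{q^-/2}$.
\item Drop Lemma \ref{l14} and use $\|u\|_{q(\cdot),\mathcal{A}}^{q^+}\ge C_e^{-q^+}M^{q^+/2}\ge C_e^{-q^+}\|u_0\|_{2,\mathcal{A}}^{q^+-q^-}M^{q^-/2}$. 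This is the exact analogue of the paper's use of $\sigma\ge\sigma(0)$, and gives a data-dependent constant like the paper's.
\end{itemize}

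Finally, $u_0=0$ is not a vacuous case: then $u\equiv0$ is a global solution, so $u_0\in\mathcal{B}$ fails. Hence $u_0\neq0$ is an implicit hypothesis of the theorem, as it is in the paper.
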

        \begin{proof}
        Firstly, it follows from Lemma \ref{l7} and the definitions of $J(u)$ and $I(u)$ that
            \begin{align*}
                0\ge J(u)\ge \frac{1}{p^+} I(u)
                 +   \left( \frac{1}{p^+} - \frac{1}{q^-} \right)
                 \int_{\mathcal{A}} |u(x)|^{q(x)}  \mathrm{d} x,
            \end{align*}
        which together with H\"{o}lder's inequality implies that
            \begin{align}\label{68}
            \begin{aligned}
                -I(u)
                \ge& \left( 1 - \frac{p^+}{q^-} \right) \int_{\mathcal{A}} |u(x)|^{q(x)}  \mathrm{d} x\\
                \ge&  \left( 1 - \frac{p^+}{q^-} \right) \left(\|u\|^{q^-}_{q^-,\Omega_{+}}
                    + \|u\|^{q^+}_{q^+,\Omega_{-}} \right)\\
                \ge&  \left( 1 - \frac{p^+}{q^-} \right)
                    \left(|\Omega_{+}|^{\frac{2-q^{-}}{2}} \|u\|^{q^-}_{2,\Omega_{+}}
                    + |\Omega_{-}|^{\frac{2-q^{+}}{2}} \|u\|^{q^+}_{2,\Omega_{-}} \right),
            \end{aligned}
            \end{align}
        where $\Omega_{+}:=\{x \in \mathcal{A}:|u(x)| \geq 1\}$ and $\Omega_{-}:=\{x \in \mathcal{A}:|u(x)|<1\}$.
        Then, let $\|u(t)\|_2^2:= \sigma(t)$ and combing \eqref{48} and \eqref{68}, we obtain
                \begin{align}\label{184}
                \begin{aligned}
                &(\sigma^{\prime}(t))^{\frac{2}{q^{+}}} + (\sigma^{\prime}(t))^{\frac{2}{q^{-}}}\\
                \geq&
                \underbrace{
                \min\left\{
                2^{\frac{2}{q^{+}}} \left( 1 - \frac{p^+}{q^-} \right)^{\frac{2}{q^{+}}}
                    \left|\mathcal{A} \right|^{\frac{2}{q^{+}}-1},
                2^{\frac{2}{q^{-}}} \left( 1 - \frac{p^+}{q^-} \right)^{\frac{2}{q^{-}}}
                    \left|\mathcal{A} \right|^{\frac{2}{q^{-}}-1} \right\}}_{:=\delta_{\Omega}} \sigma(t),
                \end{aligned}
                \end{align}
        Since \eqref{48} and $I(u(t))<0$ for $t \in (0,T_{\max})$ yield that $\sigma^{\prime}(t) = -2I(u(t)) > 0$ for a.e. $t \in (0,T_{\max})$, the function $\sigma(t)$ is increasing in $t$, i.e., $\sigma(t) \ge \sigma(0)$; then, it follows from \eqref{184} that
                \begin{align*}
                (\sigma^{\prime}(t))^{\frac{2}{q^{+}}} + (\sigma^{\prime}(t))^{\frac{2}{q^{-}}}
                \geq
                \delta_{\Omega} \sigma(0),
                \end{align*}
        which yields
                \begin{align}\label{126}
                  \sigma^{\prime}(t) \ge
                  \min\left\{\left(\frac{\delta_{\Omega} \sigma(0)}{2}\right)^{\frac{q^+}{2}},
                             \left(\frac{\delta_{\Omega} \sigma(0)}{2}\right)^{\frac{q^-}{2}}\right\} := \delta_{\sigma}.
                \end{align}
        Then, substituting \eqref{126} into \eqref{184}, we get
            \begin{align*}
                  (\sigma^{\prime}(t))^{\frac{2}{q^{-}}}
                  \ge \left( 1+ (\sigma^{\prime}(t))^{\frac{2}{q^{+}} - \frac{2}{q^{-}}} \right)^{-1}
                  \delta_{\Omega} \sigma(t)
                  \ge \left( 1+ \delta_{\sigma}^{\frac{2}{q^{+}} - \frac{2}{q^{-}}} \right)^{-1}
                  \delta_{\Omega} \sigma(t),
                \end{align*}
            which implies that
                \begin{align*}
                  \sigma^{\prime}(t)
                  \ge
                  \bar{\delta}_{\sigma}
                  \left(\sigma(t)\right)^{{\frac{q^{-}}{2}}}
                \end{align*}
            with $\bar{\delta}_{\sigma} = \left( 1+ \delta_{\sigma}^{\frac{2}{q^{+}} - \frac{2}{q^{-}}} \right)^{-\frac{q^{-}}{2}}
                  \delta^{{\frac{q^{-}}{2}}}_{\Omega} $, namely
                \begin{align}\label{109}
                   \left( \sigma^{1 - \frac{q^{-}}{2}}(t) \right)^{\prime} \le \left(1 - \frac{q^-}{2}\right)\bar{\delta}_{\sigma}.
                \end{align}
            Then one can integrate the both sides of \eqref{109} over $(0,t)$ to obtain
                \begin{align*}
                  \sigma^{1 - \frac{q^{-}}{2}}(t)
                  \le \sigma^{1 - \frac{q^{-}}{2}}(0) + \left(1 - \frac{q^-}{2}\right)\bar{\delta}_{\sigma}t,
                \end{align*}
            which means also
                \begin{align*}
                  \sigma(t) \ge \left( \sigma^{1 - \frac{q^{-}}{2}}(0) + \left(1 - \frac{q^-}{2}\right)\bar{\delta}_{\sigma}t \right)^{\frac{2}{2 - q^-}},
        \end{align*}
    and this suffices to show the validity of \eqref{185}.
    \end{proof}

Then, we show that the local solutions generated by initial data $0<J(u_0)\le d$ inside the stable invariant manifold $\mathcal{N}^+$ are actually global in time and the asymptotic behavior of the global solution with stronger restriction $0<J(u_0)< d$ can be characterized.

        \begin{theorem}[Global existence and polynomial decay]\label{t3}
              Let $(\mathrm{H_{p,q}})$, $(\mathrm{H^1_{p,q}})$-$(\mathrm{H^2_{p,q}})$ and $(\mathrm{H_{D}})$ be satisfied. Provided $u_0 \in \mathcal{N}^+$ with $0<J(u_0) \le d$, then $u_{0} \in \mathcal{G}$. Moreover, if $J(u_0)<d$, then
              the $L^2$ norm of the solution decays polynomially, i.e.,
                    \begin{align}\label{49}
                        \|u(t)\|_{2,\mathcal{A}}
                        \le \left(
                       \left(\frac{p^+}{2}-1 \right)\widetilde{\kappa} t
                           + \|u_0\|^{2-p^+}_{2,\mathcal{A}}
                            \right)^{\frac{1}{2-p^+}},
                        \quad t\in(0,\infty)
                    \end{align}
              where
                \begin{align}\label{170}
                  \widetilde{\kappa} := 2 \min \left\{1,r_{{\rm ptw}}^{p^- - p^+} \right\} C^{*}_{p(\cdot),2}
                            \left( 1 - \left(\frac{d}{J\left(u\right)}\right)
                                    ^{\frac{p^{+}-q^{-}}{q^{+}}} \right) > 0,
                \end{align}
               the constant $r_{{\rm ptw}}$ is given by \eqref{25} and $C^*_{p(\cdot),2}$ is the Sobolev constant of the embedding
              $W_0^{1,p(\cdot)}(\mathcal{A}) \hookrightarrow L^2(\mathcal{A})$.
        \end{theorem}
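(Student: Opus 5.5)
The argument has two parts: global existence via the stable invariant manifold and the continuation criterion, and polynomial decay via the $L^2$ identity \eqref{48} combined with the refined estimate of Lemma \ref{l6} (i).

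\textbf{Global existence.} By Lemma \ref{l17} (i), $u_0\in\mathcal{N}^+$ with $0<J(u_0)\le d$ gives $u(t)\in\mathcal{N}^+$, i.e. $I(u(t))\ge 0$, for all $t\in[0,T_{\max})$. By \eqref{47}, $J(u(t))\le J(u_0)$. Then \eqref{169} yields
\[
\Big(\frac{1}{p^+}-\frac{1}{q^-}\Big)\int_{\mathcal{A}}|\nabla u(t)|^{p(x)}\,\mathrm{d}x\le J(u_0).
\]
Proposition \ref{p1} turns this into a uniform bound on $\|\nabla u(t)\|_{p(\cdot),\mathcal{A}}$; equivalently, Lemma \ref{l4} with $k$ slightly above $J(u_0)$ gives the same bound. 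Lemma \ref{prop:continuation} then forces $T_{\max}=\infty$, so $u_0\in\mathcal{G}$.

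\textbf{Decay.} Assume now $J(u_0)<d$. Since $J(u(t))\le J(u_0)<d$ and $I(u(t))>0$ (for $u(t)\neq0$), Lemma \ref{l6} (i) applies. Because $d/J(u(t))\ge d/J(u_0)>1$ and the exponent $\frac{p^+-q^-}{q^+}$ is negative, we get
\[
I(u(t))\ge \kappa_0\int_{\mathcal{A}}|\nabla u|^{p(x)}\,\mathrm{d}x,\qquad
\kappa_0:=1-\Big(\frac{d}{J(u_0)}\Big)^{\frac{p^+-q^-}{q^+}}>0,
\]
uniformly in $t$. I read the $J(u)$ appearing in \eqref{170} as $J(u_0)$, which is a valid lower bound since $J$ is nonincreasing along the flow.

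\textbf{Coercivity step (main obstacle).} The next step is to bound $\int_{\mathcal{A}}|\nabla u|^{p(x)}\,\mathrm{d}x$ from below by a single power $\|u\|_{2,\mathcal{A}}^{p^+}$. By Proposition \ref{p1},
\[
\int_{\mathcal{A}}|\nabla u|^{p(x)}\,\mathrm{d}x\ge\min\big\{\|\nabla u\|_{p(\cdot),\mathcal{A}}^{p^-},\|\nabla u\|_{p(\cdot),\mathcal{A}}^{p^+}\big\}.
\]
If $\|\nabla u\|_{p(\cdot),\mathcal{A}}\le1$, the minimum is the $p^+$ power. If $\|\nabla u\|_{p(\cdot),\mathcal{A}}>1$, then $\|\nabla u\|^{p^-}\ge \|\nabla u\|^{p^+}\,r_*^{p^--p^+}$, where $r_*$ is the uniform upper bound from the global-existence step. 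This is why a factor $\min\{1,\cdot^{\,p^--p^+}\}$ appears in $\widetilde\kappa$. The paper writes $r_{\rm ptw}$ there; by Remark \ref{rmk4}-type reasoning, $\|\nabla u\|\le r_{\rm ptw}$ on the stable set below $d$. I would use whichever radius bound is justified and accept that the constant may differ cosmetically from \eqref{170}.

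Next, the Poincar\'e-type embedding $\|u\|_{2,\mathcal{A}}\le C^*_{p(\cdot),2}\|\nabla u\|_{p(\cdot),\mathcal{A}}$ converts this into
\[
\int_{\mathcal{A}}|\nabla u|^{p(x)}\,\mathrm{d}x\ge c\,\|u\|_{2,\mathcal{A}}^{p^+},
\]
with $c$ built from $\min\{1,r^{p^--p^+}\}$ and a power of $C^*_{p(\cdot),2}$. Tracking the exact power of the Sobolev constant is bookkeeping only.

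\textbf{ODE comparison.} Set $\sigma(t)=\|u(t)\|_{2,\mathcal{A}}$. Combining \eqref{48} with the two lower bounds gives
\[
\frac{\mathrm{d}}{\mathrm{d}t}\sigma^2=-2I(u)\le-\widetilde\kappa\,\sigma^{p^+}.
\]
Since $p^+>2$, dividing by $\sigma^{p^+}$ (wherever $\sigma>0$; if $\sigma$ vanishes, uniqueness keeps $u\equiv0$ afterwards) and integrating $\frac{\mathrm{d}}{\mathrm{d}t}\sigma^{2-p^+}\ge\frac{p^+-2}{2}\widetilde\kappa$ over $(0,t)$ gives
\[
\sigma^{2-p^+}(t)\ge\sigma^{2-p^+}(0)+\Big(\frac{p^+}{2}-1\Big)\widetilde\kappa\,t,
\]
which is \eqref{49}. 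The absolute continuity of $\sigma^2$, needed to integrate, comes from Lemma \ref{l7}.
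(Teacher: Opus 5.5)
Your proposal is correct and follows the paper's proof essentially step for step. Global existence comes from the invariance of $\mathcal{N}^+$ (Lemma \ref{l17}), the resulting energy bound on $\|\nabla u(t)\|_{p(\cdot),\mathcal{A}}$, and Lemma \ref{prop:continuation}; decay comes from \eqref{48} with Lemma \ref{l6} (i), Proposition \ref{p1} with a radius cutoff, and the embedding into $L^2(\mathcal{A})$, giving $\frac{\mathrm{d}}{\mathrm{d}t}\|u\|_{2,\mathcal{A}}^2\le-\widetilde{\kappa}\|u\|_{2,\mathcal{A}}^{p^+}$, which you then integrate. Your two adjustments to the constant are, if anything, more careful than the paper, which keeps the time-dependent $J(u)$ inside \eqref{170} and asserts $\|\nabla u(t)\|_{p(\cdot),\mathcal{A}}\le r_{{\rm ptw}}$ from Lemma \ref{l4}. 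Freezing $J(u)$ at $J(u_0)$ is legitimate by monotonicity, and using the radius $r_*$ from the energy bound is the safe choice. Lemma \ref{l4} together with Remark \ref{rmk4} yields the radius $r_{{\rm ptw}}$ only for energies below $\tilde d$, and $d$ may exceed $\tilde d$. So your aside that $\|\nabla u\|_{p(\cdot),\mathcal{A}}\le r_{{\rm ptw}}$ holds on the whole stable set below $d$ is not supported by those lemmas and should be dropped.
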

        \begin{proof}
\textbf{Step 1: Global existence.}
    Recalling that Theorem \ref{l1} has ensured the local existence of the solution of problem \eqref{1}-\eqref{3} on $(0,T_{\max})$, thus we can obtain its global existence by verifying the uniform-in-$t$ boundedness of the norms $\|\nabla u(t)\|_{p(\cdot),\mathcal{A}}$ and $\int_{0}^{t} \left\|u_{t}(\tau)\right\|_{2,\mathcal{A}}^{2} {\rm d}\tau$. Note that $I(u(t))>0$ for $t \in (0,T_{\max})$ with $I(u_0)>0$ according to Lemma \ref{l7}, then the definitions of $J(u)$ and $I(u)$ claim that for every $t \in (0,T_{\max})$
        \begin{align}\label{89}
        \begin{aligned}
        J(u(t))
          >& \left( \frac{1}{p^+} - \frac{1}{q^-} \right)
                    \min \left\{
                       \|\nabla u(t)\|_{p(\cdot),\mathcal{A}}^{p^{-}},
                       \|\nabla u(t)\|_{p(\cdot),\mathcal{A}}^{p^{+}}
                       \right\},
        \end{aligned}
        \end{align}
    which together with \eqref{47} yields
        \begin{align}\label{91}
        \begin{aligned}
          &\int_{0}^{t} \left\|u_{t}(\tau)\right\|_{2,\mathcal{A}}^{2} {\rm d}\tau
          + \left( \frac{1}{p^+} - \frac{1}{q^-} \right)
                    \min \left\{
                       \|\nabla u(t)\|_{p(\cdot),\mathcal{A}}^{p^{-}},
                       \|\nabla u(t)\|_{p(\cdot),\mathcal{A}}^{p^{+}}
                       \right\} < d.
        \end{aligned}
        \end{align}
    In particular,
    \[
    \sup_{0<t<T_{\max}}\|\nabla u(t)\|_{p(\cdot),\mathcal{A}}<\infty.
    \]
    Hence, by Lemma \ref{prop:continuation}, the maximal existence time cannot be finite. Consequently, $T_{\max}=\infty$, and hence $u_0\in\mathcal{G}$.

    \textbf{Step 2: Asymptotic behavior.}
    Next, we discuss the asymptotic behavior of the global solution obtained above for subcritical initial energy, i.e., $J(u_0) <d$.
    It follows from \eqref{48} and \eqref{33} that
        \begin{align}\label{55}
        \begin{aligned}
          \frac{\rm d}{{\rm d} t} \|u\|^2_{2,\mathcal{A}}
          = -2 I(u)
          \le 2 \left( \left(\frac{d}{J\left(u\right)}\right)
                        ^{\frac{p^{+}-q^{-}}{q^{+}}} - 1\right)
                \int_{\mathcal{A}}|\nabla u|^{p(x)} \mathrm{d} x.
        \end{aligned}
        \end{align}
    Recalling that $J(u_0)< d$, $I(u_0) >0$, then we can deduce from Lemma \ref{l7}, Lemma \ref{l17} (i) and Lemma \ref{l4} that
    $\|\nabla u(t) \|_{p(\cdot),\mathcal{A}} \le r_{{\rm ptw}}$ (i.e.,
    $\|r^{-1}_{{\rm ptw}} \nabla u\|_{p(\cdot),\mathcal{A}} \le 1$) for $t \in (0,\infty)$, which together with Proposition \ref{p1}
    and the embedding $W_0^{1,p(\cdot)}(\mathcal{A}) \hookrightarrow L^2(\mathcal{A})$ implies that
        \begin{align}\label{57}
          \rho(r^{-1}_{{\rm ptw}} \nabla u)
          \ge \|r^{-1}_{{\rm ptw}} \nabla u\|^{p^+}_{p(\cdot),\mathcal{A}}
          \ge r^{-p^+}_{{\rm ptw}} \left(C^{*}_{p(\cdot),2}\right)^{p^+} \|u\|_{2,\mathcal{A}}^{p^+},
        \end{align}
    where $C^*_{p(\cdot),2}$ is the Sobolev constant of the embedding
              $W_0^{1,p(\cdot)}(\mathcal{A}) \hookrightarrow L^2(\mathcal{A})$.
    Thus from Proposition \ref{p1}, we deduce from \eqref{57} that
        \begin{align}\label{56}
          \int_{\mathcal{A}}|\nabla u|^{p(x)} \mathrm{d} x
          \ge \min \left\{1,r_{{\rm ptw}}^{p^- - p^+} \right\} \left(C^{*}_{p(\cdot),2}\right)^{p^+} \|u\|_{2,\mathcal{A}}^{p^+}.
        \end{align}
    Then combining \eqref{55} and \eqref{56}, there holds
        \begin{align}\label{164}
              \frac{\rm d}{{\rm d} t} \|u\|^2_{2,\mathcal{A}}
              \le -\widetilde{\kappa} \|u\|^{p^+}_{2,\mathcal{A}} ,
        \end{align}
    where $\widetilde{\kappa}$ is defined by \eqref{170}.
    By setting $\eta(t) := \|u\|^2_{2,\mathcal{A}}$, from \eqref{164} we obtain
        \begin{align}\label{58}
          \frac{\rm d}{{\rm d} t} \eta^{1 - \frac{p^+}{2}}(t) \ge \left( \frac{p^+}{2} - 1 \right) \widetilde{\kappa}.
        \end{align}
    Then integrating the both sides of \eqref{58} over $(0,t)$, we obtain
        \begin{align*}
          \eta^{1 - \frac{p^+}{2}}(t) \ge \eta^{1 - \frac{p^+}{2}}(0) + \left( \frac{p^+}{2} - 1 \right) \widetilde{\kappa} t,
        \end{align*}
    which suffices to show the validity of \eqref{49}.
        \end{proof}

        Next, we show the finite-time blow-up of the solution for $0<J(u_0)\le d$ inside the unstable manifold $u_0 \in \mathcal{N}^-$. To this end, we give the following lemma first.

        \begin{lemma}[A differential inequality for concave method \cite{r6}]\label{l2}
        Suppose that a positive, twice-differentiable function $\xi(t)$ satisfies the inequality
            $$
            \xi^{\prime \prime}(t) \xi(t)-(1+\theta)\left(\xi^{\prime}(t)\right)^{2} \geq 0, \quad t>0,
            $$
            where $\theta>0$ is some constant. If $\xi(0)>0$ and $\xi^{\prime}(0)>0$, then there exists $0<t_{1} \leq \frac{\xi(0)}{\theta \xi^{\prime}(0)}$ such that $\xi(t)$ tends to infinity as $t \rightarrow t_{1}$.
        \end{lemma}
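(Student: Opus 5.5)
The plan is to reduce the inequality to the concavity of a negative power of $\xi$. Set $\psi(t):=\xi(t)^{-\theta}$, which is well defined and positive as long as $\xi>0$. A direct computation gives
\begin{align*}
\psi'(t)=-\theta\,\xi^{-\theta-1}\xi',\qquad
\psi''(t)=-\theta\,\xi^{-\theta-2}\left(\xi''\xi-(1+\theta)(\xi')^{2}\right).
\end{align*}
By the hypothesis, $\psi''(t)\le 0$ on the interval where $\xi$ exists and is positive, so $\psi$ is concave there.

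Concavity then places $\psi$ below its tangent line at $t=0$:
\begin{align*}
\psi(t)\le \psi(0)+\psi'(0)t=\xi(0)^{-\theta}\left(1-\frac{\theta\,\xi'(0)}{\xi(0)}\,t\right).
\end{align*}
Because $\xi(0)>0$ and $\xi'(0)>0$, the right-hand side is strictly decreasing and vanishes at $t^*=\frac{\xi(0)}{\theta\xi'(0)}$. Since $\psi>0$ wherever $\xi$ is finite and positive, $\xi$ cannot remain finite on all of $[0,t^*]$.

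To turn this into blow-up, two further points are needed. First, $\xi$ stays positive, so $\psi$ does not lose meaning earlier. To see this, observe that $\psi'(0)<0$ and concavity give $\psi'\le\psi'(0)<0$, so $\psi$ is decreasing; equivalently, $\xi$ is increasing and hence $\xi\ge\xi(0)>0$. Second, $\psi(t)\to0$ forces $\xi(t)\to\infty$. Let $t_1$ be the supremum of times up to which $\xi$ exists as a finite positive $C^2$ function. The tangent bound gives $0<\psi(t)\le \xi(0)^{-\theta}(1-t/t^*)$ on $[0,t_1)$, hence $t_1\le t^*$. On $[0,t_1)$, $\psi$ is positive, decreasing and concave. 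If $\lim_{t\to t_1}\psi(t)>0$, then $\xi$ would stay bounded near $t_1$, and the solution would extend past $t_1$ (in the intended application this is exactly the blow-up alternative). So $\psi\to0$, i.e.\ $\xi(t)\to\infty$ as $t\to t_1$, with $0<t_1\le \xi(0)/(\theta\xi'(0))$.

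The main obstacle is this last step, namely ruling out the case where $\xi$ simply ceases to be defined at $t_1$ while $\psi$ still has a positive limit. My approach is to phrase the lemma, as in Levine's original formulation, on a maximal interval of existence. The conclusion is then that $\xi$ cannot exist as a finite positive function beyond $t^*$, which is precisely what the blow-up theorems require.
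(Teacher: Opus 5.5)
The paper gives no proof of this lemma and simply cites \cite{r6}. Your argument is exactly the standard Kalantarov--Ladyzhenskaya (Levine) proof: concavity of $\psi=\xi^{-\theta}$ from $\psi''=-\theta\xi^{-\theta-2}\bigl(\xi''\xi-(1+\theta)(\xi')^2\bigr)$, followed by the tangent-line bound at $t=0$. It is correct, and you are right that the conclusion only makes sense on a maximal interval of existence.

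One small correction. Your parenthetical that a bounded $\xi$ near $t_1$ is ``exactly the blow-up alternative'' is not accurate here: boundedness of $M(t)$ does not control $\|\nabla u(t)\|_{p(\cdot),\mathcal{A}}$, which is what Lemma \ref{prop:continuation} requires. This is harmless, because Theorems \ref{t4} and \ref{t2} use the lemma only to contradict $T_{\max}=\infty$. In that setting your bound $0<\psi(T)\le \xi(0)^{-\theta}(1-T/t^*)$ on $[0,T]$ already forces $T<\xi(0)/(\theta\xi'(0))$.
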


        \begin{theorem}[Finite-time blow-up and upper bound of the blow-up time]\label{t4}
        Let $(\mathrm{H_{p,q}})$, $(\mathrm{H^1_{p,q}})$-$(\mathrm{H^2_{p,q}})$ and $(\mathrm{H_{D}})$ be satisfied. Suppose that $0<J\left(u_{0}\right)\le d$ and $u_0 \in \mathcal{N}^-$, then $u_{0} \in \mathcal{B}$.
        Additionally, if $J(u_0)<d$, the upper bound of the maximal time can be estimated by
            \begin{align*}
              T_{\max} \leq \frac{4\left\|u_{0}\right\|_{2,\mathcal{A}}^{2}}
              {\left(q^{-}-2\right)^{2} \alpha_1},
            \end{align*}
        where $\alpha_1$ is a constant satisfying $\alpha_1 \in \left(0,
        \frac{q^{-}\left(d-J\left(u_{0}\right)
        \right)}{q^{-}-1}\right)$.
        \end{theorem}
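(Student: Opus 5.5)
The plan is to use the concavity method of Lemma \ref{l2} applied to
\begin{align*}
\xi(t):=\int_0^t\|u(\tau)\|_{2,\mathcal{A}}^2\,\mathrm{d}\tau+(T-t)\|u_0\|_{2,\mathcal{A}}^2+\beta(t+\tau_0)^2,
\end{align*}
with $T,\beta,\tau_0\ge 0$ to be chosen. We argue by contradiction, assuming $T_{\max}=\infty$. By Lemma \ref{l17} (ii), $u(t)\in\mathcal{N}^-$ for all $t\in[0,T_{\max})$, so $I(u(t))<0$.

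\emph{Derivatives of $\xi$.} Using \eqref{48} we get
\begin{align*}
\xi'(t)=\|u(t)\|_{2,\mathcal{A}}^2-\|u_0\|_{2,\mathcal{A}}^2+2\beta(t+\tau_0),\qquad \xi''(t)=-2I(u(t))+2\beta .
\end{align*}

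\emph{Lower bound for $-I(u)$.} This is the key step. From \eqref{169}-type algebra, $-I(u)\ge q^-\bigl(d-J(u)\bigr)+(q^--p^+)\tfrac{1}{p^+}\int_{\mathcal{A}}|\nabla u|^{p(x)}\mathrm{d}x$ would be too crude in general. Instead we write $-I(u)= q^-(-J(u))+\int_{\mathcal{A}}(\tfrac{q^-}{p(x)}-1)|\nabla u|^{p(x)}\mathrm{d}x+\int_{\mathcal{A}}(1-\tfrac{q^-}{q(x)})|u|^{q(x)}\mathrm{d}x$, which gives
\begin{align*}
-I(u)\ge -q^-J(u)+(q^--p^+)/p^+\int_{\mathcal{A}}|\nabla u|^{p(x)}\mathrm{d}x .
\end{align*}
When $J(u_0)<d$ we want to use Lemma \ref{l6} (ii), $-I(u)>q^-(d-J(u))$, together with $J(u(t))=J(u_0)-\int_0^t\|u_\tau\|_2^2$. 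Taking a convex combination of the two bounds (weights $\tfrac{1}{q^--1}$ and $\tfrac{q^--2}{q^--1}$, or similar), the aim is
\begin{align*}
\xi''(t)\ge 2(q^--2)\int_0^t\|u_\tau\|_{2,\mathcal{A}}^2\,\mathrm{d}\tau+2\alpha_1+2\beta,
\end{align*}
where $\alpha_1<\frac{q^-(d-J(u_0))}{q^--1}$ absorbs the residual constant. The point to check is that the coefficient of $\int_0^t\|u_\tau\|^2$ is at least $q^--2$, which comes from $-q^-J(u)=-q^-J(u_0)+q^-\int_0^t\|u_\tau\|^2$.

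\emph{Concavity inequality.} Next we apply the Cauchy--Schwarz estimate
\begin{align*}
\Bigl(\tfrac12\xi'\Bigr)^2\le\Bigl(\int_0^t\|u\|^2+\beta(t+\tau_0)^2\Bigr)\Bigl(\int_0^t\|u_\tau\|^2+\beta\Bigr),
\end{align*}
using $\|u\|^2-\|u_0\|^2=2\int_0^t(u,u_\tau)$. Combined with the bound on $\xi''$, this yields $\xi\xi''-\frac{q^-}{2}(\xi')^2\ge 0$ provided $\beta\le\alpha_1/(\ldots)$, so we choose $\beta$ small. Lemma \ref{l2} with $\theta=\frac{q^--2}{2}$ then gives blow-up before $\xi(0)/(\theta\xi'(0))=\frac{2(T\|u_0\|^2+\beta\tau_0^2)}{(q^--2)\,2\beta\tau_0}$. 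Optimizing over $\tau_0$ and $T$ (taking $\tau_0\sim$ $\|u_0\|^2/\beta$ and $\beta\to\alpha_1$ scale) leads to a contradiction with $T_{\max}=\infty$ once $T$ exceeds this bound, and the optimization gives the stated $T_{\max}\le\frac{4\|u_0\|^2}{(q^--2)^2\alpha_1}$.

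\emph{Critical case $J(u_0)=d$.} Here we shift time. Since $I(u_0)<0$, \eqref{48} gives $\|u\|_2$ strictly increasing, so $u$ is not stationary. Then \eqref{47} forces $\int_0^{t_0}\|u_t\|^2>0$ for small $t_0>0$ (otherwise $u\equiv u_0$ and $I(u_0)=0$ via \eqref{23}), so $J(u(t_0))<d$. Since also $u(t_0)\in\mathcal{N}^-$, we may restart from $t_0$ using the subcritical case.

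The main obstacle is getting the correct coefficient in the lower bound for $\xi''$ under the variable exponents. The anisotropic terms only allow the crude factors $q^-/p(x)\ge q^-/p^+>1$, and we must make sure that combining Lemma \ref{l6} (ii) with the energy identity really produces $(q^--2)\int_0^t\|u_\tau\|^2$ plus a positive constant.
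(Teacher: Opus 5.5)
Your route is the paper's. Your $\xi$ is exactly $2M(t)$ from \eqref{69}, with your $\beta,\tau_0$ playing the roles of its $\alpha,\beta$. The ingredients also coincide: Lemma \ref{l6} (ii) with \eqref{47}, the Cauchy--Schwarz bound, Lemma \ref{l2} with $\theta=\frac{q^--2}{2}$, optimization in $\tau_0$, and the time shift when $J(u_0)=d$. However, the step you single out is set up incorrectly, and as written it does not close.

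From $(\xi')^2\le 4\xi\bigl(\int_0^t\|u_\tau\|_{2,\mathcal{A}}^2\,\mathrm{d}\tau+\beta\bigr)$ and $1+\theta=\frac{q^-}{2}$, the inequality $\xi\xi''-\frac{q^-}{2}(\xi')^2\ge0$ requires $\xi''\ge 2q^-\bigl(\int_0^t\|u_\tau\|_{2,\mathcal{A}}^2\,\mathrm{d}\tau+\beta\bigr)$. So the coefficient in front of $\int_0^t\|u_\tau\|^2$ must be $2q^-$, not $2(q^--2)$. With your target $\xi''\ge 2(q^--2)\int_0^t\|u_\tau\|^2+2\alpha_1+2\beta$, an uncontrolled term $-4\int_0^t\|u_\tau\|_{2,\mathcal{A}}^2\,\mathrm{d}\tau$ remains. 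Even ignoring that term, your constant condition $2\alpha_1+2\beta\ge 2q^-\beta$ forces $\beta\le\alpha_1/(q^--1)$. That yields at best $T_{\max}\le\frac{4(q^--1)\|u_0\|_{2,\mathcal{A}}^2}{(q^--2)^2\alpha_1}$, not the stated bound.

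The fix is simpler than you anticipate, and no convex combination is needed. Mixing in the second bound with weight $\frac{q^--2}{q^--1}$ would replace the constant $q^-(d-J(u_0))$ by $q^-\bigl(\frac{d}{q^--1}-J(u_0)\bigr)$, which may be negative. The anisotropy is already absorbed in the proof of Lemma \ref{l6} (ii), which together with \eqref{47} gives directly
\begin{align*}
-I(u(t))>q^-\bigl(d-J(u(t))\bigr)=q^-\bigl(d-J(u_0)\bigr)+q^-\int_0^t\|u_\tau\|_{2,\mathcal{A}}^2\,\mathrm{d}\tau .
\end{align*}
Hence
\begin{align*}
\xi\xi''-\frac{q^-}{2}(\xi')^2\ge \xi\Bigl(2q^-\bigl(d-J(u_0)\bigr)-2(q^--1)\beta\Bigr)>0
\quad\text{whenever } 0<\beta<\frac{q^-(d-J(u_0))}{q^--1}.
\end{align*}
Your quadratic weight $\beta$ is therefore itself the $\alpha_1$ of the statement. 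Lemma \ref{l2} gives $T\le\frac{T\|u_0\|_{2,\mathcal{A}}^2+\beta\tau_0^2}{(q^--2)\beta\tau_0}$ for every $T<T_{\max}$. Choosing $\tau_0=\frac{2\|u_0\|_{2,\mathcal{A}}^2}{(q^--2)\beta}$ then gives $T\le\frac{4\|u_0\|_{2,\mathcal{A}}^2}{(q^--2)^2\beta}$, which is exactly the claim with $\alpha_1=\beta$.

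Your treatment of $J(u_0)=d$ matches the paper's. Note that Lemma \ref{l6} (ii) only needs $J(u(t_0))<d$, so positivity of $J(u(t_0))$ is not required for the restart.
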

        \begin{proof}
        \textbf{Step 1: Finite-time blow-up.}
        Recalling that Theorem \ref{l1} claims the local existence of the solution.
        Firstly, we deal with the case $J(u)<d$. Arguing by contradiction, we suppose that $T_{\max}=+\infty$ and take $T>0$ sufficiently large to define the following auxiliary function
            \begin{align}\label{69}
                M(t):=\frac{1}{2} \int_{0}^{t}\|u\|_{2,\mathcal{A}}^{2} {\rm d} \tau+\frac{1}{2}(T-t)\left\|u_{0}\right\|_{2,\mathcal{A}}^{2}+\frac{1}{2} \alpha(t+\beta)^{2}
                \quad \text { for } t \in[0, T],
            \end{align}
        where $\alpha,\beta$ are positive constants that will be determined later. Then for any $t \in[0, T]$, it is easy to verify that $M(t)>0$.
        Then a direct calculation yields
            \begin{align}\label{70}
            \begin{aligned}
            M^{\prime}(t) =\frac{1}{2}\|u\|_{2,\mathcal{A}}^{2}-\frac{1}{2}\left\|u_{0}\right\|_{2,\mathcal{A}}^{2}
                + \alpha(t+\beta)
            =\int_{0}^{t}\left(u, u_{\tau}\right)_{\mathcal{A}} {\rm d} \tau
                + \alpha(t+\beta)
            \end{aligned}
            \end{align}
        and
            \begin{align}\label{71}
              M^{\prime \prime}(t) =\left(u, u_{\tau}\right)_{\mathcal{A}} +\alpha.
            \end{align}

        On the one hand, since Lemma \ref{l7} yields $I(u(t))<0$ for $t\in(0,T)$ with $u_0 \in \mathcal{N}^-$. Then, according to \eqref{34} and \eqref{47}, there holds
            \begin{align}\label{72}
            \begin{aligned}
               -I(u)
               \ge
                 -q^- \left(J(u) - d \right)
                \ge  -q^- \left(
                    J(u_0) - \int_{0}^{t} \|u\|^2_{2,\mathcal{A}} {\rm d} \tau \right)
                    + q^- d.
            \end{aligned}
            \end{align}
        Then from \eqref{48}, \eqref{72} and \eqref{71}, we have
            \begin{align}\label{76}
              M''(t) \ge -q^- \left(
                    J(u_0) - \int_{0}^{t} \|u\|^2_{2,\mathcal{A}} {\rm d} \tau \right)
                    + q^- d +\alpha.
            \end{align}

        On the other hand, by the Cauchy-Schwarz inequality and Young's inequality, for any $t \in\left[0, T \right]$, we have
            \begin{align*}
            \begin{aligned}
            &\left(\int_{0}^{t}\|u\|_{2,\mathcal{A}}^{2} {\rm d} \tau
                    +\alpha(t + \beta)^{2}\right)
                \left(\int_{0}^{t}\left\|u_{\tau}\right\|_{2,\mathcal{A}}^{2} {\rm d} \tau+\alpha\right)
                -\left(\int_{0}^{t}\left(u, u_{\tau}\right)_{\mathcal{A}} {\rm d} \tau+\alpha(t+\beta)\right)^{2} \\
            =&\left(\int_{0}^{t}\|u\|_{2,\mathcal{A}}^{2} {\rm d} \tau \int_{0}^{t}\left\|u_{\tau}\right\|_{2,\mathcal{A}}^{2} {\rm d} \tau
                -\left(\int_{0}^{t}\left(u, u_{\tau}\right)_{\mathcal{A}} {\rm d} \tau\right)^{2}\right) \\
            &+\left(\alpha \int_{0}^{t}\|u\|_{2,\mathcal{A}}^{2} {\rm d} \tau
                +\alpha(t+\beta)^{2} \int_{0}^{t}\left\|u_{\tau}\right\|_{2,\mathcal{A}}^{2} {\rm d} \tau-2 \alpha(t+\beta) \int_{0}^{t}\left(u, u_{\tau}\right)_{\mathcal{A}} {\rm d} \tau\right) \\
            \geq & 2 \alpha(t+\beta)\left(\int_{0}^{t}\|u\|_{2,\mathcal{A}}^{2} {\rm d} \tau\right)^{\frac{1}{2}}\left(\int_{0}^{t}\left\|u_{\tau}\right\|_{2,\mathcal{A}}^{2} {\rm d} \tau\right)^{\frac{1}{2}}-2 \alpha(t+\beta) \int_{0}^{t}\left(u, u_{\tau}\right)_{\mathcal{A}} {\rm d} \tau \geq 0,
            \end{aligned}
            \end{align*}
        which implies that
            \begin{align}\label{74}
            \begin{aligned}
              (M'(t))^2
              =&
              \left(\int_{0}^{t}\left(u, u_{\tau}\right)_{\mathcal{A}} {\rm d} \tau
              +\alpha (t+ \beta)\right)^{2} \\
              \le &
              \left(\int_{0}^{t}\|u\|_{2,\mathcal{A}}^{2} {\rm d} \tau
                +\alpha (t + \beta)^{2}\right)
                \left(\int_{0}^{t}\left\|u_{\tau}\right\|_{2,\mathcal{A}}^{2} {\rm d} \tau
                + \alpha \right).
            \end{aligned}
            \end{align}
        Then combing \eqref{76} and \eqref{74}, we get
            \begin{align*}
            \begin{aligned}
            & M(t) M^{\prime \prime}(t)
                - \frac{q^-}{2} \left(M^{\prime}(t)\right)^{2} \\
            \geq & M(t) M^{\prime \prime}(t)-\frac{q^-}{2}
            \left(2 M(t)-(T-t)\left\|u_{0}\right\|_{2,\mathcal{A}}^{2}\right)
                \left(\int_{0}^{t}\left\|u_{\tau}\right\|_{2,\mathcal{A}}^{2}
                {\rm d} \tau + \alpha\right) \\
            \geq & M(t)\left(M^{\prime \prime}(t)
                -q^- \left(\int_{0}^{t}\left\|u_{\tau}\right\|_{2,\mathcal{A}}^{2} {\rm d} \tau+\alpha\right)\right) \\
            \geq & M(t) \left( q^- d
             -q^- J\left(u_{0}\right)-(q^--1) \alpha
             \right) > 0,
            \end{aligned}
            \end{align*}
         where the constant $\alpha$ satisfies $0 < \alpha < \frac{q^- (d - J(u_0))}{q^- - 1}$. From Lemma \ref{l2}, we can deduce that there exists a $T^*>0$ such that
            $$
                \lim _{t \rightarrow {T^{*}}^-} M(t)=+\infty
            $$
        and
            \begin{align}\label{51}
              T_{\max}<T^{*} \leq \frac{2 M(0)}{(q^- - 2) M^{\prime}(0)} < \infty.
            \end{align}

        Next, we deal with the case $J(u_0)=d$. Since $J\left(u_{0}\right)=d>0$ and $I\left(u_{0}\right)<0$,
        then the continuity of $J(u(t))$ and $I(u(t))$ in $t$ yields the existence of a sufficiently small $\varepsilon>0$ such that $J(u(t))>0$ and
        $I(u)= -\left(u, u_{t}\right)_{\mathcal{A}}<0$ for $t \in [0, \varepsilon)$,
        which implies that $u_{t} \not\equiv 0$ on the interval $[0, \varepsilon)$.
        Therefore, Lemma \ref{l8} claims that
            \begin{align*}
             0<J\left(u\left(\varepsilon\right)\right) \leq d-\int_{0}^{\varepsilon}\left\|u_{\tau}\right\|_{2,\mathcal{A}}^{2} {\rm d} \tau <d.
            \end{align*}
        Thus we can deduce that $u \in \mathcal{N}^-$ for
        $t \in (\varepsilon, \infty]$ by Lemma \ref{l7}. Then similar as what we did for the case $J(u_0)<d$ in the proof of Theorem \ref{t4} above, we can easily get
            \begin{align*}
            \begin{split}
               & M(t) M''(t) -\frac{q^-}{2}\left(M'(t)\right)^{2}
               > M(t)
               \left(q^-\left(d-J\left(u\left(\varepsilon\right)\right)\right)
               -(q^--1) \alpha\right)>0.
            \end{split}
            \end{align*}
        Then, we can apply Lemma \ref{l2} again to obtain that the solution doesn't exist globally, which together with Lemma \ref{prop:continuation} leads to the finite-time blow-up of the solution.

        \textbf{Step 2: Upper bound of the blow-up time.}
        Taking \eqref{69} and \eqref{70} into \eqref{51}, for any $\beta>0$ and $0<\alpha<\frac{q^{-}
        \left(d-J\left(u_{0}\right)\right)}{q^{-}-1}$, we have
            \begin{align*}
            \frac{2 M(0)}{(q^{-} -2) M^{\prime}(0)}
            = \frac{\left\|u_{0}\right\|_{2,\mathcal{A}}^{2} T + \alpha \beta^{2}}{\left(q^{-} -2\right) \beta \alpha}
            = \frac{\left\|u_{0}\right\|_{2,\mathcal{A}}^{2}}
            {\left(q^{-}-2\right) \beta \alpha} T
            + \frac{\beta}{q^{-}-2},
            \end{align*}
        which implies that for any $T \in (0,T_{\max})$
            \begin{align}\label{78}
              \left(1-\frac{\left\|u_{0}\right\|_{2,\mathcal{A}}^{2}}
              {\left(q^{-}-2\right) \beta \alpha}\right) T \leq \frac{\beta}{q^{-}-2}.
            \end{align}
        Then, by taking
        $\beta \in\left(\frac{\left\|u_{0}\right\|_{2,\mathcal{A}}^{2}}
        {\left(q^{-}-2\right) \alpha},+\infty\right)$ such that
            $$
            \begin{aligned}
            T & \leq \frac{\beta}{q^{-}-2}\left(1-\frac{\left\|u_{0}\right\|_{2,\mathcal{A}}^{2}}
            {\left(q^{-}-2\right) \beta \alpha}\right)^{-1}
            =\frac{\alpha \beta^{2}}{\left(q^{-}-2\right) \beta \alpha-\left\|u_{0}\right\|_{2,\mathcal{A}}^{2}}
            :=T_{\alpha}(\beta),
            \end{aligned}
            $$
        where $T_{\alpha}(\beta)$ is positive and takes its minimum at $\beta=\frac{2\left\|u_{0}\right\|_{2,\mathcal{A}}^{2}}
        {\left(q^{-}-2\right) \alpha}$, we get that
            \begin{align}\label{50}
             T_{\max} \leq T_{\alpha}\left(\frac{2\left\|u_{0}\right\|_{2,\mathcal{A}}^{2}}{\left(q^{-}-2\right) \alpha_1}\right)=\frac{4\left\|u_{0}\right\|_{2,\mathcal{A}}^{2}}{\left(q^{-}-2\right)^{2} \alpha_1}
            \end{align}
           according to the arbitrariness of $T$ with $T \in (0,T_{\max})$ for $\alpha_1 \in \left(0,\frac{q^- (d - J(u_0))}{q^- - 1}\right)$.
        \end{proof}

\subsection{Supercritical initial energy $J(u_0) > d$}\label{sec5}
In this subsection, we consider the global well-posedness of the solution to problem \eqref{1}-\eqref{3} with supercritical initial energy $J(u_0) > d$. We not only give the abstract criteria of the global existence and finite-time blow-up of the solution in Theorem \ref{t5}, but also give a sufficient condition of the finite-time blow-up of the solution and estimate the upper bound of its maximal existence time in Theorem \ref{t2}.

Next, we claim that any global solution to \eqref{1}-\eqref{3} converges to an equilibrium up to a time sequence as shown in the following Lemma \ref{l16}, where the existence of the nontrivial equilibrium is ensured by Lemma \ref{l11}.
Moreover, since
$W_{0,\mathrm{rad}}^{1, p(\cdot)}\left(\mathcal{A}\right) \hookrightarrow L^{q(\cdot)}\left(\mathcal{A}\right)$ compactly and $L^{q(\cdot)}(\mathcal{A}) \hookrightarrow L^2(\mathcal{A})$ (recalling that $q^- \ge 2$), Lemma \ref{l16} can be similarly proved as the subcritical regime case, cf. Theorem 6.3, \cite{r2}.

        \begin{lemma}[Convergence to an equilibrium]\label{l16}
        Let $(\mathrm{H_{p,q}})$, $(\mathrm{H^1_{p,q}})$-$(\mathrm{H^2_{p,q}})$ and $(\mathrm{H_{D}})$ be satisfied. If $u_0\in W_{0,\mathrm{rad}}^{1, p(\cdot)}\left(\mathcal{A}\right) \cap \mathcal{G}$, then there exists a solution $\phi$ of \eqref{16} such that the solution of \eqref{1}-\eqref{3} driven by $u_0$ converges to the stationary solution in $W_{0,\mathrm{rad}}^{1, p(\cdot)}\left(\mathcal{A}\right)$ up to a time sequence.
        \end{lemma}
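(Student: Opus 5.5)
The plan is the standard $\omega$-limit argument, made to work in the multiple-regime setting by Lemma \ref{l5} and Corollary \ref{cor1}.

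\emph{Step 1: uniform bounds.} Since $u_0\in\mathcal{G}$, we have $T_{\max}=\infty$. The energy identity \eqref{47} shows that $J(u(t))$ is nonincreasing. We first claim $J(u(t))\ge 0$ for all $t$. If instead $J(u(t_0))<0$ for some $t_0$, then Theorem \ref{t10} applied with initial datum $u(t_0)$ (uniqueness, Theorem \ref{t1}, lets us restart there) gives finite-time blow-up, which contradicts global existence. Hence $J(u(t))\to J_\infty\ge0$, and
\begin{align*}
\int_0^\infty\|u_t\|_{2,\mathcal{A}}^2\,\mathrm{d}t\le J(u_0)<\infty .
\end{align*}
Next we bound $\|\nabla u(t)\|_{p(\cdot),\mathcal{A}}$ uniformly in $t$. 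Combining \eqref{169} with \eqref{48} gives
\begin{align*}
\frac{\mathrm{d}}{\mathrm{d}t}\|u\|_{2,\mathcal{A}}^2=-2I(u)\ge -2q^-J(u_0)+2q^-\Big(\frac1{p^+}-\frac1{q^-}\Big)\int_{\mathcal{A}}|\nabla u|^{p(x)}\,\mathrm{d}x .
\end{align*}
Choose $t_n\to\infty$ in unit intervals with $\|u_t(t_n)\|_{2,\mathcal{A}}\to0$; this is possible because $u_t\in L^2(0,\infty;L^2(\mathcal{A}))$. Testing the equation with $u$ at $t_n$ gives $|I(u(t_n))|\le\|u_t(t_n)\|_{2,\mathcal{A}}\|u(t_n)\|_{2,\mathcal{A}}$. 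Putting this together with $J(u(t_n))\le J(u_0)$ and the coercive lower bound in \eqref{169}, we get
\begin{align*}
\Big(\frac1{p^+}-\frac1{q^-}\Big)\min\{\|\nabla u(t_n)\|^{p^-},\|\nabla u(t_n)\|^{p^+}\}\le J(u_0)+o(1)\,\|u(t_n)\|_{2,\mathcal{A}} .
\end{align*}
Since $p^-\ge2$ and $\|u\|_{2,\mathcal{A}}\le C\|\nabla u\|_{p(\cdot),\mathcal{A}}$, the left side is superlinear in $\|\nabla u(t_n)\|$ and the right side is at most linear, so $\sup_n\|\nabla u(t_n)\|_{p(\cdot),\mathcal{A}}<\infty$. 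This sequence bound is all that is needed.

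\emph{Step 2: extraction.} By reflexivity and Corollary \ref{cor1}, a subsequence satisfies $u(t_n)\rightharpoonup\phi$ weakly in $W_{0,\mathrm{rad}}^{1,p(\cdot)}(\mathcal{A})$, $u(t_n)\to\phi$ strongly in $L^{q(\cdot)}(\mathcal{A})$ and in $L^2(\mathcal{A})$, and $\phi$ is radial. Passing to the limit in \eqref{23} at $t=t_n$ proceeds term by term. The term $(u_t,\psi)\to0$. The source term $|u|^{q(x)-2}u\to|\phi|^{q(x)-2}\phi$ strongly in $L^{q'(\cdot)}(\mathcal{A})$; this is also bounded via Lemma \ref{l5}(i). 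For the diffusion term, the $S_+$ property of the $p(x)$-Laplacian applies: test with $u(t_n)-\phi$. The time-derivative and source contributions vanish, so
\begin{align*}
\limsup_n\,\big(|\nabla u(t_n)|^{p(x)-2}\nabla u(t_n),\nabla(u(t_n)-\phi)\big)\le0 ,
\end{align*}
which yields $\nabla u(t_n)\to\nabla\phi$ strongly in $L^{p(\cdot)}(\mathcal{A})$. Hence $\phi$ is a weak solution of \eqref{16}. Test functions in $W_0^{1,p(\cdot)}(\mathcal{A})$ that are not radial are allowed here, consistent with the Principle of Symmetric Criticality already built into Definition \ref{d1}. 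The result is strong convergence in $W_{0,\mathrm{rad}}^{1,p(\cdot)}(\mathcal{A})$ along the time sequence.

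\emph{Main obstacle.} The delicate point is Step 1: obtaining a uniform bound along a time sequence without assuming a potential-well condition, since $u_0$ is arbitrary in $\mathcal{G}$. The route above avoids a global-in-time bound by working only on $t_n$. I expect the nonnegativity of $J$, obtained through Theorem \ref{t10} and restarting, to be the step requiring the most care, because the restart needs uniqueness. If that step fails, I would fall back to the inequality before Step 2 with $J(u(t_n))$ replaced by $J(u_0)$, which the argument already uses.
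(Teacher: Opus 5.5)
Your proposal is correct and is essentially the argument the paper intends: the paper only defers to the subcritical proof of Theorem~6.3 in \cite{r2}, with Corollary~\ref{cor1} replacing the subcritical compact embedding, and you carry out precisely that $\omega$-limit scheme (nonnegativity of $J(u(t))$ for global solutions via Theorem~\ref{t10} and the restart permitted by Theorem~\ref{t1}, hence $u_t\in L^2(0,\infty;L^2(\mathcal{A}))$, a time sequence $t_n$ along which $\|u_t(t_n)\|_{2,\mathcal{A}}\to0$ and $\|\nabla u(t_n)\|_{p(\cdot),\mathcal{A}}$ stays bounded, then compactness and identification of the limit). Since the restart is legitimate, the fallback you mention is unnecessary (on its own it would not give the integrability of $u_t$ on which your choice of $t_n$ rests), and your appeal to the uniform $L^\infty$ bound of Lemma~\ref{l5}(i) is what lets the source term pass to the limit against non-radial test functions, which need not lie in $L^{q(\cdot)}(\mathcal{A})$.
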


By virtue of Lemma \ref{l16}, we give the abstract criteria for the global existence and finite-time blow-up of the solution respectively in the following Theorem \ref{t5}. Due to the invariant manifolds given by Lemma \ref{l15}, Theorem \ref{t5} can be proved by a contradiction argument as Lemma 8 \cite{r7}, which is based on the nonempty of the $\omega$-limit set ensured by Lemma \ref{l16} and
is not affected by the multiple-regime assumption. Hence, we omit the corresponding proof here.

        \begin{theorem}[Global existence and finite-time blow-up]\label{t5}
            Let $(\mathrm{H_{p,q}})$, $(\mathrm{H^1_{p,q}})$-$(\mathrm{H^2_{p,q}})$ and $(\mathrm{H_{D}})$ be satisfied. Suppose that $J(u_0) > d$, then we have

            \begin{enumerate}[(i)]
                \item if $\|u_0\|_{2,\mathcal{A}} \le \lambda_{J(u_0)}$ and $u_0 \in \mathcal{N}^+$, then
                          $u_{0} \in \mathcal{G}_0$;
                \item if $\|u_0\|_{2,\mathcal{A}} \ge \Lambda_{J(u_0)}$ and $u_0 \in \mathcal{N}^-$, then
                          $u_{0} \in \mathcal{B}$.
            \end{enumerate}
        \end{theorem}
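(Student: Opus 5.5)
Both parts rest on the invariant manifolds of Lemma \ref{l15}, combined with the $\omega$-limit information from Lemma \ref{l16} and the energy identities of Lemma \ref{l7}. Throughout, set $k:=J(u_0)>d$.

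\textbf{Part (i).} Lemma \ref{l15}(i) gives $u(t)\in\mathcal{N}^+$ for all $t\in[0,T_{\max})$. Then \eqref{169} and \eqref{47} give
\[
\Bigl(\tfrac{1}{p^+}-\tfrac{1}{q^-}\Bigr)\min\bigl\{\|\nabla u\|_{p(\cdot),\mathcal{A}}^{p^-},\|\nabla u\|_{p(\cdot),\mathcal{A}}^{p^+}\bigr\}\le J(u(t))\le k .
\]
So $\|\nabla u(t)\|_{p(\cdot),\mathcal{A}}$ is bounded uniformly in $t$, and Lemma \ref{prop:continuation} yields $T_{\max}=\infty$, that is, $u_0\in\mathcal{G}$.

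By Lemma \ref{l16}, there are a sequence $t_j\to\infty$ and a stationary solution $\phi$ with $u(t_j)\to\phi$ in $W_{0,\mathrm{rad}}^{1,p(\cdot)}(\mathcal{A})$. It remains to show $\phi=0$. Suppose $\phi\ne0$. Then $\phi\in\mathcal{N}$, and by continuity of $J$ together with \eqref{47}, $J(\phi)=\lim J(u(t_j))<k$, because $J(u(t))$ is strictly decreasing unless $u_t\equiv0$. Hence $\phi\in\mathcal{N}_k$ and $\|\phi\|_{2,\mathcal{A}}\ge\lambda_k$. On the other hand, \eqref{48} with $I(u)>0$ shows that $\|u(t)\|_{2,\mathcal{A}}$ is strictly decreasing, so
\[
\|\phi\|_{2,\mathcal{A}}=\lim\|u(t_j)\|_{2,\mathcal{A}}<\|u_0\|_{2,\mathcal{A}}\le\lambda_k ,
\]
which is a contradiction. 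Strictness needs care: if $u_0$ is itself stationary, it lies in $\mathcal{N}$, not $\mathcal{N}^+$, so $I(u_0)>0$ forces a strict decrease on an initial interval.

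Therefore every $\omega$-limit point is $0$. Convergence of the whole trajectory then follows from monotonicity. $\|u(t)\|_{2,\mathcal{A}}$ decreases to a limit, which must equal $0$. $J(u(t))$ decreases to $J(0)=0$. Since $J(u)\ge C\min\{\|\nabla u\|^{p^-},\|\nabla u\|^{p^+}\}$ on $\mathcal{N}^+$, we get $\|\nabla u(t)\|_{p(\cdot),\mathcal{A}}\to0$, so $u_0\in\mathcal{G}_0$.

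\textbf{Part (ii).} Lemma \ref{l15}(ii) gives $u(t)\in\mathcal{N}^-$, so by \eqref{48} $\|u(t)\|_{2,\mathcal{A}}$ increases. Suppose $T_{\max}=\infty$. Lemma \ref{l16} gives $u(t_j)\to\phi$ with $\phi$ stationary. The limit $\phi$ cannot be $0$: Lemma \ref{l14}(i) gives $\|\nabla u(t)\|_{p(\cdot),\mathcal{A}}>r_{\rm ptw}$ for $u\in\mathcal{N}^-$, and this lower bound passes to the strong limit. Hence $\phi\in\mathcal{N}$ with $J(\phi)<k$, i.e. $\phi\in\mathcal{N}_k$, so $\|\phi\|_{2,\mathcal{A}}\le\Lambda_k$. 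Yet
\[
\|\phi\|_{2,\mathcal{A}}=\lim\|u(t_j)\|_{2,\mathcal{A}}>\|u_0\|_{2,\mathcal{A}}\ge\Lambda_k ,
\]
a contradiction. Hence $T_{\max}<\infty$, i.e. $u_0\in\mathcal{B}$.

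\textbf{Main obstacle.} The delicate point is securing the strict inequalities $J(\phi)<k$ and $\|\phi\|_{2,\mathcal{A}}\neq\|u_0\|_{2,\mathcal{A}}$, since the definitions of $\lambda_k$ and $\Lambda_k$ are non-strict. I would handle this by noting that $u_0\in\mathcal{N}^{\pm}$ means $I(u_0)\ne0$. By \eqref{48} and the continuity of $I(u(t))$, the $L^2$ norm then changes strictly on a short interval $[0,\varepsilon]$. Since $u_t\not\equiv0$ there, \eqref{47} also makes $J$ drop strictly below $k$.
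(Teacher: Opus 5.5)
Your proposal is correct and follows the route the paper indicates, in the style of Gazzola--Weth. It combines the invariance of $\mathcal{N}^{\pm}$ from Lemma \ref{l15}, a stationary limit along a time sequence from Lemma \ref{l16}, and a contradiction with the definitions of $\lambda_{J(u_0)}$ and $\Lambda_{J(u_0)}$. Your extra care in securing the strict inequalities $J(\phi)<J(u_0)$ and $\|\phi\|_{2,\mathcal{A}}\neq\|u_0\|_{2,\mathcal{A}}$ is exactly what is needed here, since the paper defines $J^k$ with a strict inequality.
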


        Then, we give a sufficient condition for the finite-time blow-up of the solution with arbitrarily positive initial energy $J(u_0)>0$.

        \begin{theorem}[Finite-time blow-up for arbitrarily positive $J(u_0)$]\label{t2}
        Assume $(\mathrm{H_{p,q}})$, $(\mathrm{H^1_{p,q}})$-$(\mathrm{H^2_{p,q}})$ and $(\mathrm{H_{D}})$.
        Suppose that $u_{0} \in W_{0,\mathrm{rad}}^{1, p(\cdot)}\left(\mathcal{A}\right)$ satisfies \eqref{93},
        then the solution to problem \eqref{1}-\eqref{3} blows up in finite time, and the upper bound of the maximal existence time can be estimated by
             $$
                T_{\max} \leq \frac{4\left\|u_{0}\right\|_{2,\mathcal{A}}^{2}}
                {\left(q^{-}-2\right)^{2}
                \alpha_2},
             $$
        where the constant $\alpha_2$ satisfies $\alpha_2 \in \left(0, P(\|u_0\|_{2,\mathcal{A}}) - J(u_{0})\right)$ and $P(\cdot)$ is defined by \eqref{100}.
        \end{theorem}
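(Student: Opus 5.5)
The plan is to combine the unstable invariant manifold of Lemma \ref{l10} with the concavity method of Theorem \ref{t4}. The role played there by $d$ will now be played by $P(\|u_0\|_{2,\mathcal{A}})$.

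First, by Lemma \ref{l10}, \eqref{93} gives $u(t)\in\mathcal{N}^-$ for all $t\in[0,T_{\max})$. By \eqref{48}, $\|u(t)\|_{2,\mathcal{A}}$ is then nondecreasing. Since $P$ is increasing (see \eqref{100}), we get $P(\|u(t)\|_{2,\mathcal{A}})\ge P(\|u_0\|_{2,\mathcal{A}})$.

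The key lower bound for $-I(u)$ comes from the chain \eqref{94} applied at time $t$:
\begin{align*}
J(u(t))\ge \frac{1}{q^-}I(u(t))+P(\|u(t)\|_{2,\mathcal{A}})\ge \frac{1}{q^-}I(u(t))+P(\|u_0\|_{2,\mathcal{A}}).
\end{align*}
Combined with the energy identity \eqref{47}, i.e. $J(u(t))=J(u_0)-\int_0^t\|u_\tau\|_{2,\mathcal{A}}^2\,\mathrm d\tau$, this yields
\begin{align*}
-I(u(t))\ge q^-\Bigl(P(\|u_0\|_{2,\mathcal{A}})-J(u_0)\Bigr)+q^-\int_0^t\|u_\tau\|_{2,\mathcal{A}}^2\,\mathrm d\tau .
\end{align*}
This is exactly the analogue of \eqref{72}, with $d-J(u_0)$ replaced by $P(\|u_0\|_{2,\mathcal{A}})-J(u_0)>0$.

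Next, suppose $T_{\max}=\infty$. Fix large $T$ and take the auxiliary function $M(t)$ of \eqref{69}. Using \eqref{48}, we have $M''(t)=-I(u(t))+\alpha$. The Cauchy--Schwarz estimate \eqref{74} goes through verbatim, giving
\begin{align*}
MM''-\frac{q^-}{2}(M')^2\ge M\Bigl(q^-\bigl(P(\|u_0\|_{2,\mathcal{A}})-J(u_0)\bigr)-(q^--1)\alpha\Bigr).
\end{align*}
This is positive provided $0<\alpha<\frac{q^-}{q^--1}\bigl(P(\|u_0\|_{2,\mathcal{A}})-J(u_0)\bigr)$. In particular, any $\alpha_2\in(0,P(\|u_0\|_{2,\mathcal{A}})-J(u_0))$ is admissible, since $\frac{q^-}{q^--1}>1$.

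We have $M(0)>0$ and $M'(0)=\alpha\beta>0$. Lemma \ref{l2} with $\theta=\frac{q^-}{2}-1>0$ then forces $M\to\infty$ in finite time, contradicting global existence. Lemma \ref{prop:continuation} then gives blow-up of $\|\nabla u\|_{p(\cdot),\mathcal{A}}$. The time bound follows exactly as in Step 2 of Theorem \ref{t4}: from $T\le 2M(0)/((q^--2)M'(0))$, optimize in $\beta$ at $\beta=2\|u_0\|_{2,\mathcal{A}}^2/((q^--2)\alpha_2)$ to get $T_{\max}\le 4\|u_0\|_{2,\mathcal{A}}^2/((q^--2)^2\alpha_2)$.

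The main obstacle is the justification of the differential relations for $M$. Specifically, we need $M''=(u,u_t)+\alpha=-I(u)+\alpha$ almost everywhere with $M$ only $W^{2,1}$-regular, and $I(u(t))$ must be continuous so that Lemma \ref{l10} applies. Both rely on the absolute continuity supplied by Lemma \ref{l7}. I would apply Lemma \ref{l2} in its integrated form, valid for absolutely continuous $M'$.
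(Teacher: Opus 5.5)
Your proposal is correct and follows the paper's proof essentially step for step. It uses the invariance of $\mathcal{N}^-$ from Lemma \ref{l10}, the bound $q^-J(u)-I(u)\ge q^-P(\|u_0\|_{2,\mathcal{A}})$ obtained from \eqref{94} together with the monotonicity of $\|u(t)\|_{2,\mathcal{A}}$, and then the concavity argument with $M(t)$ from \eqref{69} and the same optimization in $\beta$. Your bookkeeping with $M''=-I(u)+\alpha$ is cleaner than the paper's displayed chain and yields the slightly larger admissible range $\alpha<\frac{q^-}{q^--1}\bigl(P(\|u_0\|_{2,\mathcal{A}})-J(u_0)\bigr)$, which contains the stated range for $\alpha_2$; your remark on the absolute continuity of $M'$ also covers a regularity point the paper leaves implicit.
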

        \begin{proof}
          Following the similar idea of the proof of Theorem \ref{t4},
        we start the proof of this theorem with the local existence of the solution ensured by Theorem \ref{l1}. Still arguing by contradiction, we suppose that $T_{\max} = \infty$. Then we take a sufficiently large $T>0$ and employ the same auxiliary function $M(t)$ defined by \eqref{69} in the proof of Theorem \ref{t4}.

        Firstly, we claim that $u(t)$ remains in $\mathcal{N}^-$ as $t$ evolves according to Lemma \ref{l10}. Then similar to \eqref{70}-\eqref{74}, for any $t \in(0, T)$ we get
            \begin{align*}
            \begin{aligned}
             &M(t) M^{\prime \prime}(t)
                - \frac{q^-}{2} \left(M^{\prime}(t)\right)^{2}\\
            \geq & M(t)\left(M^{\prime \prime}(t)
                -q^- \left(\int_{0}^{t}\left\|u_{\tau}\right\|_{2,\mathcal{A}}^{2} {\rm d} \tau+\alpha\right)\right) \\
            \geq & M(t) \left(  \alpha -2 I(u) +  \alpha + q^- J(u)
                -q^- J(u_0) -q^- \alpha \right) \\
            >& q^- M(t) \left(  J(u) - \frac{2}{q} I(u) - J(u_0)
                    - \alpha \right)\\
            >& q^- M(t) \left( P(\|u_0\|_{2,\mathcal{A}}) - J(u_0) -\alpha \right) \\
            >& 0
            \end{aligned}
            \end{align*}
        with
        $\alpha \in
            \left(0, P(\|u_0\|_{2,\mathcal{A}}) - J(u_{0})\right)$.
        By an application of Lemma \ref{l2} again, we deduce that there exists a $T^{**}$ such that
            $$
                \lim _{t \rightarrow {T^{**}}^-} M(t)=+\infty.
            $$

       Next, we estimate the upper bound of the blow-up time.
       Recalling \eqref{78}-\eqref{50} and
        $\alpha_2 \in\left(0, P(\|u_0\|_{2,\mathcal{A}}) - J(u_{0})\right)$,
        we can similarly derive that
            $$
            T_{\max}
            \leq T_{\alpha}
                \left(\frac{2
                \left\|u_{0}\right\|_{2,\mathcal{A}}^{2}}{\left(q^{-}-2\right) \alpha_2}\right)
            =\frac{4\left\|u_{0}\right\|_{2,\mathcal{A}}^{2}}
                {\left(q^{-}-2\right)^{2} \alpha_2}.
            $$
        \end{proof}

        \begin{remark}[On Theorem \ref{t2}]
          In Theorem 6.5 of \cite{r2}, the authors proposed a sufficient condition for the finite-time blow-up of the solution with supercritical initial energy, and they set
                $$
                \left\|u_{0}\right\|_{2,\Omega}^{2} \geq \frac{2 p^{+}}{q^{-}-p^{+}} J\left(u_{0}\right)+\frac{q^{+}-2}{q^{-}}|\Omega|
                $$
          to make sure that $\left\|u_{0}\right\|_{2,\Omega} \ge \Lambda_{J(u_0)}$, which remains true for the multiple-regime case on an annulus studied in the present paper. We also point out that Theorem \ref{t2} is valid for any initial data with arbitrarily positive energy satisfying the constraint \eqref{93}, not only for the supercritical initial energy level.
        \end{remark}

\section{Generalization for locally symmetric domains}\label{sec6}
In this section, we tend to generalize the well-posedness results given in Sections \ref{sec3}-\ref{sec33} (including the local and global existence and the finite-time blow-up) of the solution by reconsidering problem \eqref{1}-\eqref{3} on a more general class of domains $\Omega_{\mathcal{A}}$ containing an annulus centered at the origin, i.e.,
    \begin{align}
          &u_t-\Delta_{p(x)} u=|u|^{q(x)-2} u && \text { in } \Omega_{\mathcal{A}} \times(0,+\infty), \label{186}\\
          &u=0, && \text { on } \partial \Omega_{\mathcal{A}} \times(0,+\infty), \label{187}\\
          &u(x, 0)=u_0(x), && \text { in } \Omega_{\mathcal{A}}, \label{188}
    \end{align}
where $\Omega_{\mathcal{A}} \subset \mathbb{R}^n$ ($n\ge 2$) is a bounded domain with smooth boundary satisfying the hypothesis ($\Omega_{R, r}$) introduced below.\\

$\bullet$ {\bf Hypotheses.}
Firstly, we illustrate the hypotheses on the geometry of the domain and the variable exponents $p(x)$ and $q(x)$.

{\flushleft\textbf{($\Omega_{R, r}$)}}
   \quad
    There exists an open ball $B_R(0)$ centered at the origin with radius $R>0$ such that
    $\bar{B}_{R}(0) \subset \Omega_{\mathcal{A}}$.\\

  Since the ball $B_{R}(0)$ contains the annulus $\mathcal{A}$ for $0< r < R$,
  one can observe that hypothesis ($\Omega_{R, r}$) implies that $\Omega_{\mathcal{A}}$ is locally radially symmetric in the sense that it contains an annulus centered at the origin.
  Besides, we draw the following Figure \ref{f2} with $n=2$ for an intuitive understanding of hypothesis
  ($\Omega_{R, r}$).

    \begin{figure}[h]
    \centering
            \begin{overpic}[width=8.5cm,height=6cm]{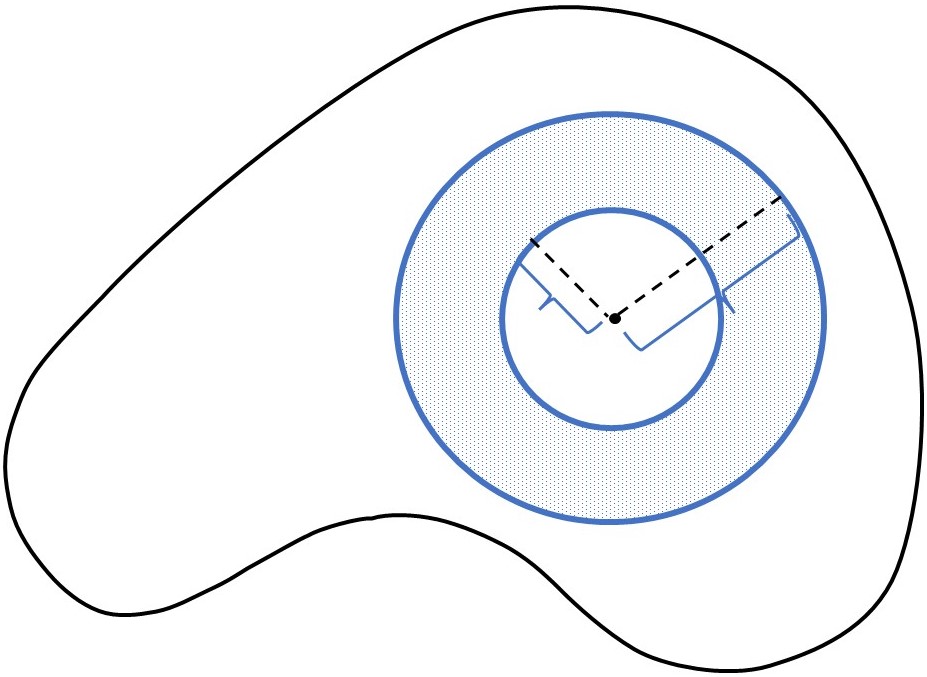}
            \put(157, 80){$0$}
            \put(135, 85){$r$}
            \put(190, 82){$R$}
            \put(115, 70){$\mathcal{A}$}
            \put(145, 68){$B_r(0)$}
            \put(100, 140){$B_R(0)$}
            \put(50, 65){$\Omega_\mathcal{A}$}
            \end{overpic}
    \caption{Locally symmetric $\Omega_{\mathcal{A}}$ satisfying ($\Omega_{R, r}$) for $n=2$}\label{f2}
    \end{figure}

Next, considering the multiple-regime phenomenon of the variable exponents, we set the following hypothesis corresponding to ($\mathrm{H_{D}}$) given in Subsection \ref{sec2.1}.

{\flushleft\textbf{($\mathrm{\bar{H}_{D}}$)}}
    \quad
    Let us define
        \begin{align*}
        \begin{aligned}
          &\Omega_{\mathcal{A},\rm sub}:= \{x\in \Omega_{\mathcal{A}}: q(x)<p^{*}(x)\},\\
          &\Omega_{\mathcal{A},\rm critical}:= \{x\in \Omega_{\mathcal{A}}: q(x)=p^{*}(x)\},\\
          &\Omega_{\mathcal{A},\rm sup}:= \{x\in \Omega_{\mathcal{A}}: q(x)>p^{*}(x)\}.
        \end{aligned}
        \end{align*}
    Moreover, we suppose that $\Omega \backslash \mathcal{A} \subset \Omega_{\mathcal{A},\rm sub}$ and
    $\Omega_{\mathcal{A},\rm critical}, \Omega_{\mathcal{A},\rm sup} \subset \mathcal{A}$ with $\Omega_{\mathcal{A},\rm sub}, \Omega_{\mathcal{A},\rm critical}$ and $\Omega_{\mathcal{A},\rm sup}$ are sets of nonzero measures, where $p^{*}(x)$ is the critical Sobolev exponent given by \eqref{165}.\\

    Hypothesis ($\mathrm{\bar{H}_{D}}$) describes the relationship between the regions corresponding to different regimes
   (i.e., $\Omega_{\mathcal{A},\rm sub}, \Omega_{\mathcal{A},\rm critical}$ and $\Omega_{\mathcal{A},\rm sup}$) and the annulus $\mathcal{A}$, which requires that only the subcritical region appears outside $\mathcal{A}$, and the critical and supercritical regions only appear within $\mathcal{A}$.
   Therefore, compared with the case that the domain is the annulus $\mathcal{A}$ with
   the hypothesis ($\mathrm{H_D}$), the locally symmetric domain $\Omega_{\mathcal{A}}$ satisfying ($\Omega_{R, r}$) with ($\mathrm{\bar{H}_{D}}$) considered here can be regarded as the ``subcritical perturbation'' with respect to the domain.

   Next, we introduce the common boundedness assumption of $p(x),q(x)$ corresponding to $(\mathrm{H_{p,q}})$ as follows.

   {\flushleft\textbf{($\mathrm{\bar{H}_{p,q}}$)}}
   \quad
   The variable exponents $p(x),q(x)$ satisfy
        \begin{align*}
        \begin{cases}
          &2 \le p^{-} \le p(x) \le p^{+} < n,\\
          &2 \le q^{-} \le q(x) \le q^{+} < \infty
        \end{cases}
        \end{align*}
    for all $x\in \Omega_{\mathcal{A}}$, where the constants $p^{\pm},q^{\pm}$ are given by
        \begin{align*}
            & p^{-} := \mathop{\essinf}\limits_{x\in \Omega_{\mathcal{A}}} p(x),\quad
            p^{+} := \mathop{\esssup}\limits_{x\in \Omega_{\mathcal{A}}} p(x), \\
            & q^{-} := \mathop{\essinf}\limits_{x\in \Omega_{\mathcal{A}}} q(x),\quad
            q^{+} := \mathop{\esssup}\limits_{x\in \Omega_{\mathcal{A}}} q(x).
        \end{align*}

   Moreover, we introduce a series of additional hypotheses on the variable exponents $p(x)$ and $q(x)$.

    {\flushleft\textbf{($\mathrm{\bar{H}^1_{p,q}}$)}}
   \quad
        $
          p(x)=p(|x|), \quad q(x)=q(|x|), \quad \forall x \in \bar{\mathcal{A}}.
        $

   {\flushleft\textbf{($\mathrm{\bar{H}^2_{p,q}}$)}}
   \quad
        $
         q(x) \leq 1 + \frac{p^{*}(x)}{2}, \quad \forall x \in \Omega_{\mathcal{A}} \backslash \mathcal{A}.
        $

    {\flushleft\textbf{($\mathrm{\bar{H}^3_{p,q}}$)}}
   \quad
        $
        p^{+} <  q(x), \quad \forall x \in \Omega_{\mathcal{A}}.
        $

   {\flushleft\textbf{($\mathrm{\bar{H}^4_{p,q}}$)}}
   \quad
        $
        q(x) <  {p^-}^*:= \frac{n p^-}{n - p^-}, \quad \forall x \in \Omega_{\mathcal{A}}\backslash \mathcal{A}.
        $\\

    Since the hypotheses $(\mathrm{\bar{H}^1_{p,q}})$-($\mathrm{\bar{H}^3_{p,q}}$) introduced above are relatively complex compared with ($\mathrm{H^1_{p,q}}$)-($\mathrm{H^2_{p,q}}$) given in Subsection \ref{sec2.1},
    to better understand the meaning of these hypotheses, we briefly explain them as follows.
   Hypothesis $(\mathrm{\bar{H}^1_{p,q}})$ requires the variable exponents $p(x),q(x)$ to be symmetric within the annulus $\mathcal{A}$ with respect to $x$.
   Hypotheses $(\mathrm{\bar{H}^2_{p,q}})$ and $(\mathrm{\bar{H}^4_{p,q}})$ assert that $q(x)$ is of the subcritical regime outside $\mathcal{A}$.
   Hypothesis $(\mathrm{\bar{H}^3_{p,q}})$ implies that the diffusion term is dominated by the nonlinear source term on $\Omega_{\mathcal{A}}$, which leads to the mountain-pass geometry of the energy functional.

   \begin{remark}[On hypothesis $(\mathrm{\bar{H}^2_{p,q}})$]\label{rmk1}
      In $(\mathrm{\bar{H}^2_{p,q}})$, we don't employ the same hypothesis (H7) in Subsection 4.1 of \cite{r1}
      due to the fact that $p^- \ge 2$, which means for a.e. $x \in \Omega_{\mathcal{A}}$
            \begin{align*}
              {p}^*(x) \ge {p^-}^* = \frac{np^-}{n - p^-} > p^- \ge 2,
            \end{align*}
      thus there holds
      $\min \left\{p^*(x), 1+\frac{p^*(x)}{2}\right\} = 1+\frac{p^*(x)}{2}$.
    \end{remark}

    \begin{remark}[On hypothesis $(\mathrm{\bar{H}^4_{p,q}})$]\label{rmk2}
    The hypothesis $(\mathrm{\bar{H}^4_{p,q}})$ is equivalent to $q(x) <  {p^-}^*$. Moreover, we emphasize that ($\mathrm{\bar{H}^2_{p,q}}$) can not be obtained from $(\mathrm{\bar{H}^4_{p,q}})$, and $(\mathrm{\bar{H}^4_{p,q}})$ can not be obtained from ($\mathrm{\bar{H}^2_{p,q}}$).
    \end{remark}

    Moreover, considering the local radial symmetry of $\Omega_{\mathcal{A}}$ given by ($\Omega_{R, r}$), we define
        \begin{align}\label{183}
          W_{0,\mathrm{rad}}^{1, p(\cdot)}(\Omega_{\mathcal{A}}):=
          \left\{u \in W_{0}^{1, p(\cdot)}(\Omega_{\mathcal{A}}): u(x)=u(|x|) \text { a.e. } x \in \mathcal{A}\right\},
        \end{align}
    as the working space of the solution.
    And we consider the Sobolev embedding $W_{0,\mathrm{rad}}^{1, p(\cdot)}(\Omega_{\mathcal{A}}) \hookrightarrow L^{q(\cdot)}(\Omega_{\mathcal{A}})$. Since $q(x)$ is of the subcritical regime outside $\mathcal{A}$,
    we can directly deduce from Corollary \ref{cor3} the validity of the following compact embedding.

    \begin{corollary}\label{cor3}
        Let $(\mathrm{\bar{H}_{p,q}})$, $(\mathrm{\bar{H}^1_{p,q}})$, $(\mathrm{\bar{H}^2_{p,q}})$ and ($\mathrm{\bar{H}_{D}}$) be satisfied, then
        $W_{0,\mathrm{rad}}^{1, p(\cdot)}(\Omega_{\mathcal{A}}) \hookrightarrow L^{q(\cdot)}(\Omega)$ compactly.
    \end{corollary}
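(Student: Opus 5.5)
The plan is to split $\Omega_{\mathcal{A}}$ into the closed ball part and its exterior. Since $\bar B_R(0)\subset\Omega_{\mathcal{A}}$, pick $\delta>0$ small and set $\mathcal{A}'=\{r<|x|<R\}$. By $(\mathrm{\bar{H}_D})$ the critical and supercritical sets lie in $\mathcal{A}$, and the complement $\Omega_{\mathcal{A}}\setminus\mathcal{A}$ consists of two pieces: the inner ball $B_r(0)$ and the outer region $\Omega_{\mathcal{A}}\setminus \bar B_R(0)$. On both, $q$ is subcritical.

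Let $\{u_k\}$ be bounded in $W_{0,\mathrm{rad}}^{1,p(\cdot)}(\Omega_{\mathcal{A}})$. The goal is to extract a subsequence that is Cauchy in $L^{q(\cdot)}(\Omega_{\mathcal{A}})$. By Proposition \ref{p1} it suffices to show that $\int_{\Omega_{\mathcal{A}}}|u_k-u_l|^{q(x)}\,\mathrm{d}x\to0$. I would split this integral over $\mathcal{A}$ and over $\Omega_{\mathcal{A}}\setminus\mathcal{A}$.

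\textbf{On $\mathcal{A}$.} The restriction $u_k|_{\mathcal{A}}$ is radial and lies in $W^{1,p(\cdot)}(\mathcal{A})$, but it need not vanish on $\partial\mathcal{A}$, so Lemma \ref{l5} does not apply directly. I would take a radial cutoff $\chi\in C_0^\infty(\mathcal{A}_{\delta})$ with $\chi\equiv1$ on $\mathcal{A}$, where $\mathcal{A}_\delta$ is a slightly enlarged annulus with $\bar{\mathcal{A}}_\delta\subset\Omega_{\mathcal{A}}$. I would then run the proof of Lemma \ref{l5} on $\chi u_k$. The Strauss inequality only needs radial symmetry where the function is nonzero, so I need $u_k$ to be radial on $\mathcal{A}_\delta$ and not merely on $\mathcal{A}$.

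The definition \eqref{183} only gives radiality on $\mathcal{A}$. I expect this to be the main obstacle. I would resolve it in one of two ways. The first is to apply the one-dimensional form of the Strauss argument on $(r,R)$ directly to the profile $v(\rho)$. For a radial $W^{1,p^-}$ function on an annulus, $\int_r^R|v'|^{p^-}\rho^{n-1}\,\mathrm{d}\rho<\infty$ and $\rho\ge r>0$ give $v\in W^{1,p^-}(r,R)\hookrightarrow C[r,R]$ compactly, with no boundary condition required. This yields compactness into $L^\infty(\mathcal{A})$, hence into $L^{q(\cdot)}(\mathcal{A})$ via $L^{q^+}$. The second is to shrink $\mathcal{A}$ slightly.

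\textbf{On $\Omega_{\mathcal{A}}\setminus\mathcal{A}$.} Here I would use $(\mathrm{\bar{H}^4_{p,q}})$: $q^+_{\Omega\setminus\mathcal{A}}<{p^-}^*$, by continuity of $q$ on the compact closure of each piece. Since $W^{1,p(\cdot)}\hookrightarrow W^{1,p^-}$ on bounded sets, the classical Rellich–Kondrachov theorem applies on each Lipschitz piece, namely $B_r(0)$ and $\Omega_{\mathcal{A}}\setminus\bar{\mathcal{A}}$. It gives compactness of $W^{1,p^-}\hookrightarrow L^{s}$ for any $s<{p^-}^*$; choosing $s\ge q^+$ on these pieces gives compactness into $L^{q(\cdot)}$ there.

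Hypothesis $(\mathrm{\bar{H}^2_{p,q}})$ is then not needed for this step. Alternatively, one can use Proposition \ref{p6}(ii), provided $\inf(p^*-q)>0$ holds on the closure of the exterior region, which continuity of $p,q$ together with $(\mathrm{\bar{H}_D})$ supplies.

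Finally, a diagonal argument yields a single subsequence that converges on both pieces. Adding the two modular estimates proves the compactness of $W_{0,\mathrm{rad}}^{1,p(\cdot)}(\Omega_{\mathcal{A}})\hookrightarrow L^{q(\cdot)}(\Omega_{\mathcal{A}})$.
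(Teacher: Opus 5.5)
Your treatment of the annulus is fine, and more careful than the paper's one-line appeal to radial compactness (Corollary \ref{cor1}), which ignores that $u_k|_{\mathcal{A}}$ need not vanish on $\partial\mathcal{A}$. Since $\rho\ge r>0$, the radial profiles are bounded in $W^{1,p^-}(r,R)$. That space embeds compactly into $C([r,R])$, so a subsequence converges uniformly on $\mathcal{A}$ and hence in $L^{q(\cdot)}(\mathcal{A})$.

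\textbf{The gap is on $\Omega_{\mathcal{A}}\setminus\mathcal{A}$.} Your main argument uses $(\mathrm{\bar{H}^4_{p,q}})$, which is \emph{not} a hypothesis of this corollary and does not follow from the ones that are (cf.\ Remark \ref{rmk2}). Under $(\mathrm{\bar{H}^2_{p,q}})$, $q$ is only bounded by $1+p^*(x)/2$, computed with the \emph{local} value of $p$. If $p^-$ is attained inside $\mathcal{A}$ and $p$ is larger outside, then $q$ can exceed ${p^-}^*$ there, and Rellich--Kondrachov for $W^{1,p^-}$ does not reach $L^{q(\cdot)}$. For example, take $n=5$, $p^-=2$ and $p\equiv 4$ near $\partial\Omega_{\mathcal{A}}$: then ${p^-}^*=10/3$, while $q$ may be as large as $11$.

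Your fallback is not justified either. $(\mathrm{\bar{H}_D})$ only gives the pointwise strict inequality $q<p^*$ on $\Omega_{\mathcal{A}}\setminus\mathcal{A}$. Continuity turns this into a uniform gap near the compact spheres $|x|=r$ and $|x|=R$. It gives nothing near $\partial\Omega_{\mathcal{A}}$, which lies in the closure of the exterior region but where no hypothesis is made. So $p^*-q$ may tend to $0$ there, and $\inf(p^*-q)=0$ is possible. The same objection applies to your ``compact closure'' step that produces $q^+<{p^-}^*$.

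\textbf{The fix.} The hypothesis you declare unnecessary, $(\mathrm{\bar{H}^2_{p,q}})$, is exactly the one that supplies the uniform gap. On $\Omega_{\mathcal{A}}\setminus\mathcal{A}$,
\[
p^*(x)-q(x)\ \ge\ \frac{p^*(x)}{2}-1\ \ge\ \frac{{p^-}^*}{2}-1\ >\ 0,
\]
because $p\mapsto np/(n-p)$ is increasing and ${p^-}^*>p^-\ge 2$. With this, Proposition \ref{p6}(ii) applies, using the standing log-H\"older condition, on the smooth bounded pieces $B_r(0)$ and $\Omega_{\mathcal{A}}\setminus\bar{B}_R(0)$. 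It gives compactness of the restrictions into $L^{q(\cdot)}$ there. Combined with your annulus step and successive extraction of subsequences, this completes the proof. It is also what the paper's justification, that ``$q$ is of the subcritical regime outside $\mathcal{A}$,'' relies on.
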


    Moreover, for the convenience of the statement, in the framework of the potential well theory,
    we shall employ the same notations in Subsection \ref{sec2.3} in the following content with $\mathcal{A}$ replaced by $\Omega_{\mathcal{A}}$ without causing any confusion. For example, the definitions of $I(u)$ and $J(u)$ are now given by
        \begin{align*}
          J(u):=\int_{\Omega_{\mathcal{A}}} \frac{|\nabla u(x)|^{p(x)}}{p(x)} \mathrm{d} x
            -\int_{\Omega_{\mathcal{A}}} \frac{|u(x)|^{q(x)}}{q(x)} \mathrm{d} x
        \end{align*}
    and
        \begin{align*}
          I(u):=\int_{\Omega_{\mathcal{A}}}|\nabla u(x)|^{p(x)} \mathrm{d} x
            -\int_{\Omega_{\mathcal{A}}}|u(x)|^{q(x)} \mathrm{d} x
        \end{align*}
    throughout this section.\\

    $\bullet$ {\bf Local existence.}
    In Section \ref{3}, the Principle of Symmetric Criticality was applied to verify the weak formulation of the equation when the domain is the annulus $\mathcal{A}$, namely the identity \eqref{12} in Definition \ref{d1} for any test function belonging to $W_0^{1,p(x)}(\mathcal{A})$, which requires the radial symmetry of the domain, the variable exponents and the initial data.
    However, under the hypothesis ($\Omega_{R, r}$), the Principle of Symmetric Criticality doesn't work for the domain $\Omega_{\mathcal{A}}$, since the domain is no longer radially symmetric, not to mention defining symmetric functions on such a domain. For example, there exists a point $x_0 \in \Omega_{\mathcal{A}} \backslash B_R(0)$ that may be thrown out of $\Omega_{\mathcal{A}}$ under some rotation $g$, i.e., $g(x_0) \not \in \Omega_{\mathcal{A}}$, shown in the following Figure \ref{f5}.
    Fortunately, by the subdifferential technique in the Banach space setting and the tricks on the test functions with modification from \cite{r1}, the local existence can be obtained as Theorem 4.1 \cite{r1}, for which we introduce the following local existence result in this situation without proof.

    \begin{figure}[h]
    \centering
            \begin{overpic}[width=8.5cm,height=6cm]{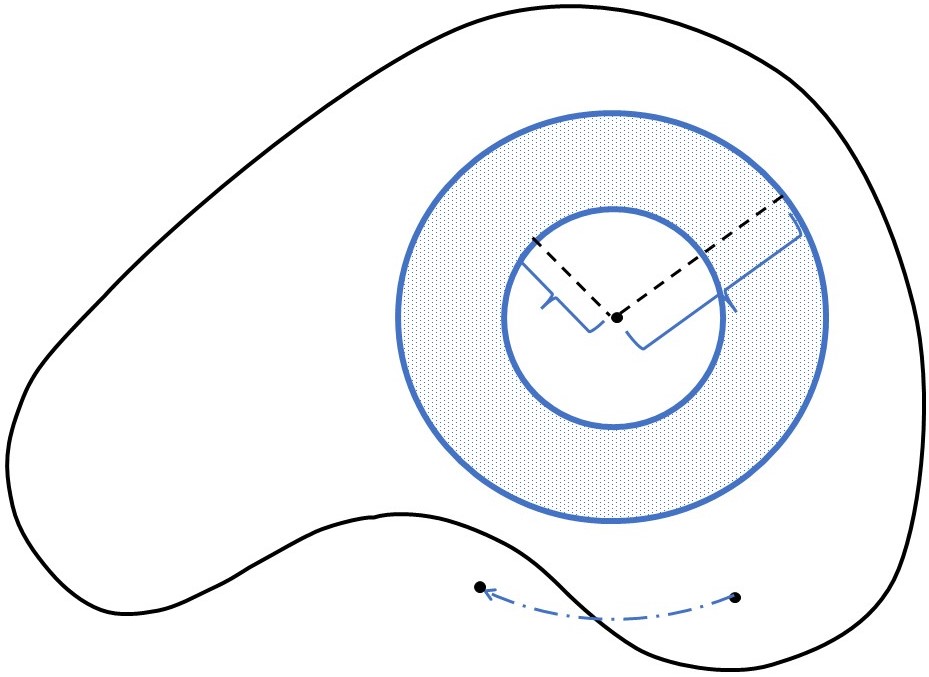}
            \put(157, 80){$0$}
            \put(135, 85){$r$}
            \put(190, 82){$R$}
            \put(115, 70){$\mathcal{A}$}
            \put(145, 68){$B_r(0)$}
            \put(100, 140){$B_R(0)$}
            \put(50, 65){$\Omega_\mathcal{A}$}
            \put(120, 10){$gx_0$}
            \put(190, 10){$x_0$}
            \end{overpic}
    \caption{The point $x_0$ thrown out of $\Omega_{\mathcal{A}}$ under some rotation $g$}\label{f5}
    \end{figure}

    \begin{theorem}[Local existence \cite{r1}]\label{t6}
        Assume that $(\Omega_{R, r})$, $(\mathrm{\bar{H}_{p,q}})$, $(\mathrm{\bar{H}^1_{p,q}})$-$(\mathrm{\bar{H}^3_{p,q}})$ and $(\mathrm{\bar{H}_{D}})$ are satisfied, $u_{0} \in W_{0,\mathrm{rad}}^{1, p(\cdot)}(\Omega_{\mathcal{A}})$, then problem \eqref{186}-\eqref{188} admits a local weak solution $u \in L^{\infty}(0, T_{\max}; W_{0,\mathrm{rad}}^{1, p(\cdot)}(\Omega_{\mathcal{A}}))$ with $u_{t} \in L^{2}\left(0, T_{\max}; L^{2}(\Omega_{\mathcal{A}})\right)$.
    \end{theorem}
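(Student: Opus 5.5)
The statement to prove is Theorem \ref{t6}: local existence on the non-symmetric domain $\Omega_{\mathcal{A}}$. The plan is to follow the scheme of Theorem \ref{l1}, with two changes. The group-symmetry version of Lemma \ref{l19} is replaced by its plain reflexive-Banach-space counterpart (take $G$ trivial, so $\Sigma_V=V$). The working space is then restricted to the closed subspace $V:=W_{0,\mathrm{rad}}^{1,p(\cdot)}(\Omega_{\mathcal{A}})$ of \eqref{183}, with $H$ the $L^2(\Omega_{\mathcal{A}})$-closure of $V$. Here $\varphi^1(u)=\int_{\Omega_{\mathcal{A}}}|\nabla u|^{p(x)}/p(x)\,\mathrm{d}x$ and $\varphi^2(u)=\int_{\Omega_{\mathcal{A}}}|u|^{q(x)}/q(x)\,\mathrm{d}x$. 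The coercivity condition \eqref{120} follows exactly as in \eqref{121}.

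Condition (iii) is verified with Corollary \ref{cor3} in place of Corollary \ref{cor1}. This gives a compact embedding $V\hookrightarrow L^{q(\cdot)}(\Omega_{\mathcal{A}})$: the annulus part uses Lemma \ref{l5}, and on $\Omega_{\mathcal{A}}\setminus\mathcal{A}$ one uses $q<{p^-}^*$ and Proposition \ref{p6}(ii). Equicontinuity in time cannot use the $L^\infty$ bound off the annulus. Instead, on $\Omega_{\mathcal{A}}\setminus\mathcal{A}$ interpolate $L^{q(\cdot)}$ between $L^2$ and $L^{p^*(\cdot)}$. This is where $(\mathrm{\bar H}^2_{p,q})$, $q\le 1+p^*/2$, is used: it guarantees $2(q-1)\le p^*$, so $|u|^{q-2}u\in L^2$ on that region and the nonlinearity is locally Lipschitz into $V^*$. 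That yields an estimate of the form \eqref{191}, and Arzel\`a--Ascoli gives strong convergence of $\partial\varphi^2(u_m)$ in $C([0,T];V^*)$. Condition (iv), the resolvent estimate, goes through as before: the pointwise map $(1+\lambda\beta_x)^{-1}$ is non-expansive, preserves radial symmetry on $\mathcal{A}$ (since $q$ is radial there by $(\mathrm{\bar H}^1_{p,q})$), and preserves zero trace. Lemma \ref{l19} then gives a strong solution $u\in C_w([0,T_0];V)\cap W^{1,2}(0,T_0;H)$.

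The main obstacle is that this solves the equation only against test functions in $V$, not all of $W_0^{1,p(\cdot)}(\Omega_{\mathcal{A}})$, and the Principle of Symmetric Criticality is unavailable (see Figure \ref{f5}). The fix, following \cite{r1}, is to modify the test function. Given $\psi\in C_0^\infty(\Omega_{\mathcal{A}})$, take a radial cutoff $\chi$ equal to $1$ near $\overline{B_r(0)}\cup(\Omega_{\mathcal{A}}\setminus B_R(0))$ and vanishing on a middle shell of $\mathcal{A}$. Split $\psi=\chi\psi+(1-\chi)\psi$. The piece $(1-\chi)\psi$ is supported in the annulus, so it can be replaced by its $\mathbf{O}(n)$-average, because the integrands are invariant under rotations on $\mathcal{A}$. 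The piece $\chi\psi$ near $\partial\mathcal{A}$ would need to be handled by localizing the equation. I expect this to be delicate, and would first try to enlarge $V$: drop radial symmetry on the subcritical collar where compactness already holds. Radial symmetry would be imposed only on a sub-annulus carrying $\Omega_{\mathcal{A},\rm critical}\cup\Omega_{\mathcal{A},\rm sup}$. Then the test functions not in $V$ are exactly those whose nonradial part lives in that sub-annulus, where averaging applies. Finally, extend to $T_{\max}$ by Lemma \ref{prop:continuation}-type continuation.
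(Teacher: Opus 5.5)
\textbf{The step that matters is missing, and it cannot be supplied.} The functional-analytic part of your plan is in line with what the paper indicates. The paper gives no proof here; it only points to the Banach-space subdifferential method and the test-function modification of \cite{r1}. Up to small repairs, your plan yields $u$ satisfying the weak formulation only for $\psi\in V=W_{0,\mathrm{rad}}^{1,p(\cdot)}(\Omega_{\mathcal{A}})$.

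The small repairs are these. On $\Omega_{\mathcal{A}}\setminus\mathcal{A}$, the uniform gap $\inf(p^*-q)>0$ needed for Proposition \ref{p6}(ii) must come from $(\mathrm{\bar{H}^2_{p,q}})$, which gives $p^*-q\ge {p^-}^*/2-1>0$. It cannot come from $(\mathrm{\bar{H}^4_{p,q}})$, which Theorem \ref{t6} does not assume. Also, restrictions of elements of $V$ to $\mathcal{A}$ need not vanish on $\partial\mathcal{A}$. So instead of the zero extension in Lemma \ref{l5}, use the one-dimensional embedding of the radial profile on $[r,R]$.

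\textbf{The gap.} You leave open the passage to all $\psi\in W_0^{1,p(\cdot)}(\Omega_{\mathcal{A}})$, and neither suggested repair can close it. Replacing a piece of $\psi$ by its $\mathbf{O}(n)$-average is legitimate only if that piece is supported where $u$ is radial. So the test functions you can reach form $V+W_0^{1,p(\cdot)}(\mathcal{A})$. This is exactly the closed subspace of those $\psi$ whose traces on the spheres $|x|=r$ and $|x|=R$ are constant. A generic $\psi$ is not of this form, whatever cutoff you use; your piece $\chi\psi$ on the two collars of $\mathcal{A}$ is precisely the unreachable part. Imposing radial symmetry only on a sub-annulus $\mathcal{A}'$ merely moves the same defect to $\partial\mathcal{A}'$. ``Localizing the equation'' presupposes the equation near these spheres, which is what is missing.

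\textbf{Why no trick can fix it.} Testing with $\psi$ of non-constant trace on a sphere bounding the radial region encodes pointwise continuity of the normal flux $|\nabla u|^{p(x)-2}\partial_\nu u$ across that sphere. The $V$-constrained solution only matches spherical averages of this flux. Nothing forces the flux coming from the non-symmetric exterior to be constant on the sphere. In fact, on a non-radial $\Omega_{\mathcal{A}}$ the flow does not in general preserve radial symmetry on $\mathcal{A}$.

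For a counterexample, take $n=3$ and $p\equiv2$. This is admissible under $(\mathrm{\bar{H}_{p,q}})$, and $(\mathrm{\bar{H}^2_{p,q}})$ gives $q\le4$ off $\mathcal{A}$. Let $u_0\ge0$ vanish on $B_R(0)$ and carry a non-radial bump in $\Omega_{\mathcal{A}}\setminus\bar{B}_R(0)$, so $u_0\in W_{0,\mathrm{rad}}^{1,2}(\Omega_{\mathcal{A}})$.
\begin{itemize}
\item Any solution in the stated class is bounded. Use the radial embedding on $\mathcal{A}$, the fact that $|u|^{q(x)-1}\in L^\infty(0,T;L^2)$ off $\mathcal{A}$, and Duhamel's formula.
\item By comparison, $S(t)u_0\le u(t)\le e^{Mt}S(t)u_0$. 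Here $S(t)$ is the Dirichlet heat semigroup on $\Omega_{\mathcal{A}}$ and $M\ge\sup|u|^{q(x)-2}$.
\item Short-time heat-kernel asymptotics give $S(t)u_0(x)/S(t)u_0(y)\to\infty$ as $t\to0^+$, for $x,y\in\mathcal{A}$ with $|x|=|y|$ and $x$ closer to the bump.
\item Hence $u(t)$ cannot be radial on $\mathcal{A}$.
\end{itemize}

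So no test-function trick yields the statement as formulated. Your construction gives a solution of the projected problem, tested against $V$ only. A genuine weak solution with values in $W_{0,\mathrm{rad}}^{1,p(\cdot)}$ is available along these lines when $\Omega_{\mathcal{A}}$, $p$ and $q$ on all of $\Omega_{\mathcal{A}}$, and $u_0$ are $\mathbf{O}(n)$-invariant. In that case the Principle of Symmetric Criticality applies as in Theorem \ref{l1}. The paper, which only cites \cite{r1} at this point, does not supply this step either.
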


    $\bullet$ {\bf Global well-posedness for $J(u_0)\le d$.}
        By the validity of Corollary \ref{cor3} and the local existence of solution ensured by Theorem \ref{t6}, our main results obtained in Subsection \ref{sec4} can be directly extended for
        problem \eqref{186}-\eqref{188} under the hypotheses $(\Omega_{R, r})$, $(\mathrm{\bar{H}_{p,q}})$, $(\mathrm{\bar{H}^1_{p,q}})$-$(\mathrm{\bar{H}^3_{p,q}})$ and ($\mathrm{\bar{H}_{D}}$) in parallel to Theorems \ref{t10}-\ref{t4} for problem \eqref{1}-\eqref{3} without much modification, shown as follows.

        \begin{theorem}
               Assume that $(\Omega_{R, r})$, $(\mathrm{\bar{H}_{p,q}})$, $(\mathrm{\bar{H}^1_{p,q}})$-$(\mathrm{\bar{H}^3_{p,q}})$ and $(\mathrm{\bar{H}_{D}})$ be satisfied. Then the conclusions of Theorems \ref{t10}-\ref{t4} are still valid for the solution to \eqref{186}-\eqref{188}, where the domain $\mathcal{A}$ is replaced by $\Omega_{\mathcal{A}}$ satisfying $(\Omega_{R, r})$.\\
        \end{theorem}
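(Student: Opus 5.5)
The proof transfers the arguments of Subsection \ref{sec4} to $\Omega_{\mathcal{A}}$ essentially verbatim. The only genuinely new inputs are the compact embedding of Corollary \ref{cor3} and the local existence of Theorem \ref{t6}. Everything else in Theorems \ref{t10}--\ref{t4} uses only the algebraic structure of $J$ and $I$, the modular--norm relations of Proposition \ref{p1}, the hypothesis $(\mathrm{\bar{H}^3_{p,q}})$ (which plays the role of $(\mathrm{H^2_{p,q}})$), and the energy identities. So I would first re-establish, on the working space $W_{0,\mathrm{rad}}^{1,p(\cdot)}(\Omega_{\mathcal{A}})$, the potential well toolkit of Subsection \ref{sec2.3}.

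For Lemmas \ref{l3}, \ref{l8}, \ref{l12} and \ref{l13}, the proofs only need $\int_{\Omega_{\mathcal{A}}}|u|^{q(x)}\,\mathrm{d}x<\infty$ for $u$ in the working space, which Corollary \ref{cor3} gives, together with $p^+<q^-$, which is $(\mathrm{\bar{H}^3_{p,q}})$. Lemma \ref{l14}, together with the positivity $d\ge\tilde d>0$ and Lemma \ref{l6}, uses only the best constant $C^*_{p(\cdot),q(\cdot)}$ of the embedding in Corollary \ref{cor3}, so these also carry over.

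The second step is Lemma \ref{l7} and the continuation criterion. For the energy identities I would rerun the Hilbert-space subdifferential argument with $H=L^2(\Omega_{\mathcal{A}})$, taking $\tilde\varphi^1$ equal to $\varphi^1$ on $W_{0,\mathrm{rad}}^{1,p(\cdot)}(\Omega_{\mathcal{A}})$ and $+\infty$ elsewhere. This gives \eqref{99}, and testing the weak formulation with $\psi=u$ gives \eqref{48}.

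For the continuation criterion (Lemma \ref{prop:continuation}), the annulus proof used the $L^\infty$ bound of Lemma \ref{l5} (i) to bound $J$ from below. Here I would replace it by the continuous embedding into $L^{q(\cdot)}(\Omega_{\mathcal{A}})$ from Corollary \ref{cor3}, which again yields $J(u(t))\ge -C_M$. The remaining steps (the $L^2$-Cauchy limit, identifying the weak limit, and restarting via Theorem \ref{t6}) are unchanged.

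Once these are in place, the invariant manifolds of Lemma \ref{l17} follow by the same first-exit-time contradiction. The statements then follow in turn:
\begin{enumerate}[(i)]
\item Theorem \ref{t10} repeats the $\sigma(t)$ ODE argument with $|\mathcal{A}|$ replaced by $|\Omega_{\mathcal{A}}|$.
\item Theorem \ref{t3} uses \eqref{91}, the continuation criterion, and the decay estimate \eqref{164}, with the constant $C^*_{p(\cdot),2}$ now taken on $\Omega_{\mathcal{A}}$.
\item Theorem \ref{t4} uses the concavity functional $M(t)$ and Lemma \ref{l2} unchanged.
\end{enumerate}

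The main obstacle I expect is the continuation criterion and uniqueness-type continuation off the annulus. The $L^\infty$ embedding is no longer available on $\Omega_{\mathcal{A}}\setminus\mathcal{A}$, so restarting the solution at $T_{\max}$ relies entirely on Theorem \ref{t6} and on the subcritical embedding outside $\mathcal{A}$. I would handle this by splitting every integral of $|u|^{q(x)}$ into the part over $\mathcal{A}$, controlled by Lemma \ref{l5} applied to the restriction, and the part over $\Omega_{\mathcal{A}}\setminus\mathcal{A}$, controlled by $(\mathrm{\bar{H}^4_{p,q}})$ and Proposition \ref{p6} (ii). If uniqueness is not available there, I would define $T_{\max}$ for the particular solution constructed in Theorem \ref{t6}, which is all that Theorems \ref{t10}--\ref{t4} require.
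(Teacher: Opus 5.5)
Your proposal is correct and follows the same route as the paper. The paper justifies this theorem only by noting that, given the compact embedding of Corollary \ref{cor3} and the local existence of Theorem \ref{t6}, the arguments behind Theorems \ref{t10}--\ref{t4} carry over to $\Omega_{\mathcal{A}}$ ``without much modification''; you simply make the transferred ingredients (potential-well lemmas, energy identities, continuation criterion) explicit.

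One small correction concerns your last paragraph, which is redundant anyway since Corollary \ref{cor3} already supplies the embedding you need. First, $(\mathrm{\bar{H}^4_{p,q}})$ is not a hypothesis of this theorem. The uniform subcritical gap outside $\mathcal{A}$ should instead come from $(\mathrm{\bar{H}^2_{p,q}})$, which gives $p^*(x)-q(x)\ge \frac{p^*(x)}{2}-1\ge \frac{{p^-}^*}{2}-1>0$. Second, Lemma \ref{l5} cannot be applied verbatim to restrictions to $\mathcal{A}$, because those restrictions need not vanish on $\partial\mathcal{A}$.
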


    $\bullet$ {\bf Global well-posedness for $J(u_0)> d$.}
    Firstly, we give a sufficient condition for the finite-time blow-up of the solution to \eqref{186}-\eqref{188} parallel to Theorem \ref{t2} for \eqref{1}-\eqref{3} as follows.

        \begin{theorem}
        Assume that $(\Omega_{R, r})$, $(\mathrm{\bar{H}_{p,q}})$, $(\mathrm{\bar{H}^1_{p,q}})$-$(\mathrm{\bar{H}^3_{p,q}})$ and $(\mathrm{\bar{H}_{D}})$ be satisfied. Then the conclusion of Theorem \ref{t2}
        is still valid for the solution to \eqref{186}-\eqref{188},
        where the domain $\mathcal{A}$ is replaced by $\Omega_{\mathcal{A}}$ satisfying $(\Omega_{R, r})$.
        \end{theorem}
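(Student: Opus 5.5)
The plan is to follow the proof of Theorem \ref{t2} verbatim, with $\mathcal{A}$ replaced by $\Omega_{\mathcal{A}}$. Before doing so, I will check that each ingredient used there survives in the locally symmetric setting.

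\emph{Step 1: the energy identities.} Theorem \ref{t6} gives a local weak solution $u$ with $u_t\in L^2(0,T_{\max};L^2(\Omega_{\mathcal{A}}))$. Corollary \ref{cor3} gives the compact embedding $W_{0,\mathrm{rad}}^{1,p(\cdot)}(\Omega_{\mathcal{A}})\hookrightarrow L^{q(\cdot)}(\Omega_{\mathcal{A}})$, so $J$ and $I$ are well defined and $C^1$ on the working space \eqref{183}. I will rederive \eqref{47} and \eqref{48} exactly as in Lemma \ref{l7}. For \eqref{47}, I use the Hilbert-space subdifferential argument with $H=L^2(\Omega_{\mathcal{A}})$, where $\tilde\varphi^1$ is extended by $+\infty$ outside \eqref{183} and $\tilde\varphi^2$ by $+\infty$ outside $L^{q(\cdot)}(\Omega_{\mathcal{A}})$. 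For \eqref{48}, I test the weak formulation with $\psi=u$ and apply the Lebesgue differentiation theorem. Neither argument uses the radial symmetry of the whole domain; each only needs the embedding of Corollary \ref{cor3}.

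\emph{Step 2: the unstable manifold.} I will redo Lemma \ref{l10} on $\Omega_{\mathcal{A}}$. Hypothesis $(\mathrm{\bar H^3_{p,q}})$ gives $q^->p^+$, so the lower bound \eqref{94}, $J(u)\ge \frac1{q^-}I(u)+P(\|u\|_{2,\Omega_{\mathcal{A}}})$, holds. In \eqref{100}, $C^*_{p(\cdot),2}$ is now the constant of $W_0^{1,p(\cdot)}(\Omega_{\mathcal{A}})\hookrightarrow L^2(\Omega_{\mathcal{A}})$, which exists by Proposition \ref{p5} and $p^-\ge2$. Assumption \eqref{93} then gives $u_0\in\mathcal{N}^-$. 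The first-hitting-time contradiction goes through unchanged: $\|u(t)\|_2$ increases by \eqref{48}, $J(u(t))$ decreases by \eqref{47}, and $P$ is monotone. Hence $u(t)\in\mathcal{N}^-$ on $[0,T_{\max})$.

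\emph{Step 3: concavity.} Assume $T_{\max}=\infty$ and define $M(t)$ by \eqref{69}. The identities \eqref{70}, \eqref{71} and the Cauchy--Schwarz estimate \eqref{74} are purely Hilbertian and do not depend on the domain. Combining \eqref{48}, the energy identity and the bound $J(u)-\frac1{q^-}I(u)\ge P(\|u\|_2)\ge P(\|u_0\|_2)$ yields
$M M''-\frac{q^-}{2}(M')^2\ge q^-M\,(P(\|u_0\|_2)-J(u_0)-\alpha)>0$ for $\alpha\in(0,P(\|u_0\|_2)-J(u_0))$.
Lemma \ref{l2} then forces $M$ to blow up in finite time, a contradiction. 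The optimization in $\beta$ from \eqref{78}--\eqref{50} gives $T_{\max}\le 4\|u_0\|_2^2/((q^--2)^2\alpha_2)$.

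\emph{Main obstacle.} The delicate point is passing from ``the solution is not global'' to genuine blow-up, that is, a continuation criterion like Lemma \ref{prop:continuation} on $\Omega_{\mathcal{A}}$. That lemma used the $L^\infty$ bound of Lemma \ref{l5}(i), which is unavailable off $\mathcal{A}$. I will replace it with the following argument. A bound on $\|\nabla u\|_{p(\cdot)}$ bounds $\int|u|^{q(x)}$ by the continuous embedding of Corollary \ref{cor3}, and this in turn bounds $\int_0^{T_{\max}}\|u_t\|_2^2$. One then extracts an $L^2$ limit lying in \eqref{183} and restarts with Theorem \ref{t6}. Corollary \ref{cor3} is exactly what makes this restart available.
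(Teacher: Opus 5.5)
Your proposal is correct and follows the paper's route. The paper gives no separate argument for this statement: it simply asserts that the proof of Theorem~\ref{t2} (Lemma~\ref{l10}, then the concavity argument with $M(t)$ and Lemma~\ref{l2}) carries over once Corollary~\ref{cor3} and Theorem~\ref{t6} are available, which is exactly what you check.

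Two minor points. First, the continuation criterion is not needed for the stated conclusion: the concavity argument alone shows that no solution survives past $4\|u_0\|_{2,\Omega_{\mathcal{A}}}^2/((q^{-}-2)^2\alpha_2)$, i.e.\ $u_0\in\mathcal{B}$. Second, the chain rule behind \eqref{47} on $\Omega_{\mathcal{A}}$ actually uses $|u|^{q(x)-2}u\in L^2(\Omega_{\mathcal{A}})$, not Corollary~\ref{cor3} alone. Off $\mathcal{A}$ this comes from $(\mathrm{\bar{H}^2_{p,q}})$, which gives $2(q(x)-1)\le p^*(x)$, and on $\mathcal{A}$ from the boundedness of functions radial there.
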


    Then, we consider the abstract criteria for the global existence and finite-time blow-up of the solution to \eqref{186}-\eqref{188} in parallel to Theorem \ref{t5} for \eqref{1}-\eqref{3}.
    To this end, we reconsider the boundedness of the variational numbers $\lambda_k$ and $\Lambda_k$.
    Firstly, one can easily deduce that $\Lambda_k < \infty$ by the virtue of the embedding $W_{0,\mathrm{rad}}^{1, p(\cdot)}(\Omega_{\mathcal{A}}) \hookrightarrow L^2(\Omega_{\mathcal{A}})$, which is analogous to Lemma \ref{l9}, thus we give the following Lemma \ref{l22} without proof.

    \begin{lemma}\label{l22}
      Let $(\Omega_{R, r})$, $(\mathrm{\bar{H}_{p,q}})$, $(\mathrm{\bar{H}^1_{p,q}})$-$(\mathrm{\bar{H}^3_{p,q}})$ and ($\mathrm{\bar{H}_{D}}$) be satisfied, then for $k >d$, there holds $\Lambda_{k}<\infty$.
    \end{lemma}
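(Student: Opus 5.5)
The plan is to mirror the first half of the proof of Lemma \ref{l9}: every element of $\mathcal{N}_k=\mathcal{N}\cap J^k$ has a gradient norm bounded above by a constant depending only on $k$, and the continuous embedding into $L^2(\Omega_{\mathcal{A}})$ then turns this into a bound on $\Lambda_k$.

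\textbf{Step 1: a uniform gradient bound on $\mathcal{N}_k$.} Take any $u\in\mathcal{N}_k$. Since $I(u)=0$, the computation \eqref{39} applies on $\Omega_{\mathcal{A}}$. It only uses $p(x)\le p^+$, $q(x)\ge q^-$ (from $(\mathrm{\bar{H}_{p,q}})$), $q^->p^+$ (from $(\mathrm{\bar{H}^3_{p,q}})$) and Proposition \ref{p1}, none of which involves symmetry or the regime of $q$. It gives
\begin{align*}
k>J(u)\ge \Big(\frac{1}{p^+}-\frac{1}{q^-}\Big)\min\Big\{\|\nabla u\|_{p(\cdot),\Omega_{\mathcal{A}}}^{p^-},\|\nabla u\|_{p(\cdot),\Omega_{\mathcal{A}}}^{p^+}\Big\}.
\end{align*}
Splitting into the cases $\|\nabla u\|\le1$ and $\|\nabla u\|>1$, exactly as in Lemma \ref{l4}, then yields $\|\nabla u\|_{p(\cdot),\Omega_{\mathcal{A}}}<r_{{\rm ptw},k}$, with $r_{{\rm ptw},k}$ given by \eqref{98}. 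Lemma \ref{l4} therefore transfers verbatim to $\Omega_{\mathcal{A}}$.

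\textbf{Step 2: the embedding into $L^2$.} Since $p^-\ge2$ and $|\Omega_{\mathcal{A}}|<\infty$, Proposition \ref{p4} gives $L^{p(\cdot)}(\Omega_{\mathcal{A}})\hookrightarrow L^{p^-}(\Omega_{\mathcal{A}})\hookrightarrow L^2(\Omega_{\mathcal{A}})$. The Poincar\'e inequality, Proposition \ref{p5}, holds on the smooth bounded domain $\Omega_{\mathcal{A}}$ because $p\in\mathcal{P}_{\log}$. Together these give
\begin{align*}
\|u\|_{2,\Omega_{\mathcal{A}}}\le C\|\nabla u\|_{p(\cdot),\Omega_{\mathcal{A}}}\quad\text{for all } u\in W_{0,\mathrm{rad}}^{1,p(\cdot)}(\Omega_{\mathcal{A}})\subset W_0^{1,p(\cdot)}(\Omega_{\mathcal{A}}).
\end{align*}
This step does not need Corollary \ref{cor3} or any radial structure.

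\textbf{Step 3: conclusion.} Combining the two steps gives $\|u\|_{2,\Omega_{\mathcal{A}}}\le C\,r_{{\rm ptw},k}$ for every $u\in\mathcal{N}_k$, and taking the supremum gives $\Lambda_k<\infty$. For the supremum to be over a nonempty set, as in the preamble to Lemma \ref{l9}, one uses $k>d\ge\tilde d>0$ together with a nontrivial element of $\mathcal{N}$ (via Lemma \ref{l12} applied on $\Omega_{\mathcal{A}}$). If $\mathcal{N}_k$ were empty, the bound would hold vacuously anyway.

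The only real obstacle is checking that $J$ and $I$ are finite on $W_{0,\mathrm{rad}}^{1,p(\cdot)}(\Omega_{\mathcal{A}})$, so that \eqref{39} makes sense. This follows from Corollary \ref{cor3}: on $\mathcal{A}$ one uses the radial $L^\infty$ bound of Lemma \ref{l5}, and on $\Omega_{\mathcal{A}}\setminus\mathcal{A}$ the exponent $q$ is subcritical.
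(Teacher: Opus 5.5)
Your proof is correct and is the argument the paper intends. The paper states the lemma without proof, saying only that it follows from the embedding into $L^2(\Omega_{\mathcal{A}})$ as in the first half of Lemma \ref{l9}: the Lemma \ref{l4} bound $\|\nabla u\|_{p(\cdot)}<r_{{\rm ptw},k}$ on $\mathcal{N}_k$ combined with that embedding, and you correctly checked that neither step uses symmetry or the regime of $q$. One caveat concerns only your closing remark: Lemma \ref{l5} is proved for functions vanishing on $\partial\mathcal{A}$, which $u|_{\mathcal{A}}$ need not do when $u\in W_{0,\mathrm{rad}}^{1,p(\cdot)}(\Omega_{\mathcal{A}})$, so the $L^\infty$ bound on $\mathcal{A}$ should instead come from the one-dimensional embedding $W^{1,p^-}(r,R)\hookrightarrow C([r,R])$ applied to the radial profile, which works because $r>0$.
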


    Additionally, if $(\mathrm{\bar{H}^4_{p,q}})$ is taken into account, there holds
    $u \in W_0^{1,p^-}(\Omega) \cap L^{q^+}(\Omega)$ and
        \begin{align*}
           W_{0,\mathrm{rad}}^{1, p(\cdot)}(\Omega_{\mathcal{A}})
          \hookrightarrow W_0^{1,p^-}(\Omega)
          \hookrightarrow L^{q^+}(\Omega)
          \hookrightarrow L^{q^-}(\Omega)
        \end{align*}
    according to Corollary \ref{cor3} and Remark \ref{rmk2}, for which we can employ the Gagliardo-Nirenberg interpolation inequality to deduce that $\lambda_{k}$ is bounded below, shown in the following Lemma \ref{l21}.

    \begin{lemma}\label{l21}
     Let $(\Omega_{R, r})$, $(\mathrm{\bar{H}_{p,q}})$, $(\mathrm{\bar{H}^1_{p,q}})$-$(\mathrm{\bar{H}^4_{p,q}})$ and ($\mathrm{\bar{H}_{D}}$) be satisfied, then for $k>d$, there holds
                $
                \lambda_k >0,
                $
     where
        $$
          \lambda_{k}:=\inf \left\{\|u\|_{2,\Omega_\mathcal{A}}: u \in \mathcal{N}_{k}\right\}.
        $$
    \end{lemma}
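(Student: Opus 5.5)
The plan is to follow the proof of Lemma \ref{l9}, replacing the $L^\infty$ interpolation (which is no longer available on $\Omega_{\mathcal{A}}\backslash\mathcal{A}$) with a Gagliardo--Nirenberg type interpolation between $L^2(\Omega_{\mathcal{A}})$ and $L^{{p^-}^*}(\Omega_{\mathcal{A}})$. Take any $u\in\mathcal{N}_k$. Since $I(u)=0$,
\begin{align*}
\int_{\Omega_{\mathcal{A}}}|\nabla u|^{p(x)}\,\mathrm{d}x=\int_{\Omega_{\mathcal{A}}}|u|^{q(x)}\,\mathrm{d}x\le \|u\|_{q^-,\Omega_{\mathcal{A}}}^{q^-}+\|u\|_{q^+,\Omega_{\mathcal{A}}}^{q^+}.
\end{align*}
For the left-hand side we use Proposition \ref{p1}, which bounds it below by $\min\{\|\nabla u\|^{p^-}_{p(\cdot)},\|\nabla u\|^{p^+}_{p(\cdot)}\}$. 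We then need two-sided bounds on $\|\nabla u\|_{p(\cdot),\Omega_{\mathcal{A}}}$ over $\mathcal{N}_k$. The upper bound $r_{{\rm ptw},k}$ comes from the argument of Lemma \ref{l4}, which only uses $(\mathrm{\bar H}^3_{p,q})$. The lower bound $r_{\rm ptw}>0$ comes from the argument of Lemma \ref{l14}, which only uses the continuous embedding given by Corollary \ref{cor3}.

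The key step is to bound $\|u\|_{q^\pm}$ in terms of $\|u\|_{2}$. Splitting $\Omega_{\mathcal{A}}=\mathcal{A}\cup(\Omega_{\mathcal{A}}\backslash\mathcal{A})$ would be awkward, so I would instead bound $\int|u|^{q(x)}$ directly on each piece.
\begin{itemize}
\item On $\mathcal{A}$, the function $u|_{\mathcal{A}}$ is radial. Using the Strauss estimate argument of Lemma \ref{l5}(i), applied to a cutoff of $u$ supported near $\bar{\mathcal{A}}$ inside $B_R(0)$, gives $\|u\|_{\infty,\mathcal{A}}\le C\|\nabla u\|_{p(\cdot),\Omega_{\mathcal{A}}}$. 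Then $\|u\|_{q^\pm,\mathcal{A}}\le C\|u\|_{2}^{2/q^\pm}\|\nabla u\|_{p(\cdot)}^{1-2/q^\pm}$, exactly as in \eqref{102}.
\item On $\Omega_{\mathcal{A}}\backslash\mathcal{A}$, hypothesis $(\mathrm{\bar H}^4_{p,q})$ gives $q^\pm$ (taken over that region) strictly below ${p^-}^*$. Combining the embedding $W_0^{1,p(\cdot)}\hookrightarrow W_0^{1,p^-}\hookrightarrow L^{{p^-}^*}$ with the $L^p$ interpolation between $L^2$ and $L^{{p^-}^*}$ gives $\|u\|_{s}\le C\|u\|_2^{\theta_s}\|\nabla u\|_{p(\cdot)}^{1-\theta_s}$ with $\theta_s\in(0,1]$ for $2\le s<{p^-}^*$.
\end{itemize}

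Inserting these bounds into the inequality above yields an estimate of the form
\begin{align*}
\min\{\|\nabla u\|^{p^-}_{p(\cdot)},\|\nabla u\|^{p^+}_{p(\cdot)}\}\le C\sum_j \|u\|_2^{a_j}\|\nabla u\|_{p(\cdot)}^{b_j},
\end{align*}
with finitely many exponents $a_j>0$. Since $\|\nabla u\|_{p(\cdot)}$ lies in $[r_{\rm ptw},r_{{\rm ptw},k})$, both the left side and the factors $\|\nabla u\|^{b_j}$ are bounded above and below by constants depending only on $k$ and the data. Hence $\sum_j\|u\|_2^{a_j}\ge c>0$, so $\|u\|_2\ge c'>0$ uniformly on $\mathcal{N}_k$. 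Taking the infimum gives $\lambda_k>0$. We also need $\mathcal{N}_k\neq\emptyset$ for $k>d$; this follows as in Subsection \ref{sec2.3}, and in any case an infimum over the empty set is $+\infty$.

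The main obstacle is the $L^\infty$ bound on $\mathcal{A}$ for functions that are radial only on $\mathcal{A}$ and not on all of $\mathbb{R}^n$. Lemma \ref{l5} extended $u$ by zero, but here $u$ does not vanish on $\partial\mathcal{A}$. My first attempt is to multiply by a radial cutoff $\chi$ equal to $1$ on $\mathcal{A}$ and supported in a slightly larger annulus inside $B_R(0)$. This does not work as stated, because $u$ is only known to be radial on $\mathcal{A}$ itself.

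So I would use instead the one-dimensional estimate for $u(\rho)$, $\rho\in(r,R)$, namely $|u(\rho)-u(\rho')|\le\int_{\rho'}^{\rho}|u'|$. I would combine it with the mean value of $|u|$ over $\mathcal{A}$, which is controlled by $\|u\|_{1}\le C\|\nabla u\|_{p(\cdot)}$, and with the factor $s^{n-1}\ge r^{n-1}$ to pass between the radial and the $n$-dimensional measure. This gives $\|u\|_{\infty,\mathcal{A}}\le C\|\nabla u\|_{p(\cdot),\Omega_{\mathcal{A}}}$ without any boundary condition on $\partial\mathcal{A}$.
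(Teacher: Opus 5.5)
Your argument is correct, and it departs from the paper's proof at exactly the point that matters.

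\textbf{The paper's route.} The paper does not split $\Omega_{\mathcal A}$. It bounds $\int_{\Omega_{\mathcal A}}|u|^{q(x)}\,\mathrm{d}x$ by $\|u\|_{q^-}^{q^-}+\|u\|_{q^+}^{q^+}$ using the global exponents. It then applies a single Gagliardo--Nirenberg inequality $\|u\|_{q^\pm,\Omega_{\mathcal A}}\le C\|\nabla u\|_{p^-,\Omega_{\mathcal A}}^{\theta'_\pm}\|u\|_{2,\Omega_{\mathcal A}}^{1-\theta'_\pm}$. It finishes as you do, with the two-sided bounds on $\|\nabla u\|_{p(\cdot)}$ from Lemmas \ref{l14} and \ref{l4}.

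That inequality needs $q^+<{p^-}^*$; indeed $\theta'_+<1$ if and only if $q^+<{p^-}^*$. But $(\mathrm{\bar{H}_{D}})$ gives $|\Omega_{\mathcal{A},\rm sup}|>0$, and there $q(x)>p^*(x)\ge {p^-}^*$. Hence $q^+>{p^-}^*$, and $W_0^{1,p^-}(\Omega_{\mathcal A})$ does not embed into $L^{q^+}(\Omega_{\mathcal A})$. The global step would require $(\mathrm{\bar{H}^4_{p,q}})$ on all of $\Omega_{\mathcal A}$, which $(\mathrm{\bar{H}_{D}})$ rules out.

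\textbf{What your splitting buys.} It is precisely what avoids this problem.
\begin{itemize}
\item On $\mathcal A$ you use the radial $L^\infty$ bound plus $L^2$--$L^\infty$ interpolation, the mechanism of Lemma \ref{l9} and Remark \ref{rmk5}. This is insensitive to how large $q$ is.
\item You invoke $(\mathrm{\bar{H}^4_{p,q}})$ only on $\Omega_{\mathcal A}\backslash\mathcal A$, where it is actually assumed.
\item Your one-dimensional profile estimate is the right substitute for Lemma \ref{l5}(i). It needs neither the zero trace on $\partial\mathcal A$ nor the Strauss inequality.
\item Combined with the $L^{{p^-}^*}$ bound off $\mathcal A$, your two local estimates give $\|u\|_{q(\cdot),\Omega_{\mathcal A}}\le C\|\nabla u\|_{p(\cdot),\Omega_{\mathcal A}}$ directly. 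So your lower bound $r_{\rm ptw}$ need not rest on Corollary \ref{cor3}, whose justification in the paper is circular.
\end{itemize}
The price is a somewhat longer argument. The gain is that it is valid under the stated multiple-regime hypotheses.

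\textbf{One point to state explicitly.} You need $\sup_{\Omega_{\mathcal A}\backslash\mathcal A}q<{p^-}^*$, so that $\theta_s>0$ for the largest exponent on that region. The pointwise strict inequality in $(\mathrm{\bar{H}^4_{p,q}})$ gives this only if, by continuity of $q$, it persists up to $\partial\Omega_{\mathcal A}$. Otherwise it should be assumed as a uniform gap. The paper's argument tacitly needs at least as much.
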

    \begin{proof}
        Since $I(u) = 0$, there holds
            \begin{align}\label{41}
            \begin{aligned}
              \int_{\Omega_\mathcal{A}}|\nabla u|^{p(x)} \mathrm{d} x
              =& \int_{\Omega_\mathcal{A}}|u|^{q(x)} \mathrm{d} x
              \leq \|u\|_{q^-,\Omega_\mathcal{A}}^{q^-}
                + \|u\|_{q^+,\Omega_\mathcal{A}}^{q^+}.
            \end{aligned}
            \end{align}
        By an application of the Gagliardo-Nirenberg inequality, we get
            \begin{align}\label{42}
              \|u\|_{q^{\pm},\Omega_\mathcal{A}}
              \leq C\|\nabla u\|_{p^{-},\Omega_\mathcal{A}}^{\theta^{\prime}_{\pm}}
              \|u\|_{2,\Omega_\mathcal{A}}^{1-\theta^{\prime}_{\pm}},
            \end{align}
        where $\theta_{-}^{\prime},\theta^{\prime}_{+} \in(0,1)$ are defined by
            \begin{align*}
              \theta'_{-} := \left(\frac{1}{2}-\frac{1}{q^-}\right)
                \left(\frac{1}{n}-\frac{1}{p^{-}}+\frac{1}{2}\right)^{-1},\quad
              \theta'_{+} := \left(\frac{1}{2}-\frac{1}{q^+}\right)
                \left(\frac{1}{n}-\frac{1}{p^{-}}+\frac{1}{2}\right)^{-1}.
            \end{align*}
       Recalling the fact that Proposition \ref{p1} claims
            \begin{align}\label{43}
            \begin{aligned}
                \int_{\Omega_\mathcal{A}}|\nabla u|^{p(x)} \mathrm{d} x
                \ge& \min \left\{
                           \|\nabla u\|_{p (\cdot),\Omega_\mathcal{A}}^{p^{-}},
                           \|\nabla u\|_{p(\cdot),\Omega_\mathcal{A}}^{p^{+}}
                           \right\}\\
                \ge& C \min \left\{
                           \|\nabla u\|_{p^-,\Omega_\mathcal{A} }^{p^{-}},
                           \|\nabla u\|_{p^-,\Omega_\mathcal{A}}^{p^{+}}
                           \right\},
            \end{aligned}
            \end{align}
        where $C$ is a positive constant independent of $u$. Then combining \eqref{41}-\eqref{43}, we obtain
            \begin{align}
            \begin{aligned}
              &\min \left\{
                           \|\nabla u\|_{p^-,\Omega_\mathcal{A}}^{p^{-}},
                           \|\nabla u\|_{p^-,\Omega_\mathcal{A}}^{p^{+}}
                           \right\}\\
              \leq& C \left( \|\nabla u\|_{p^-,\Omega_\mathcal{A}}^{\theta^{\prime}_{-} q^-}
                    \|u\|_{2,\Omega_\mathcal{A}}^{(1 - \theta^{\prime}_{-})q^-}
                + \|\nabla u\|_{p^-,\Omega_\mathcal{A}}^{\theta^{\prime}_{+} q^+}
                    \|u\|_{2,\Omega_\mathcal{A}}^{(1 - \theta^{\prime}_{-})q^+} \right),
            \end{aligned}
            \end{align}
       which implies that
            \begin{align}\label{7}
            \begin{aligned}
              \|u\|_{2,\Omega_\mathcal{A}}^{(1 - \theta^{\prime}_{-})q^-}
                    + \|u\|_{2,\Omega_\mathcal{A}}^{(1 - \theta^{\prime}_{-})q^+}
              \ge&
              \begin{cases}
                C\|\nabla u\|_{p^-,\Omega_\mathcal{A}}^{p^- - \theta^{\prime}_{+} q^+},
                    &  0 < \|\nabla u\|_{p^-} \le 1,\\
                C\|\nabla u\|_{p^-,\Omega_\mathcal{A}}^{p^+ - \theta^{\prime}_{-} q^-},
                    &  1< \|\nabla u\|_{p^-} < \infty.
              \end{cases}
            \end{aligned}
            \end{align}
       Recalling that $\|\nabla u\|_{p (\cdot),\mathcal{A}}$ is bounded both above and below for given $u \in \mathcal{N}_{k}$ according to Lemma \ref{l14} (ii) and Lemma \ref{l4}, thus the right hand side of \eqref{7} is bounded below, which implies that $\|u\|_{2,\Omega_\mathcal{A}} >0 $ with some positive constant $C$. Thus $\lambda_{k} \ge C> 0$ with the arbitrariness of $u$.
    \end{proof}

    Then, by the validity of Lemma \ref{l22} and Lemma \ref{l21}, we can establish the stable and unstable invariant manifolds for the solution generated by initial data with supercritical initial energy similarly as Lemma \ref{l15} and derive the abstract criterion for the global well-posedness of the solution to \eqref{186}-\eqref{188} as what we did for problem \eqref{1}-\eqref{3} in Theorem \ref{t5}, for which we give the following Theorem \ref{t7} and omit its proof.

    \begin{theorem}[Global existence and finite-time blow-up]\label{t7}
            Let $(\Omega_{R, r})$, $(\mathrm{\bar{H}_{p,q}})$, $(\mathrm{\bar{H}^1_{p,q}})$-$(\mathrm{\bar{H}^3_{p,q}})$ and $(\mathrm{\bar{H}_{D}})$ be satisfied. Suppose that $J(u_0) > d$, then we have

            \begin{enumerate}[(i)]
                \item if $\|u_0\|_2 \le \lambda_{J(u_0)}$ and $u_0 \in \mathcal{N}^+$, then
                          $u_{0} \in \mathcal{G}_0$;
                \item if $(\mathrm{\bar{H}^4_{p,q}})$ holds, $\|u_0\|_2 \ge \Lambda_{J(u_0)}$ and $u_0 \in \mathcal{N}^-$, then
                          $u_{0} \in \mathcal{B}$.
            \end{enumerate}
    \end{theorem}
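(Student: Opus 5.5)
We will follow the contradiction argument of Theorem \ref{t5} (itself modelled on Lemma 8 of \cite{r7}), replacing each ingredient on $\mathcal{A}$ by its counterpart on $\Omega_{\mathcal{A}}$. The ingredients are: local existence from Theorem \ref{t6}; the energy identities \eqref{47}--\eqref{48}, whose proof uses only the subdifferential structure in $H=L^2(\Omega_{\mathcal{A}})$ and therefore carries over; the compactness of Corollary \ref{cor3}; and the bounds $\lambda_k>0$ and $\Lambda_k<\infty$ from Lemmas \ref{l21} and \ref{l22}. The continuation criterion of Lemma \ref{prop:continuation} also has to be re-proved on $\Omega_{\mathcal{A}}$. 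On $\mathcal{A}$ we use the radial $L^\infty$ bound, and on $\Omega_{\mathcal{A}}\backslash\mathcal{A}$ we use the subcritical embedding given by $(\mathrm{\bar{H}^2_{p,q}})$ and $(\mathrm{\bar H_D})$. Together these give a lower bound on $J(u(t))$ whenever $\|\nabla u\|_{p(\cdot)}$ stays bounded, and the rest of that proof is unchanged.

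\emph{Invariance.} We first prove the analogue of Lemma \ref{l15}. For (i), let $\|u_0\|_2\le\lambda_{J(u_0)}$, $u_0\in\mathcal{N}^+$, and suppose $t_0$ is the first time with $u(t_0)\in\mathcal{N}$. On $[0,t_0)$, \eqref{48} shows that $\|u(t)\|_2$ is strictly decreasing, so $\|u(t_0)\|_2<\lambda_{J(u_0)}$. By \eqref{47}, $J(u(t_0))\le J(u_0)$. If $J(u(t_0))<J(u_0)$, then $u(t_0)\in\mathcal{N}_{J(u_0)}$, which contradicts the definition of $\lambda_{J(u_0)}$. Otherwise $u_t\equiv0$ on $[0,t_0]$, which contradicts $I(u_0)>0$. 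Part (ii) is symmetric: $\|u(t)\|_2$ increases and we use $\Lambda_{J(u_0)}$. Here $(\mathrm{\bar H^4_{p,q}})$ is needed, since it is what makes Lemma \ref{l21} available.

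\emph{Part (i).} Since $u(t)\in\mathcal{N}^+$ and $J(u(t))\le J(u_0)$, Lemma \ref{l4} (whose proof uses only \eqref{39} and Proposition \ref{p1}) gives $\|\nabla u(t)\|_{p(\cdot)}<r_{{\rm ptw},J(u_0)}$ uniformly in $t$. The modified continuation criterion then gives $T_{\max}=\infty$. Next, by \eqref{47} we have $\int_0^\infty\|u_t\|_2^2<\infty$, and by the analogue of Lemma \ref{l16}, which rests on Corollary \ref{cor3}, there is a sequence $t_k\to\infty$ with $u(t_k)\to\phi$, where $\phi$ is a solution of the stationary problem. The strong convergence in $W^{1,p(\cdot)}_0$ comes from the compactness together with the $(S_+)$ property of $-\Delta_{p(x)}$. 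Suppose $\phi\neq0$. Then $\phi\in\mathcal{N}$, $J(\phi)\le J(u_0)$, and
\[
\|\phi\|_2=\lim_{k\to\infty}\|u(t_k)\|_2<\|u_0\|_2\le\lambda_{J(u_0)}.
\]
The strict inequality holds because $\|u(t)\|_2$ is strictly decreasing. If $J(\phi)<J(u_0)$, this contradicts the definition of $\lambda_{J(u_0)}$; the case $J(\phi)=J(u_0)$ is excluded as in the invariance step. Hence $\phi=0$, so $\omega(u_0)=\{0\}$. Since $\|u(t)\|_2$ is monotone and $J(u(t))$ is nonincreasing, with $J(u(t_k))\to J(0)=0$, the whole trajectory converges to $0$ in $W_{0,\mathrm{rad}}^{1,p(\cdot)}(\Omega_{\mathcal{A}})$, i.e. $u_0\in\mathcal{G}_0$.

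\emph{Part (ii).} Suppose $T_{\max}=\infty$. Then $I(u(t))<0$ for all $t$ and $\|u(t)\|_2$ is increasing. By Lemma \ref{l16} there is a stationary $\phi$ with $u(t_k)\to\phi$. Since $u(t)\in\mathcal{N}^-$, Lemma \ref{l14}(i) bounds $\|\nabla u(t)\|$ away from zero, which forces $\phi\neq0$. Then $\phi\in\mathcal{N}$, $J(\phi)\le J(u_0)$, and
\[
\|\phi\|_2>\|u_0\|_2\ge\Lambda_{J(u_0)},
\]
which contradicts the definition of $\Lambda_{J(u_0)}$; the equality case $J(\phi)=J(u_0)$ is handled as before. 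Hence $u_0\in\mathcal{B}$.

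\emph{Main obstacle.} The main obstacle is transferring Lemma \ref{l16} to $\Omega_{\mathcal{A}}$ for a global solution. We need boundedness of the trajectory in $W^{1,p(\cdot)}_0$ along a time sequence, and then strong convergence to a stationary solution. In case (i) boundedness comes from Lemma \ref{l4}. In case (ii) we would first extract $t_k$ with $\|u_t(t_k)\|_2\to0$, then test the equation with $u$ and use $J(u(t_k))\ge -C$ (otherwise finite-time blow-up follows from Theorem \ref{t10} applied at a later time) to bound $\|\nabla u(t_k)\|$. Finally we pass to the limit using Corollary \ref{cor3} and the $(S_+)$ property of the $p(x)$-Laplacian.
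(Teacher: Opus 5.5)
Your proposal is correct and follows the route the paper indicates: first the analogue of Lemma \ref{l15} for the invariance of $\mathcal{N}^{\pm}$, then the Gazzola--Weth contradiction through the $\omega$-limit set of Lemma \ref{l16}, now resting on Corollary \ref{cor3}; you also supply details the paper omits, namely the continuation criterion on $\Omega_{\mathcal{A}}$ and the bounded time sequence with $\|u_t(t_k)\|_{2}\to 0$ in case (ii). One correction: Lemma \ref{l21}, and hence $(\mathrm{\bar{H}^4_{p,q}})$, concerns $\lambda_k$ and only makes the hypothesis of part (i) non-vacuous, whereas your argument for part (ii) uses only $\Lambda_{J(u_0)}$ and never invokes $\lambda_k$, so your claim that $(\mathrm{\bar{H}^4_{p,q}})$ is needed in (ii) because it makes Lemma \ref{l21} available is misplaced.
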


Next, we give several remarks to discuss some possible generalizations, including the generalization for the domain replaced by the ball $B_R(0)$ of dimension two (i.e., $n=2$) or higher dimensions (i.e., $n\ge 3$) and the generalization for the constant exponent case.

\begin{remark}[Generalization for the ball with $n=2$]
  As the another typical bounded radially symmetric domain, one can also consider the generalization of our main results established in Sections \ref{sec3}-\ref{sec6} to the ball $B_{R}(0)$ with $n=2$, centered at the origin with the radius $R>0$.
  In this situation, recalling Remark \ref{rmk6}, similarly as the proof of Lemma \ref{l5}, one can deduce that
    \begin{align*}
      W_{0,\mathrm{rad}}^{1, p(\cdot)}(B_{R}(0)) \hookrightarrow L^{\infty}(B_{R}(0))
      \quad \text{continuously}
    \end{align*}
  and
    \begin{align*}
      W_{0,\mathrm{rad}}^{1, p(\cdot)}(B_{R}(0)) \hookrightarrow L^{q}(B_{R}(0))
      \quad \text{compactly}, \ q \in(1,\infty).
    \end{align*}
  Moreover, since $B_{R}(0)$ is radially symmetric and is invariant under the action of $\mathbf{O}(n)$, all of the main results established in Section \ref{sec3} and Section \ref{sec33} of the present paper
  can be directly generalized for \eqref{1}-\eqref{3} under the hypotheses $(\mathrm{H_{p,q}})$, $(\mathrm{H^1_{p,q}})$-$(\mathrm{H^2_{p,q}})$ and $(\mathrm{H_{D}})$ with the domain $\mathcal{A}$ replaced by $B_{R}(0)$ for $n=2$.
\end{remark}

\begin{remark}[Generalization for the ball with $n\ge 3$]
Still considering the generalization for the ball, the situation for the higher dimensional case $n\ge 3$ is different from the case $n=2$ due to the failure of the recovery of the compact Sobolev embedding with the Strauss inequality. Since \eqref{181} fails at the origin, we have to let $q(x)$ obey the subcritical regime setting in the region $B_r(0)$ ($0<r<R$) to secure the well-posedness of the solution as what we did in Section \ref{sec6} to set the hypothesis ($\mathrm{\bar{H}^2_{p,q}}$).
  Moreover, we point out that the consideration for problem \eqref{1}-\eqref{3}
  with the domain replaced by $B_{R}(0)$ for $n\ge 3$ can be regarded as the special case of $\Omega_{\mathcal{A}}$ satisfying ($\Omega_{R, r}$) that we set in Section \ref{sec6}, where $\Omega=B_{R}(0)$, $\Omega \backslash \mathcal{A} = \bar{B}_{r}(0)$ and $p(x),q(x)$ don't have to be radially symmetric on $B_r(0)$.

    \begin{figure}[h]
    \centering
    \begin{overpic}[width=6cm]{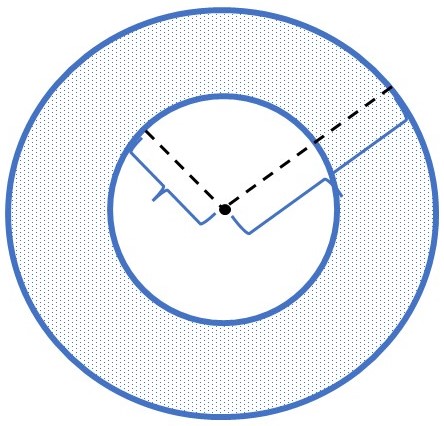}
            \put(83, 73){$0$}
            \put(130, 80){$R$}
            \put(54, 80){$r$}
            \put(20, 70){$\mathcal{A}$}
            \put(75, 50){$B_r(0)$}
            \put(150, 20){$B_R(0)$}
            \end{overpic}
    \caption{Generalization for the ball $B_R(0)$}\label{f4}
    \end{figure}
\end{remark}

\begin{remark}[The constant exponent case]
  The arguments of Section \ref{sec3} - Section \ref{sec33} in the present paper can be applied to the constant exponent case $p(x)\equiv p$ and $q(x)\equiv q$. In other words, the parabolic equations with $p$-Laplacian, i.e.,
    \begin{align}
          &u_t-\Delta_{p} u=|u|^{q-2} u && \text { in } \mathcal{A} \times(0,+\infty), \label{171}\\
          &u=0, && \text { on } \partial \mathcal{A} \times(0,+\infty), \label{172}\\
          &u(x, 0)=u_0(x), && \text { in } \mathcal{A}. \label{173}
    \end{align}
  for the critical regime case $q = p^*$ or the supercritical regime case $q > p^*$ can be regarded as the special case of \eqref{1}-\eqref{3} with $p^-=p^+$ and $q^- = q^+=p^*$ or $q^- = q^+>p^*$.
\end{remark}

\section*{Acknowledgments}
Runzhang Xu was supported by the Hainan Provincial Natural Science Foundation of China (126MS0251), the National Natural Science Foundation of China (12271122), and the Fundamental Research Funds for the Central Universities. In the process of this work (January 2022-October 2026), Runzhang Xu visited the Institute of Mathematical Sciences, The Chinese University of Hong Kong, on several occasions (June 9-21, 2025; August 25-27, 2025; December 21-24, 2025; January 15-27, 2026; February 20-March 1, 2026; April 21-May 3, 2026; and May 20-31, 2026), during which he benefited greatly from valuable discussions with Professor Zhouping Xin.

\section*{Declarations}
{\flushleft
\textbf{Conflicts of interest }
The authors declare no conflict of interest.

{\flushleft
\textbf{Data Availability }}
Data sharing is not applicable to this article as we do not analyse or generate any datasets in this study.


\begin{thebibliography}{99}


\bibitem{pyb4}	
{E. Acerbi, G. Mingione}, {Regularity results for a class of functionals with non-standard growth}, Arch. Ration. Mech. Anal., 156(2) (2001), 121-140.

\bibitem{pyb5}	
{E. Acerbi, G. Mingione}, {Regularity results for electrorheological fluids: the stationary case}, C. R. Math. Acad. Sci. Paris, 334(9) (2002), 817-822.

\bibitem{pyb6}	
{E. Acerbi, G. Mingione}, {Regularity results for stationary electro-rheological fluids}, Arch. Ration. Mech. Anal., 164(3) (2002), 213-259.

\bibitem{pyb7}
{F. Catt\'{e}, P. L. Lions, J. M. Morel, T. Coll}, {Image selective smoothing and edge detection by nonlinear diffusion}, SIAM J. Numer. Anal., 29(1) (1992), 845-866.

\bibitem{pyb8}
{L. \'{A}lvarez, P. L. Lions, J. M. Morel}, {Image selective smoothing and edge detection by nonlinear diffusion. II}, SIAM J. Numer. Anal., 29(3) (1992), 845-866.

\bibitem{pyb9}
{Y. Chen, S. Levine, M. Rao}, {Variable exponent, linear growth functionals in image restoration}, SIAM J. Appl. Math., 66(4) (2006), 1383-1406.

\bibitem{r1}
{C. O. Alves, T. Boudjeriou}, {Existence of solution for a class of heat equation involving the $p(x)$ Laplacian with triple regime}, Z. Angew. Math. Phys., 72 (2021), 1-18.

\bibitem{r2}
{L. C. Nhan, Q. V. Chuong, L. X. Truong}, {Potential well method for $p(x)$-Laplacian equations with variable exponent sources}, Nonlinear Anal. Theory Methods Appl., 56(1) (2020), 103155.

\bibitem{r4}
{L. Diening, P. Harjulehto, P. H\"{a}st\"{o}, M. Ruzicka}, {Lebesgue and Sobolev Spaces with Variable Exponents}, Springer, Berlin, 2011.

\bibitem{r6}
{V. K. Kalantarov, O. A. Ladyzhenskaya}, {The occurrence of collapse for quasilinear equations of parabolic and hyperbolic types}, Soviet Math., 10 (1978), 53-70.

\bibitem{r7}
{F. Gazzola, T. Weth}, {Finite time blow-up and global solutions for semilinear parabolic equations with initial data at high energy level}, Differ. Integral Equ., 18(9) (2005), 961-990.

\bibitem{r8}
{L. E. Payne, D. H. Sattinger}, {Saddle points and instability of nonlinear hyperbolic equations}, Israel J. Math., 22(3-4) (1975), 273-303.

\bibitem{r9}
{R. Z. Xu, J. Su}, {Global existence and finite time blow-up for a class of semilinear pseudo-parabolic equations}, J. Funct. Anal., 264 (2013), 2732-2763.

\bibitem{r10}
{W. A. Strauss}, {Existence of solitary waves in higher dimensions}, Comm. Math. Phys., 55 (1977), 149-162.

\bibitem{r11}
{D. V. Cruz-Uribe, A. Fiorenza}, {Variable Lebesgue Spaces: Foundations and Harmonic Analysis}, Springer Science $\&$ Business Media, Basel, 2013.

\bibitem{r13}
{S. Y. Chung, M. H. Choi}, {A new condition for the concavity method of blow-up solutions to $p$-Laplacian parabolic equations}, J. Differential Equations, 12 (2018), 6384-6399.

\bibitem{r18}
{K. Lee, A. Petrosyan, J. L. Vazquez}, {Large-time geometric properties of solutions of the evolution $p$-Laplacian equation}, J. Differential Equations, 229 (2006), 389-411.

\bibitem{r19}
{T. Kilpel\"{a}inen, P. Lindqvist}, {On the Dirichlet boundary value problem for a degenerate parabolic equation}, SIAM J. Math. Anal., 27 (1996), 661-683.

\bibitem{r22}
{M. Tsutsumi}, {Existence and nonexistence of global solutions for nonlinear parabolic equations}, Publ. Res. Inst. Math. Sci., 8 (1972), 211-229.

\bibitem{r23}
{H. Ishii}, {Asymptotic stability and blowing up of solutions of some nonlinear equations}, J. Differential Equations, 26 (1977), 291-319.

\bibitem{r24}
{G. Akagi}, {Local existence of solutions to some degenerate parabolic equation associated with the $p$-Laplacian}, J. Differential Equations, 241 (2007), 359-385.

\bibitem{r25}
{N. A. Dao, J. I. D\'{i}az}, {The extinction versus the blow-up: Global and non-global existence of solutions of source types of degenerate parabolic equations with a singular absorption}, J. Differential Equations, 263 (2017), 6764-6804.

\bibitem{r26}
{S. Lian, H. Yuan, C. Cao, W. Gao, X. Xu}, {On the Cauchy problem for the evolution $p$-Laplacian equations with gradient term and source}, J. Differential Equations, 235 (2007), 544-585.

\bibitem{r28}
{R. Ferreira, J. D. Rossi}, {Critical exponents for a semilinear parabolic equation with variable reaction}, Proc. Roy. Soc. Edinburgh Sect. A, 142(5) (2012), 1027-1042.

\bibitem{r30}
{J. P. Pinasco}, {Blow-up for parabolic and hyperbolic problems with variable exponents}, Nonlinear Analysis: Theory, Methods $\&$ Applications, 71(3) (2009), 1094-1099.

\bibitem{r31}
{Z. Zheng, S. Zhou}, {Blow-up of p-Laplacian evolution equations with variable source power}, Sci. China Math., 60(3) (2017), 469-490.

\bibitem{r32+}
{J. Arrieta, A. Carvalho}, {Abstract parabolic problems with critical nonlinearities and applications to Navier-Stokes and heat equations}, Trans. Amer. Math. Soc., 352(1) (2000), 285-310.

\bibitem{r32}
{J. M. Arrieta, A. Rodriguez-Bernal}, {Non well posedness of parabolic equations with supercritical nonlinearities}, Commun. Contemp. Math., 6(5) (2004), 733-764.

\bibitem{r33}
{X. Chen, J. S. Guo}, {Boundedness of global solutions of a supercritical parabolic equation}, Nonlinear Analysis: Theory, Methods $\&$ Applications, 68(3) (2008), 621-628.

\bibitem{r34}
{J. W. Cholewa, A. Rodriguez-Bernal}, {Critical and supercritical higher order parabolic problems in $\mathbb{R}^N$}, Nonlinear Analysis: Theory, Methods $\&$ Applications, 104 (2014), 50-74.

\bibitem{r35}
{T. Suzuki}, {Semilinear parabolic equation on bounded domain with critical Sobolev exponent}, Indiana Univ. Math. J., 57(7) (2008), 3365-3396.

\bibitem{r38}
{C. Zhang, L. S. Zhou}, {Renormalized and entropy solutions for nonlinear parabolic equations with variable exponents and $L^1$ data}, J. Differential Equations, 248 (2010), 1376-1400.

\bibitem{r39}
{Z. S. Lian, J. W. Gao, J. H. Yuan, L. C. Cao}, {Existence of solutions to an initial Dirichlet problem of evolutional $p(x)$-Laplace equations}, Ann. Inst. H. Poincar\'{e} Anal. Non Lin\'{e}aire, 29 (2012), 377-399.

\bibitem{r40}
{B. Guo, J. Y. Li, W. J. Gao}, {Singular phenomena of solutions for nonlinear diffusion equations involving $p(x)$-Laplace operator and nonlinear sources}, Z. Angew. Math. Phys., 66 (2015), 989-1005.


\bibitem{r47}
{N. Mizoguchi}, {On the behavior of solutions for a semilinear parabolic equation with supercritical nonlinearity}, Math. Z., 239(2) (2002), 215-229.

\bibitem{r48}
{R. Suzuki}, {Asymptotic behavior of solutions of quasilinear parabolic equations with supercritical nonlinearity}, J. Differential Equations, 190(1) (2003), 150-181.

\bibitem{r50}
E. Vitillaro, Global existence for the heat equation with nonlinear dynamical boundary conditions, Proc. Roy. Soc. Edinburgh Sect. A, 135(1) (2005), 175-207.

\bibitem{r51}
E. Vitillaro, On the Laplace equation with non-linear dynamical boundary conditions, Proc. London Math. Soc., 93(2) (2006), 418-446.


\bibitem{r53}
{G. Akagi, M. \^{O}tani}, {Evolution inclusions governed by the difference of two subdifferentials in reflexive Banach spaces}, J. Differential Equations, 209(2) (2005), 392-415.


\bibitem{r55}
{G. Akagi, M. \^{O}tani}, {Principle of symmetric criticality and evolution equations}, Discrete Contin. Dyn. Syst., 2003(Special) (2003), 1-10.

\bibitem{r56}
{J. Kobayashi, M. \^{O}tani}, {The principle of symmetric criticality for non-differentiable mappings}, J. Funct. Anal., 214(2) (2004), 428-449.

\bibitem{r57}
{R. S. Palais}, {The principle of symmetric criticality}, Commun. Math. Phys., 69(1) (1979), 19-30.

\bibitem{r58}
{M. Willem}, Minimax Theorem, Birk\"{a}user, Boston, 1996.


\bibitem{a1}
{C. Cowan, A. Moameni}, {Supercritical elliptic problems on nonradial domains via a nonsmooth variational approach}, J. Differential Equations, 341 (2022), 292-323.

\bibitem{a2}
{E. Serra}, {Non radial positive solutions for the H\'{e}non equation with critical growth}, Calc. Var. Partial Differential Equations, 23(3) (2005), 301-326.

\bibitem{a3}
{C. Bandle, L. A. Peletier}, {Nonlinear elliptic problems with critical exponent in shrinking annuli}, Math. Ann., 280(1) (1988), 1-19.

\bibitem{a4}
{A. Boscaggin, C. Francesca, N. Benedetta, T. Weth}, {A supercritical elliptic equation in the annulus}, Ann. Inst. H. Poincar\'{e} Anal. Non Lin\'{e}aire, 40(1) (2022), 157-183.

\bibitem{a5}
{D. Castorina, F. Pacella}, {Symmetry of positive solutions of an almost-critical problem in an annulus}, Calc. Var. Partial Differential Equations, 23(2) (2005), 125-138.

\bibitem{a6}
{M. Clapp, F. Pacella}, {Multiple solutions to the pure critical exponent problem in domains with a hole of arbitrary size}, Math. Z., 259(3) (2008), 575-589.

\bibitem{a7}
{D. G. De Figueiredo, O. H. Miyagaki}, {Multiplicity of non-radial solutions of critical elliptic problems in an annulus}, Proc. R. Soc. Edinb. Sect. A Math., 135(1) (2005), 25-37.

\bibitem{a8}
{Z. Q. Wang, M. Willem}, {Existence of many positive solutions of semilinear elliptic equations on an annulus}, Proc. Amer. Math. Soc., 127(6) (1999), 1711-1714.


\bibitem{p1}
Y. Fujishima, K. Ishige, Blow-up set for type I blowing up solutions for a semilinear heat equation, Ann. Inst. H. Poincar\'{e} Anal. Non Lin\'{e}aire, 31(2) (2014), 231-247.

\bibitem{p2}
M. Ishiwata, On the asymptotic behavior of unbounded radial solutions for semilinear parabolic problems involving critical Sobolev exponent, J. Differential Equations, 249(6) (2010), 1466-1482.

\bibitem{p4}
H. Matano, F. Merle, Classification of type I and type II behaviors for a supercritical nonlinear heat equation, J. Funct. Anal., 256 (2009), 992-1064.

\bibitem{p5}
H. Matano, F. Merle, Threshold and generic type I behaviors for a supercritical nonlinear heat equation, J. Funct. Anal., 261 (2011), 716-748.

\bibitem{p6}
P. Pol\'{a}\v{c}ik, Estimates of solutions and asymptotic symmetry for parabolic equations on bounded domains, Arch. Ration. Mech. Anal., 183(1) (2006), 59-91.

\bibitem{p7}
A. Salda\~{n}a, T. Weth, Asymptotic axial symmetry of solutions of parabolic equations in bounded radial domains, J. Evol. Equ., 12(3) (2012), 697-712.

\bibitem{p8}
Z. Tan,  Global solution and blowup of semilinear heat equation with critical sobolev exponent, Comm. Partial Diff. Equations, 26(3–4) (2001), 717–741.

\bibitem{e1}
H. Brezis, L. Nirenberg, Positive solutions of nonlinear elliptic equations involving critical sobolev exponents, Comm. Pure. Appl. Math., 36(4) (1983), 437–477.

\bibitem{p9}
W. M. Ni, P. E. Sacks, J. Tavantzis, On the asymptotic behavior of solutions of certain quasilinear parabolic equations, J. Differential Equations, 54(1) (1984), 97–120.

\bibitem{p10}
V. A. Galaktionov, J. L. Vazquez, Continuation of blowup solutions of nonlinear heat equations in several space dimensions, Comm. Pure Appl. Math., 50(1) (1997), 1–67.

\bibitem{p11}
{L. Damascelli}, {Comparison theorems for some quasilinear degenerate elliptic operators and applications to symmetry and monotonicity
results}, Ann. Inst. H. Poinca\'{r}e Anal. Non Li\'{n}eaire, 15 (1998), 493-516.

\bibitem{ap1}
W. Lian, J. Wang, R. Z. Xu, Global existence and blow up of solutions for pseudo-parabolic equation with singular potential, J. Differential Equations, 269(6) (2020), 4914–4959.

\bibitem{ap3}
R. Z. Xu, W. Lian, Y. Niu, Global well-posedness of coupled parabolic systems, Sci. China Math., 63 (2020), 321–356.

\bibitem{ap4}
Y. T. Wang, R. Z. Xu Xu, C. Yang, Global well-posedness for semilinear heat equations with linear or nonlinear boundary dissipation, Sci. China Math. (2026), https://doi.org/10.1007/s11425-025-2546-3.

\bibitem{aw1}
{Y. Liu, R.~Z. Xu}, {A class of fourth order wave equations with dissipative and nonlinear strain terms}, J. Differential Equations, 244(1) (2008), 200-228.

\bibitem{aw2}
{Y. Luo, R.~Z. Xu, C. Yang}, {Global well-posedness for a class of semilinear hyperbolic equations with singular potentials on manifolds with conical singularities}, Calc. Var. Partial Differential Equations, 61(6) (2022), 210.

\bibitem{aw3}	
{T. Pang, et al.}, {Global existence and non-existence for nonlinear wave equation with variable-exponent nonlinear weak damping term and power-type nonlinearity}, Commun. Math. Sci., 23(7) (2025), 1933-1982.

\bibitem{aw4}	
{Q. Lin, R. Z. Xu}, {Global well-posedness of the variable-order fractional wave equation with variable exponent nonlinearity}, J. Lond. Math. Soc. (2), 111(2) (2025), Paper No. e70091, 59 pp.


\end{thebibliography}
\end{document}